\documentclass[10pt]{article}
\pdfoutput=1
\usepackage{amsmath}
\usepackage{amsfonts}
\usepackage{amssymb}
\usepackage{amsthm}
\usepackage{mathrsfs}
\usepackage{mathtools}
\usepackage{enumerate}
\usepackage{amscd}
\usepackage[all]{xy}
\usepackage[margin=1.4in]{geometry}
\usepackage{graphicx}
\usepackage{ifpdf}
\ifpdf
  \DeclareGraphicsExtensions{.pdf,.png,.jpg}
\else
  \DeclareGraphicsExtensions{.eps}
\fi
\usepackage{caption}
\usepackage{xcolor}
\usepackage{wrapfig}
\usepackage{verbatim}
\usepackage{pinlabel}
\usepackage[toc, page]{appendix}
\usepackage{tikz}
\usepackage{tikz-3dplot}
\usetikzlibrary{arrows}

\newtheoremstyle{forADefinitionInsideARemark}
{3pt}
{3pt}
{}
{}
{}
{:}
{.5em}
{}

\newtheorem{theorem}{Theorem}[section]
\newtheorem{corollary}[theorem]{Corollary}
\newtheorem{lemma}[theorem]{Lemma}
\newtheorem{proposition}[theorem]{Proposition}

\theoremstyle{forADefinitionInsideARemark}
\newtheorem{specialDefinition}[theorem]{\textbf{Definition}}

\theoremstyle{remark}
\newtheorem{remark}[theorem]{Remark}
\newtheorem{observation}[theorem]{Observation}

\theoremstyle{definition}
\newtheorem{definition}[theorem]{Definition}
\theoremstyle{definition}
\newtheorem{example}[theorem]{Example}

\newcommand{\at}[2]{\left.#1\right|_{#2}}
\newcommand{\RR}{\mathbb{R}}
\newcommand{\PP}{\mathbb{P}}
\newcommand{\CC}{\mathbb{C}}
\newcommand{\CP}{\mathbb{C}P}
\newcommand{\RP}{\mathbb{R}P}
\newcommand{\FF}{\mathbb{F}}
\newcommand{\ZZ}{\mathbb{Z}}
\newcommand{\NN}{\mathbb{N}_{\ge 0}}
\newcommand{\M}{\mathcal{M}}
\renewcommand{\L}{\mathcal{L}}
\newcommand{\PM}{\widetilde{\mathcal{M}}}
\renewcommand{\P}{\mathcal{P}}
\newcommand{\C}{\mathcal{C}}
\newcommand{\J}{\mathcal{J}}
\newcommand{\Jreg}{\mathcal{J}_{reg}}
\newcommand{\F}{\mathcal{F}}
\newcommand{\D}{\mathcal{D}}
\newcommand{\CO}{\mathcal{CO}}
\newcommand{\OC}{\mathcal{OC}}
\newcommand{\A}{\mathcal{A}}
\newcommand{\X}{\mathcal{X}}

\newcommand{\R}{\mathcal{R}}
\newcommand{\N}{\mathcal{N}}
\renewcommand{\H}{\mathcal{H}}
\newcommand{\del}{\partial}

\renewcommand{\O}{\mathcal{O}}
\renewcommand{\hom}{\operatorname{Hom}}
\newcommand{\End}{\operatorname{End}}
\newcommand{\Aut}{\operatorname{Aut}}

\newcommand{\spaN}{\operatorname{Span}}
\newcommand{\ord}{\operatorname{ord}}
\newcommand{\cl}[2][3]{%
  {}\mkern#1mu\overline{\mkern-#1mu#2}}

\newcommand{\st}{\;\colon\;}
\renewcommand{\Re}{\operatorname{Re}}
\renewcommand{\Im}{\operatorname{Im}}
\newcommand{\Id}{\operatorname{Id}}
\newcommand{\ind}{\operatorname{ind}}
\newcommand{\ev}{\operatorname{ev}}
\renewcommand{\angle}{\measuredangle}
\renewcommand{\S}{\frac{\FF_2[c]}{(1+c+c^2)}}
\newcommand{\s}{\FF_2[c]/(1+c+c^2)}

\def\presuper#1#2%
  {\mathop{}%
   \mathopen{\vphantom{#2}}^{#1}%
   \kern-\scriptspace%
   #2}

\newcommand{\chiang}{L_{\Delta}}
\newcommand{\bindih}{\Gamma_{\Delta}}
\newcommand{\ain}{A^{-1}}
\renewcommand{\a}{A}
\renewcommand{\aa}{A^2}

\newcommand{\aaaa}{A^4}
\newcommand{\aaaaa}{A^5}
\newcommand{\bin}{B^{-1}}
\renewcommand{\b}{B}
\newcommand{\abin}{(AB)^{-1}}
\newcommand{\ab}{(AB)}
\newcommand{\aabin}{(A^2B)^{-1}}
\newcommand{\aab}{(A^2B)}
\newcommand{\aaabin}{(A^3B)^{-1}}
\newcommand{\aaab}{(A^3B)}
\newcommand{\aaaabin}{(A^4B)^{-1}}
\newcommand{\aaaab}{(A^4B)}
\newcommand{\aaaaabin}{(A^5B)^{-1}}
\newcommand{\aaaaab}{(A^5B)}
\title{Higher rank local systems in Lagrangian Floer theory}
\date{}
\author{Momchil Konstantinov}

\setlength{\parindent}{2em}

\begin{document}
\maketitle

\begin{abstract}
We extend Floer theory for monotone Lagrangians to allow coefficients in local systems of arbitrary rank. Unlike the rank 1 case, this is often obstructed by Maslov 2 discs. We study exactly what the obstruction is and define some natural unobstructed subcomplexes. To illustrate these constructions we do some explicit calculations for the Chiang Lagrangian $\chiang \subseteq \CP^3$. For example, we equip $\chiang$ with a particular rank 2 local system $W$ over $\FF_2$ for which the resulting Floer complex $CF^*(W,W)$ is unobstructed despite the presence of Maslov 2 discs. We compute that the cohomology $HF^*(W,W)$ is non-zero and deduce that $\chiang$ cannot be disjoined from $\RP^3$ by a Hamiltonian isotopy.
\end{abstract}
 \tableofcontents
\section{Introduction}
Lagrangian Floer cohomology is a very powerful tool for studying the topology and relative position of Lagrangian submanifolds in a symplectic manifold. The even more elaborate machinery of Fukaya categories encodes an immense amount of information about them. 
These invariants are all defined by ``counting'' (perturbed) punctured pseudoholomorphic discs whose boundary components are restricted to lie on a collection of Lagrangians. One can enrich this structure even further by recording some information about the homotopy classes of paths that these boundaries trace. One way to accomplish this is by using local systems of coefficients for the theory.\footnote{Another related approach to recording similar homotopy data is to call upon techniques from the topology of loop spaces. This idea was pioneered by Viterbo in \cite{viterbo1997exact} and has been extensively explored. For a small sample see e.g. \cite{barraud2007lagrangian}, \cite{fukaya2006application}, \cite{abouzaid2012wrapped}.}

Rank 1 local systems are a classical tool in Lagrangian Floer theory and are widely applied, specifically in the context of Mirror Symmetry. A natural question to ask then is whether higher rank local systems would yield useful invariants. In his paper \cite{damian2012floer}, Damian was the first to realise that in the monotone setting, it is not always possible to define Floer cohomology with coefficients in a local system of rank higher than 1 since Maslov 2 discs with boundary on the Lagrangian may obstruct $d^2 = 0$. In fact, concentrating on this obstruction for the local system arising from the universal cover, he derives 
many interesting restrictions on the topology of monotone Lagrangians in linear symplectic spaces, including a proof of the monotone Audin conjecture for Lagrangians with contractible universal cover (\cite{damian2012floer}).
From another point of view, a systematic treatment of high rank local systems in the exact case (when no such obstructions arise) was done by Abouzaid in \cite{abouzaid2012nearby}, where he constructs an extended Fukaya category which contains Lagrangians with higher rank local systems.  The powerful results of that paper then follow by an application of (a modified version of) the split-generation criterion, again due to Abouzaid (\cite{abouzaid2010geometric}). 
The goal of the present paper is to combine the above two approaches in their simplest forms: we study exactly how Maslov 2 discs obstruct $d^2 = 0$ in the monotone case and give an application of Abouzaid's criterion to an example in which this obstruction vanishes despite the existence of Maslov 2 disc bubbles. 

Regarding the first point, recall
that in the absence of local systems, the differential on the Floer complex $CF^*(L^0, L^1)$ of a monotone pair (see \ref{monotonicity} below for a quick review of monotonicity) squares to zero, if and only if 
\begin{equation}\label{obstruction numbers}
m_0(L^0) = m_0(L^1),
\end{equation}
where these are the so-called \emph{obstruction numbers} of the Lagrangians, defined as algebraic counts of Maslov 2 discs passing through a generic point. Exactly the same condition carries over when one considers local systems of rank 1, except that in the algebraic count each disc is weighted by the monodromy of the local system around its boundary. Applying the same proof to the case when the Lagrangians are equipped with  local systems $E^j \to L^j$ of higher rank, one arrives at the notion of an \emph{obstruction section} $m_0(L^j, E^j) \colon L^j \to \End(E^j)$ and a generalised form of condition \eqref{obstruction numbers} \-- see equation \eqref{obstruction equation} below. When this condition is satisfied (for example when $m_0(L^j, E^j)=0$ for $j \in \{0, 1\}$), one obtains a well-defined invariant $HF^*((L^0, E^0), (L^1, E^1))$. This is not surprising and is well-known to experts but, to our knowledge, is not explicitly written anywhere, so we devote sections \ref{The Obstruction Section} and \ref{Definition, Obstruction and Invariance} to reviewing these facts. 

In section \ref{The Pearl Complex and the Obstruction Revisited} we examine how the same obstruction appears in the setting of a computationally powerful theory of Biran and Cornea - the pearl complex \cite{biran2007quantum}. This machinery allows one to efficiently calculate Floer cohomology of a Lagrangian with itself and applies equally well when higher rank local systems come into play. 
Section \ref{honest subcomplexes} is devoted to the observation that the possibly obstructed Floer complex $CF^*((L^0, E^0), (L^1, E^1))$ contains a maximal unobstructed subcomplex $\overline{CF}^*((L^0, E^0), (L^1, E^1))$ which we call the \emph{central Floer complex}. In the case when $L^0=L^1=L$ and $E^0=E^1=E$ there is a further unobstructed subcomplex: the \emph{monodromy Floer complex} $CF^*_{mon}(E)$. We observe that when $E_{reg}$ is the local system arising from the universal cover of $L$, the differential of $CF^*_{mon}(E_{reg})$ can be seen as a deformation of the differential on the complex $C^*(L; E_{conj})$ of singular cochains with coefficients in the local system $E_{conj}$, where $E_{conj}$ arises from the conjugation action of $\pi_1(L)$ on the group algebra $\FF_2[\pi_1(L)]$. 

Next, in section \ref{The Monotone Fukaya Category} we explain how to enlarge the monotone Fukaya category over $\FF_2$ so as to include Lagrangians equipped with local systems of rank higher than 1. This mimics \cite{abouzaid2012nearby}.  

In section \ref{application to the Chiang Lagrangian} we illustrate all of the above techniques in some explicit computations for the Chiang Lagrangian $\chiang \subseteq \CP^3$. This Lagrangian is defined as the manifold of equilateral triangles inscribed equatorially in the unit sphere $S^2 \subseteq \RR^3$ and it embeds naturally in $\CP^3$ under the identification $\CP^3 \cong Sym^3(\CP^1)$. The fact that this embedding is Lagrangian was first noticed by River Chiang in \cite{chiang2004new}. The Floer theory of $\chiang$ was computed by Evans and Lekili in \cite{evans2014floer} where 
they prove that $HF^*(\chiang, \chiang)$ is non-zero if and only if one works over a field of characteristic 5. Further, by equipping the Clifford torus $T_{Cl}$ and $\chiang$ with suitable $\FF_5-$local systems $\alpha_{\zeta}$, $\beta_{\zeta}$ of rank 1, they also obtain $HF^*((T_{Cl}, \alpha_{\zeta}),(\chiang, \beta_{\zeta})) \neq 0$. As a consequence, $\chiang$ cannot be displaced from itself, or from the Clifford torus by a Hamiltonian isotopy (in fact Evans and Lekili show something much stronger: $(\chiang, \beta_{\zeta})$ generates its summand of the Fukaya category over $\FF_5$ and by varying $\beta_{\zeta}$ one can place it in any of the four summands; see \cite[Section 8]{evans2014floer}). 
One can then turn to studying the relationship between $\chiang$ and another classical Lagrangian in $\CP^3$, namely $\RP^3$. One easily checks that when they are in their standard positions, they intersect cleanly in two disjoint circles and it is then a natural question to ask whether they can be displaced by a Hamiltonian isotopy. A standard obstruction to such displaceability would be the non-vanishing of the Floer cohomology $HF(\chiang, \RP^3; \FF)$ for some field $\FF$. However, the obstruction numbers of the two Lagrangians are $m_0(\RP^3) = 0$ (since $RP^3$ has minimal Maslov number 4) and $m_0(\chiang) = \pm3$ (see \cite{evans2014floer}) and so it follows from \eqref{obstruction numbers} that the Floer cohomology $HF^*(\chiang, \RP^3;\FF)$ is well-defined only when $\text{char}(\FF)= 3$. Now, by the Auroux-Kontsevich-Seidel criterion (see e.g. \cite[Lemma 2.7]{sheridan2013fukaya}) one sees that when $\text{char}(\FF) = 3$ both $HF^*(\RP^3, \RP^3;\FF)$ and $HF^*(\chiang, \chiang;\FF)$ must vanish ($m_0 = 0$ is not an eigenvalue of quantum multiplication by $c_1(\CP^3)$ in characteristic 3). Since $HF^*(\chiang, \RP^3;\FF)$ is a (respectively left and right) module over these rings, it must also vanish. Thus, standard Floer cohomology with field coefficients cannot be used to address the question of non-displaceability.  
 In the present work we apply the idea of Floer theory with coefficients in a local system to this situation. We establish the following result:
\begin{theorem}\label{mytheorem}
There exists an $\FF_2-$local system $W \to \chiang$ of rank 2 such that $m_0(\chiang, W)=0$ and $HF^*((\chiang, W), (\chiang, W)) \neq 0$.
\end{theorem}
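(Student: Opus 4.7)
The plan is to leverage the explicit description of the Chiang Lagrangian as a homogeneous space, the Evans--Lekili classification of its Maslov $2$ discs from \cite{evans2014floer}, and the low-order arithmetic of $GL_2(\FF_2) \cong S_3$ to arrange for the rank $2$ obstruction section to vanish by a Cayley--Hamilton identity, and then to compute the Floer cohomology via the twisted pearl complex of Section~\ref{The Pearl Complex and the Obstruction Revisited}.

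First I would identify $\pi_1(\chiang) \cong \bindih$, the binary dihedral group of order $12$ (the macros $\bindih$, $A$, $B$ in the paper suggest the presentation $\langle A, B \mid A^6 = 1,\ B^2 = A^3,\ BAB^{-1} = A^{-1}\rangle$). By the analysis in \cite{evans2014floer}, there are three $SU(2)$-equivariant families of Maslov $2$ holomorphic discs through each point of $\chiang$, each contributing $\pm 1$ to $m_0(\chiang) = \pm 3$. I expect their boundary classes in $\bindih$ to lie in a single $\bindih$-orbit, reducing over $\FF_2$ to three elements of the form $1, A, A^2$ up to conjugation and multiplication by a central element. Hence the obstruction section at a generic point reduces to a sum of three matrices in the image of the monodromy representation $\rho\colon \bindih \to GL_2(\FF_2)$.

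To construct $\rho$ I would use the surjection $\bindih \twoheadrightarrow \bindih/\langle A^3\rangle \cong S_3$ together with the isomorphism $S_3 \cong GL_2(\FF_2)$, sending $A$ to a $3$-cycle. Its characteristic polynomial is $x^2 + x + 1$, so Cayley--Hamilton gives $I + \rho(A) + \rho(A)^2 = 0$ in $M_2(\FF_2)$: this is precisely the cancellation that kills the expected three-term obstruction sum and yields $m_0(\chiang, W) = 0$. Extending $\rho$ over $B$ by an order $2$ element of $S_3$ satisfying the conjugation relation is a finite check and determines the local system $W$ completely.

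For the non-vanishing $HF^*((\chiang, W), (\chiang, W)) \neq 0$, I would restrict to the monodromy Floer subcomplex $CF^*_{mon}(W)$ of Section~\ref{honest subcomplexes}, whose differential is a quantum deformation of the twisted Morse differential on $C^*(\chiang; W_{conj})$, with $W_{conj}$ the local system coming from the conjugation action of $\bindih$ on $\FF_2[\bindih]$. The classical piece $H^*(\chiang; W_{conj})$ is a direct computation in the mod $2$ representation theory of $\bindih$, and the remaining task is to show that the quantum correction does not kill everything. The main obstacle is the enumeration of rigid pearly trajectories together with their $\rho$-weights; I expect that the same Cayley--Hamilton cancellation which killed $m_0$, combined with an $S^1$-symmetric Morse function on $\chiang$ adapted from \cite{evans2014floer}, forces enough of the quantum corrections to cancel in pairs to leave a non-zero class in cohomology.
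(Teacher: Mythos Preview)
Your construction of $W$ and the vanishing of $m_0(\chiang, W)$ are correct and match the paper: the representation $\rho_D \colon \bindih \twoheadrightarrow \bindih/\langle a^3\rangle \cong S_3 \cong GL_2(\FF_2)$ is exactly the one used, and the three Maslov~$2$ boundary classes through the basepoint are $b,\, a^2 b,\, a^4 b$ (not $1, a, a^2$ as you guess, but close), giving $m_0(W)(m') = (\Id + A^2 + A^4)B$; since $\rho_D(a)^3 = \Id$ this becomes $(\Id + A + A^2)B = 0$ by Cayley--Hamilton exactly as you predicted.

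The non-vanishing argument, however, has a genuine gap. First, passing to the monodromy subcomplex buys nothing: $\rho_D$ is surjective onto $GL_2(\FF_2)$, which spans $\End_{\FF_2}(\FF_2^2)$, so $CF^*_{mon}(W) = CF^*(W,W)$ (the paper notes this explicitly). Second, the identification $\End_{mon}(E) \cong E_{conj}$ in \eqref{endmonEreg is Econj} is stated only for the regular local system $E_{reg}$, not for an arbitrary $W$, so computing $H^*(\chiang; W_{conj})$ is not the relevant classical piece here. Most importantly, there is no algebraic or symmetry shortcut available: the $\partial_1$-component from index-$2$ to index-$1$ critical points, which Evans--Lekili could discard (it vanishes for trivial coefficients by an a posteriori algebraic argument), is \emph{genuinely non-zero} for $W = W^D$, and the paper has to locate twelve additional pearly trajectories (Proposition~\ref{euclidean proof}) and compute their parallel transports one by one (Table~\ref{table}). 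The result $HF^0(W,W) \cong HF^1(W,W) \cong (\FF_2)^2$ is then read off from the ranks of two explicit $16 \times 16$ matrices over $\FF_2$; no Cayley--Hamilton-type cancellation or $S^1$-symmetry survives to replace this computation, and your expectation that ``enough of the quantum corrections cancel in pairs'' is not an argument.
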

The main corollary which we derive from this is:
\begin{corollary}\label{chiangRp3nonDisp}
The Chiang Lagrangian $\chiang$ cannot be displaced from $\RP^3$ by a Hamiltonian isotopy.
\end{corollary}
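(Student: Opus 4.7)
My plan is to proceed by contradiction, using Abouzaid's split-generation criterion in the context of the higher-rank local-system Fukaya category developed in Section \ref{The Monotone Fukaya Category}. Suppose that some Hamiltonian diffeomorphism $\phi$ of $\CP^3$ satisfies $\phi(\chiang) \cap \RP^3 = \emptyset$. Since $\RP^3 \subseteq \CP^3$ has minimal Maslov number $4$, the trivial rank-$2$ local system $\underline{\FF_2^2}$ on $\RP^3$ has vanishing obstruction section. Combined with $m_0(\chiang, W) = 0$ from Theorem \ref{mytheorem}, this makes $HF^*((\chiang, W), (\RP^3, \underline{\FF_2^2}))$ well-defined and Hamiltonian-invariant; the displaceability assumption then forces this cohomology to vanish. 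The rest of the argument is devoted to contradicting this vanishing.

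I would view $(\chiang, W)$ and $(\RP^3, \underline{\FF_2^2})$ as objects of the enlarged monotone Fukaya category $\F$ of $\CP^3$ over $\FF_2$. Observe that $QH^*(\CP^3; \FF_2) \cong \FF_2[h]/(h+1)^4$ is a local ring, and $c_1(\CP^3) = 4h = 0$ in characteristic $2$; so the Auroux-Kontsevich-Seidel decomposition of $\F$ has a single summand (indexed by the unique eigenvalue $0$ of quantum multiplication by $c_1$), in which both Lagrangians automatically live. The aim is to apply Abouzaid's split-generation criterion to conclude that $(\chiang, W)$ split-generates this summand. Once that is in hand, the usual Yoneda-type argument shows that any object $X$ with $HF^*((\chiang, W), X) = 0$ must be zero in $\F$. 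Applied to $X = (\RP^3, \underline{\FF_2^2})$, this gives $(\RP^3, \underline{\FF_2^2}) \cong 0$ in $\F$, contradicting the classical computation $HF^*(\RP^3, \RP^3; \FF_2) \cong H^*(\RP^3; \FF_2) \neq 0$.

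The main obstacle will be the verification of the hypothesis of Abouzaid's criterion: one has to show that the open-closed map $\OC \colon HH_*(\F_{(\chiang, W)}) \to QH^*(\CP^3; \FF_2)$ hits the unit of $QH^*(\CP^3; \FF_2)$. Theorem \ref{mytheorem} together with unitality of the closed-open map $\CO^0 \colon QH^*(\CP^3; \FF_2) \to HF^*((\chiang, W),(\chiang, W))$ ensures that $\CO^0$ is at least non-zero on $1$, which is suggestive but not yet sufficient on its own. Establishing the $\OC$ statement is expected to require a concrete pearl-type computation tracking pseudoholomorphic disc moduli on $\chiang$ weighted by the monodromies of $W$, building on the explicit disc data for $\chiang$ obtained by Evans and Lekili in \cite{evans2014floer} and the pearl-complex formalism for higher-rank local systems reviewed in Section \ref{The Pearl Complex and the Obstruction Revisited}. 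Once this hypothesis is secured, the remaining pieces fall into place in a fairly standard way.
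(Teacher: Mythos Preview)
Your proposal has a genuine gap at exactly the point you identify as the ``main obstacle'': you never verify that the open-closed map for $(\chiang, W)$ hits the unit of $QH^*(\CP^3;\FF_2)$ (equivalently, that $\CO^*$ is injective). You acknowledge that this would require a further pearl-type computation, but you do not carry it out, and nothing in the paper provides it. Without this step, the split-generation criterion does not apply to $(\chiang, W)$ and the argument collapses. The observation that $\CO^0(1)\neq 0$ in $HF^*((\chiang,W),(\chiang,W))$ is, as you note, suggestive but not sufficient: injectivity of $\CO^*$ on the full $QH^*$ (or surjectivity of $\OC_*$) is what is needed, and this is a substantially stronger statement that is not known for $(\chiang, W)$.

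The paper sidesteps this difficulty entirely by reversing the roles of the two Lagrangians. Rather than proving that $(\chiang,W)$ split-generates, it invokes Tonkonog's result \cite[Corollary~1.2]{tonkonog2015closed} that $\CO^* \colon QH^*(\CP^3)\to HH^*(CF^*(\RP^3,\RP^3))$ is injective, so that $\RP^3$ (with the trivial rank-one local system) split-generates $\F(\CP^3)_0$. Since Theorem~\ref{mytheorem} shows $(\chiang,W)$ is essential in that same summand, Lemma~\ref{split-generation implies non-vanishing of HF} gives $HF^*(\RP^3,(\chiang,W))\neq 0$, and non-displaceability follows. Thus the only new ingredient needed beyond Theorem~\ref{mytheorem} is a citation; no additional disc computations are required. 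Your strategy, by contrast, would demand a genuinely new and likely delicate calculation.
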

Theorem \ref{mytheorem} is proved in section \ref{Computation of Floer Cohomology with Local Coefficients} by an explicit calculation using the pearl complex. Our starting point are the results from \cite{evans2014floer}. For the numerical calculations we use Wolfram Mathematica (the code can be found on the author's home page). To establish Corollary \ref{chiangRp3nonDisp} one enlarges the monotone Fukaya category of $\CP^3$ in order to include Lagrangians with finite rank local systems and in particular $(\chiang, W)$. 
In this enlarged category, $(\chiang, W)$ and $\RP^3$ (with no local system) lie in the same summand. 
A result by Tonkonog  (\cite[Corollary 1.2]{tonkonog2015closed}) states that $\RP^3$ split-generates this summand and thus we must have  
$HF^*((\chiang, W), \RP^3) \neq 0$, since by Theorem \ref{mytheorem} we know that $(\chiang, W)$ is an essential object in this category. 

Finally, in section \ref{some additional calculations} we compute the cohomology of the central and monodromy Floer complexes of $\chiang$ with coefficients in some other $\FF_2$ local systems. In particular we find that in many cases one can obtain a non-zero Floer invariant which does not agree with the corresponding Morse invariant. Specifically we make the following
\begin{observation} \label{narrow-wide breaks}
If $W_{reg}$ is the rank 12 local system coming from the universal cover $S^3 \to \chiang$ and $W_{conj}$ is the local system induced by the conjugation action of $\pi_1(\chiang)$ on $\FF_2[\pi_1(\chiang)]$ one has 
$$0 \neq HF^*_{mon}(W_{reg}) \neq H^*(\chiang; W_{conj}).$$
That is, the ``quantum corrections'' to the differential on $C^*(\chiang; W_{conj})$ are non-trivial but they do not kill the cohomology.
\end{observation}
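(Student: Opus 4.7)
The plan is to establish both inequalities by explicit pearl complex computations, using the Morse data and holomorphic disc counts for $\chiang$ from \cite{evans2014floer} together with the subcomplex machinery developed in section \ref{honest subcomplexes}. By the discussion there, the differential on $CF^*_{mon}(W_{reg})$ decomposes as $d_{mon} = d_{Morse}^{W_{conj}} + d_{quantum}$, where the first summand is the usual Morse cochain differential twisted by the local system $W_{conj}$ and the second encodes contributions from pearly trajectories with at least one disc bubble. The computation therefore naturally divides into a ``classical'' topological half and a ``quantum'' enumerative half.

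First, I would compute $H^*(\chiang; W_{conj})$ purely topologically. Since $\chiang \cong S^3/\pi_1(\chiang)$ and $W_{conj}$ arises from the conjugation action of $\pi_1(\chiang)$ on $\FF_2[\pi_1(\chiang)]$, the group ring decomposes under conjugation as a direct sum indexed by the conjugacy classes of $\pi_1(\chiang)$; one then computes the cohomology of each summand separately using the standard machinery for twisted coefficients on a spherical space form, and sums the graded dimensions.

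Second, I would compute $HF^*_{mon}(W_{reg})$ via the pearl complex. Starting from the Evans--Lekili Morse function on $\chiang$ and the full enumeration of pseudo-holomorphic discs with boundary on it (Maslov 2 and higher), I would label each pearly trajectory by the element of $\pi_1(\chiang)$ represented by the concatenation of its flow-line segments and disc boundaries, relative to fixed basepointing paths from each critical point. This produces explicit matrices with entries in $\FF_2[\pi_1(\chiang)]$ acting on the fibre; restricting to the monodromy subcomplex and feeding the result into the same Mathematica routine already built for Theorem \ref{mytheorem} yields both the rank and the graded structure of the cohomology. Comparing with the output of the first step then gives the observation.

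The main obstacle is the combinatorial bookkeeping of the $\pi_1$-labels on pearly configurations: the decomposition $d_{mon} = d_{Morse}^{W_{conj}} + d_{quantum}$ is only meaningful once consistent basepointing conventions are fixed, and the same conventions must be used on both sides of the comparison with $H^*(\chiang; W_{conj})$. A secondary check is that the quantum correction $d_{quantum}$ is actually non-trivial in cohomology rather than only at the chain level; this will follow once the final graded dimensions are seen to disagree.
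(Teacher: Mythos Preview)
Your plan is correct and follows essentially the same route as the paper: both reduce the observation to explicit pearl-complex computations using the Evans--Lekili Morse data and disc counts, then compare the ranks of the Morse and Floer cohomologies of the monodromy subcomplex.

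The one organisational difference worth noting is how the two computations are packaged. You propose to compute $H^*(\chiang; W_{conj})$ by decomposing $\FF_2[\pi_1(\chiang)]$ under conjugation into the spans of conjugacy classes and treating each summand separately. The paper instead uses the Krull--Schmidt decomposition of the regular representation over $\FF_2$, namely $V_{reg}\cong V_4\oplus U_4\oplus U_4$, together with the observation from section~\ref{honest subcomplexes} that the monodromy complex is insensitive to multiplicities ($C_{f,mon}(E^{\oplus m}\oplus W^{\oplus n})\cong C_{f,mon}(E\oplus W)$). This immediately reduces both the Morse and Floer computations for $W_{reg}$ to those for the smaller local system $V_4\oplus U_4$, and the answers are then read off from the general tables~\ref{hm-mon for kv}), \ref{hm-mon for mv}), \ref{hf-mon for mv}) already computed for the proof of Theorem~\ref{mytheorem}. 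Your conjugacy-class decomposition would reach the same numbers (there are six conjugacy classes, matching $\dim H^i(\chiang;W_{conj})=6$), but the paper's route avoids having to analyse the twisted cohomology of each class-summand individually and reuses machinery already in place.
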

\noindent In other words, when one uses local coefficients, the ``narrow\---wide'' dichotomy, introduced by Biran-Cornea in \cite{biran2009rigidity}, does not hold.

\vspace{5mm}
To deduce the above results we only need to work over the field $\FF_2$ and do not require any grading on Lagrangians. It is reasonable to assume that the results below should hold in the more general setting when one considers graded spin Lagrangians in order to define graded invariants over fields of arbitrary characteristic (of course, then a substantial amount of signs would need to be introduced). For simplicity, we restrict all our discussions to the setting modulo 2. 

\subsection*{Acknowledgements}
First and foremost I would like to thank my supervisor Jonny Evans for suggesting the local systems approach to the problem of displacing $\chiang$ form $\RP^3$
and for his many patient explanations and general encouragement. My understanding of holomorphic curves has greatly benefited also from the excellent teaching of Chris Wendl. I thank also Yank{\i} Lekili and my fellow students at the LSGNT for numerous useful conversations. Special thanks to Agustin Moreno, Tobias Sodoge, Emily Maw and Antonio Cauchi for their help and for being such a faithful audience.

Wolfram Mathematica was used to produce the plots in Figures \ref{actual plot b11aller} and \ref{actual plot b11retour} and for numerical calculations. Most other plots were produced using the tikz-3dplot package of Jeff Hein. 

This work was supported by the Engineering and Physical Sciences Research Council [EP/L015234/1] 
as the author is part of the EPSRC Centre for Doctoral Training in Geometry and Number Theory (The London School of Geometry and Number Theory), University College London.

\section{Floer Cohomology and Local Systems}\label{floer cohomology and local systems}
\subsection{Terminology and Notation}
\subsubsection{Monotonicity}\label{monotonicity}

Let $(M, \omega)$ be a symplectic manifold and $L \subseteq M$ \--- a Lagrangian submanifold. 
Recall that monotonicity concerns the following standard maps:
\begin{itemize}
\item the symplectic area homomorphism:
$$I_{\omega} \colon \pi_2(M) \to \RR; \quad I_{\omega}(A) \coloneqq \int_{A}\omega \; ,$$
\item the Chern homomorphism:
$$I_{c_1} \colon \pi_2(M) \to \ZZ; \quad I_{c_1}(A) \coloneqq \int_{A}c_1(TM) \;  ,$$
\item the Maslov homomorphism: $I_{\mu, L} \colon \pi_2(M, L)  \to \ZZ$, defined by evaluation of the Maslov class $\mu \in H^2(M,L;\ZZ)$.\label{maslov homomorphism}
\end{itemize}
We call the symplectic manifold $M$ \emph{monotone}, if there exists a constant $\lambda > 0$ such that for every $A \in \pi_2(M)$ one has $I_{\omega}(A) = 2\lambda I_{c_1}(A).$
We then call a Lagrangian submanifold $L \subseteq M$ \emph{monotone} if one has
$I_{\omega, L}(A) = \lambda I_{\mu, L}(A)$ for every $A\in \pi_2(M, L)$, where $I_{\omega, L} \colon \pi_2(M, L) \to \RR$ is the map induced by $I_{\omega}$. 
\begin{remark}\label{finite pi1 => monotone}
One usually calls a Lagrangian monotone whenever $I_{\omega}$ and $I_{\mu, L}$ are positively proportional. A standard result by Viterbo (\cite{viterbo1987intersection}) asserts that the pullback map
$j^* \colon H^2(M, L;\ZZ) \to H^2(M;\ZZ)$ satisfies $j^*\mu = 2c_1(TM)$ and this implies, firstly, that such Lagrangians can exist only in monotone symplectic manifolds and, secondly (at least when $I_{\omega}$ doesn't vanish identically, i.e. $M$ is not symplectically aspherical), that the constants of proportionality are related by a factor of 2. This shows that with our definition above we don't lose any generality in the non-aspherical case. Note further that if $M$ is monotone and $L\subseteq M$ is any Lagrangian with $H^1(L;\ZZ) = 0$ (e.g. if $H_1(L;\ZZ)$ is finite) then, since the map $j^*$ is injective in this case, the Lagrangian $L$ must also be monotone.
\end{remark}

The concept of monotonicity also extends to pairs of Lagrangian submanifolds (cf. \cite{pozniak1999floer} 3.3.2). Given two Lagrangians $L^0$, $L^1$ in $M$, the area and Maslov homomorphisms can also be evaluated on (homotopy classes of) continuous maps $u \colon S^1 \times [0, 1] \to M$ with $u(S^1 \times \{0\}) \subseteq L^0$ and $u(S^1 \times \{1\}) \subseteq L^1$. We denote these extensions by $I_{\omega, L^0, L^1}$ and $I_{\mu, L^0, L^1}$ respectively. Then we call $(L^0, L^1)$ a \emph{monotone pair} of Lagrangians if each of them is monotone and further $I_{\omega, L^0, L^1} = \lambda I_{\mu, L^0, L^1}$. It is not hard to see that if $(L^0, L^1)$ is a monotone pair, then so is $(\psi(L^0), L^1)$ for any Hamiltonian diffeomorphism $\psi$. Another useful fact is that if $L^0$ and $L^1$ are monotone Lagrangians and for at least one $j \in \{0, 1\}$ the image of $\pi_1(L^j)$ in $\pi_1(M)$ under the map induced by inclusion is trivial, then $(L^0, L^1)$ is a monotone pair (see \cite{pozniak1999floer} Remark 3.3.2 or \cite{oh1993floer}, Proposition 2.7). Finally, if $L$ is monotone, then the pair $(L, L)$ is always monotone (\cite{oh1993floer}, Proposition 2.10).

\subsubsection{Local Systems}\label{local systems}
In this paper, we will be concerned with compact, connected, monotone Lagrangians equipped with local systems of vector spaces over the field of two elements $\FF_2$. For us such a local system would just be a functor $E \colon \Pi_1L \to \mathbf{Vect}_{\FF_2}$, where $\Pi_1L$ is the fundamental groupoid of the Lagrangian $L$. More concretely, it is an assignment of an $\FF_2-$vector space $E_x$ for each point $x \in L$ and an isomorphism $P_{\gamma} \colon E_{s(\gamma)} \to E_{t(\gamma)}$ for each homotopy class $\gamma$ of paths in $L$ with source $s(\gamma)$ and target $t(\gamma)$, in a manner which is compatible with concatenation of paths. As is customary, we call these isomorphisms \emph{parallel transport maps}.

We will sometimes write $E \to L$ to denote such a local system, the notation being a shorthand for the map
\begin{eqnarray*}
\coprod_{x} E_x &\to & L \\
v \in E_x & \mapsto & x.
\end{eqnarray*}
Similarly, by a section $\sigma \colon L \to E$ we will mean a section of this map. We will call such a section \emph{parallel} if for every path $\gamma$ on $L$ one has $P_\gamma(\sigma(s(\gamma))) = \sigma(t(\gamma))$. 

Note that, since $L$ is path-connected, the inclusion $\operatorname{B}\pi_1(L, x)^{Opp} \hookrightarrow \Pi_1L$ (where $\operatorname{B}\pi_1(L, x)$ denotes the category with one object and morphisms in bijection with $\pi_1(L, x)$ whose composition follows the group law in $\pi_1(L, x)$) induces an equivalence of categories and so we get an equivalence
\begin{equation}\label{equivalence of categories for local systems}
\operatorname{Fun}(\Pi_1L, \mathbf{Vect}_{\FF_2}) \simeq \operatorname{Fun}(\operatorname{B}\pi_1(L, x)^{Opp}, \mathbf{Vect}_{\FF_2}).
\end{equation}
 Note that our conventions are such that concatenation of paths will be written from left to right, while compositions of maps, as usual, from right to left. Since this can cause headaches in explicit computations, let us spell-out concretely how the above equivalence plays out in practice. Given two points $x, y \in L$ we write $\Pi_1L(x,y) \coloneqq \hom_{\Pi_1L}(x,y)$ for the set of homotopy classes of paths connecting $x$ to $y$. To go from left to right in \eqref{equivalence of categories for local systems}, one can associate to each local system $E \to L$, a right representation of the fundamental group $\pi_1(L,x)$, by considering the action of $\Pi_1L(x,x) \cong \pi_1(L, x)^{Opp}$ on the fibre $E_x$. To go the other way, since $L$ admits a universal cover $p \colon \widetilde{L} \to L$, we can use the following construction. Suppose we have a linear right action of $\pi_1(L, x)$ on some $\FF_2-$vector space $V$. Then we consider the quotient $V \times_{\pi_1(L, x)} \widetilde{L} = (V \times \widetilde{L})\slash\pi_1(L, x)$, where the right action on $\widetilde{L}$ is by inverse deck transformations (to identify the group of deck transformations with $\pi_1(L, x)$ we have chosen a fixed basepoint $\tilde{x} \in p^{-1}(x)$). This quotient comes equipped with a projection to $L = \widetilde{L}\slash\pi_1(L, x)$ and for any $y \in L$ we define $E_y$ to be the fibre of this map over $y$. The parallel transport map along any path $\gamma \in \Pi_1L(y, z)$ is then defined to be the one induced by 
\begin{eqnarray*}
V \times p^{-1}(y) & \to &  V \times p^{-1}(z)\\
(v,\tilde{y}) & \mapsto & (v,t(\widetilde{\gamma}_{\tilde{y}})),
\end{eqnarray*}
where $\widetilde{\gamma}_{\tilde{y}}$ is any lift of the path $\gamma$ to $\widetilde{L}$, satisfying $s(\widetilde{\gamma}_{\tilde{y}}) = \tilde{y}$. This map is equivariant with respect to the right $\pi_1(L, x)-$action and so descends to a well-defined map $P_{\gamma} \colon E_y \to E_z$ on the quotients.
 
 \textbf{Wording and notation:} Note that above we viewed the path $\gamma$ slightly ambiguously as both a morphism in the fundamental groupoid and (when we mentioned lifting) an actual continuous map $\gamma \colon [a,b] \to L$. This ambiguous use of the word \emph{path} persists throughout this work since eliminating it would cause notational clutter. Hopefully the meaning is clear from the context. Given two paths $\gamma$ and $\delta$ in $L$ with $t(\gamma) = s(\delta)$, their concatenation will be written as $\gamma \cdot \delta \in \Pi_1L(s(\gamma), t(\delta))$. Since sometimes we will use two different local systems $E^j \to L$, $j \in \{0, 1\}$ on the same space, we shall write $P_{j, \gamma} \colon E^j_{s(\gamma)} \to E^j_{t(\gamma)}$ to distinguish between the parallel transport maps.

\subsection{The Obstruction Section}\label{The Obstruction Section}
Given a monotone pair of Lagrangians $(L^0, L^1)$ equipped with local systems $E^0 \to L^0$ and $E^1 \to L^1$, we define in section \ref{Definition, Obstruction and Invariance} below a Floer-theoretic invariant 
$HF^*((L^0,E^0),(L^1,E^1))$. However, the existence of this invariant is often obstructed by Maslov 2 disc bubbles and this is captured by the \emph{obstruction sections} $m_0(E^j) \colon L^j \to \End(E^j)$. This claim is made precise in Theorem \ref{main theorem of floer theory with local coefficients} below. In order to describe the obstruction section, we concentrate on a single monotone Lagrangian $L \subseteq M$, equipped with a local system $E \to L$. 
 We begin by making our setup precise and establishing some notation.
 
  Let $(M, \omega)$ be a monotone symplectic manifold and let $L \subseteq M$ be a monotone Lagrangian submanifold with minimal Maslov number $N_L \ge 2$. Let $E \to L$ be a local system of $\FF_2$-vector spaces. 
  Let $\J(M, \omega)$ denote the 
  space of $\omega$-compatible almost complex structures on $M$, that is, the space of sections $J$ of $\End(TM)$ which satisfy $J^2 = -\Id$ and such that $g_J(\cdot, \cdot) \coloneqq \omega(\cdot, J \cdot )$ is a Riemannian metric on $M$. We denote by $D$ the closed unit disc in $\CC$. Given $J \in \J(M, \omega)$, we will be concerned with $J$-holomorphic discs with boundary on $L$, i.e. smooth maps $u \colon (D, \del D) \to (M, L)$, which satisfy the Cauchy-Riemann equation
 \begin{equation}
 d u + J(u)\circ d u \circ i =0. \label{Cauchy-Riemann for discs}
 \end{equation}
Such a disc is called \textit{simple} if there exists an open and dense subset $S \subseteq D^2$ such that for all $z \in S$ one has $u^{-1}(u(z)) = \{z\}$ and $du(z) \neq 0$.
Now let us introduce the following pieces of notation.
\begin{itemize}
\item For any class $C \in \pi_2(M, L)$ and any $k \in \ZZ$ we set
\begin{eqnarray*}
\PM^C(L;J) & \coloneqq & \left\lbrace u \in C^{\infty}((D, \del D),(M, L)) \st 
  d u + J(u)\circ d u \circ i =0, \; [u] = C \right\rbrace,\\
\PM(k, L;J) & \coloneqq & \bigcup_{\substack{C \in \pi_2(M, L)\\ \mu(C)=k}}\widetilde{\M}^C(L;J),\\
\M^C(L;J) & \coloneqq & \PM^C(L;J) / G, \\
\M(k, L;J) &\coloneqq & \PM(k, L;J) / G,
\end{eqnarray*}
where $G = \operatorname{PSL(2, \RR)}$ is the reparametrisation group of the disc acting by precomposition. We will write $q_G \colon \PM^C(L;J) \to \M^C(L;J)$ for the quotient map.
\item We further set
\begin{eqnarray*}
\M^C_{0, 1}(L;J) & \coloneqq & \PM^C(L;J) \times_G S^1,\label{discs in fixed class relative one lagrangian}\\
\M_{0, 1}(k, L; J) &\coloneqq & \PM(k,L;J) \times_G S^1,\label{discs with fixed maslov}
\end{eqnarray*}
where we view $S^1 = \del D$ and an element $\phi \in  G$ acts by $\phi \cdot (u, z) = (u \circ \phi^{-1}, \phi(z))$. We shall denote the corresponding quotient map again by $q_G$.
\item The above moduli spaces come with natural evaluation maps, 
 $$\widetilde{\ev} \colon \PM^C(L;J) \times S^1 \to L, \quad \widetilde{\ev}(u, z)  \coloneqq u(z),$$
 which clearly descend to maps $\ev \colon \M^C_{0, 1}(L;J) \to L$.
 \item For any point $p \in L$ we then write $\M^C_{0, 1}(p, L;J)$ and $\M_{0, 1}(p,k, L;J)$ for the set $\ev^{-1}(\{p\})$, where the evaluation map is restricted to the appropriate domain and we set
 \begin{eqnarray*}
 \PM^C_{0, 1}(p, L;J) &\coloneqq & q_G^{-1}(\M^C_{0, 1}(p, L;J)) \subseteq \PM^C(L;J) \times S^1 \\
 \PM_{0, 1}(p,k,L;J) &\coloneqq & q_G^{-1}(\M_{0, 1}(p,k, L;J)) \subseteq \PM(k,L;J) \times S^1.
 \end{eqnarray*}
 \item We shall decorate any of the above sets with a superscript $*$ to denote the subset, consisting of simple discs. For example $\PM^{C, *}(L;J)  \coloneqq  \left\lbrace u \in \PM^{C}(L;J) \st 
   u\text{ is simple}\right\rbrace$ and $\M^{C, *}_{0, 1}(L;J)  \coloneqq  \PM^{C, *}(L;J) \times_G S^1$.
 \end{itemize}
 All these spaces are equipped with the $C^{\infty}$--topology. When there is no danger of confusion we shall sometimes simply write $u \in \M_{0, 1}(p, k, L;J)$ for the equivalence class $[u, z] = q_G(u, z)$ and $\del u \in \pi_1(L, p)$ for the homotopy class of the loop $s \mapsto u\left(z e^{2\pi i s}\right)$. 
 
By standard transversality arguments (see \cite{mcduff2012j}, Chapter 3) it follows that there exists a Baire subset $\Jreg(L) \subseteq \J(M, \omega)$ such that for all $J \in \Jreg(L)$ and any class $C \in \pi_2(M,L)$, the space $\PM^{C, *}(L;J)$ has the structure of a smooth manifold of dimension $n+\mu(C)$ and the evaluation map $\widetilde{\ev}\colon \PM^{C, *}(L;J) \times S^1 \to L$ is smooth. Since the reparametrisation action on $\PM^{C, *}(L;J)$ is free and proper, one has that $\M^C(L;J)$ is a smooth manifold of dimension $n + \mu(C) - 3$ and the quotient map $q_G$ is everywhere a submersion (in particular the map $\ev\colon \M^{C,*}_{0, 1}(L;J) \to L$ is also smooth).
Further transversality arguments (i.e. the Lagrangian boundry analogue of \cite[Proposition 3.4.2]{mcduff2012j}) show that for any smooth map of manifolds $F \colon X \to L$, there exists a Baire subset $\Jreg(L \vert F) \subseteq \Jreg(L)$ such that for every $J \in \Jreg(L \vert F)$ the maps $F \colon X \to L$ and $\ev \colon \M^*_{0, 1}(k, L;J) \to L$ are everywhere transverse. When $X$ is a submanifold of $L$ and $F$ is the inclusion map we shall write simply $\Jreg(L\vert X)$. Results by Kwon-Oh and Lazzarini (\cite{kwon2000structure, lazzarini2000existence})
yield that when $L$ is monotone one has $\M(N_L, L;J)=\M^*(N_L, L;J)$ and a further application of Gromov compactness ensures that the manifold $\M(N_L, L;J)$ is actually compact. In particular if $N_L \ge 2$ then $\M_{0, 1}(2, L;J)$ is a compact manifold (possibly empty) of dimension $\dim(\PM(2, L;J) \times S^1) - \dim(G) = n +2 + 1-3 =n$. Therefore for any $p \in L$ and $J^p \in \Jreg(L\vert p)$ the manifold $\M_{0, 1}(p, 2, L;J^p)$ consists of a finite number of points. We are now ready to define the obstruction section.
\begin{definition}\label{definition of m0}
Let $E$ be an $\FF_2$-local system on a monotone Lagrangian submanifold $L \subseteq (M, \omega)$ with $N_L \ge 2$. The \emph{obstruction section} for $E$ is a section of the local system $\End(E)$, defined as follows. For every point $p \in L$ we choose an almost complex structure $J^p \in \Jreg(L \vert p)$ and set 
$$m_0(p, E; J^p) \coloneqq \sum_{\substack{u \in \M_{0, 1}(p,2, L;J^p)}} P_{\del u}  \in \End(E_p).$$
The obstruction section is then
\begin{eqnarray*}
m_0(E) \colon L & \to & \End(E) \\
p & \mapsto & m_0(p, E; J^p).
\end{eqnarray*}
\end{definition}

\begin{remark}
Note that when $E$ is trivial and of rank one $m_0(p, E; J^p)$ is the (local) $\FF_2-$degree of the map $\ev \colon \M_{0, 1}(2, L;J^p) \to L$.
\end{remark}
As stated, the obstruction section appears to depend on the choices of almost complex structures $J^p$. This is not the case, as the following proposition shows.

\begin{proposition} \label{invariance of m0}
The following invariance properties hold:
\begin{enumerate}[i)]
\item For any $p \in L$ and $J, J' \in \J_{reg}(L\vert p)$ one has $m_0(p, E; J) = m_0(p, E; J')$;\label{m0(p) is independent of J}
\item $m_0(E)$ is a parallel section of $\End(E)$,\label{m0 is a parallel section}
\item \label{m0 is invariant under symplectos} if $\psi \colon M \to M$ is any symplectomorphism, then for every point $p \in L$, one has 
$$m_0(E)(p) = m_0(\psi_*E)(\psi(p)).$$ 
\end{enumerate}
\end{proposition}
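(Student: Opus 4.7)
My approach is the standard $1$-parameter family cobordism argument applied to moduli spaces of Maslov 2 discs, combined with careful tracking of the homotopy class of the boundary loop. In all three parts compactness of the relevant $1$-manifolds follows from Gromov compactness plus the following dimension count: a breaking of a Maslov 2 $J$-holomorphic disc in a generic parametric family would yield a stable bubble tree with at least two non-constant components whose total Maslov index is $2$, which contradicts $N_L \ge 2$ together with monotonicity (a sphere bubble would contribute Chern number $\ge 1$ and force a disc component of Maslov index $\le 0$); multiple covers are excluded by the Kwon-Oh/Lazzarini result already invoked.

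For (i), I connect $J$ and $J'$ by a generic path $(J_t)$ in $\J(M,\omega)$, so that the parametric moduli space $\{(t, [u, z]) : \ev(u, z) = p,\; u \in \M(2, L; J_t)\}$ is a compact $1$-manifold with boundary $\M_{0,1}(p, 2, L; J) \sqcup \M_{0,1}(p, 2, L; J')$. Along each connected component the homotopy class of $\del u$ in $\pi_1(L, p)$ stays constant, hence $P_{\del u}$ agrees at the two endpoints; the boundary terms pair up and their contributions sum to zero mod $2$. For (ii), by (i) I may fix a single generic $J$ which is regular for both $p$ and $q$, since $\Jreg(L|p) \cap \Jreg(L|q)$ is Baire. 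Choose an embedded smooth path $\gamma \colon [0, 1] \to L$ from $p$ to $q$, in the prescribed homotopy class, transverse to $\ev \colon \M_{0,1}(2, L; J) \to L$, and form the fibered cobordism $N \coloneqq \{(t, [u, z]) : \ev(u, z) = \gamma(t)\}$, a compact $1$-manifold with $\partial N = \M_{0,1}(p, 2, L; J) \sqcup \M_{0,1}(q, 2, L; J)$. Each interval component $C$ of $N$ yields a cylinder $H(\tau, s) = u_\tau(z_\tau e^{2\pi i s})$ giving a rel-endpoint homotopy between $\del u_0 \cdot \beta$ and $\beta \cdot \del u_1$, where $\beta(\tau) \coloneqq \gamma(t(\tau))$. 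Since $\gamma$ is embedded, $\beta$ is homotopic rel endpoints either to $\gamma$ (when the endpoints of $C$ lie on opposite sides of $\partial N$) or to a constant loop (when they lie on the same side), so $P_\beta$ equals $P_\gamma$ or $\Id$ respectively. Summing, components of the first type contribute the relation $P_\gamma \circ P_{\del u_0} = P_{\del u_1} \circ P_\gamma$; components of the second type contribute $P_{\del u_0} + P_{\del u_1} = 2 P_{\del u_0} = 0$ mod $2$. Altogether this gives $P_\gamma \circ m_0(p, E) = m_0(q, E) \circ P_\gamma$, which is exactly the statement that $m_0(E)$ is a parallel section of $\End(E)$.

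Part (iii) follows from naturality: the pushforward $u \mapsto \psi \circ u$ is a bijection between $J$-holomorphic Maslov 2 discs through $p$ in $L$ and $\psi_* J$-holomorphic discs through $\psi(p)$ in $\psi(L)$, intertwining the homotopy class of the boundary loop with its image under $\psi_*$; combined with the very definition of $\psi_* E$ this identifies the sums defining $m_0(E)(p)$ and $m_0(\psi_* E)(\psi(p))$, while (i) ensures independence of the auxiliary almost complex structure on each side. The main obstacle is the case analysis of interval components in (ii): choosing an embedded representative of $\gamma$ is precisely what forces the "same-side" components to contribute trivially mod $2$, and is the only non-routine step of the argument.
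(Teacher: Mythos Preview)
Your argument is correct and matches the paper's cobordism approach in all three parts. One small point on part (ii): the embeddedness hypothesis on $\gamma$, which you flag as the crux, is in fact unnecessary --- since $\beta(\tau)=\gamma(t(\tau))$ factors through a map $t\colon[0,1]\to[0,1]$, a straight-line homotopy in the $t$-variable already yields $\beta\simeq\gamma^{\pm1}$ or $\beta\simeq\mathrm{const}$ rel endpoints for an arbitrary smooth $\gamma$. The paper packages this observation as a standalone lemma (Lemma~\ref{homotopy}) and achieves transversality by perturbing $J$ along a fixed $\gamma$ rather than perturbing $\gamma$ for fixed $J$, but otherwise the two proofs coincide.
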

In the remaining part of this section we prove this proposition. We will make use of the following lemma.
\begin{lemma}\label{homotopy}
Let $\C = C^0((D, \del D), (M, L))$ equipped with the compact-open topology. Let $\gamma \colon [0, 1] \to L$ be a continuous path and let $\C_{\gamma} \coloneqq \{(t, u, z) \in [0, 1] \times \C \times S^1 \st \gamma(t) = u(z)\}$. Further let $\nu \colon [0, 1] \to \C_{\gamma}$ be a continuous path and write $\nu(s)=(t(s), u_s, z_s)$. Then the loops
$\delta_{\nu(0)} \colon [0, 1]  \to   L $, 
$\delta_{\nu(0)}(s) \coloneqq u_0(z_0 e^{2\pi i s})$
and 
$\delta_{\nu(1)} \colon [0, 1]  \to  L $, 
$$
\delta_{\nu(1)}(s) \coloneqq \begin{cases}
\gamma(t(3s)), \quad s \in [0, 1/3] \\
u_1\left(z_1 e^{2 \pi i(3s-1)}\right), \quad s \in [1/3, 2/3]\\
\gamma(t(3-3s)), \quad s \in [2/3, 1]
\end{cases}
$$
are homotopic based at $u_0(z_0)$.
\end{lemma}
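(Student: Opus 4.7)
The plan is to write down an explicit continuous homotopy $H \colon [0,1]\times[0,1] \to L$, parametrised by $\tau \in [0,1]$, with $H(\tau, 0) = H(\tau, 1) = u_0(z_0)$ for all $\tau$, so that $H(1, \cdot) = \delta_{\nu(1)}$ and $H(0, \cdot)$ is (a reparametrisation of) $\delta_{\nu(0)}$. The geometric idea is simply that as $\tau$ grows from $0$ to $1$, the base point of the moving loop slides along $s \mapsto \gamma(t(s))$ from $\gamma(t(0))$ to $\gamma(t(\tau))$, while remaining tethered to $u_0(z_0)$ by that same path traversed forward and then backward.

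Concretely, I would set
\[
H(\tau, s) = \begin{cases}
\gamma(t(3s\tau)), & s \in [0, 1/3], \\
u_\tau\!\left(z_\tau\, e^{2\pi i(3s-1)}\right), & s \in [1/3, 2/3], \\
\gamma(t((3-3s)\tau)), & s \in [2/3, 1].
\end{cases}
\]
The matching at the internal breakpoints is immediate from the defining constraint of $\C_\gamma$: at $s = 1/3$ the first piece gives $\gamma(t(\tau))$ while the middle gives $u_\tau(z_\tau) = \gamma(t(\tau))$, and similarly at $s = 2/3$. Joint continuity of $H$ on the middle strip reduces to continuity of the evaluation map $\C \times D \to M$, which holds because $\C$ carries the compact--open topology and $D$ is locally compact Hausdorff. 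The basepoint condition $H(\tau, 0) = H(\tau, 1) = \gamma(t(0)) = u_0(z_0)$ is clear.

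Finally, plugging in $\tau = 1$ recovers $\delta_{\nu(1)}$ on the nose. Plugging in $\tau = 0$ collapses the outer pieces to the constant $u_0(z_0)$ and leaves the middle piece $s \mapsto u_0(z_0 e^{2\pi i(3s-1)})$; this loop is the concatenation of a constant, a reparametrisation of $\delta_{\nu(0)}$, and another constant, hence based-homotopic to $\delta_{\nu(0)}$ by the evident reparametrisation homotopy. Composing yields the desired based homotopy $\delta_{\nu(0)} \simeq \delta_{\nu(1)}$. There is no serious obstacle: the only thing to be slightly careful about is ensuring joint continuity in the compact--open topology, but this is standard, and the algebraic bookkeeping of the three pieces is routine.
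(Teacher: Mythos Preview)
Your proof is correct and follows essentially the same approach as the paper: both write down an explicit homotopy that slides the boundary loop $\partial u_\tau$ along the path $\nu$ while tethering it to $u_0(z_0)$ via $\gamma\circ t$. The only cosmetic difference is that the paper uses moving breakpoints $s\in[r/3,\,1-r/3]$ for the middle piece (with the exponent rescaled to $\frac{3s-r}{3-2r}$), so that at $r=0$ one lands exactly on $\delta_{\nu(0)}$ without the extra reparametrisation step you invoke at $\tau=0$; your fixed-breakpoint version is equally valid and the additional reparametrisation homotopy is indeed trivial.
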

\begin{proof}
An explicit homotopy is given by $H \colon [0, 1] \times [0, 1] \to  L$,
$$
H(s, r) =
\begin{cases}
\gamma(t(3s)), \quad s \in [0, r/3],\, r \in [0, 1] \\
u_r \left( z_r e^{2 \pi i \frac{3s-r}{3-2r}} \right), \quad s \in [r/3, 1-r/3],\, r \in [0, 1] \\
\gamma(t(3 - 3s)), \quad s \in [1-r/3, 1],\, r \in [0, 1]
\end{cases}
$$
Continuity of $H$ follows from that of $\nu$ and of the evaluation map $\C \times S^1 \to L$.
\end{proof}
To establish part \ref{m0(p) is independent of J}) of proposition \ref{invariance of m0} we need to consider a homotopy of almost complex structures, interpolating between $J$ and $J'$. Standard transversality and compactness arguments imply the following.
\begin{theorem}\label{cobordism for maslov 2 discs}
Suppose $L$ is monotone with $N_L \ge 2$ and let $p \in L$ and $J$, $J' \in \Jreg(L\vert p)$. Then there exists a Baire subset $\Jreg(J, J') \subseteq C^{\infty}([0, 1], \J(M, \omega))$ such that for every $\hat{J} \in \Jreg(J, J')$ one has $\hat{J}(0) = J$, $\hat{J}(1) = J'$ and if we set
\begin{eqnarray*}
\PM_{0, 1}(p, 2, L;\hat{J}) & \coloneqq & \lbrace(\lambda, u, z) \in [0, 1] \times C^{\infty}((D, \del D), (M, L)) \times S^1 \st du + \hat{J}(\lambda) \circ du \circ i = 0,\\
&& \mu([u]) = 2, u(z) = p \rbrace \; \text{and}\\
\M_{0, 1}(p, 2, L;\hat{J}) &\coloneqq & \PM_{0, 1}(p, 2, L;\hat{J}) / G,
\end{eqnarray*}
then 
$\PM_{0, 1}(p, 2, L;\hat{J})$ is a smooth $4$-dimensional manifold with boundary. Further, 
$\M_{0, 1}(p, 2, L; \hat{J})$ is a \textbf{compact} $1$-dimensional manifold with boundary
$$\del \M_{0, 1}(p, 2, L; \hat{J}) = \{0\} \times \M_{0, 1}(p, 2, L; J) \; \coprod \;\{1\}\times\M_{0, 1}(p, 2, L;J')$$
and the quotient map
$q_G \colon \PM_{0, 1}(p, 2, L;\hat{J}) \to \M_{0, 1}(p, 2, L; \hat{J})$ is everywhere a submersion.
\end{theorem}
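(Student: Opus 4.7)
The plan is to combine a parametric transversality argument with Gromov compactness in the monotone setting. I would first equip the space of smooth paths $\hat{J} \colon [0,1] \to \J(M,\omega)$ with fixed endpoints $\hat{J}(0) = J$, $\hat{J}(1) = J'$ with a Banach manifold structure (for instance, via Floer's $C^{\varepsilon}$-construction) and set up the universal moduli space of tuples $(\hat{J}, \lambda, u, z)$ with $u$ a $\hat{J}(\lambda)$-holomorphic disc of Maslov index 2 with boundary on $L$ and $z \in S^1$ satisfying $u(z) = p$. A parametric version of the Lagrangian boundary analogue of \cite[Proposition 3.4.2]{mcduff2012j} then shows that the linearised universal Cauchy--Riemann operator combined with evaluation at $z$ is surjective, so the universal moduli space is a smooth Banach manifold whose only boundary sits over $\lambda \in \{0, 1\}$. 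Applying Sard--Smale to the projection onto the path space yields the desired Baire subset $\Jreg(J, J')$.

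The simplicity of Maslov 2 discs in the monotone case with $N_L \ge 2$ (by \cite{kwon2000structure, lazzarini2000existence}) makes the standard transversality machinery genuinely applicable. A direct index calculation yields
\[
\dim \PM_{0,1}(p, 2, L; \hat{J}) = (n+2) + 1 - n + 1 = 4,
\]
accounting in order for the disc moduli, the marked point on $S^1$, the codimension of the constraint $u(z) = p$, and the parameter $\lambda$. The group $G$ acts freely (the marked point together with simplicity kills any non-trivial reparametrisation) and properly, so $\M_{0, 1}(p, 2, L; \hat{J})$ inherits the structure of a 1-manifold and $q_G$ is a submersion. The components $\{0\} \times \M_{0, 1}(p, 2, L; J)$ and $\{1\} \times \M_{0, 1}(p, 2, L; J')$ arise immediately from the endpoint conditions on $\hat{J}$.

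The main obstacle is compactness of $\M_{0, 1}(p, 2, L; \hat{J})$, for which Gromov compactness produces a limit stable map from any sequence and one must rule out every non-trivial bubbling scenario. For disc bubbling, each non-constant disc component has Maslov index at least $N_L \ge 2$ and the total Maslov index is 2, so the only combinatorial possibility is a single Maslov 2 bubble attached to a constant principal disc; that principal ghost is unstable, and collapsing it produces an honest element of $\M_{0, 1}(p, 2, L; \hat{J})$, so no genuinely new limit point appears. For sphere bubbling, a Chern number $c \ge 1$ sphere bubble forces the residual disc to have non-positive Maslov index and hence (by monotonicity) to be constant at $p$, which pins $c = 1$ via area balance and forces the sphere to pass through $p$. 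The parametric moduli of Chern 1 spheres with one marked point mapping to $p$ has expected dimension $2 - 4 + 1 = -1$ and is therefore empty for generic $\hat{J}$. Combining these arguments shows that $\M_{0, 1}(p, 2, L; \hat{J})$ is a compact 1-manifold with the asserted boundary, as required.
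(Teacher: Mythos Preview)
Your proposal is correct and in fact supplies considerably more detail than the paper itself, which does not prove this theorem at all: the paper simply introduces it with the sentence ``Standard transversality and compactness arguments imply the following'' and then uses it as a black box. Your sketch is precisely the standard argument the paper is invoking---parametric transversality via Sard--Smale, the Kwon--Oh/Lazzarini simplicity result to justify working with simple discs, the index count, and the bubbling analysis ruling out both disc and sphere degenerations in the monotone regime---so there is no discrepancy in approach, only in the level of detail provided.
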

From this theorem it follows that the elements of $\del \M_{0, 1}(p, 2, L; \hat{J})$ are naturally paired up as opposite endpoints of closed intervals. Let $(\lambda, [u, z])$ and $(\lambda', [u', z'])$ be such a pair with $\lambda \le \lambda'$ (note that $\lambda$, $\lambda' \in \{0, 1\}$) and let $\bar{\nu} \colon [0, 1] \to \M_{0, 1}(p, 2, L; \hat{J})$ be any parametrisation of the interval which connects them. Choose a lift $\nu \colon [0, 1] \to \PM_{0, 1}(p, 2, L; \hat{J})$ of $\bar{\nu}$. Since $\PM_{0, 1}(p, 2, L; \hat{J})$ embeds continuously into $\C_p$ (this is notation from Lemma \ref{homotopy}, where we let $\gamma$ be the constant path at $p$), we can apply Lemma \ref{homotopy} to obtain $P_{\del u} = P_{\del u'}$. 
We then have 
$$m_0(p, E; J) + m_0(p, E; J') = \sum_{\substack{(\lambda, [u, z]) \in \del\M_{0, 1}(p,2, L;\hat{J})}} P_{\del u} = \; 0,$$
because every term in the sum appears an even number of times. This proves part \ref{m0(p) is independent of J}) of Proposition \ref{invariance of m0}. We shall henceforth use the notation $m_0(E)(p)$ as in Definition \ref{definition of m0}.

We now move on to proving part \ref{m0 is a parallel section}). Let $p, \, q \in L$ and let $\gamma \colon [0, 1] \to L$ be any smooth path with $\gamma(0) = p$, $\gamma(1) = q$. Then, by what we explained above about achieving transversality of the evaluation map with any other map, there exists a Baire subset $\Jreg(L\vert \gamma) \subseteq \Jreg(L\vert p) \cap \Jreg(L\vert q)$ such that for every $J \in \Jreg(L\vert \gamma)$, the space
$$\PM_{0,1}(\gamma, 2, L;J) \coloneqq \left\lbrace(s, u, z) \in [0, 1] \times \PM(2, L; J) \times S^1 \st u(z) = \gamma(s)\right\rbrace$$
is a smooth $4$-dimensional manifold with boundary.
Further, the manifold $\M_{0,1}(\gamma, 2, L;J) \coloneqq \PM_{0,1}(\gamma, 2, L;J) / G$ is a $1$-dimensional compact manifold with boundary 
$$\del \M_{0,1}(\gamma, 2, L;J) = \{0\} \times \M_{0, 1}(p, 2, L; J) \;\coprod\; \{1\} \times \M_{0, 1}(q,2,  L;J).$$
Thus again the elements of $\M_{0, 1}(p, 2, L; J)\;\coprod\; \M_{0, 1}(q,2,  L;J)$ are naturally paired up as endpoints of intervals. 
Let $N$ be the number of such intervals and choose parametrisations
 $\bar{\nu}_1, \ldots, \bar{\nu}_N \colon [0, 1] \to \M_{0, 1}(\gamma, 2, L; J)$ and corresponding lifts 
 $\nu_1, \ldots, \nu_N \colon [0, 1] \to \PM_{0, 1}(\gamma, 2, L; J)$ with $\nu_i(s) = (t^i(s), u^i_s, z^i_s)$ such that $t^i(0) \le t^i(1)$ $\forall i$ (recall $t^i(0), t^i(1) \in \{0, 1\}$). Since $\PM_{0, 1}(\gamma, 2, L;J)$ embeds continuously in $\C_{\gamma}$ then by applying Lemma \ref{homotopy} to $\nu_i$, we obtain that
 \begin{equation}
 P_{\del u^i_0} = P_{\delta_{\nu_i(1)}} \quad \text{for all } 1 \le i \le N.
 \end{equation}
  Let $N_1,\, N_2 \in \{1, \ldots, N+1\}$ be such that 
$t^i(0) = 0$, $t^i(1) = 0$ for all $1 \le i \le N_1 - 1$, 
$t^i(0) = 0$, $t^i(1) = 1$ for all $N_1 \le i \le N_2 - 1$ and 
$t^i(0) = 1$, $t^i(1) = 1$ for all $N_2 \le i \le N$.
Note then that if $1 \le i \le N_1 - 1$, the loop $\delta_{\nu_i(1)}$ is based at $p$ and lies in the homotopy class $\del u^i_1 \in \pi_1(L, p)$; if $N_1 \le i \le N_2 - 1$, then $\delta_{\nu_i(1)}$ is again based at $p$ and lies in the class $\gamma \cdot \del u^i_1 \cdot \gamma^{-1} \in \pi_1(L, p)$; if $N_2 \le i \le N$, then $\delta_{\nu_i(1)}$ is based at $q$ and lies in the class $\del u^i_1 \in \pi_1(L, q)$. We thus have
\begin{eqnarray*}
m_0(E)(p) + P^{-1}_{\gamma} \circ m_0(E)(q) \circ P_{\gamma} & = & \sum_{i=1}^{N_1 -1} \left(P_{\del u^i_0} + P_{\del u^i_1}\right) + \sum_{i=N_1}^{N_2 - 1} P_{\del u^i_0} \\
& + & P^{-1}_{\gamma} \circ \left(\sum_{i=N_1}^{N_2-1} P_{\del u^i_1} + \sum_{i=N_2-1}^N\left(P_{\del u^i_0} + P_{\del u^i_1}\right)\right)\circ P_{\gamma} \\
&=& \sum_{i=1}^{N_1 -1}\left( P_{\del u^i_0} + P_{\delta_{\nu_i(1)}} \right) + \sum_{i=N_1}^{N_2-1} \left(P_{\del u^i_0} + P^{-1}_{\gamma} \circ P_{\del u^i_1} \circ P_{\gamma} \right) \\
&+& 
P^{-1}_{\gamma} \circ \left( \sum_{i = N_2}^N \left( P_{\del u^i_0} + P_{\delta_{\nu_i(1)}} \right)\right) \circ P_{\gamma}\\
&=& 0 + \sum_{i=N_1}^{N_2-1} \left( P_{\del u^i_0} + P_{\delta_{\nu_i(1)}} \right) + P^{-1}_{\gamma} \circ 0 \circ P_{\gamma}\\
&=&0.
\end{eqnarray*}
This concludes the proof of part \ref{m0 is a parallel section}) of Proposition \ref{invariance of m0}.

Finally, part \ref{m0 is invariant under symplectos}) is an easy consequence of \ref{m0(p) is independent of J}). Indeed, we know that we are free to choose $J \in \Jreg(L\vert p)$ to compute $m_0(E)(p)$ and $J' \in \Jreg(\psi(L)\vert \psi(p))$ to compute $m_0(\psi_*E)(\psi(p))$. So let $J$ be any element of $\Jreg(L\vert p)$ and set $J' = \psi_* J$. Then, almost tautologically, we have that $J' \in \Jreg(\psi(L) \vert \psi(p))$ (compatibility with $\omega$ is ensured by the fact the $\psi$ is a symplectomorphism). It is then clear that $m_0(\psi(p), \psi_*E;\psi_* J)=m_0(p, E;J)$ and this completes the proof of Proposition \ref{invariance of m0}. \hfill \qed

\subsection{Definition, Obstruction and Invariance}\label{Definition, Obstruction and Invariance}
Let us now move on to defining the Floer cohomology groups $HF^*((L^0,E^0),(L^1, E^1))$ and seeing what role the obstruction section plays. First we recall some basics of Floer theory. Let $(M, \omega)$ be closed, symplectic manifold and let $L^0$, $L^1$ be two compact Lagrangian submanifolds. Let us also assume for now that $L^0$ and $L^1$ intersect transversely. In fact, it is more natural to remove this assumption and instead consider a \emph{regular Hamiltonian} $H$ for the pair $(L^0, L^1)$ as part of the data. This is, by definition, a smooth function $H \colon [0, 1] \times M \to \RR$ whose Hamiltonian flow\footnote{
 Defined by the ODE 
  $\dot{\psi_t} = X_t \circ \psi_t$,
  where $i_{X_t}\omega = -d H_t$ and the initial condition $\psi_0= \Id$.
 } 
$\psi_t \colon [0, 1] \times M \to M$ 
satisfies $\psi_1(L^0) \pitchfork L^1$. Then one can replace $L^0$ by $\psi_1(L^0)$ and conduct the same argument. We come back to this point of view in point \ref{hamiltonian chords point of view} of Remark \ref{remark with three parts} and in subsequent sections. For now, however, we keep this transversality assumption. Letting $E^0 \to L^0$ and $E^1 \to L^1$ be local systems, we then make the following definition.
 \begin{definition}\label{definition of Floer cochains with local systems}
 The Floer cochain groups of $L^0$ and $L^1$ with coefficients in the local systems $E^0$ and $E^1$ are defined to be 
 $$CF^*((L^0, E^0),(L^1,E^1)) \coloneqq \bigoplus_{p \in L^0\cap L^1} \hom_{\FF_2}(E^0_p, E^1_p)$$
 \end{definition}
 Where no confusion can arise we will drop $L^0$ and $L^1$ from the notation and just write $CF^*(E^0, E^1)$.
  Let us now briefly recall the construction of the Floer differential on these groups. To define it one needs one more piece of extra data, namely a family of almost complex structures $J \in C^{\infty}([0, 1], \J(M, \omega))$.
  In this setting one considers \emph{strips}, i.e. smooth maps $u \colon \RR_{s} \times [0, 1]_{t} \to M$ (the subscripts denote coordinates on the respective domain components) which are $J$-holomorphic, i.e. they again satisfy the Cauchy-Riemann equation (rewritten here with respect to the global conformal coordinates $(s, t)$ on $\RR \times [0, 1]$):
\begin{equation} \label{Cauchy-Riemann for strips}
\cl{\del}_J(u) \coloneqq \partial_s u + J_t(u)\partial_t u = 0
\end{equation}
and are subject to the boundary constraints $u(s, j) \in L^j$ for $j \in \{0, 1\}$ and all $s \in \RR$.
The energy of such a map is defined to be 
$$E(u) \coloneqq \int_{\RR}\int_{0}^1 \vert\vert\del_s u \vert\vert^2_{g_J} d t d s.$$
Note in particular that $E(u) =0$ if and only if $u$ is a constant map. 
Floer showed in \cite{floer1988unregularized} that the condition $E(u) < \infty$ is equivalent to the existence of intersection points $p, q \in L^0 \cap L^1$ such that $\lim_{s \to -\infty}u(s, t) = p$ and $\lim_{s \to +\infty}u(s, t) = q$ for all $t \in[0, 1]$. Thus we have a partition of the set
$$\PM(L^0, L^1; J) \coloneqq \{u \colon \RR \times [0, 1] \to M \st \cl{\del}_J(u) =0, u(s, j) \in L^j\; \forall\; s \in \RR,\; j \in\{0, 1\}, \; E(u) < \infty\} $$
into the sets
\begin{align*}
\PM(p, q; J) \coloneqq \{u \colon \RR \times [0, 1] \to M \st & \cl{\del}_J(u)=0, u(s, j)  \in  L^j\; \forall\; s \in \RR,\; j \in\{0, 1\}, \\
& \;\lim_{s \to -\infty} u(s, t) = p\; \lim_{s \to +\infty}u(s, t) = q\}.
\end{align*}
In particular any $u \in \PM(L^0, L^1;J)$ has a unique continuous extension to the domain $[-\infty, +\infty] \times [0, 1]$ which defines a class $[u]$ in $\pi_2(M, L^0 \cup L^1)$. Thus we have a further partition of each set $\PM(p, q; J)$ into sets $\PM^A(p, q; J) = \{u \in \PM(p, q; J) \st [u] = A \in \pi_2(M, L^0 \cup L^1)\}$. A coarser partition is provided by the sets $\PM(p, q, k; J) \coloneqq \cup_{\mu(A)=k} \PM^A(p, q; J)$,
where $\mu$ here denotes the Maslov-Viterbo index of a strip (see \cite{viterbo1987intersection} or \cite[equation (2.6)]{floer1988relative} for the definition). This index depends only on the homotopy class of $u$ relative $\{p, q\}$. 

Note that since $E(u) = \int_{\RR \times [0, 1]} u^*\omega$, energy is constant on the sets $\PM^A(p, q; J)$ (although a priori not on $\PM(p, q, k; J)$). Note also that since equation (\ref{Cauchy-Riemann for strips}) is translation invariant in the variable $s$, there is a natural $\RR$-action on $\PM(L^0, L^1; J)$ preserving the sets $\PM^A(p, q; J)$. Dividing by this action, we set $\M(L^0, L^1; J)\coloneqq \PM(L^0, L^1; J)/\RR$,  $\M(p, q;J)\coloneqq \PM(p, q; J)/\RR$, $\M^A(p, q;J)\coloneqq \PM^A(p, q; J)/\RR$, $\M(p, q, k;J)\coloneqq \PM(p, q, k; J)/\RR$. 
 
One then has the following theorem of Floer :
\begin{theorem}(\cite{floer1988relative})
Let $L^0$, $L^1$ be two Lagrangian submanifolds, intersecting transversely at the points $p, q$. Then there exists a Baire subset $\J_{reg}^1(p, q) \subseteq C^{\infty}([0, 1], \J(M, \omega))$ such that for every $J \in \J_{reg}^1(p, q)$ the set $\PM(p, q; J)$ has locally the structure of a smooth manifold whose dimension near $u \in \PM(p, q; J)$ equals $\mu(u)$.
\end{theorem}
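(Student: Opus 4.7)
The plan is to follow the standard Floer-Gromov transversality scheme adapted to strips, which proceeds in four steps: set up a Banach manifold of maps with the right asymptotics, interpret the Cauchy-Riemann equation as a smooth section of a Banach bundle, prove that this section is Fredholm of index $\mu(u)$, and show that for a generic path of almost complex structures the section is transverse to the zero section. The implicit function theorem then upgrades $\PM(p, q; J)$ to a smooth manifold of the stated dimension.

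Concretely, for $p > 2$ I would work with the Banach manifold $\mathcal{B}^{1, p}(p, q)$ of $W^{1, p}_{loc}$ maps $u \colon \RR \times [0, 1] \to M$ satisfying $u(s, j) \in L^j$ and exponentially converging to $p$ and $q$ at the two ends, charted near a smooth reference map by exponentiating $W^{1, p}$ sections of $u^* TM$ with appropriate Lagrangian boundary conditions and $W^{1, p}$ decay at the punctures. Over $\mathcal{B}^{1, p}(p, q)$ sits the Banach bundle $\mathcal{E}^{0, p}$ whose fibre over $u$ consists of $L^p$ sections of $u^* TM$; the Cauchy-Riemann operator $\cl{\del}_J$ defines a smooth section whose zero set is precisely $\PM(p, q; J)$ (elliptic regularity bootstraps $W^{1, p}$ solutions to $C^\infty$). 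The linearisation $D_u\cl{\del}_J$ is a Cauchy-Riemann type operator with totally real boundary conditions and non-degenerate asymptotic operators at the punctures, and a standard index computation (e.g. via the spectral flow formula, or by gluing and comparing to the disc case) identifies its Fredholm index with the Maslov-Viterbo index $\mu(u)$.

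For transversality I would form the universal moduli space over the separable Banach manifold $\J^\ell \coloneqq C^\ell([0, 1], \J(M, \omega))$ of $C^\ell$ paths of compatible almost complex structures (for $\ell$ large) and show that its defining section has surjective vertical derivative. The algebraic step is that if $\eta \in L^q$ annihilates the image of $D_u\cl{\del}_J$ and of the derivative in the $J$ variable, then at any somewhere-injective point $(s_0, t_0) \in \RR \times (0, 1)$ of $u$ one can locally perturb $J_{t_0}$ in a neighbourhood of $u(s_0, t_0)$ disjoint from $u(\RR \setminus U)$ for some small $U$, forcing $\eta$ to vanish there; unique continuation for the adjoint Cauchy-Riemann operator then propagates $\eta \equiv 0$. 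The existence of such somewhere-injective points for finite energy Floer strips with transverse endpoints is classical (see, e.g., the analogous statement for strips in Floer-Hofer-Salamon). The projection from the universal moduli space to $\J^\ell$ is Fredholm of index $\mu(u)$, so Sard-Smale produces a Baire set of regular $J \in \J^\ell$. A standard intersection-of-Baire-sets argument passes from $\J^\ell$ to $C^\infty$ to define $\J^1_{reg}(p, q)$; for each such $J$ the implicit function theorem yields the claimed local manifold structure of dimension $\mu(u)$.

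The main obstacle is the somewhere-injectivity step: unlike the closed surface or disc case one must handle the strip topology and the possibility of branching, and one must verify that the perturbation argument can be carried out with $J$ depending only on $t$ (not on $s$), which forces the injective point argument to be done slice-by-slice. This is what makes the result genuinely belong to Floer; the remaining ingredients (Fredholm theory, elliptic regularity, Sard-Smale) are essentially turn-the-crank given the Banach setup.
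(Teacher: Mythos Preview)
The paper does not give its own proof of this statement: it is quoted as a theorem of Floer with a citation to \cite{floer1988relative} and is used as a black box. Your outline is the standard transversality argument (Banach setup, Fredholm property with index $\mu(u)$, universal moduli space, Sard--Smale, Taubes trick to pass to $C^\infty$), which is essentially Floer's original approach, so there is nothing to compare.
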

In particular then note that $\M(p, q, 1; J)$ is just a union of points. In order to make the theory work we now need to impose the monotonicity assumption. From now on, $(M, \omega)$ will be a closed monotone symplectic manifold and  $(L^0, L^1)$ \--- a monotone pair of Lagrangians. The next theorem of Oh relies heavily on these assumptions and makes the definition of the Floer differential possible. 

\begin{theorem}(\cite{oh1993floer})\label{finiteness of index 1 strips}
If $L^0$ and $L^1$ are two monotone Lagrangians intersecting transversely at the points $p, q \in L^0 \cap L^1$ then there exists a Baire subset $\J_{reg}^2(p, q) \subseteq \J_{reg}^1(p,q)$ such that for each $J \in \J_{reg}^2(p, q)$ the set $\M^A(p, q; J)$ is a finite set for every class $A \in \pi_2(M, L^0 \cup L^1)$ with $\mu(A) = 1$. Further, if the pair $(L^0, L^1)$ is monotone, then $\M(p, q, 1; J)$ is also a finite union of points, i.e. there are only finitely many classes $A \in \pi_2(M, L^0 \cup L^1)$ with $\mu(A) = 1$ and $\M^A(p, q; J) \neq \emptyset$.
\end{theorem}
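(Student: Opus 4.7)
The plan is to treat the two claims separately. For the first claim (finiteness of $\M^A(p, q; J)$ for each fixed class $A$ with $\mu(A) = 1$), I will start from the transversality result of the preceding theorem: for $J$ in an appropriate Baire subset $\Jreg^2(p,q) \subseteq \Jreg^1(p,q)$, the space $\PM^A(p,q;J)$ is a smooth manifold of dimension $\mu(A) = 1$, and since the $\RR$--translation acts freely on non-constant strips, the quotient $\M^A(p,q;J)$ is a 0-dimensional manifold. Since a 0-dimensional manifold is a finite set iff it is compact, the task reduces to establishing compactness of $\M^A(p,q;J)$.

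Compactness will follow from the standard Gromov compactness theorem for Floer strips: any sequence $[u_n] \in \M^A(p,q;J)$ has a subsequence converging to a stable broken configuration consisting of a sequence of Floer strips connecting $p$ and $q$, together with a finite collection of holomorphic disc bubbles attached at boundary points (with boundary on $L^0$ or $L^1$) and holomorphic sphere bubbles. The key is to use monotonicity and the minimal Maslov number assumption to rule out any non-trivial degeneration. Specifically, each disc bubble contributes at least $N_{L^j} \ge 2$ to the total Maslov--Viterbo index, each sphere bubble contributes at least $2 N_M \ge 2$, and strip breaking into $k \ge 2$ pieces forces at least two pieces of positive index. After shrinking $\Jreg^2(p,q)$ further so that no simple strip of non-positive index exists (this uses the standard fact that negative-index simple strips have empty moduli for generic $J$, together with the somewhere-injectivity result for strips), each component in the limit configuration has non-negative index, and the indices must sum to $1$. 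The inequalities then leave no room for bubbling or breaking, so the limit is a single un-bubbled strip in $\M^A(p,q;J)$, proving compactness.

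For the second claim, assume additionally that $(L^0, L^1)$ is a monotone pair, with constant $\lambda > 0$. Then every class $A \in \pi_2(M, L^0 \cup L^1)$ with $\mu(A) = 1$ has symplectic area $I_{\omega, L^0, L^1}(A) = \lambda$, so every strip in $\M(p, q, 1; J)$ has the same (finite) energy $E(u) = \lambda$. This uniform energy bound is exactly the input needed for a global application of Gromov compactness: any sequence $[u_n]$ in $\M(p, q, 1; J)$, regardless of which class the individual strips represent, admits a subsequence converging to a stable broken configuration. Running the same index/monotonicity bookkeeping as above forces the limit to be a single un-broken, un-bubbled strip, and for $n$ sufficiently large the class $[u_n]$ must agree with the class of the limit. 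This shows that only finitely many classes $A$ with $\mu(A) = 1$ contribute a non-empty $\M^A(p, q; J)$, and combined with the first claim gives that $\M(p, q, 1; J)$ is a finite set.

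The main technical obstacle is the soft exclusion of bubbling and strip breaking in the Gromov limit. The argument rests on the dimension/index bookkeeping in tandem with the generic absence of negative-index simple strips and simple $J$--holomorphic discs/spheres; making these generic statements precise requires the Lagrangian-boundary versions of the usual transversality results (as invoked earlier in the excerpt via \cite{mcduff2012j}), and the reduction of a possibly multiply covered limit component to a simple underlying curve whose Maslov/Chern index respects the relevant lower bound. Once these ingredients are in place, the proof is an essentially combinatorial counting argument on the indices of the limit configuration.
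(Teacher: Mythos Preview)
The paper does not prove this theorem at all: it is quoted as a result of Oh (\cite{oh1993floer}) and used as a black box, so there is no ``paper's own proof'' to compare against. Your sketch is essentially the standard argument that Oh gives---Gromov compactness together with index bookkeeping, using monotonicity to force any bubble or extra strip component to carry too much index---and it is correct in outline.

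One small imprecision worth flagging: in the second claim you write that ``every class $A$ with $\mu(A)=1$ has symplectic area $\lambda$''. This is not literally true, since the area of a strip depends on the endpoints $p,q$ and not just on $\mu$. What the monotone-pair condition actually gives is that for two strips $u,v$ with the \emph{same} asymptotics $p,q$, the glued annulus $u\#(-v)$ satisfies $E(u)-E(v)=\lambda(\mu(u)-\mu(v))$; hence all index-$1$ strips from $p$ to $q$ have the \emph{same} energy, which is the uniform bound you need. The conclusion is unchanged, but the justification should be phrased this way. You also implicitly use $N_{L^j}\ge 2$ to exclude disc bubbles, a hypothesis that the theorem as stated does not display but which is in force throughout this part of the paper.
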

We are now ready to define a candidate differential on our cochain groups $CF^*((L^0, E^0),(L^1,E^1))$. For every $u \in \M(p, q; J)$ and $j \in \{0, 1\}$ we write $\gamma^j_u \colon [-\infty, +\infty] \to L^j$ for the paths $\gamma^j_u(s) = u((-1)^js, j)$ with $\gamma^0_u(-\infty) = p = \gamma^1_u(+\infty)$ and $\gamma^0_u(+\infty) = q = \gamma^1_u(-\infty)$. 
\begin{definition}\label{Floer differential with local coefficients}
We define a map $$d^J \colon CF^*(E^0, E^1) \to CF^*(E^0, E^1)$$ as follows. For all intersection points $q \in L^0 \cap L^1$ and all linear maps $\alpha \in \hom_{\FF}(E^0_q, E^1_q)$
 $$ d^J \alpha \coloneqq \sum_{p \in L^0 \cap L^1} \sum_{u \in \M(p, q, 1; J)} P_{\gamma^1_u}\circ \alpha \circ P_{\gamma^0_u}.$$ 
\end{definition}
\begin{remark}\label{definition of d^J makes sense}
Note that in the above definition we are assuming that the time-dependent $\omega$-compatible almost complex structure $J$ is chosen generically enough so that the above sum is in fact finite. In light of Theorem \ref{finiteness of index 1 strips} this amounts to asking that $J \in \bigcap_{p, q \in L^0 \cap L^1}  \J_{reg}^2(p, q)$ which is again a Baire subset of $C^{\infty}([0, 1], \J(M, \omega))$ since $L^0$ and $L^1$ are assumed to intersect transversely and thus in a finite number of points. 
\end{remark}

We finally come to the definition of Floer cohomology for a monotone pair $((L^0, E^0), (L^1, E^1))$ of Lagrangians, equipped with $\FF_2$-local systems.

\begin{definition}\label{definition of floer cohomology with local coefficients}
Let $d^J$ be as above and assume $\left(d^J\right)^2=0$. Then the Floer cohomology of $L^0$ and $L^1$ with coefficients in the local systems $E^0$ and $E^1$ is defined to be 
$$HF^*((L^0,E^0),(L^1, E^1))\coloneqq H^*(CF^*(E^0, E^1); d^J).$$ 
\end{definition}
Again we shall use $HF^*(E^0,E^1)$ as a shorthand. The notation in Definition \ref{definition of floer cohomology with local coefficients} makes sense as long as these cohomology groups are invariant under changes of $J$. This is a well-known fact if the local systems are assumed trivial. We shall see that the same proofs apply to our case just as well. Essentially the only interesting phenomenon which enters the picture when one considers non-trivial local systems is condition \eqref{obstruction equation} for $(d^J)^2=0$, which involves the obstruction sections $m_0(E^0)$ and $m_0(E^1)$. To make these statements precise we package them in the following theorem, consisting mainly of well-known facts:

\begin{theorem}\label{main theorem of floer theory with local coefficients}
Let $(M, \omega)$ be a monotone symplectic manifold and let $(L^0, L^1)$ be a monotone pair 
of closed Lagrangian submanifolds with $N_{L^j} \ge 2$ for $j \in \{0, 1\}$, equipped with $\FF_2$-local systems $E^j \to L^j$. There exists a Baire subset $\J_{reg}(L^0, L^1) \subseteq C^{\infty}([0, 1], \J(M, \omega))$ of time-dependent $\omega$-compatible almost complex structures such that:
\begin{enumerate}[A)]
\item  For all $J \in \J_{reg}(L^0, L^1)$ \label{existence of J_reg(L^0, L^1)}
\begin{enumerate}[i)]
\item (well-defined) the map $d^J$ is well-defined; \label{d^J is well-defined}
\item (obstruction)\label{obstruction in strip model}
 the map $d^J$ satisfies $\left(d^J\right)^2 = 0$ if and only if for some (and hence every) intersection point $p \in L^0 \cap L^1$ and every linear map $\alpha \in \hom_{\FF_2}(E^0_p, E^1_p)$ one has
\begin{equation}
\alpha \circ m_0(E^0)(p) + m_0(E^1)(p) \circ \alpha = 0; \label{obstruction equation}
\end{equation}
\end{enumerate}
\item (invariance)\label{invariance in strip model} Let $H\colon [0, 1] \times M \to \RR$ be a (time-dependent) Hamiltonian and $\psi_t \colon M \to M$ be its corresponding flow.
 Suppose that $\psi_1(L^0) \pitchfork L^1$ and let $J \in \Jreg(L^0, L^1)$, $J' \in \Jreg(\psi_1(L^0), L^1)$. Then one has:
\begin{enumerate}[i)]
\item \label{Hamiltonian invariance well-defined}$(d^{J'})^2 = 0 \in \End(CF^*((\psi_1(L^0), (\psi_1)_*E^0), (L^1, E^1)))$ if and only if $(d^{J})^2 = 0 \in \End(CF^*((L^0, E^0), (L^1, E^1)))$. 
\item \label{Hamiltonian invariance isomorphic}in the case when $(d^{J})^2 = 0$, there exists an isomorphism 
$$\Psi^H \colon HF^*((\psi_1(L^0), (\psi_1)_*E^0), (L^1, E^1);d^{J'}) \to HF^*((L^0, E^0), (L^1, E^1);d^J).$$
In particular, the isomorphism type of $HF^*((L^0, E^0), (L^1, E^1))$ does not depend on the choice of $J \in \Jreg(L^0, L^1)$.
\end{enumerate}
\end{enumerate}
\end{theorem}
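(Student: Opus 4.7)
The plan is to adapt the standard Oh-style proof for monotone Floer cohomology (cf.\ \cite{oh1993floer}) to the setting of local coefficients, using Lemma \ref{homotopy} throughout to track parallel transport. First I would define $\J_{reg}(L^0, L^1) \subseteq C^{\infty}([0, 1], \J(M, \omega))$ as a countable intersection of Baire subsets: those from Theorem \ref{finiteness of index 1 strips} ensuring finiteness of $\M(p, q, 1; J)$ for every pair $p, q \in L^0 \cap L^1$; the analogous regularity subsets making each $\M(r, q, 2; J)/\RR$ into a smooth $1$-manifold with a well-behaved Gromov compactification; and the subsets guaranteeing $J_j \in \Jreg(L^j \vert q)$ for $j \in \{0,1\}$ and every $q \in L^0\cap L^1$, so that $m_0(E^j)(q)$ can be computed using the boundary values of $J$. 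Part A)i) is then immediate from Theorem \ref{finiteness of index 1 strips}.

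The core of the argument is A)ii). I would analyse the compactified $1$-dimensional moduli $\overline{\M(r, q, 2; J)/\RR}$. By Gromov compactness combined with monotonicity and $N_{L^j}\ge 2$, its topological boundary splits into two types of configurations: broken strips (two index-$1$ strips glued at a common intersection point), and an index-$0$ strip with a Maslov-$2$ disc bubble attached on $L^0$ or $L^1$. Monotonicity forces the index-$0$ strip to be constant, so the latter configurations occur only when $r = q$. The broken-strip endpoints contribute precisely the $r$-component of $(d^J)^2 \alpha$ for $\alpha \in \hom(E^0_q, E^1_q)$. For a bubble $v$ on $L^0$ attached at $q$, the $L^0$-boundary path $\gamma^0_{u_\varepsilon}$ of a nearby strip converges to a loop at $q$ homotopic to $\partial v$, while $\gamma^1_{u_\varepsilon}$ converges to the constant loop, yielding the limit contribution $\alpha \circ P_{\partial v}$; summing over all such bubbles gives $\alpha \circ m_0(E^0)(q)$, and the analogous computation for $L^1$-bubbles gives $m_0(E^1)(q)\circ \alpha$. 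Applying Lemma \ref{homotopy} to each connected component of the compact $1$-manifold matches the parallel transports at its two endpoints, and summing mod $2$ over all boundary points yields
\[ (d^J)^2 \alpha\,|_r \;+\; \delta_{r,q}\bigl(\alpha \circ m_0(E^0)(q) + m_0(E^1)(q)\circ \alpha\bigr) \;=\; 0. \]
The ``some iff every'' clause in A)ii) then follows because $m_0(E^j)$ is parallel (Proposition \ref{invariance of m0}), so conjugating by parallel transport on each $L^j$ translates the obstruction equation between any two intersection points.

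For B)i), the equivalence of the two obstruction equations follows directly from A)ii) combined with Proposition \ref{invariance of m0}(iii): under the tautological identification $(\psi_1)_* E^0_{\psi_1(p)} = E^0_p$ the operator $m_0(\psi_* E^0)(\psi_1(p))$ equals $m_0(E^0)(p)$, and parallelness of $m_0$ converts this into the corresponding equation at any intersection point of $\psi_1(L^0) \cap L^1$. For B)ii), I would build the standard continuation map from a generic interpolating path $(H_\tau, J_\tau)_{\tau\in\RR}$ by counting parametrised index-$0$ continuation strips, each weighted by parallel transport along its two boundary arcs, to define $\Psi^H$. The chain-map equation follows from the same kind of boundary analysis as in A)ii), applied now to $1$-dimensional moduli of continuation strips: the broken configurations give the expected commutator, and the disc-bubble contributions cancel thanks to the obstruction equation holding on both sides. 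The inverse is the continuation map of the reverse homotopy, and analysing a two-parameter family of moduli produces a chain homotopy to the identity in the standard way; the special case $H = 0$ with varying $J$ yields the independence of $HF^*$ from the choice of $J \in \J_{reg}(L^0, L^1)$.

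The main obstacle is step A)ii): correctly identifying the degeneration of a sequence of index-$2$ strips into a constant strip plus a Maslov-$2$ disc, and matching the limiting parallel transports with $P_{\partial v}$. This requires combining Lemma \ref{homotopy} with a careful analysis of boundary-arc convergence in the Gromov limit, and is the one place where $N_{L^j}\ge 2$ is used essentially to exclude sphere and lower-Maslov disc bubbles that would produce additional strata unaccounted for by $m_0$.
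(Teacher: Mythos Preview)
Your proposal follows essentially the same route as the paper: define $\J_{reg}(L^0,L^1)$ as an intersection of Baire sets, prove A)ii) by analysing the boundary of the compactified index-$2$ strip moduli (broken strips when $r\neq q$, broken strips plus Maslov-$2$ disc bubbles on either Lagrangian when $r=q$), deduce B)i) from the parallelness and symplectomorphism-invariance of $m_0$, and handle B)ii) via continuation maps. Two small points are worth flagging.

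First, Lemma~\ref{homotopy} as stated concerns discs with boundary on a single Lagrangian, not strips between two Lagrangians; the paper does not invoke it for A)ii) but instead cites \cite[Lemma~3.16]{damian2007constraints} for the fact that paired endpoints of an interval in $\overline{\M(r,q,2;J)}$ give homotopic boundary paths $\gamma^j_{\bar u}=\gamma^j_{\bar u'}$. The idea is of course the same, but you would need to state and prove the strip analogue rather than appeal to Lemma~\ref{homotopy} directly.

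Second, in B)ii) your remark that ``disc-bubble contributions cancel thanks to the obstruction equation holding on both sides'' is not quite the mechanism. In the $1$-dimensional moduli of continuation strips, a Maslov-$2$ bubble would leave a continuation strip of index $-1$, which is generically empty; so bubbles simply do not appear, and this is precisely where $N_{L^j}\ge 2$ is used for invariance (as the paper notes). The obstruction equation plays no role in the chain-map identity for $\Psi^H$.
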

A few comments are now in order, which we list in the following remark.
\begin{remark}:\label{remark with three parts}

\begin{enumerate}
\item  Since the obstruction criterion in \ref{obstruction in strip model}) is independent of choice of almost complex structure $J$, we can and do extend Definition \ref{definition of floer cohomology with local coefficients} by setting $HF^*((L^0,E^0),(L^1, E^1)) = 0$ in the case when equation (\ref{obstruction equation}) is not satisfied. We will however still say that Floer cohomology is not well-defined in this case.
\item \label{non-vanishing floer implies non-disp} By part \ref{invariance in strip model}), it is clear that if $HF^*((L^0,E^0),(L^1, E^1)) \neq 0$ for some local systems $E^0$, $E^1$ then, for every Hamiltonian diffeomorphism $\psi$, one has $\psi(L^0) \cap L^1 \neq \emptyset$, i.e. $L^0$ and $L^1$ cannot be displaced by a Hamiltonian isotopy.
\item \label{hamiltonian chords point of view} To rephrase part \ref{invariance in strip model}), it is useful to adopt a slightly different point of view on the complex $(CF^*((L^0,E^0),(L^1, E^1)),d^J)$ (by abuse of terminology we shall call this a complex even though a priori we don't have $(d^{J})^2 = 0$). Let $(L^0, L^1)$ be a monotone pair of Lagragians (not necessarily intersecting transversely, in particular we allow $L^0 = L^1$).
A \emph{regular Floer datum} for $(L^0, L^1)$, as defined in \cite{seidel2008fukaya}, is a pair $(H, J)$, where $H \colon [0, 1] \times M \to \RR$ is a regular Hamiltonian for $(L^0, L^1)$ (recall that this just means that its Hamiltonian flow $\psi_t$ satisfies $\psi_1(L^0) \pitchfork L^1$) and $J$ is an element of $\Jreg(\psi_1(L^0), L^1)$, the Baire set of almost complex structures, whose existence is asserted by Theorem \ref{main theorem of floer theory with local coefficients}. Then to any regular Floer datum $(H, J)$ for $(L^0, L^1)$ one can associate the complex  
$$CF^*((L^0, E^0), (L^1, E^1); H, J) \coloneqq CF^*((\psi_1(L^0), (\psi_1)_*E^0), (L^1, E^1); d^J).$$
From this point of view, part \ref{invariance in strip model}) of Theorem \ref{main theorem of floer theory with local coefficients} states that the well-definedness and isomorphism type of the cohomology of 
$CF^*((L^0, E^0), (L^1, E^1); H, J)$
 is independent of the choice of regular Floer data. In fact (although this is not obvious from the statement of the above theorem), there is even a canonical choice of isomorphism 
$$H^*(CF^*((L^0, E^0), (L^1, E^1); H, J))\quad  \cong \quad H^*(CF^*((L^0, E^0), (L^1, E^1); H', J'))$$
for any two choices of regular Floer data $(H, J)$ and $(H', J')$ (see \cite[Proposition 11.2.8]{audin2014morse} for the analogous statement in Hamiltonian Floer homology).
In particular, the following definition makes sense:
\begin{specialDefinition}\label{self-Floer with local coefficients}
Let $L \subseteq (M, \omega)$ be a closed monotone Lagrangian submanifold with $N_L \ge 2$, equipped with a pair of $\FF_2-$local systems $E^0, E^1$. Assume that there exists $p \in L$ such that
\begin{equation}\label{obstruction for self Floer with local systems}
\alpha \circ m_0(E^0)(p) + m_0(E^1)(p) \circ \alpha = 0 \quad \forall \alpha \in \hom_{\FF_2}(E^0_p, E^1_p).	
\end{equation}
We then define
$$HF^*(E^0, E^1) \coloneqq H^*(CF^*((L, E^0), (L, E^1); H, J)),$$
for some regular Floer datum $(H, J)$.	
\end{specialDefinition}

One can view the complex $CF^*((L^0, E^0), (L^1, E^1); H, J)$ in yet another way. Note that each of its generators $q \in \psi_1(L^0) \cap L^1$ corresponds to a Hamiltonian chord 
$$x_q \colon [0, 1] \to M, \quad x_q(t) = \psi_t\left((\psi_1)^{-1}(q)\right)$$
with endpoints on $L^0$ and $L^1$. Let us write $\mathcal{X}_H(L^0, L^1)$ for the set of Hamiltonian chords as above. One can then rewrite the Floer chain complex as 
$$CF^*((L^0, E^0), (L^1, E^1);H,J) = \bigoplus_{x \in \mathcal{X}_H(L^0, L^1)} \hom_{\FF^2}(E^0_{x(0)}, E^1_{x(1)}).$$
For any two intersection points $p, q \in \psi_1(L^0) \cap L^1$ one can further replace the moduli spaces $\PM(p, q; J)$ by a moduli space of \emph{Floer trajectories} $v \colon \RR \times [0, 1] \to M$, satisfying
\begin{itemize}
\item $v(s,j) \in L^j$ for $s \in \RR$ and $j \in \{0, 1\}$,
\item $\lim_{s \to -\infty} v(s, t) = x_p(t)$ and $\lim_{s \to +\infty}v(s, t) = x_q(t)$ uniformly in $t$ 
\item the Floer equation
\begin{equation}\label{floer equation}\del_s v + \left((\psi_t)^*J_t\right)(\del_t v - X_t(v)) = 0,
\end{equation}
where $X_t$ is the Hamiltonian vector field of $H$.
\end{itemize}
A bijective correspondence between the two moduli spaces is given by associating to every $u \in \PM(p, q; J)$, the map $v(s, t) = \psi_t\left(\psi_1^{-1}(u(s, t)\right)$. Each Floer trajectory $v$ gives rise to paths 
$\gamma^0_v(s) = v(s, 0)$ and $\gamma^1_v(s) = v(-s, 1)$ and using parallel transport maps along these, one obtains an alternative description of the differential on the complex $CF^*((L^0, E^0), (L^1, E^1); H, J)$. We will adopt this point of view after the end of Section \ref{Definition, Obstruction and Invariance}, as it makes the constructions of 
the monotone Fukaya category easier to describe (see 
section \ref{The Monotone Fukaya Category} below). 

\item \label{important remark on self-floer cohomology}
Note further, that in the special case when also $E^0 = E^1$, equation (\ref{obstruction for self Floer with local systems}) says that the cohomology of $CF^*((L, E), (L, E))$ is well-defined if and only if $m_0(E)$ is a scalar operator. Since we are working over $\FF_2$, this means $m_0(E) \in \{0, \Id \}$. Observe that this condition is always satisfied when $E$ has rank 1; in that context the construction is very well known and widely used, especially in topics related to mirror symmetry.

 On the other hand, condition (\ref{obstruction for self Floer with local systems}) is not guaranteed to be satisfied when the local systems have higher rank or when $E^0 \neq E^1$. This is precisely the point exploited by Damian in \cite{damian2012floer} to obtain restrictions on monotone Lagrangian submanifolds of $\CC^n$.
\end{enumerate}
\end{remark}

For the remaining part of this section we will give sketch proofs of the different parts of Theorem \ref{main theorem of floer theory with local coefficients}. As mentioned above, for all statements apart from \ref{obstruction in strip model}) one only needs to translate classical results to our setting with local coefficients. We shall give the needed references and indicate how to insert local coefficients in the respective arguments.

Note first that according to Remark \ref{definition of d^J makes sense}, to prove part \ref{d^J is well-defined}) it suffices that we set $\J_{reg}(L^0, L^1)\coloneqq \bigcap_{p, q \in L^0 \cap L^1}  \J_{reg}^2(p, q)$. In order for \ref{obstruction in strip model}) to hold, however, we will need to possibly shrink $\J_{reg}(L^0, L^1)$ to a slightly smaller subset. Let us first introduce some more notation.

For two Lagrangians $L^0$ and $L^1$ which intersect transversely we set:
\begin{itemize}
\item for every pair of intersection points $r, q \in L^0 \cap L^1$ we set 
$$B(r, q;J) \coloneqq \bigcup_{p \in L^0 \cap L^1} \M(r, p, 1;J) \times \M(p, q, 1;J);$$
\item for every intersection point $q \in L^0 \cap L^1$ we set 
$$B(q;J) \coloneqq \M_{0,1}(q,2, L^0;J_0) \cup \M_{0,1}(q, 2,L^1;J_1) \cup B(q,q;J);$$
\item for any pair of \emph{distinct} intersection points $r, q \in L^0 \cap L^1$ we set 
$$\overline{\M(r, q, 2;J)} \coloneqq \M(r, q, 2;J) \cup B(r, q;J)$$
\item for any single intersection point $q \in L^0 \cap L^1$ and we set 
$$\overline{\M(q, q, 2;J)} \coloneqq \M(q, q, 2;J) \cup B(q;J).$$
\end{itemize}

With these notions in place, Gromov compactness and gluing for moduli spaces of strips yield the following:
\begin{theorem}(\cite{oh1993floer})\label{theorem on compactified moduli spaces of strips}
Let $(L^0, L^1)$ be a monotone pair of Lagrangians, which intersect transversely in $M$. Then for every pair of intersection points $r, q \in L^0 \cap L^1$ (not necessarily distinct) there exists a Baire subset $\J^3_{reg}(r, q) \subseteq \J^2_{reg}(r, q)$ such that for every $J \in \J^3_{reg}(r, q)$ one has $J_0 \in \J_{reg}(L^0 \vert \{r, q\})$, $J_1 \in \J_{reg}(L^1 \vert \{r, q\})$ and the set $\overline{\M(r, q, 2;J)}$ has the structure of a compact $1$-dimensional manifold with boundary. Further $\del \overline{\M(r, q, 2;J)} = B(r, q;J)$ when $r \neq q$ and $\del \overline{\M(q, q, 2;J)} = B(q;J)$.
\end{theorem}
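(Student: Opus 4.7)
The plan is to first choose $\J^3_{reg}(r,q)$ so that all relevant transversality conditions hold, then apply Gromov compactness to the moduli of Maslov $2$ strips, and finally use the monotonicity hypothesis together with $N_{L^j}\ge 2$ to classify exactly which broken configurations can appear. First I would set
$$\J^3_{reg}(r,q) \coloneqq \J^2_{reg}(r,q)\cap \left\{J \st J_0 \in \Jreg\!\left(L^0\,\middle|\, \{r,q\}\right),\ J_1 \in \Jreg\!\left(L^1\,\middle|\, \{r,q\}\right)\right\}.$$
A standard Baire category argument (using that evaluation of a path at a fixed endpoint is an open map $C^\infty([0,1],\J)\to \J$ and that the conditions on $J_0$ and $J_1$ are independent) shows that this is again a Baire subset of $C^\infty([0,1],\J(M,\omega))$. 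For any such $J$, the unparametrised moduli $\M(r,q,2;J)$ is a smooth $1$-manifold, and the disc moduli $\M_{0,1}(q,2,L^j;J_j)$ are compact $0$-manifolds (finitely many points), by the results quoted in section \ref{The Obstruction Section}.

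The next step is to compactify by taking Gromov limits of sequences $[u_n]\in\M(r,q,2;J)$. Standard Gromov compactness yields a stable broken trajectory, i.e.~a chain $u^1,\dots,u^k$ of (possibly constant) finite-energy $J$-holomorphic strips $u^i$ asymptotic to consecutive intersection points $p_{i-1},p_i$ with $p_0=r$, $p_k=q$, together with possibly attached $J_t$-holomorphic disc bubbles on $L^0\cup L^1$ and sphere bubbles in $M$. Since $(M,\omega)$ is monotone and $N_{L^j}\ge 2$, and since we have arranged generic $J$, classical arguments rule out sphere bubbles and multiply-covered discs (cf.\ the Kwon--Oh--Lazzarini theorem), and force every non-constant stable component to have strictly positive Maslov--Viterbo or Maslov index. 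Conservation of total Maslov index ($=2$) then leaves only two cases.

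The Maslov/index bookkeeping is the main content of the argument. In \textbf{Case 1} (no disc bubbles), the chain consists of non-constant strips whose indices are positive integers summing to $2$. Since $\M(p_{i-1},p_i,0;J)$ contains no non-constant elements (by monotonicity), the only option is $k=2$ with both $u^1,u^2$ of index $1$ meeting at some $p=p_1\in L^0\cap L^1$; this contributes exactly $B(r,q;J)$. In \textbf{Case 2} (at least one disc bubble), each disc bubble has Maslov $\ge N_{L^j}\ge 2$, so there can be only one, it has Maslov exactly $2$, and all strips have total index $0$, hence are constant. A constant strip at a single point $p\in L^0\cap L^1$ together with a Maslov $2$ disc bubble attached at $p$ on $L^0$ or $L^1$ forces $p=r=q$; this case only occurs when $r=q$ and contributes $\M_{0,1}(q,2,L^0;J_0)\cup\M_{0,1}(q,2,L^1;J_1)$. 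Together, these are precisely $B(r,q;J)$ when $r\neq q$ and $B(q;J)$ when $r=q$.

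Finally, one must prove the converse: each configuration in the alleged boundary is indeed a boundary point and carries a collar neighbourhood in $\overline{\M(r,q,2;J)}$, so that the total space is a compact $1$-manifold with boundary. For the strip-breaking stratum this is the standard Floer gluing theorem (\cite{floer1988relative}, \cite{oh1993floer}), producing a half-open interval of genuine index $2$ strips from each pair in $\M(r,p,1;J)\times\M(p,q,1;J)$ once the gluing parameter is chosen large enough. For the disc-bubble stratum (when $r=q$), one applies the analogous gluing of a constant strip at $q$ with an attached Maslov $2$ disc bubble; the transversality ensured by $J_j\in\Jreg(L^j|\{q\})$ is exactly what guarantees that this gluing is well-defined and injective modulo the $S^1$ rotation already quotiented out in the definition of $\M_{0,1}(q,2,L^j;J_j)$. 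The hard part is this last point: verifying that the gluing maps are local diffeomorphisms onto collars and that no two distinct boundary configurations produce overlapping families, which requires the joint regularity encoded in $\J^3_{reg}(r,q)$ and a careful inspection of the linearised gluing operator. Once this is in place, the identifications $\partial\overline{\M(r,q,2;J)}=B(r,q;J)$ (for $r\neq q$) and $\partial\overline{\M(q,q,2;J)}=B(q;J)$ follow.
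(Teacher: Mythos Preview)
The paper does not give its own proof of this theorem; it is stated as a citation from \cite{oh1993floer} and used as a black box (the sentence immediately following it simply sets $\Jreg(L^0,L^1)\coloneqq\bigcap_{r,q}\J^3_{reg}(r,q)$ and moves on). So there is no proof in the paper to compare against.

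Your sketch is the standard outline of how this result is established in the literature, and the overall structure---transversality via a Baire intersection, Gromov compactness, index bookkeeping under monotonicity to enumerate the possible limits, then gluing to show each enumerated configuration is a genuine boundary point---is correct. A couple of places deserve a bit more care. First, your exclusion of sphere bubbles should really be phrased as a codimension argument: in a monotone manifold simple $J$-holomorphic spheres have $c_1\ge 1$, and for generic $J$ the set of points lying on such a sphere has real codimension at least $4$ in $M$, so a one-dimensional family of strips generically misses it; one does not literally ``rule them out'' by regularity of the strips alone. Second, the disc-bubble gluing in Case~2 is genuinely more delicate than ordinary Floer gluing, since one of the two pieces (the strip) is constant and hence not cut out transversely in the usual sense; this is precisely the analysis carried out in Oh's paper and is the non-routine part of the argument. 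Your remark that ``the hard part is this last point'' is accurate, but you should be aware that the collar construction there is not a direct application of strip-to-strip gluing.
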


We now set $\J_{reg}(L^0, L^1) \coloneqq \bigcap_{r, q \in L^0 \cap L^1}  \J_{reg}^3(r, q)$. The proof of part \ref{obstruction in strip model}) is then confined to the following proposition:

\begin{proposition}\label{proposition for d^2=0 in strip model}
Let $J \in \J_{reg}(L^0, L^1)$. Then $\left(d^J\right)^2=0$ if and only if for all intersection points $q \in L^0 \cap L^1$ and all maps $\alpha \in \hom_{\FF}(E^0_q, E^1_q)$ we have
\begin{equation}\label{equation for d^2=0 with local systems}
\alpha \circ m_0(q, E^0;J_0) + m_0(q, E^1;J_1) \circ \alpha =0.
\end{equation}
\end{proposition}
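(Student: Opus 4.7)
My plan is to compute $(d^J)^2 \alpha$ directly for a generator $\alpha \in \hom_{\FF_2}(E^0_q, E^1_q)$ and read off the obstruction from the resulting sum. Expanding Definition \ref{Floer differential with local coefficients} twice yields
$$(d^J)^2 \alpha = \sum_{r \in L^0 \cap L^1} \;\sum_{\substack{p \in L^0 \cap L^1 \\ (v, u) \in \M(r,p,1;J) \times \M(p,q,1;J)}} P_{\gamma^1_v} \circ P_{\gamma^1_u} \circ \alpha \circ P_{\gamma^0_u} \circ P_{\gamma^0_v},$$
so the $r$-component of $(d^J)^2 \alpha$ is a weighted count indexed by $B(r,q;J)$. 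Since $CF^*(E^0, E^1)$ decomposes as a direct sum over intersection points, $(d^J)^2 = 0$ if and only if each such $r$-component vanishes for every $q$ and every $\alpha$.

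Next I would invoke Theorem \ref{theorem on compactified moduli spaces of strips}: the compact $1$-manifold $\overline{\M(r,q,2;J)}$ has its boundary components pair up as endpoints of closed intervals. For each interval I would parametrize it by some $\bar\nu \colon [0,1] \to \overline{\M(r,q,2;J)}$, pick a continuous lift to parametrized Floer strips, and run a strip analogue of Lemma \ref{homotopy} to show that the loops formed by the boundary paths at the two endpoints (closed off using the constant paths at the appropriate intersection points) are based homotopic in $L^0$ and in $L^1$. By functoriality of parallel transport, paired endpoints therefore contribute identical elements of $\hom_{\FF_2}(E^0_r, E^1_r)$, and the total weighted boundary sum vanishes in characteristic $2$. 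When $r \neq q$, Theorem \ref{theorem on compactified moduli spaces of strips} gives $\del \overline{\M(r,q,2;J)} = B(r,q;J)$, so the contribution of $B(r,q;J)$ is automatically zero; this kills the off-diagonal components of $(d^J)^2$ regardless of the local systems.

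When $r = q$ the boundary also carries the two families $\M_{0,1}(q, 2, L^j; J_j)$ of disc bubbles. A point of $\M_{0,1}(q, 2, L^0; J_0)$ represents the degeneration of the Maslov $2$ strip into a constant strip at $q$ together with a Maslov $2$ disc $u$ attached along $t=0$; the resulting boundary loop on $L^1$ is null-homotopic, while the one on $L^0$ is homotopic to $\del u$. Its contribution to the boundary sum is thus $\Id \circ \alpha \circ P_{\del u} = \alpha \circ P_{\del u}$, and summing over $\M_{0,1}(q,2,L^0;J_0)$ recovers $\alpha \circ m_0(q, E^0; J_0)$ in the sense of Definition \ref{definition of m0}, using $J^q = J_0 \in \Jreg(L^0 \vert q)$ as provided by Theorem \ref{theorem on compactified moduli spaces of strips}. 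The dual analysis on $L^1$ yields $m_0(q, E^1; J_1) \circ \alpha$. Combining with the vanishing of the total boundary sum, the $q$-component of $(d^J)^2 \alpha$ equals $\alpha \circ m_0(q, E^0; J_0) + m_0(q, E^1; J_1) \circ \alpha$, which gives the proposition. The hardest technical step is the strip-and-bubble version of Lemma \ref{homotopy}: it requires the gluing theorem to match the boundary loops on each Lagrangian near a broken-trajectory node or near a disc bubble with those of a nearby smooth index $2$ strip, together with some careful bookkeeping regarding basepoints and the sign conventions built into $\gamma^0_u, \gamma^1_u$.
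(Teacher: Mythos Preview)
Your proposal is correct and follows essentially the same route as the paper: expand $(d^J)^2\alpha$ as a sum over $B(r,q;J)$, use the pairing of boundary points of $\overline{\M(r,q,2;J)}$ to cancel contributions in characteristic $2$, and in the diagonal case $r=q$ identify the extra disc-bubble contributions with the obstruction terms. The only cosmetic difference is that where you propose a strip analogue of Lemma~\ref{homotopy} to match boundary paths at paired endpoints, the paper simply cites \cite[Lemma 3.16]{damian2007constraints} for that fact.
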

\begin{proof}
We have:
\begin{eqnarray*}
\left(d^J\right)^2 \alpha & = & \sum_{r \in L^0 \cap L^1}\left[\sum_{p \in L^0 \cap L^1} \sum_{\substack{u \in \M(r, p, 1;J) \\
v \in \M(p, q, 1;J)}} P_{\gamma^1_v \cdot \gamma^1_u}\circ \alpha \circ P_{\gamma^0_u \cdot \gamma^0_v}\right],\\
\end{eqnarray*}
where the dot denotes concatenation of paths. 
Thus, for every intersection point $r$ the corresponding element in $\hom_{\FF_2}(E^0_r, E^1_r)$ appearing in $\left(d^J\right)^2 \alpha$ can be rewritten as
\begin{equation}\label{matrix entry for d^2 for strips}
\langle \left(d^J\right)^2 \alpha, r \rangle \coloneqq \sum_{\cl{u} \in B(r, q;J)} P_{\gamma^1_{\cl{u}}} \circ \alpha \circ P_{\gamma^0_{\cl{u}}},
\end{equation}
where for $\cl{u} = (u,v) \in B(r,q;J)$ we define $\gamma^0_{\cl{u}} \coloneqq \gamma^0_u \cdot \gamma^0_v$ and $\gamma^1_{\cl{u}} \coloneqq \gamma^1_v \cdot \gamma^1_u$. One now observes that whenever $r \neq q$ we have that the elements in $B(r, q;J)$ are naturally paired-up as opposite ends of the closed intervals which are the connected components of the compactified $1$-dimensional moduli space $\overline{\M(r, q, 2;J)}$.
Let $\{\cl{u}, \cl{u}'\}\subseteq B(r, q;J)$ be such a pair. It follows (see e.g. \cite{damian2007constraints}, Lemma 3.16) that $\gamma^0_{\cl{u}} = \gamma^0_{\cl{u}'} \in \Pi_1L^0(r,q)$ and $\gamma^1_{\cl{u}} = \gamma^1_{\cl{u}'} \in \Pi_1L^1(q,r)$.  Thus we have the identity
$$P_{\gamma^1_{\cl{u}}} \circ\alpha\circ P_{\gamma^0_{\cl{u}}} = P_{\gamma^1_{\cl{u}'}} \circ\alpha\circ P_{\gamma^0_{\cl{u}'}}.$$
Since all isolated broken strips $(u, v)$ from $r$ to $q$ come in such pairs, every summand in the right-hand side of \eqref{matrix entry for d^2 for strips} appears twice, yielding $\langle \left(d^J\right)^2 \alpha, r \rangle = 0$.

We now consider the case when $r=q$. In that case the boundary of the Gromov compactification $\cl{\M(q, q;J)}$ is $B(q;J)$. For elements $\cl{u} \in B(q;J) \setminus B(q,q;J)$ we set $\gamma^0_{\cl{u}} = \del u$, $\gamma^1_{\cl{u}} \equiv q$, if $\cl{u} = u \in \M_{0,1}(q, 2, L^0;J_0)$ and $\gamma^0_{\cl{u}} \equiv q$, $\gamma^1_{\cl{u}} = \del u$, if $\cl{u} = u \in \M_{0,1}(q, 2, L^1;J_1)$. Again the elements of $B(q;J)$ are paired-up as end points of closed intervals and when $\{\cl{u}, \cl{u}'\}$ is such a pair, we have $\gamma^j_{\cl{u}} = \gamma^j_{\cl{u}'} \in \Pi_1 L^j(q,q)$, hence
$$P_{\gamma^1_{\cl{u}}} \circ\alpha\circ P_{\gamma^0_{\cl{u}}} = P_{\gamma^1_{\cl{u}'}} \circ\alpha\circ P_{\gamma^0_{\cl{u}'}}.$$
Thus
$\sum_{\cl{u} \in B(q; J)} P_{\gamma^1_{\cl{u}}} \circ\alpha\circ P_{\gamma^0_{\cl{u}}} = 0$, again since every summand appears twice. Expanding the left-hand side yields
$$\sum_{\cl{u} \in B(q,q;J)} P_{\gamma^1_{\cl{u}}} \circ\alpha\circ P_{\gamma^0_{\cl{u}}} \quad + \sum_{\substack{u \in \M_{0, 1}(q ,2, L^0;J_0)}} \alpha \circ P_{\del u}\quad + \sum_{\substack{u \in \M_{0, 1}(q,2, L^1;J_1)}}P_{\del u}\circ \alpha\quad =\quad 0.$$
This can be rewritten as
$$\langle \left(d^J\right)^2 \alpha, q \rangle + \alpha \circ m_0(q, E^0;J_0) + m_0(q, E^1;J_1) \circ \alpha =0,$$
which proves the proposition.
\end{proof}

We now move on to part \ref{invariance in strip model}) of Theorem \ref{main theorem of floer theory with local coefficients}. To show part \ref{Hamiltonian invariance well-defined}) let $p \in L^0 \cap L^1$, $q \in \psi_1(L^0) \cap L^1$. Further, let $\gamma \colon [0, 1] \to L^0$, $\delta \colon [0, 1] \to L^1$ be any paths with $\gamma(0) = \delta(0) = p$, $\gamma(1) = \psi_1^{-1}(q)$, $\delta(1) = q$. From Proposition \ref{invariance of m0} \ref{m0 is a parallel section}) we have
\begin{eqnarray*}
m_0(E^0)(p) & = & P_{\gamma}^{-1} \circ m_0(E^0)(\psi_1^{-1}(q)) \circ P_{\gamma}\\
m_0(E^1)(p) & = & P_{\delta}^{-1} \circ m_0(E^1)(q) \circ P_{\delta}.
\end{eqnarray*}
Then by Proposition \ref{invariance of m0} \ref{m0 is invariant under symplectos}) we have that for every $\beta \in \hom_{\FF_2}(((\psi_1)_*E^0)_q, E^1_q)$:
\begin{eqnarray*}
\beta \circ m_0((\psi_1)_*E^0)(q) + m_0(E^1)(q) \circ \beta & = &
\beta \circ m_0(E^0)(\psi_1^{-1}(q)) + m_0(E^1)(q) \circ \beta\\ & = &
P_{\delta} \circ \left(\alpha \circ m_0(E^0)(p) + m_0(E^1)(p) \circ \alpha\right)\circ P_{\gamma}^{-1},
\end{eqnarray*}
where $\alpha = P_{\delta}^{-1} \circ \beta \circ P_{\gamma} \in \hom_{\FF_2}(E^0_p, E^1_p)$. From this it immediately follows that the cohomology $HF\left(\left(\psi_1(L^0), (\psi_1)_*E^0\right), \left(L^1, E^1\right)\right)$ is well-defined if and only if $HF((L^0, E^0), (L^1, E^1))$ is well-defined.

The proof of part \ref{Hamiltonian invariance isomorphic}) is standard and is based on Floer's original idea of \emph{continuation maps}. It is best seen from the point of view of the complex $CF^*((L^0, E^0), (L^1, E^1); H, J)$, generated by linear maps between fibres of the local systems over start and end points of Hamiltonian chords. One studies strips which satisfy a version of the Floer equation \eqref{floer equation} which is not translation-invariant. The condition $N_{L^j}\ge 2$ is used here to establish compactness for moduli spaces of such maps of index 0 and 1. The boundaries of these strips can be used to define parallel transport maps. Using these, one constructs chain maps 
\begin{equation}\label{continuation map}
\Psi_{H', J'}^{H, J} \colon CF^*((L^0, E^0), (L^1, E^1); H', J') \longrightarrow CF^*((L^0, E^0), (L^1, E^1); H, J)
\end{equation}
 which are then shown to be homotopy equivalences. Since the proof does not depend in any way on the rank and/or triviality of the local systems we refer the reader to \cite[Theorem 5.1]{oh1993floer}.  (see also \cite[Chapter 11]{audin2014morse} for a detailed description of the same argument for Hamiltonian Floer homology). \qed

\subsection{The Pearl Complex and the Obstruction Revisited}\label{The Pearl Complex and the Obstruction Revisited}
We now recall an alternative approach to calculating self-Floer cohomology of a single monotone Lagrangian, namely Biran and Cornea's pearl complex (see \cite{biran2008lagrangian} for an extensive account of this theory or \cite{biran2007quantum} for the full details). This is precisely the machinery we shall use in section \ref{application to the Chiang Lagrangian} for computations related to the Chiang Lagrangian. In this section we explain how to adapt this theory to incorporate local coefficients. 
 Let $L \subseteq (M, \omega)$ be a closed monotone Lagrangian submanifold with minimal Maslov number $N_L \ge 2$. Further, let $L$ be equipped with a pair of $\FF_2$-local systems $E^0$, $E^1$. Choose a Morse function $f \colon L \to \RR$ and a Riemannian metric $g$, such that $\F = (f, g)$ is a Morse-Smale pair. We shall refer to $\F$ as a \emph{Morse datum}. The cochain groups in this case are given by:
$$C^*_{f}(E^0,E^1) = \bigoplus_{x \in Crit(f)} \hom_{\FF_2}(E^0_x,E^1_x).$$ 
To define the appropriate candidate differential one chooses a time-independent $\omega$-compatible almost complex structure $J \in \J(M, \omega)$. Then one considers the following moduli spaces of \emph{pearly trajectories}. 
\begin{definition}\label{pearls}
For any pair of critical points $y, x  \in Crit(f)$ a parametrised pearly trajectory from $y$ to $x$ is defined to be a configuration $\mathbf{u} = (u_1, \ldots, u_r)$ of $J$-holomorphic discs 
$$u_{\ell} \colon (D, \del D) \to (M, L), \quad du_{\ell} + J\circ du_{\ell} \circ i =0,$$
such that if $\phi \colon \RR \times L \to L$ denotes the negative gradient flow of $f$ with respect to the metric $g$, then
there exist elements $\{t_1, \ldots, t_{r-1}\} \subseteq (0, \infty)$ such that
\begin{enumerate}
\item $\lim_{t\to -\infty}\phi_{t}(u_1(-1)) = y$;
\item for all $1 \le \ell \le r-1$, $\phi_{t_{\ell}}(u_{\ell}(1)) = u_{\ell+1}(-1)$;
\item $\lim_{t \to +\infty}\phi_{t}(u_r(1)) = x$.
\end{enumerate}
\end{definition}

\noindent The relevant moduli spaces now are:
\begin{itemize}
\item For any vector $\mathbf{A} = (A_1, \ldots, A_r) \in \left(H_2(M, L) \setminus 0\right)^r$ we denote by $\widetilde{\P}(y, x, \mathbf{A};\F,J)$ the set of all parametrised pearly trajectories $\mathbf{u} = (u_1, \ldots, u_r)$ such that $[u_i] = A_i$ for all $1 \le i \le r$;
\item For any positive integer $k$ we define 
$$\widetilde{\P}(y, x, kN_L;\F,J) \coloneqq \bigcup_{\substack{\mathbf{A}\\ \mu(\mathbf{A})=kN_L}}\widetilde{\P}(y, x, \mathbf{A};\F,J),$$
where the length $r$ of the vector $\mathbf{A}$ is allowed to vary and $\mu(\mathbf{A})\coloneqq \sum_{i=1}^r \mu(A_i)$.
\item We impose the following equivalence relation on $r-$tuples of $J-$holomorphic discs (for varying $r$):
$\mathbf{u} =(u_1, u_2, \ldots, u_r) \sim \mathbf{u'}=(u'_1, u'_2, \ldots, u'_{r'})$ if and only if $r=r'$ and there exist elements $\sigma_{\ell} \in G_{-1, 1} \coloneqq \{g \in PSL(2, \RR) \st g(-1)=-1, \; g(1)=1\}$ such that $u_{\ell}\circ \sigma_{\ell} = u'_{\ell}$. We now set
\begin{eqnarray*}
\P(y, x, \mathbf{A};\F,J) & \coloneqq & \widetilde{\P}(y, x, \mathbf{A};\F,J)/\sim \\
\P(y, x, kN_L;\F,J) & \coloneqq & \widetilde{\P}(y, x, kN_L;\F,J)/\sim 
\end{eqnarray*}
These definitions extend naturally to the case when $\mathbf{A}$ is the empty vector, in which case one defines $\P(y, x, \mathbf{\emptyset};\F,J) = \P(y, x, 0;\F,J)$ to be the space of unparametrised negative gradient trajectories of $f$ connecting $y$ to $x$. 
\item We also declare the following to be standing notation: 
$$\delta(y, x, \mathbf{A}) \coloneqq \ind{y} - \ind{x} + \mu(\mathbf{A}) - 1,$$
$$\delta(y, x, kN_L) \coloneqq  \ind{y} - \ind{x} + kN_L - 1.$$
\end{itemize}

These moduli spaces of pearly trajectories have natural descriptions as pre-images of certain submanifolds of products of $L$ under suitable evaluation maps and are thus endowed with a topology. That is, given a vector $\mathbf{A} \neq \emptyset$ as above, one considers the map
$$\ev_{\mathbf{A}} \colon \M^{A_1}(L;J) \times \cdots \times \M^{A_r}(L;J) \to L^{2r},$$
$$\ev_{\mathbf{A}}(u_1, \ldots, u_r) \coloneqq (u_1(-1), u_1(1), u_2(-1), u_2(1), \ldots, u_r(-1), u_r(1)).$$
 Then, putting $Q \coloneqq \{(x, \phi_t(x)) \in L \times L \st t > 0, \; x \in L \setminus Crit(f)\}$, we have that
$$\widetilde{\P}(y, x, \mathbf{A};\F,J) = \ev_{\mathbf{A}}^{-1}(W^d(y) \times Q^{r-1} \times W^a(x)).$$
Note that from this and our discussion about dimensions of moduli spaces of discs in Section \ref{The Obstruction Section} it follows that the expected dimension of the space $\P(y, x, \mathbf{A};\F,J)$ is $\delta(y, x, \mathbf{A})$. 

Following \cite{biran2007quantum}, one can also use these descriptions to exhibit $\P(y, x, \mathbf{A};\F,J)$ as a topological subspace of the much larger space $\L$, defined as follows. Let $\P_L$ denote the space of continuous paths $\{\gamma \colon [0, b] \to L \st b \ge 0\}$ (with the compact-open topology) and let $\P_{\F} \subseteq \P_L$ denote the subspace consisting of paths which parametrise negative gradient flowlines of $f$ in the unique way such that $f(\gamma(t)) = f(\gamma(0))-t$.

 Then $\P(y, x, \mathbf{A};\F, J)$ embeds continuously into the space 
$$\L \coloneqq \P_{\F} \times \PM^{A_1}(L;J)/G_{-1, 1} \times \P_{\F} \times \PM^{A_2}(L;J)/G_{-1, 1} \times \cdots \times \PM^{A_r}(L;J)/G_{-1, 1} \times \P_{\F}.$$

Now let $\mathbf{u} = (u_1, u_2, \ldots, u_r) \in \widetilde{\P}(y, x, \mathbf{A};\F,J)$ be a parametrised pearly trajectory connecting $y$ to $x$ and let $(\tau_0, [u_1], \tau_1, [u_2], \ldots, \tau_{r-1}, [u_r], \tau_r)$ be the corresponding element of $\L$. For any $1 \le \ell \le r$ and $j \in \{0, 1\}$ define $\gamma^j_{u_{\ell}} \colon [0, 1] \to L$, $\gamma^j_{u_{\ell}}(t) = u_{\ell}(e^{i\pi(j+t+1)})$ (that is, $\gamma^0_{u_{\ell}}$ parametrises the image of the ``bottom'' half-circle, traversed counter clockwise, while $\gamma^1_{u_{\ell}}$ parametrises the ``top'' half-circle). 
We now define the following two paths:
\begin{eqnarray}\label{pearly paths for local systems}
\gamma^0_{\mathbf{u}} &\coloneqq & \tau_0 \cdot \gamma^0_{u_1}\cdot \tau_1 \cdots \gamma^0_{u_r} \cdot \tau_r \in \Pi_1L(y,x)  \nonumber \\
\gamma^1_{\mathbf{u}} &\coloneqq & \tau^{-1}_r \cdot \gamma^1_{u_r}\cdot \tau^{-1}_{r-1} \cdots \gamma^1_{u_1} \cdot \tau^{-1}_1 \in \Pi_1L(x,y).
\end{eqnarray}
We then get corresponding parallel transport maps $P_{j,\gamma^j_{\mathbf{u}}}\colon E^j_y \to E^j_x$ for $j \in \{0, 1\}$. Whenever we have $E^0 = E^1 = E$ we will just write $P_{\gamma^j_{\mathbf{u}}} = P_{j,\gamma^j_{\mathbf{u}}}$ as before.

 We wish to define a candidate differential on $C^*_f(E^0,E^1)$ by using parallel transport maps along the paths \eqref{pearly paths for local systems} corresponding to isolated pearly trajectories. The relevant theorem, guaranteeing that this is possible is the following.

\begin{theorem}(\cite{biran2007quantum}, Proposition 3.1.3)\label{biran-cornea}
For any Morse datum $\F$, there exists a Baire subset $\Jreg(\F) \subseteq \J(M, \omega)$ such that for every $J \in \Jreg(\F)$ and every pair of points $x, y \in Crit(f)$  the set $\P(y, x, kN_L;\F,J)$ has naturally the structure of a smooth manifold of dimension $\delta(y, x, kN_L)$, whenever $\delta(y, x, kN_L) \le 1$. Furthermore, when $\delta(y, x, kN_L) = 0$ the space $\P(y, x, kN_L;\F,J)$ is compact and hence consists of a finite number of points.
\end{theorem}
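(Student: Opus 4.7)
The plan is to follow the approach of Biran and Cornea. The theorem breaks into two halves: first, a transversality statement giving $\P(y,x,kN_L;\F,J)$ the structure of a smooth manifold of dimension $\delta(y,x,kN_L)$; second, a Gromov-type compactness statement handling the $0$-dimensional case. The key structural input is the fibre-product description
\[
\widetilde{\P}(y,x,\mathbf{A};\F,J) \;=\; \ev_{\mathbf{A}}^{-1}\bigl(W^d(y) \times Q^{r-1} \times W^a(x)\bigr) \;\subseteq\; \PM^{A_1}(L;J)/G_{-1,1} \times \cdots \times \PM^{A_r}(L;J)/G_{-1,1},
\]
so that a dimension count reduces to making the evaluation map transverse to the given Morse-theoretic subset.

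For the manifold structure, I would first invoke the transversality theorems recalled in Section \ref{The Obstruction Section}: for a Baire subset of $\J(M,\omega)$, every space $\M^{A_i,*}(L;J)$ is a smooth manifold of dimension $n + \mu(A_i) - 3$, and its $G_{-1,1}$-refinement has dimension $n + \mu(A_i) - 1$. The Kwon-Oh and Lazzarini theorem, together with monotonicity, lets one restrict attention to simple discs: when $\mu(A_i) = N_L$ every $J$-holomorphic representative is automatically simple, and for larger Maslov index the non-simple discs factor through simple ones of strictly smaller Maslov index, so their moduli lie in strata of strictly smaller dimension and can be ignored for dimension purposes. Applying the parametric transversality argument of the type used to produce $\Jreg(L\vert F)$ in Section \ref{The Obstruction Section} to the evaluation map $\ev_{\mathbf{A}}$ yields a further Baire subset $\Jreg(\F)$ on which $\ev_{\mathbf{A}}$ is transverse to $W^d(y) \times Q^{r-1} \times W^a(x)$ for every admissible $\mathbf{A}$. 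Since $\mu(\mathbf{A}) = kN_L$ and $\mu(A_i) \ge N_L$ force $r \le k$, and for each $r$ there are only countably many classes $\mathbf{A}$ with the correct total Maslov index, a countable intersection of Baire sets preserves the Baire property. A direct dimension count then matches $\dim \P(y,x,\mathbf{A};\F,J)$ with $\delta(y,x,\mathbf{A}) = \ind(y) - \ind(x) + \mu(\mathbf{A}) - 1$.

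For compactness in the case $\delta(y,x,kN_L) = 0$, monotonicity supplies the uniform energy bound $E(\mathbf{u}) = \lambda k N_L$, so Gromov compactness for each disc component combines with standard Morse-theoretic compactness for the intervening flow segments. A sequence in $\P(y,x,kN_L;\F,J)$ can fail to converge only by (a) breaking of some gradient segment at an intermediate critical point, (b) concentration of energy at a boundary point splitting off an extra disc bubble of Maslov $\ge N_L$, or (c) interior sphere bubbling. Monotonicity and $N_L \ge 2$ place configurations of type (c) in codimension at least $2$, while for generic $J \in \Jreg(\F)$ the strata of types (a) and (b) carry expected dimension $\delta - 1 = -1$ and are therefore empty. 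Hence any subsequence limits into $\P(y,x,kN_L;\F,J)$ itself, and the space is compact. The main obstacle will be orchestrating all the transversality arguments simultaneously with a single time-independent $J$ for every pair $(y,x)$ and every admissible $\mathbf{A}$; the local systems play no role at this stage and only enter later, when the parallel transport maps along the paths $\gamma^j_{\mathbf{u}}$ are used to verify $d^2 = 0$.
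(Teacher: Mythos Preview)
The paper does not give its own proof of this theorem: it is quoted verbatim as a result of Biran--Cornea (\cite{biran2007quantum}, Proposition~3.1.3) and used as a black box. So there is no in-paper argument to compare against; your sketch is essentially an outline of the original Biran--Cornea proof, which is the correct reference, and the structure you describe (fibre-product description, transversality of the evaluation map against the Morse-theoretic locus, monotonicity-driven energy bound plus Gromov compactness) matches their approach.

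One point worth flagging in your outline: the handling of non-simple discs is more delicate than ``they factor through lower-Maslov simple discs and so live in smaller strata''. In the Biran--Cornea setup one has to ensure transversality for the full evaluation map $\ev_{\mathbf{A}}$ simultaneously on all disc factors, and multiply-covered discs obstruct this directly. Their solution (see \cite[\S5]{biran2007quantum}) is to replace a pearly trajectory containing a non-simple disc by a combinatorially more complicated configuration of simple discs (using the Kwon--Oh/Lazzarini decomposition) and to verify that the resulting space has strictly smaller virtual dimension than the original, so that for generic $J$ and $\delta \le 1$ no such configuration survives. Your one-line dismissal glosses over this, though the conclusion is the same.
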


We can now define the candidate differential:

\begin{definition}\label{Pearly differential with local systems}
For a Morse datum $\F$ and an almost complex structure $J \in \Jreg(\F)$ we define a map:
$$d^{(\F,J)} \colon C^*_f(E^0,E^1) \to  C^*_f(E^0,E^1) $$
by setting for every $x \in Crit(f)$ and every $\alpha \in \hom_{\FF_2}(E^0_x, E^1_x)$, 
$$d^{(\F,J)}(\alpha) = \sum_{k\in \NN}\sum_{\substack{y \in Crit(f) \\ \delta(y, x, kN_L) = 0}}
                            \sum_{\mathbf{u}\in \P(y, x, kN_L;\F,J)} P_{1,\gamma^1_{\mathbf{u}}}\circ \alpha \circ P_{0,\gamma^0_{\mathbf{u}}}.$$
\end{definition}
Propositions 5.1.2 and 5.6.2 in \cite{biran2007quantum} then assert that (for a possibly smaller Baire subset of almost complex structures, still denoted $\Jreg(\F)$) the above map is a differential whenever the local systems $E^0$ and $E^1$ are assumed trivial of rank 1, and the resulting cohomology is canonically isomorphic to the Floer cohomology $HF^*(L, L)$. In the theorem below we state the modified versions of these facts when the non-trivial local systems are incorporated into the picture.

\begin{theorem}\label{main theorem on local pearly theory}
Let $(M, \omega)$ be a closed monotone symplectic manifold and let $L \subseteq M$ be a closed monotone Lagrangian submanifold with $N_L \ge 2$, equipped with a pair of $\,\FF_2-$local systems $E^0, E^1$ and a Morse datum $\F = (f, g)$. Then there exists a Baire subset $\Jreg(\F) \subseteq \J(M, \omega)$ such that for every $J \in \Jreg(\F)$:
\begin{enumerate}[A)]
\item
\begin{enumerate}[i)]
\item the map $d^{(\F, J)}$ is well-defined; \label{pearly theorem part: pearly differential well-defined}
\item \label{pearly theorem part: conditions to square to zero}$\left(d^{(\F, J)}\right)^2 = 0$ if and only if for some (and hence every) point $x \in Crit(f)$ one has 
\begin{equation}
\alpha \circ m_0(E^0)(x) + m_0(E^1)(x) \circ \alpha = 0 \label{obstruction equation for pearls}
\end{equation}
for every linear map $\alpha \in \hom_{\FF_2}(E^0_x, E^1_x)$.
\end{enumerate}
\item \label{pearly invariance} Let $\F = (f, g)$ and $\F'=(f', g')$ be two sets of Morse data for $L$ and $J \in \Jreg(\F)$, $J' \in \Jreg(\F')$ be regular almost complex structures. If equation \eqref{obstruction equation for pearls} holds, there exists a canonical isomorphism 
\begin{equation}\label{pearly continuation map}
\Psi_{\F, J}^{\F', J'} \colon H^*\left(C^*_f(E^0, E^1), d^{(\F, J)}\right) \to H^*\left(C^*_{f'}(E^0, E^1), d^{(\F', J')}\right)
\end{equation}
\item \label{pearly theorem part: PSS} When equation \eqref{obstruction equation for pearls} holds, there exists a canonical isomorphism 
\begin{equation} \label{pss isomorphism}
\Psi_{PSS} \colon H^*\left(C^*_f(E^0, E^1), d^{(\F, J)}\right) \to HF^*((L, E^0), (L,E^1)).
\end{equation}

\end{enumerate}
\end{theorem}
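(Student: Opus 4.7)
The three parts mirror almost exactly the strip-model statements of Theorem \ref{main theorem of floer theory with local coefficients}, so the plan is to insert parallel transport maps into the classical rank-one constructions of \cite{biran2007quantum} and redo the boundary analysis of the relevant moduli spaces, recording homotopy classes of paths traced along $L$. First, I would take $\Jreg(\F)$ to be the intersection of the Baire subset provided by Theorem \ref{biran-cornea} with (i) $\Jreg(L\vert \{x\})$ for each of the finitely many $x \in Crit(f)$, so that $m_0(E^j)(x)$ admits the description of Definition \ref{definition of m0} using the time-independent $J$, and (ii) a further generic set which guarantees that every one-dimensional pearly moduli space $\overline{\P(y,x,kN_L;\F,J)}$ with $\delta(y,x,kN_L)=1$ is a compact one-manifold with boundary whose codimension-one strata consist exclusively of Morse breakings and Maslov-$2$ disc bubbles.

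\textbf{Parts A)i) and A)ii).} Part i) is immediate from Theorem \ref{biran-cornea} once one observes that for fixed $y,x$ the equation $\delta(y,x,kN_L)=0$ uniquely determines $k$, and the resulting moduli space is a finite set. For part ii) I would proceed exactly as in Proposition \ref{proposition for d^2=0 in strip model}: fix $x \in Crit(f)$ and $\alpha \in \hom_{\FF_2}(E^0_x,E^1_x)$, and for each $y$ compute the sum of $P_{1,\gamma^1_{\bar{\mathbf u}}}\circ\alpha\circ P_{0,\gamma^0_{\bar{\mathbf u}}}$ over the boundary points $\bar{\mathbf u}$ of $\overline{\P(y,x,kN_L;\F,J)}$ with $\delta=1$. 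Pairing boundary points as endpoints of closed intervals shows this sum vanishes. On the other hand, the Morse-breaking strata, with paths concatenated, contribute $\langle (d^{(\F,J)})^2\alpha, y\rangle$. A disc-bubbling stratum corresponds to a Maslov-$2$ disc $v$ splitting off at some point $z$ on $\partial u_i'$ for one of the main pearls; modulo $G_{-1,1}$, the point $z$ lies on either the top or the bottom half-arc of $\partial D\setminus\{-1,1\}$, so its boundary contribution carries either an extra $P_{1,\partial v}$ inserted into $\gamma^1_{\mathbf{u}}$ or an extra $P_{0,\partial v}$ inserted into $\gamma^0_{\mathbf{u}}$. Summing over all bubbles $v$ through the fixed point $u_i'(z)$ yields $m_0(E^j)(u_i'(z))$ in the appropriate slot, and then parallel-invariance of $m_0(E^j)$ (Proposition \ref{invariance of m0}\ref{m0 is a parallel section})) together with homotopy invariance of parallel transport (Lemma \ref{homotopy}) lets me slide this $m_0$-insertion along the pearl all the way to the critical point $x$ without changing its contribution. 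The net bubbling contribution collapses to $P_{1,\gamma^1_{\mathbf{u}}} \circ (m_0(E^1)(x)\circ\alpha + \alpha\circ m_0(E^0)(x))\circ P_{0,\gamma^0_{\mathbf{u}}}$ summed over genuine pearls, and vanishing of the boundary sum gives exactly \eqref{obstruction equation for pearls}.

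\textbf{Parts B) and C).} Both parts follow by standard continuation/comparison arguments of the sort used for the rank-one pearl complex in \cite{biran2007quantum}, decorated with parallel transport maps along the relevant boundary paths. For part B), I would choose a homotopy of Morse data $(\F_s, J_s)$ interpolating between $(\F,J)$ and $(\F',J')$, form the associated moduli of $s$-dependent pearly trajectories, and use its 0- and 1-dimensional strata to define a chain map $\Psi^{\F',J'}_{\F,J}$ and verify that it is a chain homotopy equivalence, with the independence of the resulting isomorphism on auxiliary data shown as in the Floer case (cf. \cite[Prop. 11.2.8]{audin2014morse}). For part C), I would use the usual PSS moduli spaces of ``half-strip'' configurations interpolating between Floer strips (on one end) and pearly trajectories (on the other), with the parallel transport maps read off along the resulting boundary paths in $L$; the obstruction equation \eqref{obstruction equation for pearls} ensures the PSS-chain map squares correctly.

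\textbf{Main obstacle.} The only genuinely new analysis compared to \cite{biran2007quantum} is the boundary identification in part A)ii), specifically verifying that the sliding argument — moving an inserted $m_0$ along a pearl — really does convert every bubbling stratum into a contribution of the form $\alpha \circ m_0(E^0)(x)$ or $m_0(E^1)(x) \circ \alpha$ at the terminal critical point $x$. This requires keeping careful track of which of the two half-arcs of each disc carries $E^0$ and which carries $E^1$ parallel transport, and combining Lemma \ref{homotopy} with the parallelism of $m_0(E^j)$ to re-express each local insertion globally. Once this is done, both the well-definedness and the invariance statements reduce to counts of one-dimensional moduli space boundaries and the remaining steps are routine.
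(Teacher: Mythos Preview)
Your treatment of parts A)i), B) and C) matches the paper's: those reduce to Theorem \ref{biran-cornea} and to the rank-one arguments of \cite{biran2007quantum} decorated with parallel transport, exactly as you say.

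The gap is in part A)ii): your picture of the codimension-one boundary of $\overline{\P(y,x,kN_L;\F,J)}$ is not right. A Maslov-$2$ disc bubbling off at a \emph{generic} point $z$ on $\partial u_i$ is a codimension-two phenomenon, not codimension one: after removing the bubble, the residual pearly configuration has $\delta$ dropped by $2$, hence $\delta=-1$, which is generically empty. So there is no stratum of the kind you describe, and no ``sliding'' of $m_0$ along a pearl is ever needed. For $y\neq x$ the non-Morse-breaking codimension-one strata are the collision/splitting configurations $\P(y,x,(\mathbf A',\mathbf A'');\F,J)$ (a flowline shrinking to zero length, equivalently one disc breaking at a marked point $\pm 1$); summed over all $\mathbf A$ these cancel in pairs, so $\langle d^2\alpha,y\rangle=0$ with no $m_0$ term whatsoever. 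The obstruction arises \emph{only} when $y=x$: then $\delta(x,x,kN_L)=1$ forces $kN_L=2$, the pearl consists of a single Maslov-$2$ disc, and the extra boundary stratum is the degeneration $\D^{\mp}(A,x)$ in which both marked points $\pm 1$ collapse onto a constant ghost component with the nodal point at $\mp i$. Under the identification $\D^{\mp}(A,x)\cong\M^A_{0,1}(x,L;J)$, the boundary loop of the Maslov-$2$ bubble is inserted on the $E^0$-side or the $E^1$-side according to the sign, yielding directly
\[
\langle d^2\alpha,x\rangle \;=\; m_0(E^1)(x)\circ\alpha \;+\; \alpha\circ m_0(E^0)(x),
\]
with no conjugation by outer parallel transport maps since the ghost component sits at $x$. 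This is Theorem \ref{pearly compactness} in the paper. Parallelism of $m_0$ (Proposition \ref{invariance of m0}\ref{m0 is a parallel section})) is used only to pass between different critical points in the ``for some (and hence every) $x$'' clause, not in the boundary analysis itself.
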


In the remaining part of this section we give a sketch proof of this theorem, emphasising part \ref{pearly theorem part: conditions to square to zero}) which is the only place where higher rank local systems make a difference.

\emph{Proof of part \ref{pearly theorem part: pearly differential well-defined}):}
This is an immediate consequence of Theorem \ref{biran-cornea} above.  \qed

\emph{Proof of part \ref{pearly theorem part: conditions to square to zero}):} The proof relies on analysing the natural Gromov compactifications of the spaces $\P(y, x, \mathbf{A}; \F, J)$ when $\delta(y, x, \mathbf{A}) = 1$. 
These compactifications are described in detail by Biran and Cornea in \cite[Lemma 5.1.3]{biran2007quantum}, where they also prove that $d^2 =0$ in the case of trivial rank 1 local systems (we have dropped the decoration $(\F, J)$ from the differential to alleviate notation). Generalising the same arguments to the case of arbitrary local systems yields that for any \emph{distinct} $x, y \in Crit(f)$ and each $\alpha \in \hom_{\FF_2}(E^0_x, E^1_x)$ one has $\langle d^2(\alpha), y \rangle = 0$.

However, some care needs to be taken when evaluating $\langle d^2(\alpha), x \rangle$. To that end we consider the space of twice marked discs $\PM^A(L; J)/G_{-1, 1}$ for $\mu(A) = 2$. 
Its Gromov compactification is obtained by adding stable maps with two components: one is a Maslov 2 disc while the other is a constant disc component and contains the two marked points. We distinguish these configurations into two types, depending on the cyclic order of the special points on the constant component.
That is, with the marked points at $-1$ and $1$, we have (up to equivalence of stable maps) two possibilities for the nodal point: we write 
$$\del \left(\cl{\PM^A(L; J)/G_{-1, 1}}\right) = \D^-(A) \cup \D^+(A),$$ 
where $\D^-(A)$ consists of equivalence classes with the nodal point of the constant component at $-i$, while $\D^+(A)$ consists of the ones with the nodal point at $i$. 
The extended evaluation map $\cl{\ev}_{(A)} \colon \cl{\PM^A(L; J)/G_{-1, 1}} \to L^2$ maps $\D^-(A) \cup \D^+(A)$ to $\operatorname{diag}(L)$.
 We shall write $\D^-(A, x) \coloneqq \D^-(A) \cap \cl{\ev}_{(A)}^{-1}(x, x)$, $\D^+(A, x) \coloneqq \D^+(A) \cap \cl{\ev}_{(A)}^{-1}(x, x)$ and $\D^{\mp}(A,x) \coloneqq \D^-(A, x) \cup \D^+(A, x)$ for any point $x \in L$.
Then, one has the following addendum to \cite[Lemma 5.1.3]{biran2007quantum}:

\begin{theorem}\label{pearly compactness}
There exists a Baire subset $\Jreg(\F) \subseteq \J(M, \omega)$ such that for every $J \in \Jreg(\F)$ one has that for each $x \in Crit(f)$ and $A \in \pi_2(M, L)$ with $\mu(A)=2$, the Gromov compactification $\cl{\P(x, x, (A);\F,J)}$  has naturally the structure of a compact 1-dimensional manifold with boundary. Furthermore, the boundary is given by 
\begin{eqnarray*}
\del \cl{\P(x, x, (A);\F,J)} & = & \left[\bigcup_{\substack{z \in Crit(f) \\ \delta(x,z,0)=0}}\P(x, z, \emptyset;\F,J) \times \P(z, x, (A);\F,J)\right] \cup \\
&&\left[\bigcup_{\substack{z \in Crit(f) \\ \delta(x,z,2)=0}}\P(x, z, (A);\F,J) \times \P(z, x, \emptyset;\F,J)\right] \cup \D^{\mp}(A, x).
\end{eqnarray*}
\end{theorem}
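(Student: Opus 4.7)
The plan is to extend the analysis of \cite[Lemma 5.1.3]{biran2007quantum} to the case where both marked points of the single pearl disc map to the same critical point $x$. Since $\delta(x,x,(A)) = \ind x - \ind x + 2 - 1 = 1$, Theorem \ref{biran-cornea} (after possibly shrinking $\Jreg(\F)$) already guarantees that the interior $\P(x,x,(A);\F,J)$ is a smooth $1$-manifold for generic $J$. What remains is to identify the Gromov compactification and verify that it is a compact $1$-manifold with the asserted boundary.

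First I would apply Gromov compactness to a divergent sequence in $\P(x,x,(A);\F,J)$. Extracting a subsequence, one obtains a stable limit of the form $x \leadsto \cdots \leadsto \text{(disc tree)} \leadsto \cdots \leadsto x$, where the flanking parts are (possibly broken) gradient trajectories of $\F$ and the disc tree is a stable nodal $J$-holomorphic curve of total Maslov index $\mu(A)=2$, carrying the two original marked points $-1$ and $1$. Because $L$ is monotone with $N_L \ge 2$, every non-constant disc component has Maslov index at least $2$, so the disc tree contains exactly one non-constant disc; by the Kwon--Oh/Lazzarini result recalled in Section \ref{The Obstruction Section}, this disc is automatically simple and in class $A$. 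Either the two marked points remain on it (no bubbling, the limit lies in the interior up to flowline breaking) or they sit on a single constant ghost component attached to the non-constant disc at a boundary node. The two possible cyclic orders of $\{-1, \text{node}, 1\}$ on the ghost produce exactly the strata $\D^-(A,x)$ and $\D^+(A,x)$. Sphere bubbling is excluded for generic $J \in \Jreg(\F)$ by the standard dimension count for monotone spheres passing through a point of $L$. On the gradient-flow side, the only codimension-one degeneration is breaking at an intermediate critical point $z$, which yields the two Morse-type terms indexed by $\delta(x,z,0)=0$ and $\delta(z,x,2)=0$ respectively (both conditions enforce $\ind z = \ind x - 1$ or $\ind z = \ind x + 1$).

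The remaining task is to equip the compactification with a smooth manifold-with-boundary structure, with collars meeting exactly the three strata above. At the Morse-breaking ends one invokes the standard pearly gluing of \cite{biran2007quantum}, which is unaffected by the presence of local systems since the two marked points of the disc remain distinct throughout the gluing; the argument transfers verbatim. The main obstacle is the gluing at a ghost-bubble element $(v,\ast) \in \D^{\mp}(A,x)$, where one must produce a one-sided collar inside $\P(x,x,(A);\F,J)$ of smooth pearls whose marked points separate from the node of $v$ as the gluing parameter opens. This requires two transversality inputs already set up in Section \ref{The Obstruction Section}: the linearised Cauchy--Riemann operator at $v$ is surjective because $v$ represents an element of $\M(N_L, L;J) = \M^*(N_L, L;J)$ for $J$ in a Baire subset of $\Jreg(L)$; and the evaluation $\ev \colon \M^{A,*}_{0,1}(L;J) \to L$ is transverse to $\{x\}$, which can be arranged by further shrinking $\Jreg(\F)$ to lie inside $\Jreg(L \vert x)$ for every critical point $x$ (finitely many). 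A standard pre-gluing plus implicit function theorem argument, with the gluing parameter being the angular separation on $\del D$ of the emerging marked points, then produces a unique local one-sided collar for each of $\D^-(A,x)$ and $\D^+(A,x)$. Combining the three types of collars with the limit classification above yields that $\cl{\P(x,x,(A);\F,J)}$ is a compact $1$-manifold with the asserted boundary.
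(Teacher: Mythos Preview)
Your proposal is correct and follows essentially the same route the paper points to: the paper does not give a self-contained proof of this statement but defers to \cite[Lemma 5.1.3]{biran2007quantum} and \cite[Section 6.2]{zapolsky2015lagrangian}, noting only that the novelty here is keeping $\D^-(A,x)$ and $\D^+(A,x)$ as genuine boundary strata rather than gluing them to each other as interior points (which is what Biran--Cornea implicitly do). Your compactness/gluing sketch is precisely the standard argument those references carry out, and your treatment of the ghost-bubble strata via one-sided collars is exactly the refinement needed to realise them as boundary. One tiny slip: in the Morse-breaking paragraph you wrote $\delta(z,x,2)=0$ where the statement has $\delta(x,z,2)=0$; the latter is what forces $\ind z = \ind x + 1$ for the second broken stratum.
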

 \begin{figure}[h!]
\labellist
\pinlabel $\mathbf{2}$ at 112 300
\pinlabel $\mathbf{2}$ at 523 300
\pinlabel $\mathbf{2}$ at 324 233
\pinlabel $\mathbf{2}$ at 115 42
\pinlabel $\mathbf{2}$ at 498 102
\pinlabel $\mathbf{0}$ at 115 100
\pinlabel $\mathbf{0}$ at 498 45
\pinlabel $x$ at 6 342
\pinlabel $x$ at 190 298
\pinlabel $x$ at 445 298
\pinlabel $x$ at 630 252
\pinlabel $x$ at 210 232
\pinlabel $x$ at 432 232
\pinlabel $x$ at 8 100
\pinlabel $x$ at 228 100
\pinlabel $x$ at 388 45
\pinlabel $x$ at 606 45
\endlabellist
\begin{center}
\includegraphics[scale=0.5]{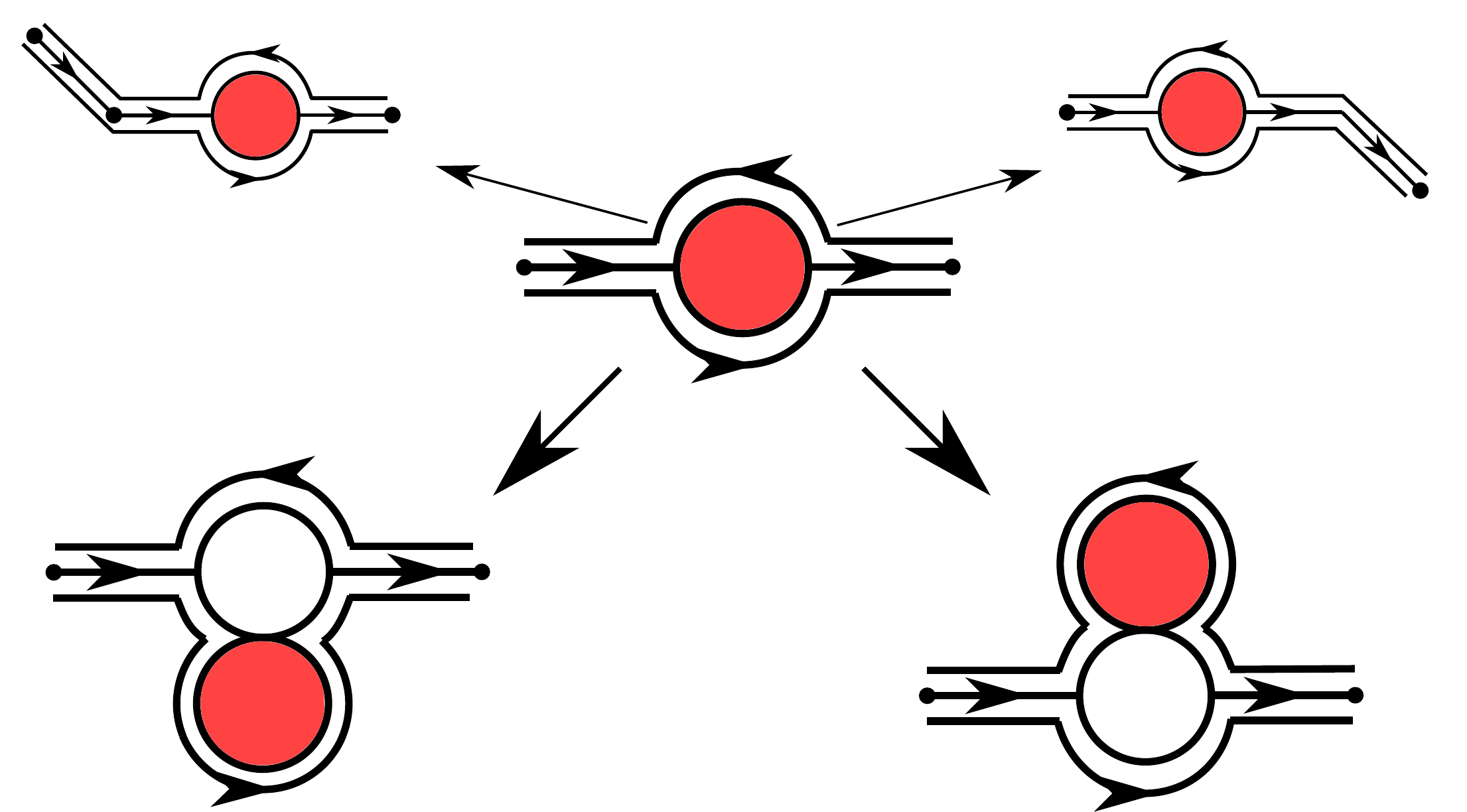}
\caption{The obstruction for the pearl complex.} \label{figure for obstruction for pearls}
\end{center}
\end{figure}
 The situation is illustrated in Figure \ref{figure for obstruction for pearls}. The above description of the boundary $\del\cl{\P(x, x, (A);\F,J)}$ is not explicitly mentioned in \cite{biran2007quantum} since there a natural bijection between $\D^{-}(A, x)$ and $\D^{+}(A, x)$ is implicitly used to glue the two spaces together and thus treat them as points in the interior of $\cl{\P(x, x, (A);\F,J)}$. An explicit description of this idea can be found in \cite{zapolsky2015lagrangian}, Section 6.2. 
 
 We claim that the above theorem suffices to prove part \ref{pearly theorem part: conditions to square to zero}) of Theorem \ref{main theorem on local pearly theory}. Indeed, consider 
 $$w = [(\tilde{u}_{\alpha}, \tilde{u}_{\beta}), \{(\alpha, -i) ,(\beta, z)\}, \{(\alpha, -1), (\alpha, 1)\}] \in D^-(A, x),$$ 
 the notation being $[(\text{maps}), \{\text{nodal points}\}, \{\text{marked points}\}]$; note in particular that $\tilde{u}_{\alpha}$ is constant.
 Write $\del \tilde{u}_{\beta} \in \Pi_1L(x,x)$ for the boundary of $\tilde{u}_{\beta}$ viewed as a loop based at $x$. We define $\gamma^0_w \coloneqq \del \tilde{u}_{\beta}$ and $\gamma^1_w$ to be the constant path at $x$. Similarly, if $w \in D^+(A, x)$ we define $\gamma^1_w \coloneqq \del \tilde{u}_{\beta}$ and $\gamma^0_w$ to be the constant path at $x$. Note that there are obvious diffeomorphisms $\D^{\mp}(A, x) \cong \M^A_{0, 1}(x, L;J)$, given by
 $$w = [(\tilde{u}_{\alpha}, \tilde{u}_{\beta}), \{(\alpha, \mp i) ,(\beta, z)\}, \{(\alpha, -1), (\alpha, 1)\}] \mapsto u_w = [\tilde{u}_{\beta}, z].$$
Clearly, if $w \in \D^-(A, x)$, then $\gamma^0_w = \del u_w$ and if $w \in \D^+(A, x)$, then $\gamma^1_w = \del u_w$. From this it is immediate (at least when $J \in \Jreg(L, x)$) that for every $\alpha \in \hom(E^0_x, E^1_x)$ we have

\begin{equation}\label{pearly obstruction appears for the first time}
m(E^1)(x) \circ \alpha + \alpha \circ m(E^0)(x)=\sum_{\substack{A, \\ \mu(A) = 2}} \sum_{\substack{w \in \D^{\mp}(A, x)}} P_{\gamma^1_w} \circ \alpha \circ P_{\gamma^0_w}.
\end{equation}

\noindent Note now that we also have
\begin{eqnarray*}
\langle d^2 \alpha, x \rangle & = & \sum_{\substack{z \in Crit(f) \\ \delta(x, z, 0) = 0}}\sum_{\substack{(\mathbf{u}, \mathbf{v}) \in  \\\P(x, z, 0;\F,J) \times \P(z, x, 2;\F,J)}} 
P_{1,\gamma^1_{\mathbf{v}}\cdot\gamma^1_{\mathbf{u}}} \circ \alpha \circ 
P_{0,\gamma^0_{\mathbf{u}}\cdot\gamma^0_{\mathbf{v}}} \quad +\\
& & \sum_{\substack{z \in Crit(f) \\ \delta(x, z, 2) = 0}}\sum_{\substack{(\mathbf{u}, \mathbf{v}) \in \\ \P(x, z, 2;\F,J) \times \P(z, x, 0;\F,J) }} P_{1,\gamma^1_{\mathbf{v}}\cdot\gamma^1_{\mathbf{u}}} \circ \alpha \circ 
P_{0,\gamma^0_{\mathbf{u}}\cdot\gamma^0_{\mathbf{v}}}.
\end{eqnarray*}
 Adding this to eqution \eqref{pearly obstruction appears for the first time}, we obtain what we were after: 
 $$\langle d^2 \alpha, x \rangle + m_0(E^1)(x) \circ \alpha + \alpha \circ m_0(E^0)(x) = 0,$$
 where the right-hand side vanishes since, by Theorem \ref{pearly compactness}, the sum runs over all boundary points of the compact 1-dimensional manifold $\cl{\P(x,x,2;\F, J)}$. This completes the proof of part \ref{pearly theorem part: conditions to square to zero}) of Theorem \ref{main theorem on local pearly theory}. \qed 
 
 \emph{Proofs of part \ref{pearly invariance}) and part \ref{pearly theorem part: PSS}):} These are proved for trivial rank 1 local systems in \cite[Section 5.1.2]{biran2007quantum}  and \cite[Proposition 5.6.2]{biran2007quantum}, respectively. Straightforward generalisations of these arguments to the case of higher rank local systems yield the results. \qed

\vspace{\baselineskip}
\noindent Observe that the map $d^J$ can be written as 
\begin{equation}\label{splitting of differential}
d^J = \del_0 + \del^J_1 + \cdots, 
\end{equation}
$$
\text{where}\quad \del^J_{k} \alpha \; = \sum_{\substack{y \in Crit(f) \\ \ind(y) = \ind(x) + 1 - kN_L}}
                            \sum_{\mathbf{u}\in \P(y, x, kN_L;\F,J)} P_{1,\gamma^1_{\mathbf{u}}} \circ \alpha \circ P_{0,\gamma^0_{\mathbf{u}}}.$$
In particular $\del_0$ arises only from counts of gradient flow lines. In the case of the trivial rank 1 local system, it is just the standard Morse differential. It is shown by Abouzaid in \cite[Appendix B]{abouzaid2012nearby} that the cohomology of  $(C^*_f(E^0, E^1), \del_0)$ (which is always well-defined) is isomorphic to $H^*(L; \hom(E^0, E^1))$, where the latter is the singular cohomology of $L$ with coefficients in the local system $\hom(E^0, E^1)$.

\subsection{Honest Subcomplexes}\label{honest subcomplexes}

Above we have seen that the complex $CF^*((L^0, E^0), (L^1, E^1);d^J) = \bigoplus_{q \in L^0 \cap L^1}\hom_{\FF_2}(E^0_q, E^1_q)$ can be obstructed by the presence of linear maps $\alpha \in \hom_{\FF_2}(E^0_q, E^1_q)$ for which 
$$\alpha \circ m_0(E^0)(q) \neq m_0(E^1)(q) \circ \alpha.$$
We also used on several occasions the fact that if $p, q \in L^0 \cap L^1$ and $\gamma \in \Pi_1L^0(p, q)$, $\delta \in \Pi_1L^1(q, p)$ then for every $\alpha \in \hom_{\FF_2}(E^0_q, E^1_q)$ one has that 
\begin{equation}\label{m0 parallel again}
\begin{array}{rcl}
(P_{\delta} \circ \alpha \circ P_{\gamma}) \circ m_0(E^0)(p)& + & m_0(E^1)(p) \circ (P_{\delta} \circ \alpha \circ P_{\gamma})  = \\
 & = & P_{\delta} \circ \bigg(\alpha\circ m_0(E^0)(q) \;\; + \;\; m_0(E^1)(q) \circ \alpha\bigg) \circ P_{\gamma}
\end{array}
\end{equation}
which follows from Proposition \ref{invariance of m0} \ref{m0 is a parallel section}).

Since the Floer differential is defined precisely by pre- and post-composing each homomorphism $\alpha$ by parallel transport along paths on the Lagrangians, the above two observations show that the subspace
$$\overline{CF}^*((L^0, E^0), (L^1, E^1)) \coloneqq \bigoplus_{q \in L^0 \cap L^1} \{\alpha \in \hom_{\FF_2}(E^0_q, E^1_q) \st \alpha \circ m_0(E^0)(q) + m_0(E^1)(q) \circ \alpha = 0 \}$$
is actually preserved by $d^J$ and is an unobstructed subcomplex of $CF^*(E^0, E^1)$. We call $\overline{CF}^*(E^0, E^1)$ the \emph{central subcomplex} of $CF^*(E^0, E^1)$. From the point of view of Hamiltonian chords the central subcomplex is 
\[\begin{array}{rcl}
\overline{CF}^*(E^0, E^1; H,J) & \coloneqq & \bigoplus_{x \in \X_H(L^0, L^1)}\{\alpha \in \hom_{\FF_2}(E^0_{x(0)}, E^1_{x(1)})
 \st \\
  && \qquad \qquad \qquad \qquad\alpha \circ m_0(E^0)(x(0)) + m_0(E^1)(x(1)) \circ \alpha = 0 \}.
\end{array}\]
When $L^0 = L^1$ one can also consider the central subcomplex of the pearl complex:
$$\overline{C}^*_f(E^0, E^1) \coloneqq \bigoplus_{x \in Crit(f)}\{\alpha \in \hom_{\FF_2}(E^0_{x}, E^1_{x})
 \st\alpha \circ m_0(E^0)(x) + m_0(E^1)(x) \circ \alpha = 0 \}.$$
 Again, $\overline{C}^*_f(E^0, E^1)$ is unobstructed by construction. Further, since all chain-level continuation maps 
 \begin{equation}\label{continuation map again}
 \Psi_{H,J}^{H', J'} \colon CF^*((L^0,E^0), (L^1,E^1); H,J) \to CF^*((L^0,E^0), (L^1,E^1); H',J'),
 \end{equation}
 \begin{equation}\label{pearly continuation map again}
 \Psi_{\F, J}^{\F', J'} \colon \left(C^*_f(E^0, E^1), d^{(\F, J)}\right) \to \left(C^*_{f'}(E^0, E^1), d^{(\F', J')}\right)
 \end{equation}
 and the PSS morphism 
 \begin{equation}\label{pss chain morphism}
 \Psi_{PSS} \colon \left(C^*_f(E^0, E^1), d^{(\F, J)}\right) \to CF^*((L, E^0), (L,E^1);H,J).
 \end{equation}
 are again defined using pre- and post-composition by parallel transport maps, we conclude by \eqref{m0 parallel again} that they all restrict to chain maps between the corresponding central subcomplexes. The proofs of Theorem \ref{main theorem of floer theory with local coefficients} \ref{invariance in strip model}) and Theorem \ref{main theorem on local pearly theory} \ref{pearly invariance}) \& \ref{pearly theorem part: PSS}) then apply to show that the restricted maps are in fact chain-homotopy equivalences. One can then make the following definition.
 \begin{definition}
 We define the central Floer cohomology of $E^0 \to L^0$ and $E^1 \to L^1$ to be
 $$\overline{HF}^*((L^0, E^0), (L^1, E^1)) \coloneqq H^*\left(\overline{CF}^*((L^0,E^0), (L^1,E^1); H,J), d^J\right)$$
 for some choice or regular Floer data $(H,J)$.
 When $L^0 = L^1 = L$ one has that $\overline{HF}^*((L, E^0), (L, E^1))$ is canonically isomorphic to 
 $H^*\left(\overline{C}^*_f(E^0, E^1), d^{(\F, J)}\right)$ via the PSS isomorphism.
 \end{definition}
 
 Particularly interesting is the case when $L^0 = L^1 = L$ and $E^0 = E^1 = E$. The Floer differential then preserves a further subcomplex whose definition is much more geometric. For each pair of points $x$ and $y$ on $L$ set
 $$\hom_{mon}(E_x, E_y) \coloneqq \spaN_{\FF_2}\{P_{\gamma} \colon E_{x} \to E_{y} \st \gamma \in \Pi_1L(x, y)\}.$$
 We then consider the following complex.
 \begin{definition}
 The monodromy Floer cochain complex of $E \to L$ is defined to be 
 \begin{equation}\label{monodromy floer complex}
CF^*_{mon}(E;H, J) \coloneqq \bigoplus_{x \in \mathcal{X_H}(L,L)} \hom_{mon}(E_{x(0)}, E_{x(1)}).
 \end{equation}
 \end{definition}
The observation that $CF^*_{mon}(E;H, J)\subseteq \overline{CF}^*(E,E;H,J)$ is equivalent to the fact that $m_0(E)$ is a parallel section, i.e. Proposition \ref{invariance of m0} \ref{m0 is a parallel section}).  
Since the Floer differential and continuation maps are defined using pre- and post-concatination by parallel transport maps and parallel transport maps form a set which is closed under composition, it is clear that $CF^*_{mon}(E;H, J)$ is in fact a subcomplex of $\overline{CF}^*(E,E;H,J)$ and that the maps \eqref{continuation map again} restrict to give chain-homotopy equivalences between monodromy cochain complexes. We then make the following definition.
\begin{definition}
The monodromy Floer cohomology of $E \to L$ is defined to be
$$HF^*_{mon}(E) \coloneqq H^*(CF^*_{mon}(E;H,J), d^J)$$
for some choice of regular Floer data $(H,J)$.
\end{definition}
Analogously, writing
$\End_{mon}(E_x) \coloneqq \hom_{mon}(E_x, E_x),$
one can consider the complex
\begin{equation}\label{monodromy pearl complex}
C^*_{f, mon}(E) \coloneqq \bigoplus_{x \in Crit(f)}\End_{mon}(E_x)
\end{equation}
and the same arguments as above show that $C^*_{f, mon}(E)$ is a subcomplex of the central complex $\overline{C}^*_f(E, E)$, that its homology is invariant under changes of Morse data and that the PSS morphism induces a canonical isomorphism 
$$H^*\left(C^*_{f, mon}(E), d^{(\F, J)}\right) \cong HF^*_{mon}(E).$$

\vspace{5mm}
Note that if one considers only the action of the Morse differential, one naturally obtains the \emph{central Morse complex} $\left(\overline{C}^*_f(E^0,E^1), \del_0\right)$ and the \emph{monodromy Morse complex} $\left(C^*_{f, mon}(E), \del_0\right)$. One can interpret these complexes using the following notions.
Observe that given local systems $E^0, E^1 \to L$, the fact that $m_0(E^0)$ and $m_0(E^1)$ are parallel sections implies that the assignment:
$$\forall x \in L \quad Z_{m_0}(E^0, E^1)_x \coloneqq \{\alpha \in \hom_{\FF_2}(E^0_{x}, E^1_{x})
 \st\alpha \circ m_0(E^0)(x) + m_0(E^1)(x) \circ \alpha = 0 \}$$
defines a local subsystem of $\hom(E^0, E^1)$.

 On the other hand, for any local system $E \to L$, the assignment
$$\forall x\in L \quad \End_{mon}(E)_x \coloneqq \End_{mon}(E_x)$$ 
defines a local subsystem of $\End(E)$ (which is the canonical \emph{local system of operator rings} in the terminology of \cite{steenrod1943homology}) and the fact that $m_0(E)$ is parallel implies the following inclusions of local systems on $L$:
$$\End_{mon}(E) \leq Z_{m_0}(E, E) \leq \End(E).$$ 
In the particular case when $E=E_{reg}$ is the local system induced by the right regular representation of $\pi_1(L)$ on $\FF_2[\pi_1(L)]$ it is not hard to check that 
\begin{equation}\label{endmonEreg is Econj}
\End_{mon}(E_{reg}) \cong E_{conj},
\end{equation}
where $E_{conj}$ is the local system induced by the conjugation action of $\pi_1(L)$ on $\FF_2[\pi_1(L)]$. 
  
  Appealing again to \cite[Appendix B]{abouzaid2012nearby} we conclude that: 
\begin{eqnarray*}
H^*\left(\overline{C}^*_f(E^0,E^1), \del_0\right) & \cong & H^*(L;Z_{m_0}(E^0, E^1))\\
H^*\left(C^*_{f, mon}(E), \del_0\right) & \cong & H^*(L; \End_{mon}(E)),
\end{eqnarray*}
where on the right-hand side we have the singular cohomologies of $L$ with coefficients in the local systems $Z_{m_0}(E^0, E^1)$ and $\End_{mon}(E)$, respectively.

\vspace{5mm}
Finally, let us examine the behaviour of these invariants under taking direct sums of local systems. Given local systems $E^{01}$, $E^{02}$ on $L^0$ and $E^{11}$, $E^{12}$ on $L^1$ we have for every $x_0 \in L^0$, $x_1 \in L^1$, the decomposition
\begin{eqnarray}\label{decomposition of homs}
\hom((E^{01} \oplus E^{02})_{x_0}, (E^{11} \oplus E^{12})_{x_1}) = \bigoplus_{\substack{i\in\{1,2\} \\ j \in \{1, 2\}}}\hom(E^{0j}_{x_0}, E^{1i}_{x_1}).
\end{eqnarray}
It is then convenient to represent an element $ \alpha \in \hom((E^{01} \oplus E^{02})_{x_0}, (E^{11} \oplus E^{12})_{x_1})$ as a matrix $\begin{pmatrix}
\alpha_{11}&\alpha_{12}\\
\alpha_{21}&\alpha_{22}
\end{pmatrix}$
with $\alpha_{ij} \in \hom(E^{0j}_{x_0}, E^{1i}_{x_1})$.
When similarly represented as matrices, the parallel transport maps for $E^{01} \oplus E^{02}$ and $E^{11} \oplus E^{12}$ have block-diagonal from. Since the Floer differential involves only pre- and post-composing elements $\alpha$ by such block-diagonal matrices, it follows that $d^J$ preserves the decomposition
$$CF^*((E^{01} \oplus E^{02}), (E^{11} \oplus E^{12})) = \bigoplus_{\substack{i\in\{1,2\} \\ j \in \{1, 2\}}}CF^*(E^{0j}, E^{1i}),$$
arising from \eqref{decomposition of homs}. Hence,  $CF^*((E^{01} \oplus E^{02}), (E^{11} \oplus E^{12}))$ is unobstructed if and only if $CF^*(E^{0j}, E^{1i})$ is unobstructed for all $i,j \in \{1,2\}$ and then
$$HF^*((E^{01} \oplus E^{02}), (E^{11} \oplus E^{12})) = \bigoplus_{\substack{i\in\{1,2\} \\ j \in \{1, 2\}}}HF^*(E^{0j}, E^{1i}).$$
Even when the full Floer complex is obstructed one still has the decomposition in central Floer cohomology, i.e.
\begin{equation}\label{decomposition for central complex}
\overline{HF}^*((E^{01} \oplus E^{02}), (E^{11} \oplus E^{12})) = \bigoplus_{\substack{i\in\{1,2\} \\ j \in \{1, 2\}}}\overline{HF}^*(E^{0j}, E^{1i}).
\end{equation}
This is because if $H$ is some regular Hamlitonian for $(L^0, L^1)$ and $x \in \X_{H}(L^0, L^1)$ with $x_0 =x(0)$ and $x_1=x(1)$, then $\alpha \in \hom((E^{01} \oplus E^{02})_{x_0}, (E^{11} \oplus E^{12})_{x_1})$ defines an element of $\overline{CF}^*((E^{01} \oplus E^{02}), (E^{11} \oplus E^{12}))$ if and only if 
$$\begin{pmatrix}
\alpha_{11}&\alpha_{12}\\
\alpha_{21}&\alpha_{22}
\end{pmatrix}
\begin{pmatrix}
m_0(E^{01})(x_0) & 0\\
0 & m_0(E^{02})(x_0)
\end{pmatrix}=
\begin{pmatrix}
m_0(E^{11})(x_1) & 0\\
0 & m_0(E^{12})(x_1)
\end{pmatrix}
\begin{pmatrix}
\alpha_{11}&\alpha_{12}\\
\alpha_{21}&\alpha_{22}
\end{pmatrix}.$$
This holds if and only if $\alpha_{ij} \in \overline{CF}^*(E^{0j}, E^{1i};H)$ for all $i,j \in \{1, 2\}$, which establishes the decomposition \eqref{decomposition for central complex}. 

On the other hand, the monodromy Morse and Floer complexes behave rather differently. The important observation here is that if $E$ and $W$ are local systems on $L$ and $m, n$ are two \emph{strictly} positive integers, then one has an isomorphism of complexes
\begin{equation}
C_{f, mon}(E^{\oplus m}\oplus W^{\oplus n}) \cong C_{f, mon}(E\oplus W), 
\end{equation}
i.e. the monodromy Floer complex does not see multiplicities. This is because for any $x, y \in L$ and $\epsilon, \delta \in \{0,1\}$ one has a surjective linear map
\begin{eqnarray}
\phi_{yx} \colon \hom_{mon}((E^{\oplus m}\oplus W^{\oplus n})_x, (E^{\oplus m}\oplus W^{\oplus n})_y)  & \longrightarrow & \hom_{mon}((E^{\oplus \epsilon}\oplus W^{\oplus \delta})_x, (E^{\oplus \epsilon}\oplus W^{\oplus \delta})_y) \nonumber \\
P_{\gamma}^{E^{\oplus m}\oplus W^{\oplus n}} & \longmapsto & P_{\gamma}^{E^{\oplus \epsilon}\oplus W^{\oplus \delta}}.\label{associating parallel transport maps}
\end{eqnarray}
Note that this is well-defined since we require that $m\ge 1$ and $n\ge 1$. It is also guaranteed to be an isomorphism whenever $\epsilon = \delta =1$ since then one has a well-defined inverse, given by reversing the arrow in \eqref{associating parallel transport maps}. Further, since $\phi_{yx}$ are defined just by matching the parallel transport maps to the underlying paths on $L$, they respect composition. That is, if $X \in \hom_{mon}((E^{\oplus m}\oplus W^{\oplus n})_x, (E^{\oplus m}\oplus W^{\oplus n})_y)$ and $Y \in \hom_{mon}((E^{\oplus m}\oplus W^{\oplus n})_y, (E^{\oplus m}\oplus W^{\oplus n})_z)$ then
\begin{equation}\label{phixy preserve composition}
\phi_{zx}(Y\circ X) = \phi_{zy}(Y) \circ \phi_{yx}(X).
\end{equation}
Now, specialising \eqref{associating parallel transport maps} to $x=y \in Crit(f)$ we have an isomorphism
\begin{equation}\label{phixx isomorphisms}
\xymatrix{
\bigoplus_{x \in Crit(f)}\phi_{xx}\; \colon C_{f, mon}(E^{\oplus m}\oplus W^{\oplus n}) \ar[r]^-{\cong}& C_{f, mon}(E\oplus W)}.
\end{equation}
Further, since $d^{(\F, J)}$ involves only pre- and post-composition by parallel transport maps, we see form \eqref{phixy preserve composition} that \eqref{phixx isomorphisms} commutes with the Floer differential and hence is an isomorphism of complexes.
\begin{remark}
The monodromy Floer cohomology seems to capture interesting information about the homotopy theory of loops on $L$. We defer more in-depth investigation of this invariant for future work. In section \ref{some additional calculations} below we provide some explicit calculations of monodromy Floer cohomology for the Chiang Lagrangian.
\end{remark}

\section{The Monotone Fukaya Category}\label{The Monotone Fukaya Category}
\subsection{Setup}\label{setup}
The next standard construction to which we seek to add local systems of arbitrary rank, is the monotone Fukaya category. That is, for a monotone symplectic manifold $(M, \omega)$, we would like to define a (non-curved) $\FF_2-$linear $A_{\infty}$ category whose objects are compact monotone Lagrangian submanifolds equipped with $\FF_2-$local systems, the morphism spaces are Floer cochain groups as in Definition \ref{definition of Floer cochains with local systems} and the first of the $A_{\infty}$ operations $\mu^1$ is a map as in Definition \ref{Floer differential with local coefficients} for appropriate choices of almost complex structures and Hamiltonian perturbations. Note that if $(L, E)$ is to be an object of such a category then we would need there to exist a Floer datum $(H, J)$ such that $CF^*((L, E), (L, E); H, J)$ is an honest complex (i.e. the associated map $d^J$ squares to zero). By point \ref{important remark on self-floer cohomology} in Remark \ref{remark with three parts}, this forces $m_0(E)$ to be a scalar operator and since we are working over $\FF_2$ this means $m_0(E) \in \{0, \Id\}$. 

Having made this preliminary observation, let us describe the constructions more precisely. We do so following closely the exposition in \cite{sheridan2013fukaya}, based in turn on \cite{seidel2008fukaya}. For each $w \in \{0, \Id\}$ we define an $\FF_2-$linear $A_{\infty}$ category $\F(M)_w$ whose objects are pairs $(L,E)$, where $L$ is a compact monotone Lagrangian with $N_L \ge 2$ and $E$ is a finite-rank $\FF_2-$local system with $m_0(E) = w$.  For technical reasons we also require that the image $\iota_*(\pi_1(L)) \subseteq \pi_1(M)$ under the map induced by inclusion is trivial (this is the analogue of the requirement to work with monotone \emph{pairs} of Lagrangians from Section \ref{floer cohomology and local systems}). For simplicity (and since this is what we need for applications) let us only construct a full subcategory of $\F(M)_w$ with a finite set of objects $\mathcal{L} = \{(L^i, E^i)\}$.

For every ordered pair $(L^i, L^j)$ ($i$ and $j$ not necessarily distinct) choose a regular Hamiltonian $H^{ij}\colon [0, 1] \times M \to \RR$ with corresponding flow $\psi^{ij}$ (so $\psi^{ij}_1(L^i) \pitchfork L^j$) and then for every $L^i$ choose 
$J_{L_i} \in \Jreg(L^i \vert \cup_j((L^i \cap (\psi^{ij}_1)^{-1}(L^j)) \cup (\psi^{ji}_1(L^j)\cap L^i))$ (recall that this notation means that evaluation maps from simple, $J_{L^i}-$holomorphic discs with one boundary marked point are transverse to all start and end points of Hamiltonian chords for the chosen $H^{ij}$). 
Now complete $H^{ij}$ to a regular Floer datum by choosing $J^{ij} \in \Jreg(\psi^{ij}_1(L^i), L^j)$ such that $J^{ij}_0 = (\psi^{ij}_1)_*J_{L^i}$ and $J^{ij}_1 = J_{L^j}$.
 We now define the hom--spaces in $\F(M)_w$ to be 
$$hom_{\F(M)_w}((L^i, E^i), (L^j, E^j)) \coloneqq CF^*((L^i, E^i), (L^j, E^j); H^{ij}, J^{ij})$$
and the first $A_{\infty}$ operation $\mu^1$ to consist of the differentials $d^{J^{ij}}$ on all these complexes. Having fixed all Floer data, we now drop it from the notation. We shall write $\X(L^i, L^j)$ for the set of Hamiltonian chords from $L^i$ to $L^j$ for the fixed regular Hamiltonian $H^{ij}$.

The construction of the higher $A_{\infty}$ operations is well-established, at least in the case of rank 1 local systems (see e.g. \cite[Section 2.3]{sheridan2013fukaya}, based on the constructions for exact manifolds from \cite{seidel2008fukaya}). In the wrapped setting, higher rank local systems have been described in detail by Abouzaid \cite{abouzaid2012nearby}. Thus, throughout this discussion we omit a lot of technical details (mostly from \cite[Section 9]{seidel2008fukaya}), in particular the fact that Floer and perturbation data can be chosen in such a way that all moduli spaces which appear are smooth manifolds of the correct dimensions and admitting the correct compactifications.
That this is possible (i.e. that modified proofs from \cite{seidel2008fukaya} apply) is an artefact of monotonicity.

Let us now give a brief description of how the constructions are modified to incorporate local coefficients. 
For every $d \ge 2$ and any $d$-tuple of objects $\{(L^j, E^j)\}_{0 \le j \le d}$ there is a linear map
$$\mu^d \colon CF^*(E^{d-1}, E^{d}) \otimes \cdots \otimes CF^*(E^{0}, E^{1}) \longrightarrow CF^*(E^0, E^d),$$
which is defined by counting isolated perturbed pseudoholomorphic polygons with boundary on the Lagrangians $L^0, L^1, \ldots, L^d$ and using their boundary components for parallel transport. More precisely, let $D$ denote the unit disc in $\CC$ and let $\{\zeta_0, \zeta_1, \ldots, \zeta_d\}$ be a counterclockwise cyclicly ordered set of points on $\del D$ which are labeled either positive (also called incoming) or negative (outgoing). We call each $\zeta_j$ a positive, respectively negative puncture. A choice of strip-like ends for $(D, \zeta_0, \ldots, \zeta_d)$ is a collection of pairwise disjoint open neighbourhoods $\zeta_j \in U_j \subseteq D$, together with biholomorphisms $\epsilon_j \colon \RR^{\pm} \times [0, 1] \to U_j$,
satisfying $\epsilon_j^{-1}(\del U_j) = \del (\RR^{\pm} \times [0, 1])$ and $\lim_{\lvert s \rvert \to +\infty} \epsilon_j(s, t) = \zeta_j$, where $\RR^+  = (0, +\infty)$, $\RR^- = (-\infty, 0)$ and the choice between the two domains is determined by whether the corresponding puncture is labeled positive or negative. Suppose one is given a set of objects $\{(L^j, E^j)\}_{0 \le j \le d}$ and Hamiltonian chords:
$x_0 \in \X(L^0, L^d)$ and $\{x_j\}_{1 \le j \le d}$, with $x_j \in \X(L^{j-1}, L^j)$. Let $(D, \zeta_0, \ldots, \zeta_d)$ be as above with $\zeta_0$ labeled negative and all other punctures labeled positive and assume one has made a choice of strip-like ends. Then any continuous map $u \colon D \setminus \{\zeta_0, \ldots, \zeta_d\} \to M$, mapping the boundary arc between $\zeta_{j}$ and $\zeta_{j+1}$ to $L^j$ (with $\zeta_{d+1} \coloneqq \zeta_0$) and satisfying $\lim_{\lvert s \rvert \to +\infty}u(\epsilon_j(s, t)) = x_j(t)$ uniformly in $t$, gives rise to a linear map
\begin{equation}\label{the map mu^u}
\mu_u \colon \hom_{\FF_2}(E^{d-1}_{x_d(0)}, E^d_{x_d(1)}) \otimes \cdots \otimes \hom_{\FF_2}(E^0_{x_1(0)}, E^1_{x_1(1)}) \longrightarrow \hom_{\FF_2}(E^0_{x_0(0)},E^d_{x_0(1)})
\end{equation}
$$\mu_u(\alpha_d \otimes \alpha_{d-1} \otimes \cdots \otimes \alpha_1) = P_{\gamma^d_u} \circ \alpha_d \circ P_{\gamma^{d-1}_u} \circ \alpha_{d-1} \circ \cdots \circ P_{\gamma^1_u} \circ \alpha_1 \circ P_{\gamma^0_u},$$
where $\gamma^0_u \in \Pi_1L^0(x_0(0), x_{1}(0))$, $\gamma^d_u \in \Pi_1L^d(x_d(1), x_0(1))$ and $\gamma^j_u \in \Pi_1L^j(x_j(1), x_{j+1}(0)), \; 1 \le j \le d-1$ are the compactified images under $u$ of the arcs between $\zeta_j$ and $\zeta_{j+1}$ (see Figure \ref{figure structure maps}).
\begin{figure}[h!]
\labellist
\pinlabel $\alpha_d$ at 496 639
\pinlabel $\alpha_1$ at 456 82
\pinlabel $\alpha_2$ at 797 49
\pinlabel $E^0_{x_1(0)}$ at 430 175
\pinlabel $E^1_{x_1(1)}$ at 570 120
\pinlabel $E^1_{x_2(0)}$ at 695 103
\pinlabel $E^2_{x_2(1)}$ at 819 153
\pinlabel $E^{d-1}_{x_d(0)}$ at 587 570
\pinlabel $E^d_{x_d(1)}$ at 451 526
\pinlabel $x_1$ at 502 151
\pinlabel $x_2$ at 758 126
\pinlabel $x_d$ at 523 569
\pinlabel $x_0$ at 203 375
\pinlabel $\gamma^0_u$ at 514 325
\pinlabel $\gamma^1_u$ at 629 254
\pinlabel $\gamma^2_u$ at 730 288
\pinlabel $\gamma^{d-1}_u$ at 656 439
\pinlabel $\gamma^d_u$ at 500 400
\endlabellist
\begin{center}
\includegraphics[scale=0.4]{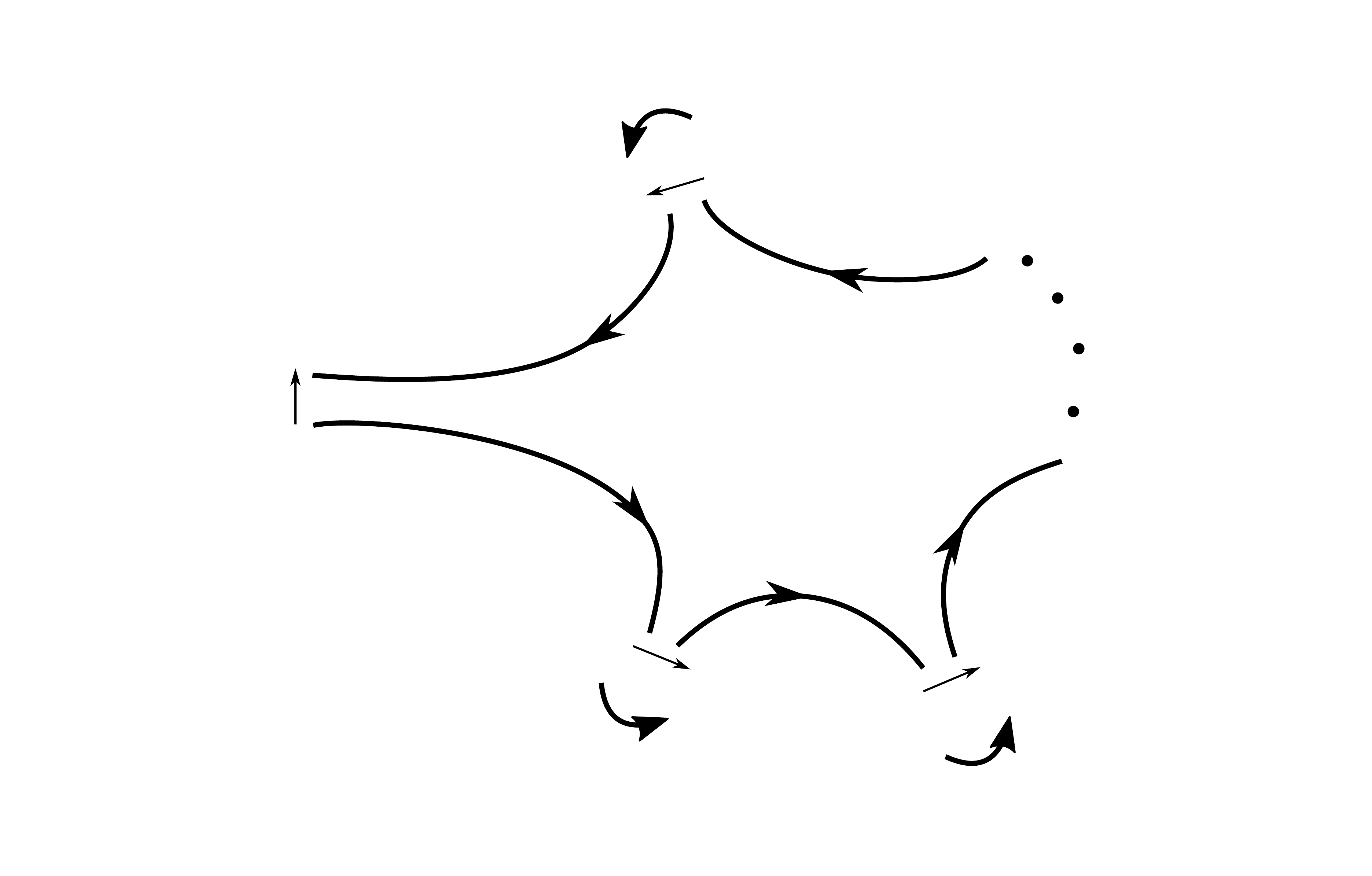}
\caption{The structure maps}\label{figure structure maps}
\end{center}
\end{figure}
We denote the moduli space of maps as above, satisfying the appropriately perturbed Cauchy-Riemann equation by $\R_{1:d}(x_0:x_1, \ldots, x_d)$ and its $k$-dimensional component by $\R^k_{1:d}(x_0:x_1, \ldots, x_d)$ (for this to make sense one needs to first make consistent choices of strip-like ends for the universal families $\R_{1:d}$ of abstract holomorphic discs with $d$ positive punctures and one negative and then make choices of perturbation data for these families which is consistent with gluing and ensures transversality - see \cite[(9g),(9h),(9i)]{seidel2008fukaya}; similar procedures need to be applied to all moduli spaces we discuss below).
One then defines the $A_{\infty}$ operations by setting:
$$\mu^d \colon CF^*(E^{d-1}, E^{d}) \otimes \cdots \otimes CF^*(E^{0}, E^{1}) \longrightarrow CF^*(E^0, E^d),$$
$$\mu^d \coloneqq \sum_{\substack{x_0 \in \X(L^0,L^d)\\ (x_1, \ldots, x_d) \in \Pi_{j=1}^d\X(L^{j-1}, L^j)}}\sum_{u \in \R_{1:d}^0(x_0: x_1, \ldots, x_d)} \mu_u.$$ 
Note that $\mu^1$ is indeed just the collection of differentials $d^{J^{ij}}$. We call an object $(L,E)$ of $\F(M)_w$ \emph{essential} whenever $H^*(hom_{\F(M)_w}(E,E), \mu^1) = HF^*(E,E) \neq 0$. 
\vspace{1em}
\par \begin{remark}
Monotonicity, together with the requirement that the images $\iota^i_*(\pi_1(L^i)) \subseteq \pi_1(M)$ be trivial, ensures uniform energy bounds on pseudoholomorphic maps belonging to spaces of the same expected dimension, so that Gromov compactness applies. 
In particular zero-dimensional moduli spaces are compact, so that all sums ranging over such spaces are finite. Finally, disc and sphere bubbles do not appear in any of the constructions apart from $\mu^1$ which we discussed at length above.
This is because all other constructions involve only zero- and one-dimensional moduli spaces of maps which satisfy a perturbed Cauchy-Riemann equation that does not admit an $\RR-$action and so rely only on Fredholm problems of index 0 and 1. The conditions $N_M \ge 1$ and $N_{L_i} \ge 2$ imply that any sphere or disc bubble would reduce the sum of the Fredholm indices governing the remaining components by at least 2, making them all negative and thus contradicting transversality. 
\end{remark}\par
\noindent The $A_{\infty}$ associativity relations 
$$ \sum_{j=1}^d\sum_{i=0}^{d-j} \mu^{d-j+1}(\alpha_d, \ldots, \alpha_{i+j+1}, \mu^j(\alpha_{i+j}, \ldots, \alpha_{i+1}), \alpha_i \ldots, \alpha_1) = 0
$$
are shown to hold by considering the Gromov compactification $\cl{\R}_{1:d}^1(x_0: x_1, \ldots, x_d)$ of the one-dimensional component of such maps (see \cite[(9l)]{seidel2008fukaya}) and using the fact that the paths used for parallel transport, which are determined by configurations of broken curves appearing at opposite ends of an interval in $\cl{\R}_{1:d}^1(x_0: x_1, \ldots, x_d)$ are homotopic (for an example of a similar argument see Figure \ref{figure for OC} below). This finishes the setup of the extended monotone Fukaya category $\F(M)_{w}$, which is now allowed to contain any set of objects $\{(L^i, E^i)\}$ which satisfy $m_0(E^i) = w$. 
Note that for any object $(L, E)$ of this category, the structure maps $\mu^*$ make $CF^*(E,E)$ into an $A_{\infty}$ algebra. 
\vspace{1em}
\begin{remark}
The above construction depends heavily on choices of strip-like ends and regular Floer and perturbation data. It is a fact that different choices yield quasi-equivalent categories (see \cite{seidel2008fukaya}, (10a)). We will not require this fact here.
\end{remark}

\subsection{Split-Generation}\label{split-generation}
Our main purpose for discussing this extended Fukaya category is so that we can prove the non-displaceablity result Corollary \ref{chiangRp3nonDisp}. To that end we need a version of Abouzaid's split-generation criterion \cite{abouzaid2010geometric} to hold in this setting. Such an extension has already been proved in \cite{abouzaid2012nearby} for the wrapped Fukaya category and our situation is in fact a lot simpler since we won't have to deal with infinite-dimensional $\hom-\,$spaces. 
For the sake of completeness we include a discussion of the split-generation criterion and its proof below.
Recall first that if $\A$ is any cohomologically unital $A_{\infty}$ category then an object $L$ is said to split-generate an object $K$ if $K$ is quasi-isomorphic to an object in the smallest triangulated (in the $A_{\infty}$ sense) and idempotent closed subcategory of $\Pi(Tw \A)$ containing $L$, where $\Pi(Tw \A)$ denotes the split-closure of the category $Tw(\A)$ of twisted complexes over $\A$ (see \cite[(3l), (4c)]{seidel2008fukaya}). Split-generation is important for computations in Fukaya categories but in the present work we are interested only in the following well-known consequence.
 \begin{lemma}\label{split-generation implies non-vanishing of HF}
Suppose that $K$ is split-generated by $L$ and $H^*(hom_{\A}(K,K), \mu^1) \neq 0$. Then $H^*(hom_{\A}(L,K), \mu^1) \neq 0$.\qed
\end{lemma}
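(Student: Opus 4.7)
The plan is to argue by contradiction via a Yoneda-type propagation: assume $H^*(\hom_{\A}(L,K),\mu^1) = 0$ and deduce that the same vanishing must hold for every object in the smallest split-closed triangulated subcategory of $\Pi(Tw\A)$ containing $L$; since by hypothesis $K$ lies in this subcategory, we would then obtain $H^*(\hom_{\A}(K,K),\mu^1)=0$, contradicting the hypothesis.

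First I would set up the cohomological functor $F(-) \coloneqq H^*(\hom_{\Pi(Tw\A)}(-, K))$ on the triangulated, idempotent-closed category $H^0(\Pi(Tw\A))$. The standard construction of $Tw\A$ (see \cite[(3l)]{seidel2008fukaya}) is built precisely so that the embedding $\A \hookrightarrow \Pi(Tw\A)$ is cohomologically full and faithful; in particular $F(L)=H^*(\hom_{\A}(L,K),\mu^1)=0$ by the assumption, and similarly $F(K)=H^*(\hom_{\A}(K,K),\mu^1)$. The key general property of $F$ is that, being corepresented by $K$ up to cohomology, it is a cohomological functor: any exact triangle $X\to Y\to Z\to X[1]$ in $H^0(\Pi(Tw\A))$ induces a long exact sequence
\begin{equation*}
\cdots \to F(Z[1]) \to F(Y[1]) \to F(X[1]) \to F(Z) \to F(Y) \to F(X) \to \cdots
\end{equation*}
and $F$ sends direct summands to direct summands.

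Next I would use this to propagate the vanishing. Since $F$ commutes with shifts, $F(L[n])=0$ for all $n$. If $X\to Y\to Z\to X[1]$ is an exact triangle with $F(X)=F(Y)=0$ in all degrees, then the long exact sequence forces $F(Z)=0$ in all degrees as well. An induction on the length of the iterated mapping cone construction thus gives $F(T)=0$ for every twisted complex $T$ built out of shifts of $L$, i.e.\ for every object of the smallest triangulated subcategory $\langle L\rangle \subseteq H^0(Tw\A)$. Passing to the split closure, if $K'$ is a summand of some $T\in \langle L\rangle$ then $F(K')$ is a summand of $F(T)=0$, hence vanishes too; this shows $F$ vanishes on every object of the smallest split-closed triangulated subcategory containing $L$.

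Finally, the hypothesis that $L$ split-generates $K$ means precisely that $K$ is quasi-isomorphic in $\Pi(Tw\A)$ to such an object $K'$. Since $F$ is well-defined on quasi-isomorphism classes, $F(K)=F(K')=0$, i.e.\ $H^*(\hom_{\A}(K,K),\mu^1)=0$, contradicting the hypothesis. The main (minor) obstacle is bookkeeping: one has to be comfortable that the corepresentable functor $\hom_{\Pi(Tw\A)}(-,K)$ really does send cones to triangles of chain complexes on the nose, so that taking cohomology produces a genuine long exact sequence; this is a formal feature of the $Tw$ construction and is not specific to local systems, so we simply invoke it from \cite{seidel2008fukaya}.
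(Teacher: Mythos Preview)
Your argument is correct and is exactly the standard proof of this well-known fact. The paper does not actually give a proof: the lemma is stated as a ``well-known consequence'' and immediately followed by a \qedsymbol, so there is nothing to compare against beyond confirming that your Yoneda-type propagation through cones and direct summands is the intended reasoning.
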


 Suppose now that $(L, E)$ and $(K, W)$ are objects of $\F(M)_{w}$ and $E$ and $W$ have finite ranks. There exist linear maps 
 $$\CO^* \colon QH^*(M) \to HH^*(CF^*((L,E),(L,E)))$$
 $$\OC_* \colon HH_*(CF^*((L, E), (L, E))) \to QH^*(M),$$
  the \emph{closed-open} and \emph{open-closed string maps}, relating the quantum cohomology of the ambient manifold to Hochschild invariants of the $A_{\infty}$ algebra $CF^*(E,E)$ (we review these concepts and the maps themselves below). The version of the split-generation criterion we need is the following:
 \begin{theorem}\label{split-generation criterion}
 Let $(L, E)$ be an object of $\F(M)_w$, where $E$ has finite rank. If the map $\CO^*$ is injective, then any other object $(K, W) \in \F(M)_w$ with $W$ of finite rank is split-generated by $(L,E)$.
\end{theorem}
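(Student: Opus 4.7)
The approach will be to adapt Abouzaid's split-generation criterion from \cite{abouzaid2010geometric} to the present setting with higher rank local systems, following the blueprint of its extension in \cite{abouzaid2012nearby}. The starting point is the standard equivalent characterisation (\cite[Lemma 1.5]{abouzaid2010geometric}): the object $(L,E)$ split-generates $(K,W)$ if and only if the unit $e_{(K,W)} \in HF^*((K,W),(K,W))$ lies in the image of the map induced on cohomology by $\mu^2$,
\begin{equation*}
HF^*((K,W),(L,E)) \otimes_{HF^*((L,E),(L,E))} HF^*((L,E),(K,W)) \longrightarrow HF^*((K,W),(K,W)).
\end{equation*}
So the task reduces to showing that injectivity of $\CO^*$ based at $L$ forces $e_{(K,W)}$ to be hit by this composition.

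To achieve this I will introduce a \emph{secondary open-closed map with mixed boundary}
\begin{equation*}
\OC_*^{L,K} \colon HH_*\bigl(CF^*((L,E),(K,W)), CF^*((K,W),(L,E))\bigr) \longrightarrow QH^*(M),
\end{equation*}
defined by counting isolated perturbed pseudoholomorphic discs with one interior marked point (paired with a Morse cocycle on $M$) and two boundary punctures that split the boundary into an arc mapped to $L$ and an arc mapped to $K$, with inputs in the indicated Floer complexes. Parallel transport along the two boundary arcs uses the local systems $E$ and $W$ exactly as in the construction of $\mu^d$ in Section \ref{setup}, and the monotonicity assumption together with $N_L, N_K \ge 2$ rules out sphere and disc bubbling in positive codimension. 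A one-parameter degeneration argument---examining the ends of one-dimensional components of the relevant moduli spaces as the two boundary punctures collide or as the interior puncture escapes to one of them---produces a commutative diagram relating $\OC_*^{L,K}$, the two closed-open maps $\CO^*$ based at $L$ and at $K$, and the composition induced by $\mu^2$.

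Tracing through this diagram yields the desired conclusion. Both $\CO^*$ maps are unital, sending $[1_M] \in QH^*(M)$ to the identity in the respective Hochschild cohomologies, so the diagram combined with the standard non-degenerate trace pairings on the finite-dimensional $\FF_2$-Floer complexes implements the following dichotomy: either the evaluation map induced by $\mu^2$ hits $e_{(K,W)}$ (whence split-generation), or there exists a non-zero element of $QH^*(M)$ in the kernel of $\CO^*$ at $L$, contradicting the hypothesis. The main obstacle, and the only step requiring genuine work, is establishing this commutative diagram at the cochain level: one must carefully track the homotopy classes of the boundary paths traced out at each codimension-one boundary stratum of the relevant moduli spaces, so that the parallel transport maps assemble into honest chain maps between the Hochschild complexes. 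Working over $\FF_2$ (no signs) with finite-rank local systems (no completion issues) keeps this book-keeping entirely tractable and closely parallel to \cite{abouzaid2012nearby}, the remaining analytic input (transversality, Gromov compactness, exclusion of bubbling by monotonicity) being essentially identical to that in Section \ref{setup}.
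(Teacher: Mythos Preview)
Your proposal has a genuine gap at the crucial logical step: you never explain how injectivity of $\CO^* \colon QH^*(M) \to HH^*(CF^*(E,E))$ is converted into the statement that $e_{(K,W)}$ is hit by your evaluation map. The map $\CO^*$ lands in Hochschild \emph{cohomology}, whereas the evaluation map one actually needs (Abouzaid's $H(\mu)$, which uses all the $\mu^{d+2}$, not merely $\mu^2$) has source Hochschild \emph{homology} with coefficients in the bimodule $P_W(E) = CF^*(E,W) \otimes CF^*(W,E)$. Bridging this gap requires a non-trivial duality statement, and your appeal to ``non-degenerate trace pairings'' together with an unspecified ``dichotomy'' does not supply it. Your secondary map $\OC_*^{L,K}$, as described (discs with exactly two boundary punctures and one interior marked point), is simply a map $CF^*(W,E) \otimes CF^*(E,W) \to QH^*(M)$, not a map out of any Hochschild complex; moreover the degenerations you list are not the right ones---collision of the two boundary punctures produces a disc with boundary entirely on one Lagrangian joined to a disc with boundary on both, and ``the interior puncture escapes to a boundary puncture'' is not a codimension-one stratum in the compactification. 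No end of such a moduli space yields $\CO^*$ at $L$ landing in $HH^*$.

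The paper's argument is structurally different and provides exactly the missing link. It first establishes (Proposition~\ref{pairing between hochshild cohomology and homology}) a perfect pairing $HH^*(CF^*(E,E)) \otimes HH_*(CF^*(E,E)) \to \FF_2$ under which $\CO^*$ is dual to the open-closed map $\OC_* \colon HH_*(CF^*(E,E)) \to QH^*(M)$; injectivity of $\CO^*$ therefore gives \emph{surjectivity} of $\OC_*$, so $1 \in QH^*(M)$ lies in its image. A separate geometric argument (Proposition~\ref{commutativity of split-generation diagram}, proved by counting annuli whose conformal modulus degenerates either to a pair of discs joined at an interior node or to a pair of discs joined along a boundary chord) shows that the composite $\CO^0 \circ \OC_*$ equals $H(\mu) \circ HH_*(\Delta)$, where $\Delta \colon CF^*(E,E) \to P_W(E)$ is an $A_\infty$-bimodule coproduct. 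Since $\CO^0(1) = e_{(K,W)}$, this places $e_{(K,W)}$ in the image of $H(\mu)$, and Lemma~\ref{algebraic split-generation} concludes. Your sketch neither introduces the coproduct $\Delta$ nor the bimodule $P_W(E)$, and conflates the duality step with the annulus-degeneration step.
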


This theorem is due to Abouzaid (\cite{abouzaid2010geometric}) in the case of exact Lagrangians in an exact symplectic manifold and when $E$ and $W$ are trivial of rank 1. The case of a general symplectic manifold is work in progress by Abouzaid-Fukaya-Oh-Ohta-Ono \cite{afoooinpreparation}. Still in the exact case, the paper \cite{abouzaid2012nearby} proves a version in which $W$ is allowed to be non-trivial and possibly of infinite rank.
This last requirement is the cause of several algebraic complications which we avoid here.
The proof for the monotone setting and with $E$ and $W$ of rank 1 (though possibly non-trivial) is treated in 
\cite[Section 2.11]{sheridan2013fukaya}. We include a sketch of that proof, modified to incorporate local systems of any finite rank. In our application to the Chiang Lagrangian we shall only use the split-generation criterion in the case when $E$ is trivial of rank 1 but for completeness we treat the slightly more general case here. We begin by reviewing the  closed-open and open-closed string maps.

\subsubsection{Hochschild cohomology and the closed-open string map}
Let us first describe the map
\begin{equation}\label{closed-open map}
\CO^* \colon QH^*(M) \to HH^*(CF^*((L,E),(L,E)).
\end{equation}
Its domain is (in our case) the ungraded small quantum cohomology ring of $M$, whose inderlying vector space is simply $H^*(M;\FF_2)$ but whose ring structure is deformed by ``quantum contributions'' arising from counts of pseudoholomorphic spheres (for a brief account see e.g. \cite[Section 2.2]{sheridan2013fukaya}; full details are given in \cite[Chapter 11]{mcduff2012j}). We denote this product by $\star$. It is a fact that $\star$ is associative, commutative (graded commutative when one works over characteristic different from 2 and $QH^*$ is graded) and together with the Poincar\'{e} pairing makes $QH^*(M)$ into a Frobenius algebra, i.e.
$$\langle a \star b, c \rangle = \langle a, b \star c \rangle.$$
Further, the usual unit $1 \in H^*(M;\FF_2)$ is also a unit for the $\star$ product.

 The target of $\CO^*$ is the Hochschild cohomology of the $A_{\infty}$ algebra $CF^*(E,E)$. The Hochschild cochain complex is 
$CC^*(CF^*(E,E)) \coloneqq \Pi_{d \ge 0}\; CC_c^*(E,E)^d$, where
$$CC_c^*(E,E)^d \coloneqq \hom_{\FF_2}(CF^*(E,E)^{\otimes d}, CF^*(E,E)),$$ equipped with the differential 
\begin{eqnarray*}
\delta((\phi^0, \phi^1, \ldots))^d(\alpha_d, \ldots, \alpha_1) & = & \sum_{j=0}^d\sum_{i=0}^{d-j} \mu^{d-j+1}(\alpha_d, \ldots, \alpha_{i+j+1}, \phi^j(\alpha_{i+j}, \ldots, \alpha_{i+1}), \alpha_i \ldots, \alpha_1) \\
&+& \sum_{j=1}^d\sum_{i=0}^{d-j} \phi^{d-j+1}(\alpha_d, \ldots, \alpha_{i+j+1}, \mu^j(\alpha_{i+j}, \ldots, \alpha_{i+1}), \alpha_i \ldots, \alpha_1).
\end{eqnarray*}
Given an element $\beta \in QH^*(M)$ and a pseudocycle $f \colon B \to M$ representing the Poincar\'{e} dual of $\beta$, one defines a corresponding Hochschild cochain $\CO^*(\beta;f) = (\CO^*(\beta;f)^d)_{d \ge 0} \in CC^*(CF^*(E,E))$ as follows. For every tuple of Hamiltonian chords $(x, \vec{x}) \coloneqq (x, x_1, \ldots, x_d)$ in $ \X(L,L)$ one considers the moduli space $\R_{1:d;1}(x:\vec{x};f)$ of perturbed pseudoholomorphic maps $u$ from a disc with $d$ positive boundary punctures, asymptotic to $\vec{x}$, one negative boundary puncture which is asymptotic to $x$ and an internal marked point which is mapped to $im(f)$. Every $u \in \R_{1:d;1}(x:\vec{x};f)$ defines a map $\mu_u$ as in equation \eqref{the map mu^u}. One then sets
$$\CO^*(\beta;f)^d \coloneqq \sum_{(x, \vec{x}) \in \X(L,L)^{d+1}} \sum _{u \in \R^0_{1:d;1}(x:\vec{x};f)} \mu_u.$$
 
The facts that the resulting element is $\delta-$closed and that its cohomology class is independent of the choice of pseudocycle $f$ are proved for rank 1 local systems in \cite[Section 2.5]{sheridan2013fukaya} and the proofs hold just as well in our case (we review a similar argument for the open-closed string map in more detail below). 
By inspecting the definition of the differential $\delta$ one sees that the length-zero projection  
$CC^*(CF^*(E,E)) \to CF^*(E,E)$, $(\phi^0, \phi^1, \ldots) \mapsto \phi^0$ is a chain map. Composing $\CO^*$ with this projection at the level of cohomology gives the map
$$\CO^0 \colon QH^*(M) \to HF^*(E,E).$$
It is an important result that when $HF^*(E,E)$ is equipped with the Floer product\footnote{That is, the product induced by the operation $\mu^2$; the $A_{\infty}$ relations guarantee that this product is well-defined and associative on cohomology.} the map $\CO^0$ is an algebra homomorphism and the element $e_E = e_{(L,E)} \coloneqq \CO^0(1)$ is a unit for $HF^*(E,E)$. Again the proof can be taken directly from \cite[Sections 2.4, 2.5 ]{sheridan2013fukaya}.
\begin{remark}
More generally, the Hochschild cohomology $HH^*(CF^*(E,E))$ itself is an algebra when equipped with the so-called Yoneda product (see e.g. \cite[equation (A.4.1)]{sheridan2013fukaya}); $e_E$ is a unit for this structure as well and the full map $\CO^*$ is also a unital algebra homomorphism. 
\end{remark}

\subsubsection{Hochschild homology and the open-closed string map}

We now describe the open-closed string map $\OC_*\colon HH_*(CF^*(E,E)) \to QH^*(M)$, paying attention to the fact that we allow $E$ to have finite rank higher than one. There is a Hochschild homology group $HH_*(CF^*(E,E), \N)$ for any $A_{\infty}$ bimodule over $CF^*(E,E)$. It is the homology of the complex
$$CC_*(CF^*(E,E), \N) \coloneqq \bigoplus_{d \ge 0} \N \otimes CF^*(E,E)^{\otimes d}$$ 
with respect to the $A_{\infty}$ cyclic bar differential 
\begin{eqnarray*}
b(\underline{n}, \alpha_d, \ldots, \alpha_1) &=& \sum_{\substack{r \ge 0, s \ge 0 \\ r+s \le d}} \mu_{\N}^{r\vert1\vert s} (\alpha_r, \ldots, \alpha_1, \underline{n}, \alpha_d, \ldots,\alpha_{d-s+1})\otimes \alpha_{d-s} \otimes \cdots \otimes \alpha_{r+1} \\
&+& \sum_{\substack{i \ge 0, j\ge 1 \\ i+j \le d}} \underline{n} \otimes \alpha_d \otimes \cdots \otimes \alpha_{i+j+1} \otimes \mu^{j}(\alpha_{i+j}, \ldots, \alpha_{i+1}) \otimes \alpha_i \otimes \cdots \otimes \alpha_1,
\end{eqnarray*}
where $\mu_{\N}^{\cdot\vert1\vert \cdot}$ denote the bimodule structure maps for $\N$. Substituting $\N = CF^*(E,E)$ one obtains the group $HH_*(CF^*(E,E))$, which is the source of $\OC_*$. Following \cite[Section 2.6]{sheridan2013fukaya}, we define the open-closed string map in terms of a pairing 
\begin{equation}\label{OC pairing}
( \OC_*(-),- ) \colon HH_*(CF^*(E,E)) \otimes H_*(M; \FF_2) \to \FF_2.
\end{equation}
Given a generator 
\begin{eqnarray*}
\underline{\alpha} \otimes \alpha_d \otimes \cdots \otimes \alpha_1 &\in& \hom_{\FF_2}(E_{\underline{x}(0)}, E_{\underline{x}(1)}) \otimes\hom_{\FF_2}(E_{x_d(0)}, E_{x_d(1)}) \otimes \cdots \otimes \hom_{\FF_2}(E_{x_1(0)}, E_{x_1(1)})\\
& \le &  CC_*(CF^*(E,E))
\end{eqnarray*}
and a pseudocycle $f$, representing a homology class $a$, we consider the moduli space $\R_{0: d+1;1}(\underline{x},\vec{x}; f)$, consisting of perturbed pseudoholomorphic discs 
asymptotic to $\underline{x}$ and $\vec{x} \coloneqq (x_1, \ldots, x_d)$ at the boundary punctures and mapping the boundary to $L$ and the internal marked point to $im(f)$. We define
$$( \OC_*(\underline{\alpha} \otimes \alpha_d \otimes \cdots \otimes \alpha_1), a ;f ) \coloneqq 
\sum_{u \in \R^0_{0, d+1;1}(\underline{x}, \vec{x}; f)} tr(P_{\gamma^d_u} \circ \alpha_d \circ P_{\gamma^{d-1}_u} \circ \alpha_{d-1} \circ \cdots \circ P_{\gamma^1_u} \circ \alpha_1 \circ P_{\gamma^0_u} \circ \underline{\alpha}),
$$
where on the right hand side one takes the trace of the element in brackets which is an endomorphism of $E_{\underline{x}(0)}$. For index reasons, the boundary of the Gromov compactification of the 1-dimensional component $\R^1_{0, d+1;1}(\underline{x},\vec{x}; f)$ consists only of strip breakings at the incoming punctures and configurations of pairs of discs, one of which carries the internal marked point and the other carries at least two punctures. These are precisely the moduli spaces contributing to the composition
\begin{equation}\label{OC composed with b}
\xymatrixcolsep{5pc}
\xymatrix{
CC_*(CF^*(E,E)) \ar[r]^b & CC_*(CF^*(E,E)) \ar[r]^-{( \OC_*(-), a; f )} & \FF_2
}.
\end{equation}
 We claim that this implies $\langle\OC_*(b(\underline{\alpha}\otimes \alpha_d \otimes \cdots \otimes \alpha_1)), a ;f \rangle =0$. Let us illustrate this by an example. Suppose that $d=4$ and the two broken configurations in Figure \ref{figure for OC} appear as opposite boundary points of a connected component of $\cl{\R}_{0: 5;1}^1(\underline{x}, x_1, x_2, x_3, x_4;f)$. 
 \begin{figure}
 \begin{center}
 \labellist
 \pinlabel $\underline{\alpha}$ at 975 744
 \pinlabel $\alpha_1$ at 681 1039
 \pinlabel $\alpha_2$ at 74 924
 \pinlabel $\alpha_3$ at 85 710
 \pinlabel $\alpha_4$ at 655 564
 \pinlabel {$(u, v)$} at 519 916
 \pinlabel $f$ [l] at 338 830
 \pinlabel $\gamma^0_u$ at 372 926
 \pinlabel $\gamma^1_u$ at 219 820
 \pinlabel $\gamma^2_u$ at 354 710
 \pinlabel $\gamma^0_v$ at 611 731
 \pinlabel $\gamma^1_v$ at 793 676
 \pinlabel $\gamma^2_v$ at 812 856
 \pinlabel $\gamma^3_v$ at 650 893
 \pinlabel $\underline{\alpha}$ at 328 522
 \pinlabel $\alpha_1$ at 31 240
 \pinlabel $\alpha_2$ at 337 30
 \pinlabel $\alpha_3$ at 929 184
 \pinlabel $\alpha_4$ at 935 403
 \pinlabel {$(u', v')$} at 479 390
 \pinlabel $f$ [l] at 284 269
 \pinlabel $\gamma^0_{u'}$ at 206 354
 \pinlabel $\gamma^1_{u'}$ at 197 150 
 \pinlabel $\gamma^2_{u'}$ at 390 200
 \pinlabel $\gamma^3_{u'}$ at 375 380
 \pinlabel $\gamma^0_{v'}$ at 634 190
 \pinlabel $\gamma^1_{v'}$ at 780 295
 \pinlabel $\gamma^2_{v'}$ at 642 430
 \endlabellist
 \includegraphics[scale=0.24]{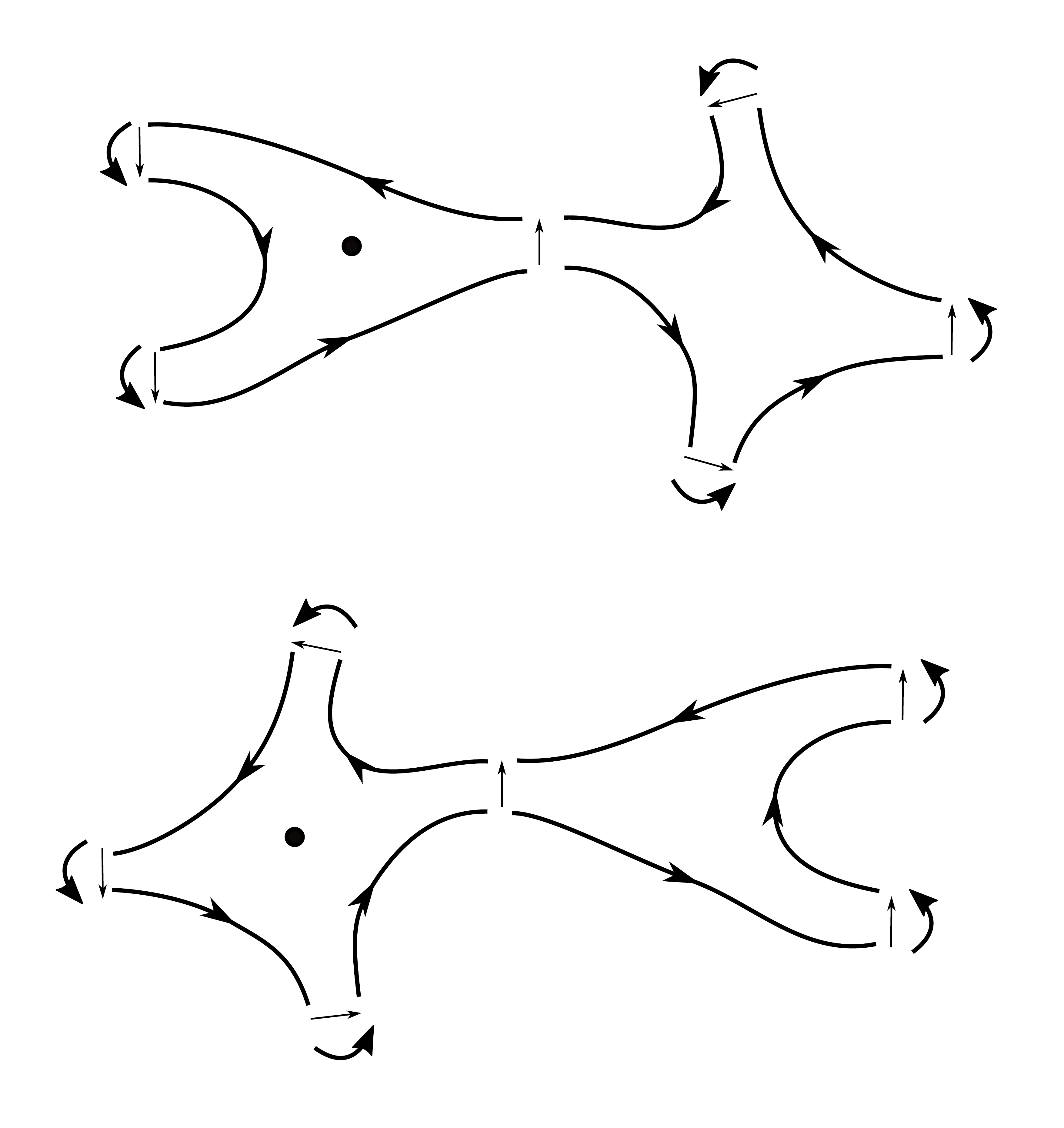}
 \caption{Evaluating $\OC_*$ on a Hochschild boundary}\label{figure for OC}
 \end{center}
 \end{figure}
 
 Their contributions to the composition \eqref{OC composed with b} are given by:
 $$tr \left(P_{\gamma^2_u} \circ \alpha_3 \circ P_{\gamma^1_u} \circ \alpha_2 \circ P_{\gamma^0_u} \circ \left(P_{\gamma^3_v} \circ \alpha_1 \circ P_{\gamma^2_v} \circ\underline{\alpha} \circ P_{\gamma^1_v} \circ \alpha_4 \circ P_{\gamma^0_v}\right)\right)$$
 and
 $$tr \left(P_{\gamma^3_{u'}} \circ \left(P_{\gamma^2_{v'}} \circ \alpha_4 \circ P_{\gamma^1_{v'}} \circ \alpha_3 \circ P_{\gamma^0_{v'}}\right) \circ P_{\gamma^2_{u'}} \circ \alpha_2 \circ P_{\gamma^1_{u'}} \circ \alpha_1 P_{\gamma^0_{u'}} \circ \underline{\alpha}\right).$$
Since there is a 1-parameter family of glued curves interpolating between the two broken configurations, we conclude that for every $0 \le j \le 4$ the two paths connecting $x_j(1)$ to $x_{j+1}(0)$ (where $x_0 = x_5 = \underline{x}$) arising from $(u, v)$ and $(u', v')$ are homotopic. In particular $\gamma^3_v \cdot \gamma^0_u = \gamma^1_{u'} \in \Pi_1L(x_1(1), x_2(0))$, $\gamma^2_u \cdot \gamma^0_v = \gamma^1_{v'} \in \Pi_1L(x_3(1), x_4(0))$, $\gamma^1_{v}  = \gamma^2_{v'} \cdot \gamma^3_{u'}\in \Pi_1L(x_4(1), \underline{x}(0))$ and $\gamma^1_{u}  = \gamma^2_{u'} \cdot \gamma^0_{v'}\in \Pi_1L(x_2(1), x_3(0))$. Using this we see that the two expressions of which we are taking the trace are cyclic permutations of compositions of the same maps and hence the traces agree. Since all broken configurations contributing to \eqref{OC composed with b} come in such pairs, we conclude that the composition vanishes altogether.

 On the other hand, given a Hochschild chain $\varphi$ and two pseudocycles $f$, $g$ representing $a$, then by considering moduli spaces of discs with asymptotics determined by $\varphi$ and which map the internal marked point to a homology between $f$ and $g$ one can show (see \cite[Section 2.6]{sheridan2013fukaya}) that $( \OC_*(\varphi) , a ;f ) + ( \OC_*(\varphi) , a ;g )$ depends only on $b(\varphi)$ and so vanishes when $\varphi$ is a Hochschild cycle. One thus obtains a well defined pairing \eqref{OC pairing} which defines the map $\OC_*$.
 
\subsubsection{The bimodule $P_W(E)$ and the evaluation map $H(\mu)$}
Let us consider for a moment a purely algebraic setup. Let $\A$ be an $A_{\infty}$ category and let $E$ be an object of $\A$. Then for every object $W$ one can consider the space $P_W(E) \coloneqq hom_{\A}(E,W) \otimes hom_{\A}(W,E)$ which is an $A_{\infty}$ bimodule over $hom_{A}(E,E)$ with structure maps 
\begin{eqnarray*}
\mu^{r\vert 1 \vert 0} \colon hom_{\A}(E,E)^{\otimes r} \otimes P_W(E) & \to & P_W(E) \\
\mu^{r\vert 1 \vert 0}(\alpha_r, \ldots, \alpha_1, f \otimes g)& = & f \otimes \mu^{r+1}(\alpha_r, \ldots, \alpha_1, g),
\end{eqnarray*}
\begin{eqnarray*}
\mu^{0\vert 1 \vert s} \colon P_W(E) \otimes hom_{\A}(E,E)^{\otimes s} & \to & P_W(E) \\
\mu^{0\vert 1 \vert s}(f \otimes g, \alpha_{\vert 1}, \ldots, \alpha_{\vert s})& = & \mu^{s+1}(f, \alpha_{\vert 1}, \ldots, \alpha_{\vert s})\otimes g
\end{eqnarray*}
and $\mu^{r \vert 1 \vert s} = 0$ for $r \neq 0 \neq s$. Thus one has a Hochschild homology group $HH_*(hom_{\A}(E,E), P_W(E))$. There is a natural evaluation map:
$$H(\mu) \colon HH_*(hom_{\A}(E,E), P_W(E)) \to H^*(hom_{\A}(W,W), \mu^1),$$ induced on the chain level by the map:
\begin{eqnarray*}
C(\mu) \colon  CC_*(hom_{\A}(E,E), P_W(E)) & \to & hom_{\A}(W,W)\\
C(\mu) \colon  (f \otimes g) \otimes \alpha_d \otimes \cdots \otimes \alpha_1 & \mapsto & \mu^{d+2}(f, \alpha_d, \ldots, \alpha_1, g).
\end{eqnarray*}
In this setting one has the following lemma of Abouzaid:
\begin{lemma}(\cite[Lemma 1.4]{abouzaid2010geometric})\label{algebraic split-generation}
Let $\A$ be a cohomologically unital $A_{\infty}$ category and $E$, $W$ be objects in $\A$. If the unit $e_W \in H^*(hom_{\A}(W,W), \mu^1)$ lies in the image of the evaluation map $H(\mu)$,
then $W$ is split-generated by $E$.
\end{lemma}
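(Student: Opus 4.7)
The plan is to convert the algebraic datum witnessing the hypothesis into a twisted complex over $E$ that splits off $W$. Unpacking the assumption, the fact that $e_W \in \operatorname{im} H(\mu)$ furnishes a cycle $\xi \in CC_*(\hom_\A(E,E), P_W(E))$ with $b(\xi) = 0$ and $C(\mu)(\xi) = e_W + \mu^1(h)$ for some $h \in \hom_\A(W,W)$. Writing $\xi$ as a finite sum
\[
\xi = \sum_I (f_I \otimes g_I)\otimes \alpha^I_{d_I}\otimes\cdots\otimes \alpha^I_{1},
\]
with $f_I\in\hom_\A(E,W)$, $g_I\in\hom_\A(W,E)$, $\alpha^I_j\in\hom_\A(E,E)$, gives me the raw material from which to build a twisted complex.

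The next step is the construction itself. For each $I$, I take $d_I{+}1$ shifted copies of $E$ and form a strictly upper-triangular matrix whose $(j,j{+}1)$ entry is $\alpha^I_{j+1}$; direct-summing over $I$ produces an object $(X,\delta)$ in $\operatorname{Tw}\A$. The Maurer--Cartan relation $\sum_k \mu^k_\A(\delta,\dots,\delta)=0$ coincides with those terms of $b(\xi)=0$ in which the inner $A_\infty$ operation collapses only consecutive $\alpha$-entries not adjacent to the bimodule factor. I then package the $f_I$ into a single map $F\colon W\to X$, entering each block at its top, and the $g_I$ into a single map $G\colon X\to W$, emerging from each block at its bottom. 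The remaining terms of $b(\xi)=0$ split into two groups: those in which $\mu^j$ devours some tail of $\alpha$'s together with $f_I$, and those in which it devours $g_I$ together with some head of $\alpha$'s; these are, respectively, exactly the cocycle conditions $\mu^1_{\operatorname{Tw}\A}(F)=0$ and $\mu^1_{\operatorname{Tw}\A}(G)=0$.

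Finally, expanding the composition $\mu^2_{\operatorname{Tw}\A}(G,F)$ according to the definition of structure maps in $\operatorname{Tw}\A$ unwinds into exactly $\sum_I \mu^{d_I+2}_\A(g_I,\alpha^I_{d_I},\dots,\alpha^I_1,f_I) = C(\mu)(\xi)$, which by hypothesis represents $e_W$ in $H^*(\hom_\A(W,W),\mu^1)$. Hence $[G]\circ[F]=[e_W]$, exhibiting $[W]$ as a cohomological retract of $[X]$ in $H(\operatorname{Tw}\A)$. A standard lift of cohomology-level idempotents to strict idempotents in the split-closure (see e.g.\ \cite[Lemma 4.2]{seidel2008fukaya}) then realises $W$ as a genuine summand of $X$ in $\Pi(\operatorname{Tw}\A)$, so $W$ is split-generated by $E$. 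The only step where real care is needed is the bookkeeping in the middle paragraph: cleanly partitioning the summands of $b(\xi)$ into the three bucketed conditions (Maurer--Cartan for $\delta$, cocycle for $F$, cocycle for $G$) and verifying that the map $C(\mu)$ at the chain level is precisely the $\mu^2_{\operatorname{Tw}\A}$-composition of the resulting morphisms. Once this dictionary is established, everything else is formal $A_\infty$-combinatorics.
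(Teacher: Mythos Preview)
The paper does not prove this lemma; it is quoted directly from \cite[Lemma~1.4]{abouzaid2010geometric} and used as a black box. Your proposal attempts a direct construction, and while the overall shape---build a twisted complex over $E$ from the data of $\xi$ and exhibit $W$ as a retract---is the right one, the central step fails.

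The problem is your assertion that $b(\xi)=0$ ``cleanly partitions'' into the Maurer--Cartan equation for $\delta$, closedness of $F$, and closedness of $G$. All three families of terms live in the same space $CC_*(\hom_\A(E,E),P_W(E))$, and for a general Hochschild cycle they cancel against one another rather than vanishing separately. Concretely, with your strictly super-diagonal $\delta$ the $(i,j)$-component of the Maurer--Cartan equation on the $I$-th block reduces to the single condition $\mu_{\A}^{\,j-i}(\alpha^I_j,\ldots,\alpha^I_{i+1})=0$; there is no reason for an arbitrary Hochschild cycle to satisfy this for every $i<j$, and without it $(X,\delta)$ is not an object of $\operatorname{Tw}\A$ at all. (There is also a direction mix-up: since $f_I\in\hom_\A(E,W)$ and $g_I\in\hom_\A(W,E)$, the map assembled from the $f_I$ goes $X\to W$ and the one from the $g_I$ goes $W\to X$, opposite to what you wrote; correspondingly your expression $\mu^{d_I+2}(g_I,\alpha^I_{d_I},\ldots,\alpha^I_1,f_I)$ does not typecheck and should read $\mu^{d_I+2}(f_I,\alpha^I_{d_I},\ldots,\alpha^I_1,g_I)$.)

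Abouzaid's actual argument separates these concerns. One first builds a \emph{universal} object over $E$---essentially the bar resolution of the left $\hom_\A(E,E)$-module $\hom_\A(E,W)$, realised as an (in general infinite) twisted complex on copies of $E$---whose Maurer--Cartan equation holds by the $A_\infty$-relations of $\A$ alone, together with a canonical closed morphism to $W$. The Hochschild chain complex $CC_*(\hom_\A(E,E),P_W(E))$ is then identified with the morphism space from $W$ into this bar object, under which $C(\mu)$ becomes post-composition with the canonical map. The cycle $\xi$ thus supplies only the morphism out of $W$; the twisted differential and the morphism into $W$ are fixed in advance and require no input from $b(\xi)=0$, so no partition is needed.
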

Let us now specialise to the case where $\A$ is the category $\F(M)_w$ from section \ref{setup} above. The  bimodule is then $P_W(E) = P_{(K,W)}(L,E) = CF^*(E, W) \otimes CF^*(W, E)$. 
Note that this can be rewritten as 
\begin{eqnarray*}
CF^*(E, W) \otimes CF^*(W, E) &= &\left(\bigoplus_{y \in \X(L, K)}\hom(E_{y(0)}, W_{y(1)})\right)  \otimes \left(\bigoplus_{z \in \X(K, L)}\hom(W_{z(0)}, E_{z(1))})\right)\\
&=&\bigoplus_{\substack{y \in \X(L, K)\\z \in \X(K, L)}}\hom(E_{y(0)}, W_{y(1)})  \otimes  \hom(W_{z(0)}, E_{z(1))})\\
&=&\bigoplus_{\substack{y \in \X(L, K)\\z \in \X(K, L)}} E^{\vee}_{y(0)} \otimes W_{y(1)}  \otimes  W^{\vee}_{z(0)} \otimes E_{z(1)}\\
&=&\bigoplus_{\substack{y \in \X(L, K)\\z \in \X(K, L)}} \hom(W_{z(0)}, W_{y(1)}) \otimes \hom(E_{y(0)}, E_{z(1)}),
\end{eqnarray*}
where we have crucially used the fact that $E$ and $W$ have finite rank.
From now on we shall refer to the elements of the components of $P_W(E)$ in one of the following two ways
\begin{itemize}
\item $\hat{f}_y \otimes \hat{g}_z \in \hom(E_{y(0)}, W_{y(1)}) \otimes \hom(W_{z(0)}, E_{z(1))})$
\item $f_{zy} \otimes g_{yz} \in \hom(W_{z(0)}, W_{y(1)}) \otimes \hom(E_{y(0)}, E_{z(1)})$.
\end{itemize}
We will find the second description more useful. We then need an expression for the output of the evaluation map $C(\mu)$, when it is applied to elements of the form $f_{zy} \otimes g_{yz}$.

\begin{lemma}\label{evaluation map via traces}
For elements $f_{zy} \otimes g_{yz} \in \hom(W_{z(0)}, W_{y(1)}) \otimes \hom(E_{y(0)}, E_{z(1)}) \le P_W(E)$ and $\alpha_d \otimes \cdots \otimes \alpha_1 \in \hom(E_{x_d(0)},E_{x_d(1)}) \otimes \cdots \otimes \hom(E_{x_1(0)},E_{x_1(1)}) \le CF^*(E,E)^{\otimes d}$, the evaluation map $C(\mu)$ is given by
\begin{equation}\label{compostition is taking trace}
C(\mu) ((f_{zy} \otimes g_{yz}) \otimes \alpha_d \otimes \cdots \otimes \alpha_1) = 
\end{equation}
$$\sum_{w \in \X(K, K)} \sum_{u \in \R^0_{1: d+2}(w: z, x_1, \ldots, x_d, y)} \mathrm{tr}(P_{\gamma^{d+1}_u} \circ \alpha_d \cdots \circ P_{\gamma^2_u} \circ \alpha_1 \circ P_{\gamma^1_u}\circ g_{yz}) P_{\gamma^{d+2}_u} \circ f_{zy}
\circ P_{\gamma^0_u},
$$
where one takes the trace of the element in brackets which is an endomorphism of $E_{y(0)}$. 
\end{lemma}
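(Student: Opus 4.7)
The plan is to reduce the formula to a purely algebraic fact about the swap isomorphism
\[
\hom(E_{y(0)}, W_{y(1)}) \otimes \hom(W_{z(0)}, E_{z(1)}) \;\xrightarrow{\;\cong\;}\; \hom(W_{z(0)}, W_{y(1)}) \otimes \hom(E_{y(0)}, E_{z(1)})
\]
used earlier to rewrite the bimodule $P_W(E)$. All Floer-theoretic content is already packaged into the definition of $\mu^{d+2}$ and into the formula \eqref{the map mu^u} for $\mu_u$; what remains is a finite-dimensional linear-algebra computation.

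First I unfold the definitions. Let $\hat f_y \otimes \hat g_z \in \hom(E_{y(0)}, W_{y(1)}) \otimes \hom(W_{z(0)}, E_{z(1)})$ denote the preimage of $f_{zy} \otimes g_{yz}$ under the swap. Unpacking $C(\mu)$ and $\mu^{d+2}$ as in Section \ref{setup}, the left-hand side of \eqref{compostition is taking trace} becomes
\[
\sum_{w \in \X(K,K)} \sum_{u \in \R^0_{1:d+2}(w: z, x_1, \ldots, x_d, y)} P_{\gamma^{d+2}_u} \circ \hat f_y \circ T_u \circ \hat g_z \circ P_{\gamma^0_u},
\]
where $T_u \coloneqq P_{\gamma^{d+1}_u} \circ \alpha_d \circ \cdots \circ \alpha_1 \circ P_{\gamma^1_u} \in \hom(E_{z(1)}, E_{y(0)})$. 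Here I use that the asymptotic conditions on $u$ match the chords $z, x_1, \ldots, x_d, y$ read counterclockwise around $\partial D$, consistent with Figure \ref{figure structure maps} and with the placement of $\hat f_y$ and $\hat g_z$ as the first and last arguments of $\mu^{d+2}$.

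The core of the proof is then the following identity: if $A, B, C, D$ are finite-dimensional $\FF_2$-vector spaces, $\hat f \in \hom(A, B)$, $\hat g \in \hom(C, D)$, and $f \otimes g \in \hom(C, B) \otimes \hom(A, D)$ is the image of $\hat f \otimes \hat g$ under the swap isomorphism, then for every $T \in \hom(D, A)$
\[
\hat f \circ T \circ \hat g \;=\; \mathrm{tr}(T \circ g) \cdot f.
\]
This is checked on a pure tensor by writing $\hat f = e^{\vee}_i \otimes w_i$ and $\hat g = v^{\vee}_j \otimes f_j$ in bases of $A, B, C, D$: both sides evaluate to the map $v \mapsto e^{\vee}_i(T f_j)\, v^{\vee}_j(v)\, w_i$, i.e.\ to $e^{\vee}_i(T f_j) \cdot (w_i \otimes v^{\vee}_j)$. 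Extending by $\FF_2$-linearity (arbitrary elements of the tensor product are finite sums of pure tensors) handles the general case, and finite-dimensionality of the fibres enters only to guarantee that the trace is well-defined.

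Applying this with $(A, B, C, D) = (E_{y(0)}, W_{y(1)}, W_{z(0)}, E_{z(1)})$ and $T = T_u$ gives $\hat f_y \circ T_u \circ \hat g_z = \mathrm{tr}(T_u \circ g_{yz}) \cdot f_{zy}$, and substituting back into the sum above produces exactly \eqref{compostition is taking trace}. The expected obstacle is purely bookkeeping: one must keep straight which boundary arc of $u$ is labelled by which chord (in particular, that $y$ indexes the arc between the last $\alpha$-chord and the output and $z$ the arc between the output and the first $\alpha$-chord), and which of the two natural identifications $\hom(A,B) \otimes \hom(C,D) \cong B \otimes A^\vee \otimes D \otimes C^\vee$ the swap implements. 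Once these conventions have been pinned down, the computation is mechanical.
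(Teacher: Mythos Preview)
Your proposal is correct and follows essentially the same route as the paper. Both arguments unwind $C(\mu)$ into a sum over discs $u$ of $P_{\gamma^{d+2}_u}\circ \hat f_y\circ T_u\circ \hat g_z\circ P_{\gamma^0_u}$ and then reduce to the linear-algebra identity that, under the swap isomorphism $\hom(A,B)\otimes\hom(C,D)\cong\hom(C,B)\otimes\hom(A,D)$, the triple composition map agrees with $(f,T,g)\mapsto \mathrm{tr}(T\circ g)\,f$; you verify this by a basis computation, while the paper phrases it as ``performing all possible contractions of dual tensor factors'', which is the same thing.
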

\begin{proof}
Note that the contribution of every disc $u \in \R^0_{1:d+2}(w:z, x_1, \ldots, x_d, y)$ to $$C(\mu)((\hat{f}_y \otimes \hat{g}_z) \otimes \alpha_d \otimes \cdots \otimes \alpha_1) = \mu^{d+2}(\hat{f}_y, \alpha_d, \ldots, \alpha_1, \hat{g}_z)$$ is obtained by applying the composition map:
\begin{equation}\label{composition map}
\xymatrix{
\hom(E_{y(0)}, W_{y(1)}) \otimes \hom(E_{z(1)}, E_{y(0)}) \otimes \hom(W_{z(0)}, E_{z(1)}) \ar[r]^-{-\circ - \circ -} & \hom(W_{z(0)}, W_{y(1)})}
\end{equation}
to the element $\hat{f}_y \otimes T \otimes \hat{g}_z$, where $T = P_{\gamma_{u}^{d+1}} \circ \alpha_d \circ \cdots \circ \alpha_1 \circ P_{\gamma_u^1}$. Using again that our local systems have finite ranks, we have
$$
\begin{array}{rll}
&\hom(E_{y(0)}, W_{y(1)}) \otimes \hom(E_{z(1)}, E_{y(0)}) \otimes \hom(W_{z(0)}, E_{z(1)}) & \\
=&E_{y(0)}^{\vee}\otimes W_{y(1)} \otimes E_{z(1)}^{\vee}\otimes E_{y(0)}\otimes W_{z(0)}^{\vee} \otimes E_{z(1)}&\\
=&\hom(E_{z(1)}, E_{y(0)}) \otimes \hom(E_{y(0)},E_{z(1)}) \otimes \hom(W_{z(0)}, W_{y(1)}).&
\end{array}
$$
We then see that the composition map \eqref{composition map} coincides with the map 
\begin{eqnarray}
\hom(E_{z(1)}, E_{y(0)}) \otimes \hom(E_{y(0)},E_{z(1)}) \otimes \hom(W_{z(0)}, W_{y(1)}) & \to & \hom(W_{z(0)}, W_{y(1)}) \nonumber 	\\
T \otimes g_{yz} \otimes f_{zy} & \mapsto & \text{tr}(T \circ g_{yz})f_{zy},\label{trace instead of composition}
\end{eqnarray}
as both are given by performing all possible contractions of dual tensor factors in $E_{y(0)}^{\vee}\otimes W_{y(1)} \otimes E_{z(1)}^{\vee}\otimes E_{y(0)}\otimes W_{z(0)}^{\vee} \otimes E_{z(1)}$. 
\end{proof}
\subsubsection{The coproduct map $\Delta$}
Following \cite[Section 3.3 and 4.2]{abouzaid2010geometric}, \cite[Section 5.1]{abouzaid2012nearby}, \cite[Section 2.11]{sheridan2013fukaya}, we relate $CF^*(E,E)$ to the bimodule $P_W(E)$ via an $A_{\infty}$ bimodule homomorphism 
obtained from counts of pseudoholomorphic discs with two outgoing boundary punctures. More precisely,
one defines a coproduct map
$$\Delta \colon CF^*(E, E) \to P_W(E)$$
as follows.
Consider holomorphic discs with two negative boundary punctures $\zeta_{01}$, $\zeta_{02}$ and one positive $\zeta_1$, appearing in this cyclic order counterclockwise around the boundary of the disc. 
For every choice of Hamiltonian chords $x \in \X(L, L)$, $y \in \X(L, K)$ and $z \in \X(K, L)$, one has the 
moduli space $\R_{2:1}(z,y:x)$ of perturbed pseudoholomorphic discs $u$ 
which are 
asymptotic at $\zeta_{01}$, $\zeta_{02}$ and $\zeta_1$ to $z$, $y$ and $x$, respectively, and which map the 
boundary arc between $\zeta_{01}$ and $\zeta_{02}$ to $K$ and the remaining two arcs to $L$. Every map $u \in \R_{2:1}(z, y: x)$ defines paths $\gamma^0_u \in \Pi_1K(z(0), y(1))$, $\gamma^1_u \in \Pi_1L(y(0), x(0))$, $\gamma^1_u \in \Pi_1L(x(1), z(1))$ which are the images of the boundary arcs connecting $\zeta_{01}$ to $\zeta_{02}$, $\zeta_{02}$ to $\zeta_1$ and $\zeta_1$ to $\zeta_{01}$, respectively. The map $\Delta$ is then defined by setting for every $\alpha \in \hom(E_{x(0)}, E_{x(1)})$
$$\Delta(\alpha) = \sum_{\substack{y \in \X(L, K)\\z \in \X(K, L)}} \sum_{u \in \R_{2:1}^0(z, y: x)} P_{\gamma^0_u} \otimes (P_{\gamma^2_u} \circ \alpha \circ P_{\gamma^1_u})$$
with $P_{\gamma^0_u} \otimes (P_{\gamma^2_u} \circ \alpha \circ P_{\gamma^1_u}) \in \hom(W_{z(0)}, W_{y(1)}) \otimes \hom(E_{y(0)}, E_{z(1)}) \le P_W(E)$.
 
One can now extend the map $\Delta$ to a homomorphism of $A_{\infty}$ bimodules. That is, for every $r\ge 0$, $s \ge 0$ one defines an operation
$$\Delta^{r\vert 1 \vert s} \colon CF^*(E,E)^{\otimes r} \otimes CF^*(E,E) \otimes  CF^*(E,E)^{\otimes s} \to P_{W}(E)$$
by considering discs with two negative punctures and $r+1+s$ positive ones. Given chords $\vec{x} = (x_1, \ldots , x_r)$, $\underline{x}$, $\vec{x}_{\vert} = (x_{\vert s}, \ldots, x_{\vert 1})$, all connecting $L$ to $L$, and elements $\alpha_i \in \hom(E_{x_i(0)}, E_{x_i(1)})$, $\underline{\alpha} \in \hom(E_{\underline{x}(0)}, E_{\underline{x}(1)})$, $\alpha_{\vert i} \in \hom(E_{x_{\vert i}(0)}, E_{x_{\vert i}(1)})$ one sets
$$\Delta^{r\vert 1 \vert s} (\alpha_r, \ldots, \alpha_1, \underline{\alpha}, \alpha_{\vert 1}, \ldots, \alpha_{\vert s}) =$$
$$\sum_{\substack{y \in \X(L, K)\\z \in \X(K, L)}} \sum_{u \in \R^0_{2: r+1+s}(z, y: \vec{x}_{\vert}, \underline{x}, \vec{x})} P_{\gamma^0_u}
\otimes 
(P_{\gamma^{r+s+2}_u} \circ \alpha_r \circ P_{\gamma^{r+s+1}_u} \circ
\cdots 
\circ P_{\gamma^{s+2}_u} \circ \underline{\alpha} \circ P_{\gamma^{s+1}_u} \circ 
\cdots 
\circ P_{\gamma^2_u} \circ \alpha_{\vert s} \circ P_{\gamma^1_u}),$$
where $\gamma^0_u$ is again the image of the arc between the two negative punctures, which is mapped to $K$ and the other arcs are ordered counterclockwise around the boundary of the disc.
Note that $\Delta^{0 \vert 1 \vert 0}$ is the initially defined coproduct map. The fact that $\Delta$ is indeed an $A_{\infty}$ bimodule homomorphism (i.e. satisfies \cite[Equation (4.13)]{abouzaid2010geometric}) is verified again by considering the Gromov compactification of the one-dimensional component $\R^1_{2: r+1+s}(z, y: \vec{x}_{\vert}, \underline{x}, \vec{x})$.
It follows that $\Delta$ induces a map $HH_*(\Delta)$ in Hochschild homology. It is defined on the chain level by using all cyclic shifts of arguments of $\Delta^{\cdot \vert 1 \vert \cdot}$. That is, given an element $\underline{\alpha} \otimes \alpha_d \otimes \cdots \otimes \alpha_1 \in CC_*(CF^*(E,E))$, one has 
$$CC_*(\Delta)(\underline{\alpha} \otimes \alpha_d \otimes \cdots \otimes \alpha_1) = \sum_{r+s \le d} \Delta^{r \vert 1 \vert s} (\alpha_r \otimes \cdots \otimes \alpha_1 \otimes \underline{\alpha}\otimes \alpha_d \otimes \cdots \otimes \alpha_{d-s+1}) \otimes \alpha_{d-s} \otimes \cdots \otimes \alpha_{r+1}.$$

\subsubsection{Proof of the split-generation criterion}
We are now in a position to give a sketch proof of Theorem \ref{split-generation criterion}. It follows from the following two facts:
\begin{proposition}\label{pairing between hochshild cohomology and homology}
(compare \cite[Corollary 2.5, Proposition 2.6]{sheridan2013fukaya})
There exists a perfect pairing 
\begin{equation}\label{perfect pairing HH* and HH_*}
HH^*(CF^*(E,E)) \otimes HH_*(CF^*(E,E)) \to \FF_2.
\end{equation}
Further, the diagram 
\begin{equation}\label{diagram for duality of CO and OC}
\xymatrix{
QH^*(M) \ar[d]^{\CO^*} \ar[r]^{\cong} & QH^*(M)^{\vee} \ar[d]^{\OC_*^{\vee}} \\
HH^*(CF^*(E,E)) \ar[r]^-{\cong} & HH_*(CF^*(E,E))^{\vee} 
}
\end{equation}
commutes, where the top isomorphism is given by the Poincar\'{e} pairing and the bottom one comes from \eqref{perfect pairing HH* and HH_*}.
\end{proposition}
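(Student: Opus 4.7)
The plan is to adapt the proof of \cite[Corollary 2.5, Proposition 2.6]{sheridan2013fukaya} by systematically inserting parallel-transport and trace operations so that the argument accommodates the higher-rank local system $E$.

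The first step is to construct a Floer-theoretic Poincar\'e pairing
\[
\langle-,-\rangle_E \colon CF^*(E,E)\otimes CF^*(E,E)\to \FF_2
\]
by counting isolated perturbed $J$-holomorphic discs with two negative boundary punctures, no positive punctures and no internal marked points, all boundary arcs mapped to $L$. Each such disc $u$ produces paths $\gamma^1_u$, $\gamma^2_u$ in $L$ and contributes $\mathrm{tr}(P_{\gamma^2_u}\circ\beta\circ P_{\gamma^1_u}\circ\alpha)$ to $\langle\alpha,\beta\rangle_E$. Gromov compactness, using monotonicity, $N_L\ge 2$ and the assumption $m_0(E)\in\{0,\Id\}$, guarantees that this is a chain map; via the PSS isomorphism \eqref{pss isomorphism} it descends to the classical Poincar\'e duality on $L$ with coefficients in the local system $\hom(E,E)$, which is perfect because $E$ has finite rank.

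Next I would extend $\langle-,-\rangle_E$ to a chain-level pairing on Hochschild invariants by setting
\[
\langle\phi,\underline{\alpha}\otimes\alpha_d\otimes\cdots\otimes\alpha_1\rangle \coloneqq \langle\phi^d(\alpha_d,\ldots,\alpha_1),\underline{\alpha}\rangle_E.
\]
A direct verification using the $A_\infty$-relations and the explicit formulas for $\delta$ and $b$ shows this is a chain map in each slot. For perfectness at the level of cohomology I plan to run the length-filtration spectral-sequence comparison of Abouzaid--Sheridan: both Hochschild complexes are filtered by word length, the pairing respects the filtration, and on the $E_2$-page one obtains the Hochschild invariants of the finite-dimensional graded algebra $HF^*(E,E)$ paired via the classical algebraic pairing coming from Step 1, which is perfect.

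For the commutativity of \eqref{diagram for duality of CO and OC}, the task reduces to the cochain identity
$\langle\CO^*(\beta;f),c\rangle = \langle\OC_*(c),\beta\rangle_{QH}$
for every Hochschild chain $c$ and every $\beta\in QH^*(M)$ represented by a pseudocycle $f$. Both sides are computed by the same moduli space of perturbed discs with boundary asymptotics prescribed by $c$ and one internal marked point constrained to a pseudocycle representing the Poincar\'e dual of $\beta$; the matching of parallel-transport and trace data is precisely the identity used in Lemma \ref{evaluation map via traces}. The hardest step will be the perfectness claim in Step 2: tracking the higher-rank local coefficients through the spectral-sequence comparison requires verifying that the Floer pairing on $HF^*(E,E)$ agrees with Poincar\'e duality on $L$ with coefficients in $\hom(E,E)$, which in turn forces us to invoke the PSS isomorphism together with the description of the leading part of the pearl differential from \cite[Appendix B]{abouzaid2012nearby}.
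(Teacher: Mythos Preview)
Your proposal is correct and follows essentially the same approach as the paper: adapt Sheridan's arguments from \cite[Lemma 2.4, Corollary 2.5, Proposition 2.6]{sheridan2013fukaya} by inserting parallel-transport and trace data, with the key linear-algebra input being exactly the rearrangement of tensor factors recorded in Lemma \ref{evaluation map via traces}. The paper's own proof is in fact more laconic than yours---it simply refers to Sheridan and notes that the only new ingredient for higher-rank $E$ is the trace identity---so your outline is a faithful (and more detailed) expansion of what the paper sketches.
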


\begin{proposition}\label{commutativity of split-generation diagram} (compare \cite[Proposition 4.1]{abouzaid2012nearby}, \cite[Lemma 2.15]{sheridan2013fukaya}) The following diagram commutes:
 \begin{equation}\label{generation diagram}
\xymatrixcolsep{5pc}
\xymatrix{
HH_*(CF^*(E, E)) \ar[d]^{\OC_*} \ar[r]^-{HH_*(\Delta)} & HH_*(CF^*(E, E), P_W(E))\ar[d]^{H(\mu)}\\ 
QH^*(M) \ar[r]^{\CO^0} & HF^*(W, W).}
\end{equation}
\end{proposition}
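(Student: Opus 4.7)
The plan is to establish commutativity on the chain level, up to chain homotopy, by interpolating the two compositions through a one-parameter family of perturbed pseudoholomorphic discs with boundary on $L \cup K$. The construction follows the template of \cite[Section 5.1]{abouzaid2012nearby} and \cite[Section 2.11]{sheridan2013fukaya}, with parallel transport along the boundary arcs of each disc inserted at every step, exactly as in our definitions of $\CO^0$, $\OC_*$, $\Delta$ and $H(\mu)$.

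First I would fix a Hochschild cycle $\varphi = \underline{\alpha}\otimes \alpha_d \otimes \cdots \otimes \alpha_1$ and unpack the definitions. The composition $\CO^0 \circ \OC_*(\varphi)$ counts rigid broken configurations consisting of an $L$-disc carrying the Hochschild chord asymptotics and an interior marked point on a pseudocycle $f$, glued at that point to a $K$-disc with one outgoing $K$--$K$ chord $w$ and an interior marked point also on $f$, the whole sum ranging over pseudocycles Poincar\'e-dual to a basis of $QH^*(M)$. On the other side, by Lemma \ref{evaluation map via traces}, $H(\mu)\circ HH_*(\Delta)(\varphi)$ counts pairs of discs: an $L$-disc with two outgoing punctures (from $\Delta$) glued along an intermediate $L$--$L$ Hamiltonian chord to a $K$-disc with many incoming punctures (from $\mu^{d+2}$), weighted by the trace and parallel-transport pattern \eqref{compostition is taking trace}.

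Next, I would construct a parametrised moduli space $\N_\rho(w;\varphi)$, with $\rho \in [0,+\infty]$, of perturbed pseudoholomorphic discs with the required chord asymptotics and an extra internal modulus $\rho$ that governs a neck region between an $L$-part and a $K$-part of the domain. The family is arranged so that as $\rho \to +\infty$ the neck stretches and the domain breaks along an intermediate $L$--$L$ chord into a $\Delta$-disc and a $\mu^{d+2}$-disc, reproducing the $H(\mu)\circ HH_*(\Delta)$ configurations; while as $\rho \to 0$ the domain degenerates into the two-disc nodal configuration connected through an interior point used to define $\CO^0 \circ \OC_*$, summed over a basis of $QH^*(M)$ via Poincar\'e-dual pseudocycles. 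Standard transversality and Gromov compactness, with sphere and disc bubbling excluded by monotonicity and $N_L, N_K \ge 2$ as in the construction of $\F(M)_w$, then yield that the one-dimensional component $\N^1(w;\varphi)$ is a compact $1$-manifold whose boundary consists exactly of these two limit strata together with strip-breakings at the Hochschild incoming asymptotics. Summing parallel transport contributions along this boundary gives the required chain-level equality modulo terms applied to $b(\varphi)$, which vanish because $\varphi$ is a cycle.

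The matching of parallel transport contributions across the two limits is automatic: each boundary arc of the degenerating disc traces out a continuous family of paths on $L$ or on $K$, so the assembled boundary paths at $\rho = 0$ and $\rho = +\infty$ are homotopic rel endpoints, precisely in the spirit of Lemma \ref{homotopy} and the trace manipulation used in the proof of Lemma \ref{evaluation map via traces}. The hard part is the standard but delicate task of choosing consistent perturbation data for the parametrised family so that it is everywhere transverse and so that its two limits identify, via gluing theorems, with the moduli spaces appearing in the definitions of $\CO^0$, $\OC_*$, $\Delta$ and $H(\mu)$. These technical points are handled in the cited references and transfer verbatim to our setting, since local systems enter the story only through parallel transport along boundary arcs.
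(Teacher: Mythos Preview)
Your overall strategy matches the paper's: one constructs an interpolating moduli space and reads off the two compositions at its ends. However, two points in your description are not quite right and the second of these is a genuine gap.

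First, the interpolating domain is not a disc with an ``internal neck'' but an \emph{annulus} $A_r$ with inner boundary on $K$ (carrying the single outgoing $K$--$K$ puncture) and outer boundary on $L$ (carrying the $d+1$ incoming $L$--$L$ punctures); the parameter is the conformal modulus $r$. At the $r\to 1$ end the annulus pinches and breaks into the $\Delta^{r|1|s}$-disc and the $\mu^{d-r-s+2}$-disc, glued along a \emph{pair} of chords $y\in\X(L,K)$ and $z\in\X(K,L)$ --- not along a single $L$--$L$ chord as you wrote. This is what matches the shape of $P_W(E)=CF^*(E,W)\otimes CF^*(W,E)$ and the trace formula of Lemma~\ref{evaluation map via traces}.

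Second, and more importantly, at the $r\to\infty$ end the annulus degenerates to a pair of discs sharing a common interior point, i.e.\ to the moduli space $\D(w:\underline{x},x_1,\ldots,x_d)$. This defines a map $\chi\colon CC_*(CF^*(E,E))\to CF^*(W,W)$, but $\chi$ is \emph{not} literally $\CO^0\circ\OC_*$: the latter counts two separate discs each constrained to hit a pseudocycle from a Poincar\'e-dual basis, whereas $\chi$ imposes the single diagonal constraint $\ev(u)=\ev(v)$. You need a second, independent cobordism argument --- using a bordism $h$ between the diagonal in $M\times M$ and $\sum_i e_i\times\epsilon_i$ --- to show $H(\chi)=\CO^0\circ\OC_*$ on cohomology. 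The paper carries this out as a separate lemma before running the annulus argument; your proposal collapses these two distinct steps into one and thereby omits the diagonal-to-basis comparison entirely.
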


Assuming these facts we have:

\noindent\textit{Proof of Theorem \ref{split-generation criterion}:}
If $\CO^*$ is injective, then Proposition \ref{pairing between hochshild cohomology and homology} implies  that $\OC_*$ is surjective and in particular $1 \in QH^*(M)$ lies in the image of $\OC_*$. Since $\CO^0$ is a unital algebra homomorphism it follows that $e_W$ lies in the image of $\CO^0 \circ \OC_*$. By Proposition \ref{commutativity of split-generation diagram} we then have that $e_W$ lies in the image of $H(\mu)$ and applying Lemma \ref{algebraic split-generation} yields that $(K, W)$ is split-generated by $(L,E)$. \qed

\noindent\textit{Proof of Proposition \ref{pairing between hochshild cohomology and homology}:}
The construction of the pairing \eqref{perfect pairing HH* and HH_*} and the proof that it is perfect can be taken directly from \cite[Lemma 2.4 \& Corollary 2.5]{sheridan2013fukaya}. The only extra input needed to deal with local systems of higher finite rank is a linear algebra argument, analogous to Lemma \ref{evaluation map via traces} above (the proof of Lemma 2.4 in \cite{sheridan2013fukaya} uses the coproduct map $\Delta$; as seen above, the output of $\Delta$ lies in a slightly awkward tensor product of spaces of linear maps; one needs to rearrange the tensor factors to make this output more manageable). We omit the details of this proof here.

The fact that diagram \eqref{diagram for duality of CO and OC} commutes is proved in \cite[Proposition 2.6]{sheridan2013fukaya}.
\qed

We now give a sketch proof of Proposition \ref{commutativity of split-generation diagram}, following \cite[Section 2.11]{sheridan2013fukaya}. 

\noindent\textit{Proof of Proposition \ref{commutativity of split-generation diagram}:}
Given Hamiltonian chords $\{\underline{x}, x_1, \ldots, x_d\} \in \X(L, L)$ and $w \in \X(K,K)$, consider the moduli space
$$\D(w: \underline{x}, x_1, \ldots, x_d) \coloneqq \left\{(u, v) \in \R_{1: 0;1}(w;M) \times \R_{0: d+1;1}(\underline{x}, x_1, \ldots, x_d; M) \st \ev(u)=\ev(v)\right\},$$
which consists of pairs of discs, connected at an internal node and asymptotic to the prescribed chords at their boundary punctures.
One can use the zero-dimensional component $\D^0(w: \underline{x}, x_1, \ldots, x_d)$ to define a map:
$$
\chi \colon CC_*(CF^*(E,E)) \to CF^*(W,W)$$
\begin{equation}
\chi(\underline{\alpha}\otimes \alpha_{d}\otimes \ldots \otimes \alpha_{1}) = \sum_{w \in \X(K, K)}\sum_{\substack{(u', v')\in \\\D^0(w: \underline{x}, x_1, \ldots, x_d)}} \mathrm{tr}\left(P_{\gamma^d_{v'}} \circ \alpha_d  \circ \cdots \circ P_{\gamma^0_{v'}} \circ \underline{\alpha}\right)P_{\del u'}.
\end{equation}
By considering the boundary of the Gromov compactification of the one-dimensional component $\D^1(w: \underline{x}, x_1, \ldots, x_d)$, one shows that $\chi$ is a chain map. As a preparatory step for proving Proposition \ref{commutativity of split-generation diagram} one needs the following lemma:
\begin{lemma}
Let $H(\chi) \colon HH_*(CF^*(E,E)) \to HF^*(W,W)$ denote the induced map on homology. Then $H(\chi) = \CO^0 \circ \OC_*$.
\end{lemma}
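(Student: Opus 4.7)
The plan is to construct a one-parameter family of moduli spaces interpolating between the configurations defining $\chi$ and those defining $\CO^0 \circ \OC_*$, and use it to produce a chain homotopy between these two maps. The key insight is that $\chi$ imposes the \emph{small diagonal} constraint $\ev(u)=\ev(v)$ on a pair of discs connected at an interior node, while $\CO^0 \circ \OC_*$ imposes a \emph{broken diagonal} constraint where each disc carries an internal marked point lying on a pseudocycle, and the two pseudocycles together represent the class $[\Delta_M] \in H_*(M \times M;\FF_2)$ via Poincar\'e duality. Morse theory provides a natural deformation between these two constraints.

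First I would pick a Morse function $h \colon M \to \RR$ together with a generic Riemannian metric whose ascending and descending manifolds of critical points give pseudocycles $\{f_p, g_p\}_{p \in \operatorname{Crit}(h)}$ representing Poincar\'e dual bases of $H_*(M;\FF_2)$, so that $\sum_p [f_p]\otimes[g_p] = [\Delta_M]$. For each $\tau \in [0,+\infty]$ define the moduli space $\mathcal{H}_{\tau}(w: \underline{x}, x_1, \ldots, x_d)$ of pairs $(u, v)$, where $u$ and $v$ are as in the definition of $\chi$ but with their internal marked points $\ev(u), \ev(v)$ constrained by $\ev(v) = \phi_{\tau}^{h}(\ev(u))$, with $\phi^h$ the negative gradient flow of $h$. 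Taking the union over $\tau$ produces a parametrised moduli space $\mathcal{H}_{[0,+\infty]}$. At $\tau=0$ the incidence condition degenerates to $\ev(u)=\ev(v)$, recovering $\D(w: \underline{x}, x_1, \ldots, x_d)$; at $\tau=+\infty$ the flow segment breaks at critical points of $h$, so configurations decompose as pairs with $\ev(u) \in g_p$ and $\ev(v) \in f_p$, summed over $p \in \operatorname{Crit}(h)$. The contributions of the $v$-factor reproduce the pairing defining $\OC_*(\varphi)$ evaluated against $[f_p]$, while the contributions of the $u$-factor reproduce $\CO^0_W([g_p])$; combining these via the completeness relation for $[\Delta_M]$ yields exactly $\CO^0 \circ \OC_*(\varphi)$.

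Next, standard transversality for parametrised moduli of $J$-holomorphic discs with internal incidence conditions gives, for generic perturbation data, a smooth 1-dimensional manifold $\mathcal{H}_{[0,+\infty]}^1(w:\underline{x},x_1,\ldots,x_d)$ with a natural Gromov compactification. Monotonicity of $M$ and of $L, K$, together with $N_M \ge 1$ and $N_{L}, N_{K} \ge 2$, rules out disc and sphere bubbling in the expected dimension count (as in the remark following the definition of $\F(M)_w$), so the boundary consists of: contributions at $\tau=0$ giving $\chi$; contributions at $\tau=+\infty$ giving $\CO^0 \circ \OC_*$; Morse breaking of the flow segment at interior critical points contributing to an intermediate operator in the expected Morse-theoretic way, which on cohomology coincides with the $\tau=+\infty$ limit; and strip breakings at the boundary punctures $w, \underline{x}, x_1, \ldots, x_d$. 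The rigid elements of $\mathcal{H}_{(0,+\infty)}^0$ then define a map $\Theta \colon CC_*(CF^*(E,E)) \to CF^*(W,W)$, and the standard cobordism argument yields the chain-level identity
\[
\chi(\varphi) - \CO^0 \circ \OC_*(\varphi) = \mu^1(\Theta(\varphi)) + \Theta(b(\varphi)),
\]
which passes to the desired identity $H(\chi) = \CO^0 \circ \OC_*$ on cohomology.

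The main obstacle is the careful bookkeeping of parallel transport maps and the verification that the trace structures on both sides of the identity match. At $\tau = +\infty$, for each critical point $p$ the count splits into two independent disc counts: the $v$-factor contributes the trace expression on $E$ computing $\langle \OC_*(\varphi), [f_p]\rangle$, while the $u$-factor contributes the boundary monodromy $P_{\partial u}$ on $W$ computing $\CO^0_W([g_p])$. To compare with the trace expression defining $\chi$ one uses the fact that the parallel transport paths $\gamma^j_{u}, \gamma^j_{v}$ at $\tau = 0$ agree with those arising from $(u,v)$ at $\tau=+\infty$ up to homotopy, since the boundary configurations are unaffected by the internal incidence condition varying along the family. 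The identification of the two trace structures is precisely the linear-algebraic manipulation from Lemma \ref{evaluation map via traces}, applied now to the $W$-factor rather than the $E$-factor: summing $[f_p]\otimes[g_p]$ over $p$ performs the tensor contraction realising the diagonal in $H_*(M\times M;\FF_2)$, which matches the contraction enforced by the nodal constraint $\ev(u)=\ev(v)$ appearing in $\chi$. Making this matching rigorous, together with ensuring transversality uniformly in $\tau$ near $\tau=+\infty$ where the flow line breaking takes place, constitutes the technical heart of the proof.
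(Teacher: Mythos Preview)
Your proposal is correct and follows essentially the same route as the paper: both construct a one-parameter family of incidence conditions in $M\times M$ interpolating between the diagonal (giving $\chi$) and the broken diagonal $\sum_i e_i\times\epsilon_i$ (giving $\CO^0\circ\OC_*$), then read off a chain homotopy from the boundary of the associated 1-dimensional moduli space. The only difference is cosmetic: the paper uses an abstract bordism $h\colon B\to M\times M$ between these two cycles, whereas you realise this bordism concretely as the graph of a Morse gradient flow $\{(x,\phi^h_\tau(x))\}$; these are two packagings of the same geometric interpolation.
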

\begin{proof}
Let $\{e_1, \ldots, e_m\}$ be a basis for $H_*(M;\FF_2)$ elements of pure degree and let $\{e^1, \ldots, e^m\} \subseteq H^*(M;\FF_2)$ denote its dual basis. Further set $\epsilon_i = PD(e^i)$ and $\epsilon^i = PD(e_i)$. Choose pseudocycles $f_i$, $g_i$ representing $e_i$ and $\epsilon_i$ respectively. Then, given a Hochschild cycle $\varphi = \sum_j \lambda_j\; \underline{\alpha}_j\otimes \alpha_{jd}\otimes \ldots \otimes \alpha_{j1}$, one has 
$$\CO^0(\OC_*(\varphi)) = \left[\sum_j \lambda_j\sigma\left(\underline{\alpha}_j\otimes \alpha_{jd}\otimes \ldots \otimes \alpha_{j1};\{f_i\}, \{g_i\}\right)\right],$$ where the square brackets denote the cohomology class in $HF^*(W,W)$ and
\begin{eqnarray*}
\sigma\left(\underline{\alpha}\otimes \alpha_{d}\otimes \ldots \otimes \alpha_{1};\{f_i\}, \{g_i\}\right) &\coloneqq &\sum_{i=1}^m\left\langle \OC_*(\underline{\alpha}\otimes \alpha_{d}\otimes \ldots \otimes \alpha_{1}), e_i; f_i \right\rangle \,\CO^0(e^i;g_i) \nonumber \\
\end{eqnarray*}
$$
=\sum_{w \in \X(K, K)}\left(\sum_{\substack{(u, v) \in \\ \coprod_{i=1}^m \R^0_{1: 0;1}(w;g_i) \times \R^0_{0: d+1;1}(\underline{x}, x_1, \ldots, x_d; f_i)}} \mathrm{tr}(P_{\gamma^d_v} \circ \alpha_d  \circ \cdots \circ P_{\gamma^0_v} \circ \underline{\alpha})P_{\del_u}\right). \nonumber$$
Now, given Hamiltonian chords $\{\underline{x}, x_1, \ldots, x_d\} \in \X(L, L)$, $w \in \X(K,K)$ and a bordism $h \colon B \to M \times M$, realising a homology between $\sum_{i=1}^l e_i \times \epsilon_i$ and the diagonal, consider the moduli space
$$\H(w:\underline{x}, x_1, \ldots, x_d ; h) \coloneqq \left\{(u, v) \in \R_{1: 0;1}(w;M) \times \R_{0: d+1;1}(\underline{x}, x_1, \ldots, x_d; M) \st (\ev(u),\ev(v)) \in im(h)\right\}.$$
Then $\coprod_{i=1}^m \R^0_{1: 0;1}(w;g_i) \times \R^0_{0: d+1;1}(\underline{x}, x_1, \ldots, x_d; f_i)$ and the zero-dimensional component of discs connected at a node 
$\D^0(w: \underline{x}, x_1, \ldots, x_d)$ form part of the boundary of the Gromov compactification of the 1-dimensional component $\H^1(w: \underline{x}, x_1, \ldots, x_d; h)$. By analysing the remaining boundary components of this compactification and using again that the homotopy classes of the paths involved in parallel transport remain invariant in 1-parameter families, one finds that the sum 
$$\left(\sum_{\substack{(u, v) \in \\ \coprod_{i=1}^m \R^0_{1: 0;1}(w;g_i) \times \R^0_{0: d+1;1}(\underline{x}, x_1, \ldots, x_d; f_i)}} \mathrm{tr}(P_{\gamma^d_v} \circ \alpha_d  \circ \cdots \circ P_{\gamma^0_v} \circ \underline{\alpha})P_{\del_u}\right) + 
$$
$$+\left(\sum_{\substack{(u', v')\in \\\D^0(w: \underline{x}, x_1, \ldots, x_d)}} \mathrm{tr}(P_{\gamma^d_{v'}} \circ \alpha_d  \circ \cdots \circ P_{\gamma^0_{v'}} \circ \underline{\alpha})P_{\del u'}\right) \in \hom_{\FF_2}(W_{w(0)}, W_{w(1)})$$
depends linearly on $b(\underline{\alpha}, \alpha_d, \ldots, \alpha_1)$ up to a term which is the $\hom_{\FF_2}(W_{w(0)}, W_{w(1)})$-component of a $\mu^1$-exact element.
\end{proof}

To prove Proposition \ref{commutativity of split-generation diagram}, it remains to be shown that $H(\chi) = H(\mu) \circ HH_*(\Delta)$. This is implied by the following lemma.

\begin{lemma}
The maps $\chi$ and $C(\mu) \circ CC_*(\Delta)$ are chain-homotopic.
\end{lemma}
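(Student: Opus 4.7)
The plan is to construct an explicit chain homotopy by interpolating between the two geometric configurations defining $\chi$ and $C(\mu)\circ CC_*(\Delta)$. Recall that $\chi$ counts pairs of perturbed pseudoholomorphic discs joined at an interior node: one disc with boundary on $K$ and one outgoing puncture asymptotic to $w$, and one disc with boundary on $L$ carrying the $d+1$ asymptotic chords. On the other hand, $C(\mu)\circ CC_*(\Delta)$ counts, after unwinding the definitions of $\Delta^{r|1|s}$ and the evaluation $C(\mu)$, pairs of discs joined along a $K$-boundary strip, i.e.\ along two chords $y\in\X(L,K)$ and $z\in\X(K,L)$; the $L$-boundary is cut by two marked points (where it meets $K$) and these two pieces, joined along the two chord asymptotics, form a single disc with one outgoing puncture $w$, $d+1$ incoming $L$-chord asymptotics, and an inserted $K$-arc between $z$ and $y$.

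First, I would introduce a $1$-parameter family of domains $S_R$, $R\in[0,+\infty]$, consisting of a disc with $d+1$ boundary punctures labelled $(\underline{x},x_1,\dots,x_d)$, together with one distinguished boundary arc of ``height'' $R$ which is constrained to map to $K$ and whose two endpoints are two additional punctures asymptotic to Hamiltonian chords $y\in\X(L,K)$ and $z\in\X(K,L)$; at the same time a second component of length $R$ carrying the outgoing puncture $w$ and with boundary on $K$ is glued to this $K$-arc. When $R=+\infty$ this configuration is exactly a pair of discs broken along the two $K$-chords $y,z$ (the $C(\mu)\circ CC_*(\Delta)$ configuration), while when $R=0$ the entire $K$-arc collapses to an interior node, yielding the two-disc configuration with internal incidence that defines $\chi$. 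A consistent choice of strip-like ends and perturbation data across this family (compatible with the already-chosen data at the two ends, so as to match the maps $\chi$, $C(\mu)\circ CC_*(\Delta)$) extends the previous constructions; existence of such a choice is standard, cf.\ \cite[(9g)--(9l)]{seidel2008fukaya}. Let $\H(w:\underline{x},x_1,\dots,x_d)$ denote the resulting parametric moduli space of solutions, and write $\H^k$ for its $k$-dimensional component.

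Next, I would define the candidate chain homotopy
\[
\eta\colon CC_*(CF^*(E,E))\longrightarrow CF^*(W,W),
\]
\[
\eta(\underline{\alpha}\otimes\alpha_d\otimes\cdots\otimes\alpha_1)
=\sum_{w}\sum_{u\in \H^0(w:\underline{x},x_1,\dots,x_d)}
\mathrm{tr}\bigl(P_{\gamma^d_u}\circ\alpha_d\circ\cdots\circ P_{\gamma^0_u}\circ\underline{\alpha}\bigr)\,P_{\del u}^{K},
\]
where the trace and parallel transports are assembled exactly as in Lemma \ref{evaluation map via traces} and in the definition of $\chi$; the rearrangement of tensor factors via the finite-rank identification $\hom(A,B)\otimes\hom(C,D)\cong\hom(C,B)\otimes\hom(A,D)$ is crucial for writing the trace. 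The signs are trivial since we are over $\FF_2$.

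Then I would analyse the Gromov compactification of the $1$-dimensional component $\H^1$. Monotonicity together with the hypothesis $N_{L},N_K\ge 2$ rules out all sphere and disc bubbles for index reasons (same argument as in Remark after the definition of the $\mu^d$). What remains are four codimension-$1$ strata: (i) the stratum $R=0$, which contributes the counts defining $\chi$; (ii) the stratum $R=+\infty$, where the $K$-strip of length $R$ breaks into an $L$-disc with two $K$-chord outputs glued to a $(K,K)$-disc along $(y,z)$, giving precisely the $C(\mu)\circ CC_*(\Delta)$ configuration; (iii) strip breakings at the $L$-boundary inputs (at $\underline{x}$ or one of the $x_i$), which after rearrangement of the cyclic order yield $\eta\circ b$; and (iv) strip breaking at the $w$ output on the $K$-side, which yields $\mu^1_{W,W}\circ\eta$. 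Pairing up the boundary points of each compact $1$-manifold component and matching homotopy classes of boundary paths (as in Figure \ref{figure for OC}) gives the identity
\[
\chi+C(\mu)\circ CC_*(\Delta)=\mu^1_{W,W}\circ\eta+\eta\circ b,
\]
which is the required chain homotopy.

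The main obstacle is step three: setting up the family $S_R$ with perturbation data that simultaneously matches, at $R=0$, the perturbation data used for $\chi$ and, at $R=+\infty$, the gluing of those used for $\Delta$ and for $\mu^{d+2}$ in $C(\mu)$, while ensuring transversality for all $\H^k$ with $k\le 1$ and the correct identification of the boundary strata (ii) with broken configurations from $C(\mu)\circ CC_*(\Delta)$. This is a standard but delicate gluing/compactness argument; the rest is bookkeeping of parallel transports.
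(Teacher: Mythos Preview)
Your overall strategy is correct and matches the paper's: one builds a one-parameter family of domains interpolating between the nodal configuration underlying $\chi$ and the broken configuration underlying $C(\mu)\circ CC_*(\Delta)$, defines $\eta$ (the paper calls it $h$) by counting rigid solutions, and reads off the chain-homotopy identity from the boundary of the one-dimensional piece. The trace-weighting and the handling of local systems are exactly right.

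The gap is in the choice of domain. What you describe---a disc with all $d+1$ inputs, a $K$-boundary arc bounded by two \emph{fixed} extra punctures $y,z$, and a second component glued along that arc---is not the right object, and its degenerations do not reproduce the needed boundary strata. Concretely: at your $R=+\infty$ end, your first disc carries \emph{all} of the $x_i$ while the second carries only $w$, so you would only see the summands of $C(\mu)\circ CC_*(\Delta)$ with $r+s=d$ (i.e.\ $\mu^2$ applied after $\Delta^{r|1|s}$), missing every term where some $x_{r+1},\dots,x_{d-s}$ sit on the $\mu$-disc. At your $R=0$ end there is no clean mechanism by which a boundary arc of a disc, together with a second disc glued to it, ``collapses to an interior node'' and simultaneously produces a disc with boundary on $K$ carrying $w$. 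Finally, having $y,z$ as honest punctures of the interpolating domain introduces spurious codimension-one strata (strip-breaking at $y$ or $z$) that you do not account for.

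The correct interpolating domain, used by the paper following \cite[Section~5.3]{abouzaid2012nearby}, is an \emph{annulus} $A_r=\{1\le |z|\le r\}$ with inner boundary on $K$ carrying the single negative puncture $w$ and outer boundary on $L$ carrying the $d+1$ positive punctures $\underline{x},x_1,\dots,x_d$; the conformal modulus $r$ is the parameter. As $r\to\infty$ the annulus pinches at an interior circle to give the two discs joined at an interior node (the $\chi$ configuration). As $r\to 1$ the annulus breaks along a pair of boundary chords into $\R^0_{1:d-r-s+2}(w:z,x_{r+1},\dots,x_{d-s},y)\times \R^0_{2:r+s+1}(z,y:x_{d-s+1},\dots,x_d,\underline{x},x_1,\dots,x_r)$, and the position of the break varies, which is exactly what produces the full sum over $r,s$ in $CC_*(\Delta)$ and the corresponding $\mu^{d-r-s+2}$ in $C(\mu)$. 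The chords $y,z$ are not punctures of the annulus; they appear only in the limit. With this domain your steps (iii) and (iv) go through verbatim, and the obstacle you flag (matching perturbation data at the two ends) is the standard annulus-to-disc gluing already handled in the references.
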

\begin{proof}
Following 
 \cite[Section 5.3]{abouzaid2012nearby}, we construct such a homotopy by considering a moduli space of perturbed pseudoholomorphic maps, whose domain is an annulus $A_r = \{z \in \CC \st 1 \le \vert z \vert \le r\}$ (for some $r$) with $d+1$ positive punctures $\{\underline{\zeta} = r, \zeta_1, \ldots, \zeta_d\}$ on the outer circle and one negative puncture on the inner circle, constrained to lie at $-1$. Given chords $\{\underline{x}, x_1, \ldots, x_d\} \in \X(L,L)$ and $w \in \X(K,K)$, we denote by $\C^-_{1:d+1}(w: \underline{x}, x_1, \ldots, x_d)$ the moduli space of maps as above, which are furthermore required to map the boundary component $\{z \in A_r \st \vert z \vert = 1\}$ to $K$, the remaining boundary components $\{z \in A_r \st \vert z \vert = r\}$ to $L$ and which are asymptotic to $w$ at $-1$ and to $\{\underline{x}, x_1, \ldots, x_d\}$ at $\{\underline{\zeta} = r, \zeta_1, \ldots, \zeta_d\}$. The boundary of the Gromov compactification of the one-dimensional component $\C^{-, 1}_{1:d+1}(w: \underline{x}, x_1, \ldots, x_d)$ consist of the following four types of configurations (see \cite[Equations (5.18), (5.19), (5.20)]{abouzaid2012nearby}):
\begin{enumerate}
\item  \label{contributing to mu1 circ h} a strip braking at the outgoing puncture; connected components of this stratum are given by products 
$$\R^0_{1:1}(w:w') \times \C^{-, 0}_{1:d+1}(w':\underline{x}, x_1, \ldots, x_d)$$ for some $w' \in \X(K,K)$.
\item \label{contributing to h circ b} a strip or a stable disc component (i.e. a disc carrying at least two punctures) breaking off at a positive puncture; connected components of this stratum are given by products 
$$\C^{-, 0}_{1:d-s-r+1}(w: \underline{x}', x_{r+1}, \ldots, x_{d-s}) \times \R^0_{1:r+s+1}(\underline{x}': x_{d-s+1}, \ldots, x_d, \underline{x}, x_1, \ldots, x_r)$$
for some $\underline{x}' \in \X(L,L)$ and
$$\C^{-, 0}_{1:d-j+2}(w: \underline{x}, x_1, \ldots, x_i, x', x_{i+j+1}, \ldots, x_d) \times \R^0_{1:j}(x': x_{i+1}, \ldots, x_{i+j})$$
for some $x' \in \X(L,L)$.
\item \label{contributing to c(mu) circ cc(delta)} a degeneration of the conformal modulus of the annulus as $r \to 1$; components of the boundary at $r=1$ are given by products
$$\R^0_{1:d-r-s+2}(w: z, x_{r+1}, \ldots, x_{d-s}, y) \times \R^0_{2:r+s+1}(z, y: x_{d-s+1}, \ldots, x_d, \underline{x}, x_1, \ldots, x_r)$$
for some $y \in \X(L, K)$, $z \in \X(K,L)$. 
\item \label{contributing to chi} a degeneration of the conformal modulus of the annulus as $r \to +\infty$; the boundary at $r=+\infty$ is the moduli space $\D^0(w: \underline{x}, x_1, \ldots, x_d)$ of pairs of discs, connected at a node. 
\end{enumerate}
Observe that the degenerations of types \ref{contributing to c(mu) circ cc(delta)} and \ref{contributing to chi} are precisely the ones which account for the $\hom_{\FF_2}(W_{w(0)}, W_{w(1)})$-component of $C(\mu) \circ CC_*(\Delta)(\underline{\alpha}\otimes \alpha_d\otimes \cdots \otimes \alpha_1)$ and $\chi(\underline{\alpha}\otimes \alpha_d\otimes \cdots \otimes \alpha_1)$, respectively. Further, from the description of $C(\mu)$ in Lemma \ref{evaluation map via traces} one can see that both $\chi(\underline{\alpha}\otimes \alpha_{k}\otimes \ldots \otimes \alpha_{1})$ and $C(\mu) \circ CC_*(\Delta)(\underline{\alpha}\otimes \alpha_{k}\otimes \ldots \otimes \alpha_{1})$ weight the parallel transport map along the boundary component mapping to $K$ by 
the trace of the the loop of linear maps, obtained by composing the elements $\alpha_i$ with the parallel transport along the boundary components mapped to $L$. On the other hand, each $a \in \C^-_{d+1}(w: \underline{x}, x_1, \ldots, x_d)$ defines paths $\gamma^j_a \in \Pi_1L(x_j(1), x_{j+1}(0))$, $0 \le j \le d$, which are the images of the boundary arcs connecting $\zeta_j$ to $\zeta_{j+1}$ (again the notation means $\zeta_0 = \zeta_{d+1} = \underline{\zeta}$ and $x_0 = x_{d+1} = \underline{x}$) and $\gamma_a \in \Pi_1K(w(0), w(1))$, which is the image of the inner boundary circle, oriented clockwise. We then define a map 
$$h \colon CC_*(CF^*(E,E)) \to CF^*(W,W)$$
$$h(\underline{\alpha}\otimes \alpha_d\otimes \cdots \otimes \alpha_1) = \sum_{w \in \X(K,K)}\sum_{a \in \C^{-, 0}_{d+1}(w: \underline{x}, x_1, \ldots, x_d)} tr(P_{\gamma^d_a} \circ \alpha_d \circ \cdots \circ P_{\gamma^0_a} \circ \underline{\alpha})P_{\gamma_a}.$$
This is analogous to \cite[Equation (5.22)]{abouzaid2012nearby}, except that we weight the parallel transport on $K$ by the trace of the loop on $L$.
Looking at the remaining types of boundary components of the compactification of $\C^{-, 1}_{1:d+1}(w: \underline{x}, x_1, \ldots, x_d)$, we see that
the degenerations of types \ref{contributing to mu1 circ h} and \ref{contributing to h circ b} account for the $\hom(W_{w(0)}, W_{w(1)})$-component of $\mu^1(h(\underline{\alpha}\otimes \alpha_d\otimes \cdots \otimes \alpha_1))$ and $h(b(\underline{\alpha}\otimes \alpha_d\otimes \cdots \otimes \alpha_1))$, respectively. Using again that all these terms are paired-off as boundary points of closed intervals we conclude that $C(\mu)\circ CC_*(\Delta)+ \chi + \mu^1 \circ h + h\circ b = 0$, i.e. $h$ is a chain-homotopy between $\chi$ and $C(\mu)\circ CC_*(\Delta)$.
\end{proof}

\section{Application to the Chiang Lagrangian}\label{application to the Chiang Lagrangian}
We shall now illustrate all of the above constructions by applying them to the particular case of the Chiang Lagrangian $\chiang \subseteq (\CP^3, \omega_{FS})$. This Lagrangian was discovered by River Chiang in \cite{chiang2004new} and its Floer theory was studied extensively by Evans and Lekili in \cite{evans2014floer}. Our goal is to extend their calculations to the case of coefficients in a local system of rank higher than 1 in order to study the relation between $\chiang$ and $\RP^3$. To motivate the calculations that follow and the necessity for local coefficients, a few general comments are in order.

Note first that $(\CP^3, \omega_{FS})$ is a monotone symplectic manifold with minimal Chern number $N_{\CP^3}=4$ since $[\omega_{FS}] = c_1(T\CP^3)/4 \in H^2(\CP^3; \ZZ)$. Further, since $\CP^3$ is simply-connected, 
we can include any monotone Lagrangian as an object of the Fukaya category. 

Recall also that $\RP^3$ is a Lagrangian submanifold of $\CP^3$ since it is the fixed-point set of complex conjugation, which is an antisymplectic involution. The fact that it is monotone can also be seen using this involution or by observing that its fundamental group is finite and appealing to Remark 
 \ref{finite pi1 => monotone} above. Another important fact is that $m_0(E) = 0$ for any local system $E \to \RP^3$ (and over any characteristic) since in fact $\RP^3$ can bound no $J$-holomorphic Maslov 2 discs at all for any $\omega_{FS}$-compatible $J$. To see this one again uses the antisymplectic involution: a Maslov 2 disc with boundary in $\RP^3$ could be completed via complex conjugation to a sphere with symplectic area $1/2$ which is impossible since $[\omega_{FS}]$ is an integral class.
 
Let us now briefly recall the definition of the Chiang Lagrangian, using notation from \cite{evans2014floer}. To this end, we view $\CP^3 \cong \operatorname{Sym}^3(\CP^1)$ as configurations of triples of points on $\CP^1$. The action of $SL(2, \CC)$ on $\CP^1$ by M{\"o}bius transformations then defines an action on $\CP^3$ whose restriction to the compact form $SU(2) \subseteq SL(2,\CC)$ is Hamiltonian. 
 Setting $\Delta \coloneqq \{[1:1], [\omega^2:1], [\omega^4 : 1]\}$, where $\omega = e^{i\pi/3}$, we then have a decomposition $\CP^3 = W_{\Delta} \cup Y_{\Delta}$, where $W_{\Delta} = SL(2, \CC)\cdot \Delta$ is the orbit consisting of all triples of pairwise distinct points and $Y_{\Delta}$ is a compactifying divisor consisting of triples with at least two coinciding points (note then that $Y_{\Delta}$ is cut out by the discriminant of a cubic, which is a section of $\O_{\CP^3}(4)$; that is, $Y_{\Delta}$ is an anticanonical hypersurface).
  From this point of view, $\CP^3$ is a special case of an $SL(2, \CC)$-quasihomogeneous $3$-fold $X_C$, obtained by compatifying an $SL(2, \CC)$-orbit $W_C$ of a configuration $C \in \CP^n = \operatorname{Sym}^n(\CP^1)$ of $n$ distinct points in $\CP^1$. It is known since the work of Aluffi and Faber in \cite{aluffi1993linear} that $X_{\Delta}=\CP^3$ is the first of only four cases in which such a compactifiaction is smooth, the other three being when $C$ can be chosen to consist of the vertices of a regular tetrahedron, octahedron and icosahedron. It is a fact that in all 4 cases, when $C$ is chosen to be such a regular configuration, its orbit $L_C$ under the action of the compact real form $SU(2) \subseteq SL(2, \CC)$ is a Lagrangian submanifold of $X_C$ with respect to the restriction of the Fubini-Study symplectic form on $\CP^n$. We then have:
  \begin{definition}
  The Chiang Lagrangian is the orbit $\chiang = SU(2)\cdot\Delta$ in $X_{\Delta} = \CP^3$.
  \end{definition}
In \cite{evans2014floer} Evans and Lekili compute the Floer cohomology of $\chiang$ with itself and show that it is non-zero only over a field of characteristic 5, when it becomes additively isomorphic to the singular cohomology. Further, they prove that $\chiang$ generates its summand in the monotone Fukaya category of $\CP^3$ over characteristic 5. Their technique is to exploit the many symmetries in order to prove regularity for holomorphic discs with boundary on $\chiang$, classify the ones of small Maslov index and then use the pearl complex machinery. This approach has then been taken-up and generalised by Jack Smith in the paper \cite{smith2015floer}, culminating in the calculation of Floer cohomology for the three Lagrangians arising from the Platonic solids.

Let us address now our main question, namely whether $\chiang$ and $\RP^3$ can be displaced by a Hamiltonian isotopy. A natural attempt would be to compute $HF^*(\chiang, \RP^3)$ and hope that the result is non-zero.
  However, it is shown in \cite{evans2014floer} that, when $\chiang$ is equipped with a spin structure, so that the moduli space $\M_{0, 1}(2, \chiang;J)$ can be oriented and the count of Maslov 2 discs can be done over $\ZZ$, one has $m_0(\chiang) = \pm 3$.
  Therefore, over any characteristic different from 3, one has $m_0(\chiang) \neq m_0(\RP^3)$ and so $HF^*(\chiang, \RP^3)$ is not well-defined. On the other hand, $HF^*(\chiang, \RP^3)$ must vanish over characteristic 3, since it is a left module over the unital algebra $HF^*(\RP^3, \RP^3)$ which itself vanishes over characteristic different from 2. This is a consequence of the Auroux-Kontsevich-Seidel criterion (see e.g. \cite[Lemma 2.7]{sheridan2013fukaya}): $m_0(\RP^3)=0$ is an eigenvalue of quantum multiplication by $c_1(\CP^3)$ only in characteristic 2.
  Thus, if one wants to use Floer cohomology as a meaningful obstruction to displacing $\chiang$ and $\RP^3$, one needs to equip $\chiang$ with an appropriate local system $W$ so that $m_0(\chiang, W) = m_0(\RP^3) = 0$ over characteristic 2. One cannot use rank 1 local systems for this purpose, see Remark \ref{rank 1 doesnt work} below. In Section \ref{proof of Theorem} we show that there is a particular local system of rank 2 on $\chiang$ which satisfies this and which has non-zero Floer cohomology with $\RP^3$. The proof of this relies on an explicit calculation of parallel transport maps along the same pearly trajectories, which Evans and Lekili use in \cite{evans2014floer}. In order to describe these trajectories, we begin with a detailed account of the topology of $\chiang$. 

\subsection{Topology of $\chiang$}\label{Topology of chiang}
 The Lie algebra $\mathfrak{su}(2)$ is the real-linear span of the Pauli matrices:
  $$\sigma_1 = \begin{pmatrix}
  i & 0 \\
  0 & -i
  \end{pmatrix},\; 
  \sigma_2 = \begin{pmatrix}
  0 & 1 \\
  -1& 0
  \end{pmatrix} \text{\; and \;}
  \sigma_3 = \begin{pmatrix}
  0 & i\\
  i & 0
  \end{pmatrix}.$$
 From now on, $S^2$ will only be used to denote the unit sphere in $\RR^3$. All occurrences of ``$\exp$'' refer to the exponential map in $SU(2)$. For a unit vector $V=(v_1, v_2, v_3) \in S^2$ and $t \in \RR$ we will write $\exp(tV)$ to mean $\exp(t(v_1\sigma_1 + v_2\sigma_2 + v_3\sigma_3))$.
 The action of $SU(2)$ on $\CP^1$ by projective transformations can be identified with the action of $SU(2)$ on $S^2$ by quaternionic rotations, as long as we adopt the following conventions (for any other choice the two actions would of course be conjugate):
 \begin{itemize}
 \item for any unit vector $V \in S^2$, $\exp(\theta V)$ acts on $S^2$ by a right-hand rotation by $2\theta$ in the axis $V$; this is the adjoint action of $SU(2)$ on $S^2 \subseteq \mathfrak{su}(2)$, where we identify $\mathfrak{su}(2)$ with $\RR^3$ via the basis $\{\sigma_1, \sigma_2, \sigma_3\}$;  
 \item we identify $\CC \cup \{\infty\} \cong \CP^1$ via $z \mapsto [z:1]$, $\infty \mapsto [1:0]$;
 \item we identify $\CC \cup \{\infty\} \cong S^2$ via 
 $z \mapsto \left(\frac{\lvert z \rvert^2 - 1}{\lvert z \rvert^2 + 1}, \frac{2\Re(iz)}{\lvert z \rvert^2 + 1}, \frac{2\Im(iz)}{\lvert z \rvert^2 + 1}\right)$, $\infty \mapsto (1,0,0)$, i.e. via stereographic projection from $(1,0,0)$ followed by multiplication by $-i$.
\end{itemize}   
 The last two identifications combine to give the diffeomorphism
 $\Phi \colon \PP^1 \to S^2$, $\Phi([x:y]) = \left( \frac{\lvert x \rvert^2 - \lvert y \rvert^2}{\lvert x \rvert^2 + \lvert y \rvert^2}, \frac{2\Re(i x \cl{y})}{\lvert x \rvert^2 + \lvert y \rvert^2}, \frac{2\Im(i x \cl{y})}{\lvert x \rvert^2 + \lvert y \rvert^2} \right)$.
 In this way $\Delta = \{[1:1], [\omega^2:1], [\omega^4 : 1]\}$ corresponds to the equilateral triangle with vertices $V_1'\coloneqq(0, 0, 1)$, $V_3'\coloneqq(0, -\sqrt{3}/2, -1/2)$, and $V_2'\coloneqq(0, \sqrt{3}/2, -1/2)$ (our choice of names for the vertices will become apparent when we discuss a particular Morse function on $\chiang$ below). 
 
 Recall that $\chiang = SU(2)\cdot\Delta \subseteq \operatorname{Sym}^3(\CP^1) = \CP^3$. The stabiliser of $\Delta$ is easily seen to be the binary dihedral group of order 12, given explicitly by
 $$\bindih = \left\{ 
 \begin{pmatrix}
 \omega^k & 0 \\
 0 & \cl{\omega}^k
\end{pmatrix} \st k \in \{0, 1, \ldots 5\}
\right\} 
\cup \left\{
\begin{pmatrix}
0 & i\omega^k \\
i\cl{\omega}^k & 0
\end{pmatrix} \st  k \in \{0, 1, \ldots 5\}
\right\} \subseteq SU(2). $$
Abstractly, we view this group by the presentation 
$$\bindih = \langle a, b \;\vert\; a^6 = 1, b^2 = a^3, ab  = ba^{-1}\rangle, $$
the above complex representation being given by $a \mapsto \begin{pmatrix}
 \omega & 0 \\
 0 & \cl{\omega}
\end{pmatrix} $
and 
$b \mapsto \begin{pmatrix}
 0 & i \\
 i & 0
\end{pmatrix}$.
So we have $\chiang \cong SU(2) / \bindih$ and $SU(2)$ is tiled by 12 fundamental domains for the action of $\bindih$. Further, the quotient map $q \colon SU(2) \to \chiang$ induces a natural isomorphism 
\begin{eqnarray}
\bindih &\to & \pi_1(\chiang, q(\Id))^{Opp} \label{isomorphism with pi_1}\\
x & \mapsto & [q \circ \ell_x] \nonumber,
\end{eqnarray}
where $\ell_x \colon [0, 1] \to SU(2)$ is any path with $\ell(0) = \Id$ and $\ell(1) = x$. In particular $\chiang$ is monotone by Remark \ref{finite pi1 => monotone}. Further, since it is orientable, the Chiang Lagrangian must also satisfy $N_{\chiang}\in 2\ZZ$. We will see (\ref{maslov 2 discs on chiang} below) that in fact $N_{\chiang}=2$. In figure \ref{fundamental domain} below we give a schematic description of a fundamental domain for the right action of $\bindih$ on $SU(2)$. The picture is essentially borrowed from \cite{evans2014floer} with the difference that the fundamental domain given there is (erroneously) for a left $\bindih$-action. A detailed derivation of the domain can be found in \cite[Section 5]{smith2015floer}.
\begin{figure}[h!]
\captionsetup{justification=centering,margin=2cm}
\begin{center}
\tdplotsetmaincoords{80}{185}
\begin{tikzpicture}[tdplot_main_coords, scale=1.5]
\draw[thick,->] (0,0,0.75) -- (0.4,0,0.75) node[anchor=north east]{$\sigma_2$};
\draw[thick,->] (0,0,0.75) -- (0,1,0.75) node[anchor=north west]{$\sigma_3$};
\draw[thick,->] (0,0,0.75) -- (0,0,1) node[anchor=south]{$\sigma_1$};
\def\RI{2}
\def\RII{2}

\node at (0:\RI) (x1bottomleft){$\cdot$};
\node [below, black] at (x1bottomleft) {$x_1$};
\draw[thick] [black, -triangle 45] (x1bottomleft.center) -- (60:\RI) node at (60:\RI) (x2bottomfront){$\cdot$}
node [below, black] at (x2bottomfront){$x_2$};
\draw[thick] [yellow, -triangle 45] (x2bottomfront.center) -- (120:\RI) node at (120:\RI) (x3bottomfront){$\cdot$} 
node [below, black] at (x3bottomfront) {$x_3$};
\draw[thick] [orange, -triangle 45] (x3bottomfront.center) --  (180:\RI) node at (180:\RI) (x1bottomright){$\cdot$} 
node [below, black] at (x1bottomright) {$x_1$};
\draw[dashed, thick, -triangle 45] [cyan] (x1bottomright.center) --  (240:\RI) node at (240:\RI) (x2bottomback){$\cdot$}
node [below, black] at (x2bottomback) {$x_2$};
\draw[dashed, thick, -triangle 45] [red] (x2bottomback.center) --  (300:\RI) node at (300:\RI) (x3bottomback){$\cdot$}
node [below, black] at (x3bottomback) {$x_3$};
\draw[dashed, thick, -triangle 45] [magenta] (x3bottomback.center) --  (x1bottomleft.center);
\path[fill=gray, fill opacity=0.2](0:\RI)
   \foreach \x in {0,60,120,180,240,300,360} { -- (\x:\RI)};

\begin{scope}[yshift=1.7cm]

\node at (0:\RII) (x2topleft){$\cdot$}
node [above, black] at (x2topleft) {$x_2$};
\draw[thick] [yellow, -triangle 45] (0:\RII) -- (60:\RII) node at (60:\RII) (x3topfront){$\cdot$}
node [above, black] at (x3topfront) {$x_3$};
\draw[thick] [orange, -triangle 45] (60:\RII) -- (120:\RII) node at (120:\RII) (x1topfront){$\cdot$}
node [above, black] at (x1topfront) {$x_1$};
\draw[thick] [cyan, -triangle 45] (120:\RII) --  (180:\RII) node at (180:\RII) (x2topright){$\cdot$}
node [above, black] at (x2topright) {$x_2$};
\draw[thick] [red, -triangle 45] (180:\RII) --  (240:\RII) node at (240:\RII) (x3topback){$\cdot$}
 node [above, black] at (x3topback) {$x_3$};
\draw[thick] [magenta, -triangle 45] (240:\RII) --  (300:\RII) node at (300:\RII) (x1topback){$\cdot$}
node [above, black] at (x1topback) {$x_1$};
\draw[thick] [black, -triangle 45] (300:\RII) --  (x2topleft.center);

\path[thick,fill=gray!30,opacity=0.3] (\RII,0)
  \foreach \x in {0,60,120,180,240,300,360}
    { --  (\x:\RII) node at (\x:\RII) (R2-\x) {}};

\end{scope}
\draw [thick, cyan, -triangle 45] (x1bottomleft.center) -- (x2topleft.center);
\draw [thick, red, -triangle 45] (x2bottomfront.center) -- (x3topfront.center);
\draw [thick, magenta, -triangle 45] (x3bottomfront.center) -- (x1topfront.center);
\draw [thick, black, -triangle 45] (x1bottomright.center) -- (x2topright.center);
\draw [thick, yellow, dashed, -triangle 45] (x2bottomback.center) -- (x3topback.center);
\draw [thick, orange, dashed, -triangle 45] (x3bottomback.center) -- (x1topback.center);

\end{tikzpicture}
\end{center}
\caption{
The fundamental domain for $\chiang$. Opposite quadrilateral faces are identified by a $90^{\circ}$ rotation and the two hexagonal faces are identified by a $60^{\circ}$ rotation so that colours of edges match. The fundamental domain is viewed as sitting in $SU(2)$ with $\Id$ at the center of the prism and the matrices $\{\sigma_1, \sigma_2, \sigma_3\} \subseteq T_{\Id}SU(2)$ are given for orientation.
}\label{fundamental domain}
\end{figure}
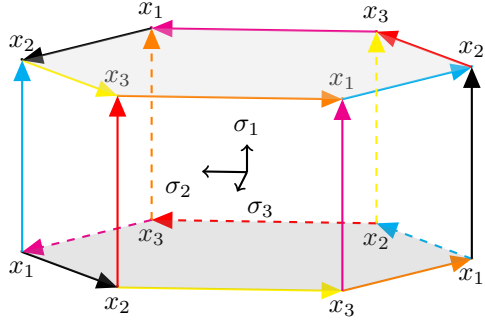

Evans and Lekili also describe a Morse function on $\chiang$ by specifying its critical points and some of its flowlines. We shall use essentially the same Morse function (depicted in figure \ref{prism and morse flowlines} below) to compute Floer cohomology but since we want to work with local coefficients, we are particularly concerned with where exactly its index 1 downward gradient flowlines (with respect to the round metric on $SU(2)$) pass. This is what we shall now spell out. Throughout this discussion it is useful to keep in mind the picture of rotating equilateral triangles, inscribed equatorially in the unit sphere in $\RR^3$. For example, for any unit vectors $V, W \in \RR^3$ we think of the point $q(\exp(sV)\exp(tW)) \in \chiang$ as the triangle, obtained from $\Delta$ by first applying a right-hand rotation by $2t$ in the axis $W$ and then a right-hand rotation by $2s$ in the axis $V$. 

Recall that we defined  $V_1'\coloneqq(0, 0, 1)$, $V_2'\coloneqq(0, \sqrt{3}/2, -1/2)$, $V_3'\coloneqq(0, \sqrt{3}/2, -1/2)$. We now further set $V_1 \coloneqq (0, -\sqrt{3}/2, 1/2)$, $V_2 \coloneqq (0, \sqrt{3}/2, 1/2)$, $V_3 \coloneqq (0, 0, -1)$ and $h \coloneqq \exp\left(\frac{\pi}{6}\sigma_1\right)\in SU(2)$. We then define the Morse function $f \colon \chiang \to \RR$ to have:
\begin{itemize}\label{morse function}
\item one minimum at $m' \coloneqq q(\Id)$; 
\item three critical points of index 1: $x_1' \coloneqq q\left(\exp\left(\frac{\pi}{4}V_1'\right)\right)$, $x_2' \coloneqq q\left(\exp\left(\frac{\pi}{4}V_2'\right)\right)$ and $x_3' \coloneqq q\left(\exp\left(\frac{\pi}{4}V_3'\right)\right)$. They are connected to the minimum $m'$ via 6 flowlines whose compactified images can be parametrised for $t \in [0,1]$ by $\gamma_i'(t) = q\left(\exp\left(\left(1- t\right)\frac{\pi}{4}V_i'\right)\right)$ and $\tilde{\gamma}_i'(t) =  q\left(\exp\left(-\left(1-t\right)\frac{\pi}{4}V_i'\right)\right)$ for $i \in \{1, 2, 3\}$;
\item three critical points of index 2:  $x_1 \coloneqq q\left(h \exp\left(\frac{\pi}{4}V_1\right)\right)$, $x_2 \coloneqq q\left(h \exp\left(\frac{\pi}{4}V_2\right)\right)$ and $x_3 \coloneqq q\left(h \exp\left(\frac{\pi}{4}V_3\right)\right)$;
\item one maximum at $m \coloneqq q(h)$. It connects to the index 2 critical points via 6 flowlines whose images are similarly given by $\gamma_i(t) = q\left(h\exp\left(t\frac{\pi}{4}V_i \right)\right)$ and $\tilde{\gamma}_i(t) = q\left(h\exp\left(-t\frac{\pi}{4}V_i \right)\right)$ for $t \in [0, 1]$ and $i \in \{1, 2, 3\}$;
\item there are 12 other index 1 flowlines, connecting critical points of index 2 to ones of index 1. For our purposes we do not need a similarly precise description of their images and the schematic description from figure \ref{prism and morse flowlines} will do.
\end{itemize}

%
%
%

\begin{figure}[h]
\begin{minipage}{0.33\textwidth}
\tdplotsetmaincoords{65}{185}
\begin{tikzpicture}[tdplot_main_coords, scale = 1]
\pgfmathsetmacro\x{cos(60)}
\pgfmathsetmacro\y{sin(60)}

\def\RI{2}

\node [below, black] at (\RI,0,-\RI/2) (x1.bottomleft) {$x_1$};
\draw[thick] [black] (\RI, 0, -\RI/2) -- (\RI*\x, \RI*\y, -\RI/2) node [below, black] at (\RI*\x, \RI*\y, -\RI/2) {$x_2$};
\draw[thick] [yellow] (\RI*\x, \RI*\y, -\RI/2) -- (-\RI*\x, \RI*\y, -\RI/2) node [below, black] at (-\RI*\x, \RI*\y, -\RI/2) {$x_3$};
\draw[thick] [orange] (-\RI*\x, \RI*\y, -\RI/2) --  (-\RI, 0, -\RI/2) node [below, black] at (-\RI, 0, -\RI/2){$x_1$};
\draw[dashed, thick] [cyan] (-\RI, 0, -\RI/2) --  (-\RI*\x, -\RI*\y, -\RI/2) node [below, black] at (-\RI*\x, -\RI*\y, -\RI/2) {$x_2$};
\draw[dashed, thick] [red] (-\RI*\x, -\RI*\y, -\RI/2) --  (\RI*\x, -\RI*\y, -\RI/2) node [below, black] at (\RI*\x, -\RI*\y, -\RI/2) {$x_3$};
\draw[dashed, thick] [magenta] (\RI*\x, -\RI*\y, -\RI/2) --  (\RI, 0, -\RI/2);

\def\RII{2}
\def\height{1}

\draw node [above, black] at (\RII, 0, \height) {$x_2$};
\draw[thick] [yellow] (\RII, 0, \height) -- (\RII*\x, \RII*\y, \height) node [above, black] at (\RII*\x, \RII*\y, \height) {$x_3$};
\draw[thick] [orange] (\RII*\x, \RII*\y, \height) -- (-\RII*\x, \RII*\y, \height) node [above, black] at (-\RII*\x, \RII*\y, \height) {$x_1$};
\draw[thick] [cyan] (-\RII*\x, \RII*\y, \height) --  (-\RII, 0, \height) node [above, black] at (-\RII, 0, \height) {$x_2$};
\draw[thick] [red] (-\RII, 0, \height) --  (-\RII*\x, -\RII*\y, \height) node [above, black] at (-\RII*\x, -\RII*\y, \height) {$x_3$};
\draw[thick] [magenta] (-\RII*\x, -\RII*\y, \height) --  (\RII*\x, -\RII*\y, \height) node [above, black] at (\RII*\x, -\RII*\y, \height) {$x_1$};
\draw[thick] [black] (\RII*\x, -\RII*\y, \height) --  (\RII, 0, \height);

\draw [thick, cyan] (\RI, 0, -\RI/2) -- (\RII, 0, \height);
\draw [thick, red] (\RI*\x, \RI*\y, -\RI/2) -- (\RII*\x, \RII*\y, \height);
\draw [thick, magenta] (-\RI*\x, \RI*\y, -\RI/2) -- (-\RII*\x, \RII*\y, \height);
\draw [thick, black] (-\RI, 0, -\RI/2) -- (-\RII, 0, \height);
\draw [thick, dashed, yellow, opacity=0.5] (-\RI*\x, -\RI*\y, -\RI/2) -- (-\RII*\x, -\RII*\y, \height);
\draw [thick, dashed, orange, opacity=0.5] (\RI*\x, -\RI*\y, -\RI/2) -- (\RII*\x, -\RII*\y, \height);

\draw  node [above] at (0, 0, \height) {$m$};
\draw [thick, -latex', blue] (0, 0, \height) -- (\RII, 0, \height) node [above] at (17/30*\RII, 0, \height) {$\tilde{\gamma}_{2}$};
\draw [thick, -latex', blue] (0, 0, \height) -- (\RII*\x, \RII*\y, \height) node [above] at (22/30*\RII*\x, 22/30*\RII*\y, \height){$\tilde{\gamma}_{3}$};
\draw [thick, -latex', green] (0, 0, \height) -- (-\RII*\x, \RII*\y, \height) node [left] at (17/30*-\RII*\x,17/30*\RII*\y, \height){$\gamma_1$};
\draw [thick, -latex', green] (0, 0, \height) -- (-\RII, 0, \height) node [below] at (20/30*-\RII, 0, \height){$\gamma_{2}$};
\draw [thick, -latex', green] (0, 0, \height) -- (-\RII*\x, -\RII*\y, \height) node [right] at (12/30*-\RII*\x, 12/30*-\RII*\y, \height){\hspace{1mm}$\gamma_{3}$};
\draw [thick, -latex', blue] (0, 0, \height) -- (\RII*\x, -\RII*\y, \height) node [above] at (10/30*\RII*\x, 10/30*-\RII*\y, \height){$\tilde{\gamma}_{1}$};
\end{tikzpicture}
\begin{center}
\vspace{1.3mm}a)
\end{center}
\end{minipage}%
%
\begin{minipage}{0.30\textwidth}
\tdplotsetmaincoords{53}{180}
\begin{tikzpicture}[tdplot_main_coords, scale = 1.2]
\pgfmathsetmacro\x{cos(60)}
\pgfmathsetmacro\y{sin(60)}
\def\RI{2}
\def\RII{2}
\def\height{\RI/2}
\draw[dashed, thick] [cyan] (-\RI, 0, -\RI/2) --  (-\RI*\x, -\RI*\y, -\RI/2);
\draw[dashed, thick, opacity=1] [red] (-\RI*\x, -\RI*\y, -\RI/2) --  (\RI*\x, -\RI*\y, -\RI/2);
\draw[dashed, thick] [magenta] (\RI*\x, -\RI*\y, -\RI/2) --  (\RI, 0, -\RI/2);
\draw [thick, dashed, yellow, opacity=1] (-\RI*\x, -\RI*\y, -\RI/2) -- (-\RII*\x, -\RII*\y, \height);
\draw [thick, dashed, orange, opacity=1] (\RI*\x, -\RI*\y, -\RI/2) -- (\RII*\x, -\RII*\y, \height);
\begin{scope}[help lines, dashed]
\draw (\RI, 0, -\RI/2) -- (\x*\RI, \y*\RI, \height);
\draw (\RI, 0, \height) -- (\x*\RI, \y*\RI, -\height);
\draw (\x*\RI,\y*\RI, -\RI/2) -- (-\x*\RI, \y*\RI, \height);
\draw (\x*\RI,\y*\RI, \height) -- (-\x*\RI, \y*\RI, -\height);
\draw (-\x*\RI, \y*\RI, -\height) -- (-\RI, 0, \height);
\draw (-\RI, 0, -\height) -- (-\x*\RI, \y*\RI, \height);
\draw (-\RI, 0, -\height) -- (-\x*\RI, -\y*\RI, \height);
\draw (-\RI, 0, \height) -- (-\x*\RI, -\y*\RI, -\height);
\draw (-\x*\RI, -\y*\RI, -\height) -- (\x*\RI, -\y*\RI, \height);
\draw (-\x*\RI, -\y*\RI, \height) -- (\x*\RI, -\y*\RI, -\height);
\draw (\x*\RI, -\y*\RI, -\height) -- (\RI, 0, \height);
\draw (\x*\RI, -\y*\RI, \height) -- (\RI, 0, -\height);
\end{scope}

\begin{scope}[latex'-]
\draw  node [above right, thick, black] at (0, 0, 0) {$m'$};
\draw [thick, blue] (0, 0, 0) -- (\y*\y*\RI, \y*\x*\RI, 0) node [above] at (17/30*\y*\y*\RI, 17/30*\y*\x*\RI, 0) {$\tilde{\gamma}'_{3}$} 
node [above, black] at (\y*\y*\RI, \y*\x*\RI, 0){$x'_3$};
\draw [thick, green] (0, 0, 0) -- (0, \y*\RI, 0) node [left] at (0, 17/30*\y*\RI,0){$\gamma'_{1}$} 
node [below, black] at (0, \y*\RI, 0){$x'_1$};
\draw [thick, green] (0, 0, 0) -- (-\y*\RI*\y, \y*\RI*\x, 0) node [below] at (20/30*-\y*\RI*\y,17/30*\y*\RI*\x, 0){$\gamma'_{2}$} 
node [below, black] at (-\y*\RI*\y, \y*\RI*\x, 0){$\;x'_2$};
\draw [thick, green] (0, 0, 0) -- (-\y*\RI*\y,-\y*\RI*\x, 0) node [below] at (15/30*-\y*\RI*\y, 20/30*-\y*\RI*\x, 0){$\gamma'_{3}$} 
node [above, black] at (-\y*\RI*\y, -\y*\RI*\x, 0){$x'_3$};
\draw [thick, blue] (0, 0, 0) -- (0, -\y*\RI,0) node [right] at (5/30, 20/30*-\y*\RI,0){\hspace{1mm}$\tilde{\gamma}'_{1}$}
 node [above, black] at (0, -\y*\RI,0){$x'_1$};
\draw [thick, blue] (0, 0, 0) -- (\y*\RI*\y, -\y*\RI*\x, 0) node [above] at (20/30*\y*\RI*\y, 20/30*-\y*\RI*\x, 0){$\tilde{\gamma}'_{2}$}
 node [above, black] at (\y*\RI*\y, -\y*\RI*\x, 0){$x'_2$};
\end{scope}


\draw[thick] [black] (\RI, 0, -\RI/2) -- (\RI*\x, \RI*\y, -\RI/2);
\draw[thick] [yellow] (\RI*\x, \RI*\y, -\RI/2) -- (-\RI*\x, \RI*\y, -\RI/2);
\draw[thick] [orange] (-\RI*\x, \RI*\y, -\RI/2) --  (-\RI, 0, -\RI/2);


\draw[thick] [yellow] (\RII, 0, \height) -- (\RII*\x, \RII*\y, \height);
\draw[thick] [orange] (\RII*\x, \RII*\y, \height) -- (-\RII*\x, \RII*\y, \height);
\draw[thick] [cyan] (-\RII*\x, \RII*\y, \height) --  (-\RII, 0, \height);
\draw[thick] [red] (-\RII, 0, \height) --  (-\RII*\x, -\RII*\y, \height);
\draw[thick] [magenta] (-\RII*\x, -\RII*\y, \height) --  (\RII*\x, -\RII*\y, \height);
\draw[thick] [black] (\RII*\x, -\RII*\y, \height) --  (\RII, 0, \height);

\draw [thick, cyan] (\RI, 0, -\RI/2) -- (\RII, 0, \height);
\draw [thick, red] (\RI*\x, \RI*\y, -\RI/2) -- (\RII*\x, \RII*\y, \height);
\draw [thick, magenta, opacity=1] (-\RI*\x, \RI*\y, -\RI/2) -- (-\RII*\x, \RII*\y, \height);
\draw [thick, black] (-\RI, 0, -\RI/2) -- (-\RII, 0, \height);

\end{tikzpicture}
\begin{center}
b)
\end{center}
\end{minipage}%
%
\begin{minipage}{0.33\textwidth}
\tdplotsetmaincoords{70}{185}
\hspace{6mm}
\begin{tikzpicture}[tdplot_main_coords, scale = 1.1]
\pgfmathsetmacro\x{cos(60)}
\pgfmathsetmacro\y{sin(60)}
\draw[thick, latex'-, green] (0,0,0) -- (0,0,1) node[right] at (0,0,1/2){\textcolor{green}{$\sigma$}} node [left, black] at (0,0,0){$m'$} node [above, black] at (0, 0, 1){$m$};
\draw[thick, latex'-, blue] (0,0,0) -- (0,0,-1) node[right] at (0,0,-1.2/2){\textcolor{blue}{$\tilde{\sigma}$}};
\def\RI{2}
\def\RII{2}
\def\height{\RI/2}
\draw [thick, dashed, yellow, opacity=1] (-\RI*\x, -\RI*\y, -\RI/2) -- (-\RII*\x, -\RII*\y, \height);
\draw [thick, dashed, orange, opacity=1] (\RI*\x, -\RI*\y, -\RI/2) -- (\RII*\x, -\RII*\y, \height);

\draw[thick] [black] (\RI, 0, -\RI/2) -- (\RI*\x, \RI*\y, -\RI/2);
\draw[thick] [yellow] (\RI*\x, \RI*\y, -\RI/2) -- (-\RI*\x, \RI*\y, -\RI/2);
\draw[thick] [orange] (-\RI*\x, \RI*\y, -\RI/2) --  (-\RI, 0, -\RI/2);
\draw[dashed, thick] [cyan] (-\RI, 0, -\RI/2) --  (-\RI*\x, -\RI*\y, -\RI/2);
\draw[dashed, thick] [red, opacity=0.5] (-\RI*\x, -\RI*\y, -\RI/2) --  (\RI*\x, -\RI*\y, -\RI/2);
\draw[dashed, thick] [magenta, opacity=0.5] (\RI*\x, -\RI*\y, -\RI/2) --  (\RI, 0, -\RI/2);

\begin{scope}[help lines, dashed]
\draw (\RII*\x, \RII*\y, \height) -- (-\RII*\x, -\RII*\y, \height);
\draw (-\RII*\x, \RII*\y, \height) -- (\RII*\x, -\RII*\y, \height);

\draw (\RII*\x, \RII*\y, -\height) -- (-\RII*\x, -\RII*\y, -\height);
\draw (-\RII*\x, \RII*\y, -\height) -- (\RII*\x, -\RII*\y, -\height);

\draw (\RII*\x, \RII*\y, \height) -- (-\RII*\x, -\RII*\y, -\height);
\draw (\RII*\x, \RII*\y, -\height) -- (-\RII*\x, -\RII*\y, \height);
\end{scope}

\draw[thick] [yellow] (\RII, 0, \height) -- (\RII*\x, \RII*\y, \height);
\draw[thick] [orange] (\RII*\x, \RII*\y, \height) -- (-\RII*\x, \RII*\y, \height);
\draw[thick] [cyan] (-\RII*\x, \RII*\y, \height) --  (-\RII, 0, \height);
\draw[thick] [red] (-\RII, 0, \height) --  (-\RII*\x, -\RII*\y, \height);
\draw[thick] [magenta] (-\RII*\x, -\RII*\y, \height) --  (\RII*\x, -\RII*\y, \height);
\draw[thick] [black] (\RII*\x, -\RII*\y, \height) --  (\RII, 0, \height);
\draw [thick, cyan, opacity=1] (\RI, 0, -\RI/2) -- (\RII, 0, \height);
\draw [thick, red, opacity=1] (\RI*\x, \RI*\y, -\RI/2) -- (\RII*\x, \RII*\y, \height);
\draw [thick, magenta, opacity=1] (-\RI*\x, \RI*\y, -\RI/2) -- (-\RII*\x, \RII*\y, \height);
\draw [thick, black, opacity=1] (-\RI, 0, -\RI/2) -- (-\RII, 0, \height);

\begin{scope}[thick, -latex', blue]
\draw (\RI, 0, -\height) -- (\y*\y*\RI, \y*\x*\RI, 0)
node [above right] at (\y*\y*\RI, \y*\x*\RI, 0){$\;\;\delta_{33}$}
node [above left] at (\y*\y*\RI, \y*\x*\RI, 0){$\tilde{\delta}_{23}\;$}
node [below left] at (\y*\y*\RI, \y*\x*\RI, 0){$\delta_{13}\;$}
node [below right] at (\y*\y*\RI, \y*\x*\RI, 0){$\;\delta_{23}$};
\draw (\x*\RI, \y*\RI, \height) -- (\y*\y*\RI, \y*\x*\RI, 0);
\draw (\RI, 0, \height) -- (\y*\y*\RI, \y*\x*\RI, 0);
\draw (\x*\RI, \y*\RI, -\height)--(\y*\y*\RI, \y*\x*\RI, 0);
\draw (\x*\RI,\y*\RI, -\height) -- (0, \y*\RI, 0) 
node [above right] at(0, \y*\RI, 0) {$\;\;\;\;\delta_{11}$}
node [above left] at (0, \y*\RI, 0){$\delta_{31}\;\;$}
node [below left] at (0, \y*\RI, 0){$\delta_{21}\;\;\;\;\;$}
node [below right] at (0, \y*\RI, 0) {$\;\;\;\tilde{\delta}_{31}$};
\draw (-\x*\RI, \y*\RI, \height)--(0, \y*\RI, 0);
\draw (\x*\RI,\y*\RI, \height) --(0, \y*\RI, 0); 
\draw (-\x*\RI, \y*\RI, -\height)--(0, \y*\RI, 0);
\draw (-\x*\RI, \y*\RI, -\height) -- (-\y*\RI*\y, \y*\RI*\x, 0) 
node [above right] at (-\y*\RI*\y, \y*\RI*\x, 0){$\;\delta_{22}$}
node [above left] at (-\y*\RI*\y, \y*\RI*\x, 0){$\delta_{12}\;\;$}
node [below left] at (-\y*\RI*\y, \y*\RI*\x, 0){$\delta_{32}\;\;$}
node [below right] at (-\y*\RI*\y, \y*\RI*\x, 0){$\;\;\;\,\tilde{\delta}_{12}$};
\draw (-\RI, 0, \height)--(-\y*\RI*\y, \y*\RI*\x, 0);
\draw (-\RI, 0, -\height) -- (-\y*\RI*\y, \y*\RI*\x, 0);
\draw (-\x*\RI, \y*\RI, \height)--(-\y*\RI*\y, \y*\RI*\x, 0);
\end{scope}


\end{tikzpicture}
\begin{center}
\vspace{9.3mm} \hspace{8mm} c)
\end{center}
\end{minipage}

\caption{
A Morse function $f \colon \chiang \to \RR$. All flowlines of index 1 are depicted with arrows pointing in the direction of downward gradient flow. Note that in diagram c) the flowlines $\delta_{ij}$ and $\tilde{\delta}_{ij}$ always go from $x_i$ to $x_j'$. The index 3 flowlines $\sigma$ and $\tilde{\sigma}$ and the different colouring (green and blue) of the flowlines will be used below for the calculation of parallel transport maps.}\label{prism and morse flowlines}
\end{figure}
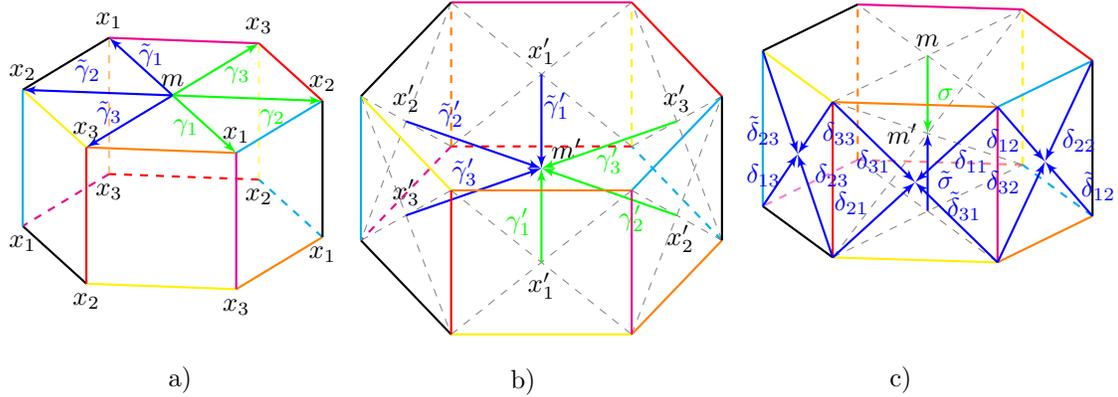

\begin{figure}[h!]
\labellist
\large
\pinlabel $V'_1$ at 178 368
\pinlabel $V'_2$ at 338 94
\pinlabel $V'_3$ at 20 101
\pinlabel $V_1$ at 21 281
\pinlabel $V_2$ at 337 277
\pinlabel $V_3$ at 178 9
\pinlabel $x_1$ at 242 159
\pinlabel $x_2$ at 114 145
\pinlabel $x_3$ at 171 277
\pinlabel {\textcolor{brown}{$x'_1$}} at 185 94
\pinlabel {\textcolor{brown}{$x'_2$}} at 91 238
\pinlabel {\textcolor{brown}{$x'_3$}} at 261 228
\pinlabel {\textcolor{green}{$\gamma'_1$}} at 71 73
\pinlabel {\textcolor{green}{$\gamma'_1$}} at 284 110
\pinlabel {\textcolor{green}{$\gamma'_2$}} at 61 130
\pinlabel {\textcolor{green}{$\gamma'_2$}} at 145 331
\pinlabel {\textcolor{green}{$\gamma'_3$}} at 287 152
\pinlabel {\textcolor{green}{$\gamma'_3$}} at 225 331
\pinlabel {\textcolor{green}{$\gamma_1$}} at 207 40
\pinlabel {\textcolor{green}{$\gamma_1$}} at 290 236
\pinlabel {\textcolor{green}{$\gamma_2$}} at 142 36
\pinlabel {\textcolor{green}{$\gamma_2$}} at 66 226
\pinlabel {\textcolor{green}{$\gamma_3$}} at 282 264
\pinlabel {\textcolor{green}{$\gamma_3$}} at 95 302
\pinlabel {\textcolor{blue}{$\tilde{\gamma}'_1$}} at 277 77
\pinlabel {\textcolor{blue}{$\tilde{\gamma}'_1$}} at 84 114
\pinlabel {\textcolor{blue}{$\tilde{\gamma}'_2$}} at 150 280
\pinlabel {\textcolor{blue}{$\tilde{\gamma}'_2$}} at 50 168
\pinlabel {\textcolor{blue}{$\tilde{\gamma}'_3$}} at 193 280
\pinlabel {\textcolor{blue}{$\tilde{\gamma}'_3$}} at 317 168
\pinlabel {\textcolor{blue}{$\tilde{\gamma}_1$}} at 312 213
\pinlabel {\textcolor{blue}{$\tilde{\gamma}_1$}} at 188 65
\pinlabel {\textcolor{blue}{$\tilde{\gamma}_2$}} at 35 217
\pinlabel {\textcolor{blue}{$\tilde{\gamma}_2$}} at 162 52
\pinlabel {\textcolor{blue}{$\tilde{\gamma}_3$}} at 73 262
\pinlabel {\textcolor{blue}{$\tilde{\gamma}_3$}} at 285 298
\pinlabel {\textcolor{red!80}{$F_{11}$}} at 227 68
\pinlabel {\textcolor{red!80}{$B_{11}$}} at 227 138
\pinlabel {\textcolor{red!80}{$F_{12}$}} at 114 68
\pinlabel {\textcolor{red!80}{$B_{12}$}} at 144 138
\pinlabel {\textcolor{red!80}{$F_{22}$}} at 85 162
\pinlabel {\textcolor{red!80}{$B_{22}$}} at 73 212
\pinlabel {\textcolor{red!80}{$F_{23}$}} at 128 237
\pinlabel {\textcolor{red!80}{$B_{23}$}} at 128 306
\pinlabel {\textcolor{red!80}{$F_{33}$}} at 215 237
\pinlabel {\textcolor{red!80}{$B_{33}$}} at 241 306
\pinlabel {\textcolor{red!80}{$F_{31}$}} at 281 162
\pinlabel {\textcolor{red!80}{$B_{31}$}} at 272 212
\endlabellist
\begin{center}
\includegraphics[scale=1]{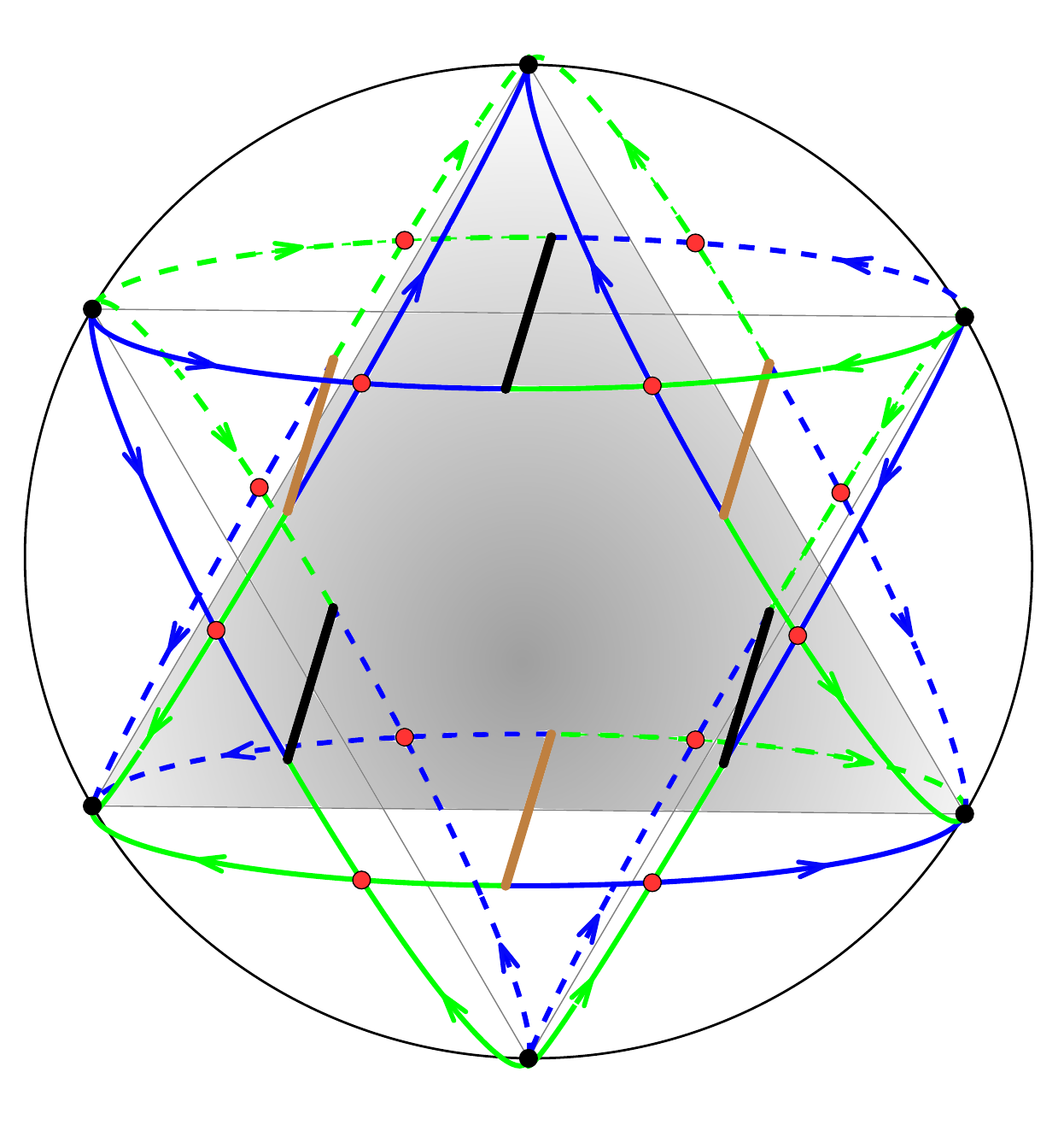}
\end{center}
\caption{Another representation of the Morse function $f$: The minimum $m'$ corresponds to the triangle $\triangle V'_1V'_2V'_3$ and the maximum $m$ is $\triangle V_1V_2V_3$. The critical points of index one $\{x'_i\}_{1 \le i \le 3}$ and index two $\{x_i\}_{1 \le i \le 3}$ correspond to triangles with one side along the segment with the respective label. The flowlines of index 1 through the minimum and maximum are also illustrated by the pairs of circular arcs with matching labels. Each downward flowline consists of triangles rotating around a fixed vertex, with their other two vertices tracing out the two arcs in the indicated directions. The labels of these arcs match the ones on the flowlines in Figure \ref{prism and morse flowlines}.}\label{the one with all the circles}
\end{figure}
For the sake of completeness, let us now give a formula for such a function. To describe it we will use coordinates on $\chiang$ coming from the Hopf coordinates on $S^3$. Consider the following ``Euler angles map'':
$$G \colon (\RR/2\pi\ZZ)^3 \to SU(2), \quad G(\theta, \varphi, \psi) \coloneqq \exp\left(\frac{\varphi+\psi}{2}\sigma_1\right) \cdot \exp\left(\theta \sigma_3\right) \cdot \exp\left(\frac{\varphi-\psi}{2}\sigma_1\right).$$ 
The map $G$ is a degree 4 ramified covering whose singular values (i.e. where ``gimbal lock'' occurs) form the standard Hopf link 
$\lbrace \exp(t \sigma_1) \st t \in [0, 2\pi] \rbrace \cup \lbrace \exp(t \sigma_1) \cdot \exp((\pi/2)\sigma_3) \st t \in [0, 2\pi] \rbrace$. Now our Morse function $f \colon \chiang \to \RR$ (or rather, its pull-back under $q \circ G$) is given by 
\begin{equation}\label{formula for morse function}
f(\theta, \varphi, \psi) = -\cos^4(\theta)\cos(6\varphi) - \sin^4(\theta)\cos(6\psi)
\end{equation}
and one can easily check that in these coordinates its critical points are indeed $m' =  (0, 0, 0)$, $x'_1 =  (\pi/4, 0, 0)$, $x'_2 =  (\pi/4, 2\pi/3, 2\pi/3)$, $x'_3 =  (\pi/4, \pi/3, \pi/3)$, $x_1 =  (\pi/4, \pi/6, \pi/6)$, $x_2 =  (\pi/4, 5\pi/6, 5\pi/6)$, $x_3 =  (\pi/4, 3\pi/6, 3\pi/6)$ and $m = (0, \pi/6, \pi/6)$.

\subsection{Computation of Floer Cohomology with Local Coefficients}\label{Computation of Floer Cohomology with Local Coefficients}
\subsubsection{Morse Differential}\label{Morse Differential}
We now move on to calculating $\del_0$ explicitly in the case when $L$ is the Chiang Lagrangian $\chiang$. More precisely, let $V$ be any vector space over $\FF_2$ and let $\rho \colon \bindih \to \Aut(V)$ be a representation. Since $\bindih\cong\pi_1(\chiang, m')^{Opp}$, $\rho$ determines a \emph{right} action of $\pi_1(\chiang)$ on V and so we obtain a local system $W \to \chiang$ with fibre isomorphic to $V$ by the recipe from \ref{local systems}. As Morse data for the pearl complex we shall use the pair $\F = (f, g)$, where $f$ is the Morse function (\ref{formula for morse function}) and $g$ is the round metric on $SU(2)$. In the Appendix we explain how to perturb $\F$ slightly, so that all transversality conditions are satisfied and the results of our calculations are not altered. Our goal for now is to explicitly compute the Morse differential $\del_0$ on the complex $C^*_{\F}(W, W)$. 

We thus need to calculate parallel transport maps on $W$ along the index 1 flow lines of $f$. To that end we first fix an identification $W_{m'} \cong V$. Next, we also identify with $V$ the fibres of $W$ which lie over other critical points. We do so in the unique way so that parallel transport maps along $(\gamma'_1)^{-1}$, $(\gamma'_2)^{-1}$, $(\gamma'_3)^{-1}$, $\sigma^{-1}$, $(\sigma^{-1}\cdot\gamma_1)$, $(\sigma^{-1}\cdot\gamma_2)$ and $(\sigma^{-1}\cdot\gamma_3)$  are represented by the identity map $V \to V$. From now on we refer to the paths in this list as \emph{identification paths} and we draw them in green on all diagrams (see also figure \ref{prism and morse flowlines} above). 

Suppose now that $\ell$ is a path from $s(\ell)\in Crit(f)$ to $t(\ell)\in Crit(f)$. By pre-concatenating $\ell$ with the identification path to $s(\ell)$ and post-concatenating it with the inverse of the identification path to $t(\ell)$ we obtain the corresponding loop $\hat{\ell}$, based at $m'$. We identify this loop with an element $\left[\hat{\ell}\right] \in \bindih$ via the isomorphism (\ref{isomorphism with pi_1}). Then, using the identifications above we have 
$$\begin{array}{rclccl}
&&&\rho\left(\left[\hat{\ell}\right]\right)&&\\
P_{\ell} & \colon & V                       & \xrightarrow{\hspace*{1.5cm}} && V \\
          &        &$\rotatebox{90}{$\cong$}$  &     &&$\rotatebox{90}{$\cong$}$\\
          &        &W_{s(\ell)}                &    && W_{t(\ell)}
\end{array}$$
We now use this set-up, together with the universal cover $SU(2) \to \chiang$ to calculate $\del_0$. Note that the fundamental domain whose centre lies at $\Id \in SU(2)$ borders 8 other fundamental domains with centres at $a = \exp(\sigma_1 \pi/3)$, $a^5 = \exp(-\sigma_1 \pi/3)$, 
$b = \exp(\sigma_3\pi/2)$, $ab$, $a^2b$, $a^3b$, $a^4b$ and $a^5b$. The first, fourth and eighth of these are schematically depicted (after stereographic projection from $-Id \in SU(2)$) in figures \ref{main and top}, \ref{main and front right} and \ref{main and front left}  respectively.\footnote{See Figure 3 in \cite{evans2014floer} or Figures \ref{actual plot b11aller}, \ref{actual plot b11retour} below for accurate pictures of the stereographically projected fundamental domains.}

Let us now compute $P_{\tilde{\gamma}'_2} \colon W_{x'_2} \to W_{m'}$ (identifications with $V$ are implicit here and in what follows). The corresponding loop is $(\gamma'_2)^{-1}\cdot\tilde{\gamma}'_2$. A lift of this loop at $\Id \in SU(2)$ is shown in Figure \ref{main and front right}.
%
%
\begin{figure}[h!]\label{parallel transport along gamma2primetilde}
\begin{center}
\tdplotsetmaincoords{80}{185}
\begin{tikzpicture}[tdplot_main_coords, scale = 1.2]
\pgfmathsetmacro\x{cos(60)}
\pgfmathsetmacro\y{sin(60)}

\draw node at (0,0,0) {$\cdot$};
\draw node [above, left] at (0,0,0) {$\operatorname{Id}$};

\def\RI{2}

\node [below, black] at (\RI,0,-\RI/2) {$x_1$};
\draw[thick] [black] (\RI, 0, -\RI/2) -- (\RI*\x, \RI*\y, -\RI/2) node [below, black] at (\RI*\x, \RI*\y, -\RI/2) {$x_2$};
\draw[thick] [yellow] (\RI*\x, \RI*\y, -\RI/2) -- (-\RI*\x, \RI*\y, -\RI/2) node [below, black] at (-\RI*\x, \RI*\y, -\RI/2) {$x_3$};
\draw[thick] [orange, dashed] (-\RI*\x, \RI*\y, -\RI/2) --  (-\RI, 0, -\RI/2) node [below, black] at (-\RI, 0, -\RI/2){$x_1$};
\draw[dashed, thick] [cyan] (-\RI, 0, -\RI/2) --  (-\RI*\x, -\RI*\y, -\RI/2) node [below, black] at (-\RI*\x, -\RI*\y, -\RI/2) {$x_2$};
\draw[dashed, thick] [red] (-\RI*\x, -\RI*\y, -\RI/2) --  (\RI*\x, -\RI*\y, -\RI/2) node [below, black] at (\RI*\x, -\RI*\y, -\RI/2) {$x_3$};
\draw[dashed, thick] [magenta] (\RI*\x, -\RI*\y, -\RI/2) --  (\RI, 0, -\RI/2);

\def\RII{2}
\def\height{1}

\draw node [above, black] at (\RII, 0, \height) {$x_2$};
\draw[thick] [yellow] (\RII, 0, \height) -- (\RII*\x, \RII*\y, \height) node [above, black] at (\RII*\x, \RII*\y, \height) {$x_3$};
\draw[thick] [orange] (\RII*\x, \RII*\y, \height) -- (-\RII*\x, \RII*\y, \height) node [above, black] at (-\RII*\x, \RII*\y, \height) {$x_1$};
\draw[thick] [cyan, dashed] (-\RII*\x, \RII*\y, \height) --  (-\RII, 0, \height) node [above, black] at (-\RII, 0, \height) {$x_2$};
\draw[thick] [red] (-\RII, 0, \height) --  (-\RII*\x, -\RII*\y, \height) node [above, black] at (-\RII*\x, -\RII*\y, \height) {$x_3$};
\draw[thick] [magenta] (-\RII*\x, -\RII*\y, \height) --  (\RII*\x, -\RII*\y, \height) node [above, black] at (\RII*\x, -\RII*\y, \height) {$x_1$};
\draw[thick] [black] (\RII*\x, -\RII*\y, \height) --  (\RII, 0, \height);

\draw [thick, cyan] (\RI, 0, -\RI/2) -- (\RII, 0, \height);
\draw [thick, red] (\RI*\x, \RI*\y, -\RI/2) -- (\RII*\x, \RII*\y, \height);
\draw [thick, magenta] (-\RI*\x, \RI*\y, -\RI/2) -- (-\RII*\x, \RII*\y, \height);
\draw [thick, black, dashed, opacity=0.5] (-\RI, 0, -\RI/2) -- (-\RII, 0, \height);
\draw [thick, yellow, dashed, opacity=0.5] (-\RI*\x, -\RI*\y, -\RI/2) -- (-\RII*\x, -\RII*\y, \height);
\draw [thick, orange, dashed] (\RI*\x, -\RI*\y, -\RI/2) -- (\RII*\x, -\RII*\y, \height);

\tdplotsetrotatedcoords{60}{270}{300}
\coordinate (Shift) at (-2*\RI*\y*\y,2*\RI*\y*\x,0);
\tdplotsetrotatedcoordsorigin{(Shift)}
\begin{scope}[tdplot_rotated_coords]
\draw  node at (0,0,0) {$\cdot$};
\draw  node [above, right] at (0,0,0) {$ab$};
\def\RI{2}

\draw[thick] [black] (\RI, 0, -\RI/2) -- (\RI*\x, \RI*\y, -\RI/2) node [above, black] at (\RI*\x, \RI*\y, -\RI/2) {$x_2$};
\draw[thick] [yellow] (\RI*\x, \RI*\y, -\RI/2) -- (-\RI*\x, \RI*\y, -\RI/2) node [above, black] at (-\RI*\x, \RI*\y, -\RI/2) {$x_3$};
\draw[thick] [orange] (-\RI*\x, \RI*\y, -\RI/2) --  (-\RI, 0, -\RI/2) node [above right, black] at (-\RI, 0, -\RI/2){$x_1$};
\draw[thick] [cyan] (-\RI, 0, -\RI/2) --  (-\RI*\x, -\RI*\y, -\RI/2) node [below, black] at (-\RI*\x, -\RI*\y, -\RI/2) {$x_2$};
\draw[thick] [red] (-\RI*\x, -\RI*\y, -\RI/2) --  (\RI*\x, -\RI*\y, -\RI/2);

\def\RII{2}
\def\height{1}

\def\RII{2}
\def\height{1}

\draw[thick] [yellow, dashed] (\RII, 0, \height) -- (\RII*\x, \RII*\y, \height) node [above, black] at (\RII*\x, \RII*\y, \height) {$x_3$};
\draw[thick] [orange] (\RII*\x, \RII*\y, \height) -- (-\RII*\x, \RII*\y, \height) node [right, black] at (-\RII*\x, \RII*\y, \height) {$x_1$};
\draw[thick] [cyan] (-\RII*\x, \RII*\y, \height) --  (-\RII, 0, \height) node [right, black] at (-\RII, 0, \height) {$x_2$};
\draw[thick] [red] (-\RII, 0, \height) --  (-\RII*\x, -\RII*\y, \height) node [below, black] at (-\RII*\x, -\RII*\y, \height) {$x_3$};
\draw[thick] [magenta, dashed] (-\RII*\x, -\RII*\y, \height) --  (\RII*\x, -\RII*\y, \height);

\draw [thick, red] (\RI*\x, \RI*\y, -\RI/2) -- (\RII*\x, \RII*\y, \height);
\draw [thick, magenta] (-\RI*\x, \RI*\y, -\RI/2) -- (-\RII*\x, \RII*\y, \height);
\draw [thick, black] (-\RI, 0, -\RI/2) -- (-\RII, 0, \height);
\draw [thick, yellow] (-\RI*\x, -\RI*\y, -\RI/2) -- (-\RII*\x, -\RII*\y, \height);
\end{scope}
\tdplotresetrotatedcoordsorigin;

\draw [thick, green, opacity=0.7, -latex'] (0, 0, 0) -- (-\y*\RI*\y, \y*\RI*\x, 0) node [below] at (20/30*-\y*\RI*\y,17/30*\y*\RI*\x, 0){$(\gamma'_{2})^{-1}\;\;\;\;\;\;\;\;\;$};
\draw [thick, opacity=0.7, -latex', blue](-\y*\RI*\y, \y*\RI*\x, 0) -- (-2*\RI*\y*\y,2*\RI*\y*\x,0) node [below] at
(25/30*-2*\RI*\y*\y,25/30*2*\RI*\y*\x,0) {$\tilde{\gamma}'_2$};

\end{tikzpicture}
\end{center}
\caption{Parallel transport along $\tilde{\gamma}'_2$.}\label{main and front right}
\end{figure}
From there we read off:
$$\begin{array}{rcl}
&AB&\\
P_{\tilde{\gamma}'_2}  \colon  W_{x'_2}  & \xrightarrow{\hspace*{1.5cm}} & W_{m'},
 \end{array}$$
 where we write $A\coloneqq\rho(a)$ and $B\coloneqq \rho(b)$.
\noindent Similarly we can compute $P_{\tilde{\delta}_{23}} \colon W_{x_2} \to W_{x'_3}$ by lifting the loop $(\sigma)^{-1}\cdot\gamma_2\cdot\tilde{\delta}_{23}\cdot(\gamma'_3)^{-1}$ to the universal cover (see Figure \ref{main and top}). 
%
%
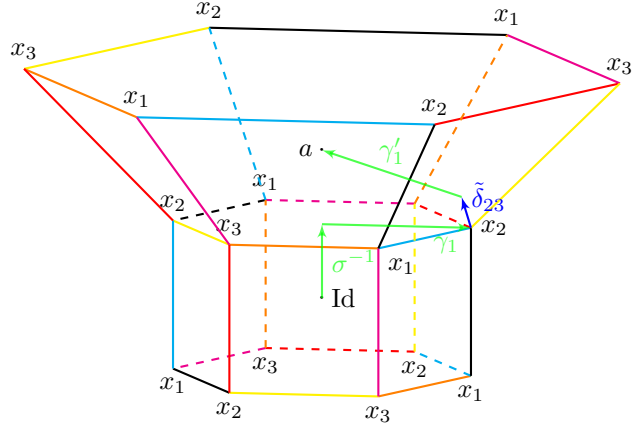
\begin{figure}[h!]\label{parallel transport along delta23tilde}
\begin{center}
\tdplotsetmaincoords{80}{188}
\begin{tikzpicture}[tdplot_main_coords, scale = 1]
\pgfmathsetmacro\x{cos(60)}
\pgfmathsetmacro\y{sin(60)}
\pgfmathsetmacro\hypoup{sqrt(7)/6}
\pgfmathsetmacro\yyup{(sin(45)*\hypoup}
\pgfmathsetmacro\yy{\y+\yyup}
\pgfmathsetmacro\zz{-1/2 + \yyup}

\draw node at (0,0,0) {$\cdot$};
\draw node [above, right] at (0,0,0) {$\operatorname{Id}$};

\def\RI{2}

\node [below, black] at (\RI,0,-\RI/2) {$x_1$};
\draw[thick] [black] (\RI, 0, -\RI/2) -- (\RI*\x, \RI*\y, -\RI/2) node [below, black] at (\RI*\x, \RI*\y, -\RI/2) {$x_2$};
\draw[thick] [yellow] (\RI*\x, \RI*\y, -\RI/2) -- (-\RI*\x, \RI*\y, -\RI/2) node [below, black] at (-\RI*\x, \RI*\y, -\RI/2) {$x_3$};
\draw[thick] [orange] (-\RI*\x, \RI*\y, -\RI/2) --  (-\RI, 0, -\RI/2) node [below, black] at (-\RI, 0, -\RI/2){$x_1$};
\draw[dashed, thick] [cyan] (-\RI, 0, -\RI/2) --  (-\RI*\x, -\RI*\y, -\RI/2) node [below, black] at (-\RI*\x, -\RI*\y, -\RI/2) {$x_2$};
\draw[dashed, thick] [red] (-\RI*\x, -\RI*\y, -\RI/2) --  (\RI*\x, -\RI*\y, -\RI/2) node [below, black] at (\RI*\x, -\RI*\y, -\RI/2) {$x_3$};
\draw[dashed, thick] [magenta] (\RI*\x, -\RI*\y, -\RI/2) --  (\RI, 0, -\RI/2);

\def\RII{2}
\def\height{1}

\draw node [above, black] at (\RII, 0, \height) {$x_2$};
\draw[thick] [yellow] (\RII, 0, \height) -- (\RII*\x, \RII*\y, \height) node [above, black] at (\RII*\x, \RII*\y, \height) {$x_3$};
\draw[thick] [orange] (\RII*\x, \RII*\y, \height) -- (-\RII*\x, \RII*\y, \height) node [below right, black] at (-\RII*\x, \RII*\y, \height) {$x_1$};
\draw[thick] [cyan] (-\RII*\x, \RII*\y, \height) --  (-\RII, 0, \height) node [right, black] at (-\RII, 0, \height) {$x_2$};
\draw[thick] [red, dashed] (-\RII, 0, \height) --  (-\RII*\x, -\RII*\y, \height);
\draw[thick] [magenta, dashed] (-\RII*\x, -\RII*\y, \height) --  (\RII*\x, -\RII*\y, \height) node [above, black] at (\RII*\x, -\RII*\y, \height) {$x_1$};
\draw[thick] [black, dashed] (\RII*\x, -\RII*\y, \height) --  (\RII, 0, \height);

\draw [thick, cyan] (\RI, 0, -\RI/2) -- (\RII, 0, \height);
\draw [thick, red] (\RI*\x, \RI*\y, -\RI/2) -- (\RII*\x, \RII*\y, \height);
\draw [thick, magenta] (-\RI*\x, \RI*\y, -\RI/2) -- (-\RII*\x, \RII*\y, \height);
\draw [thick, black] (-\RI, 0, -\RI/2) -- (-\RII, 0, \height);
\draw [thick, yellow, dashed] (-\RI*\x, -\RI*\y, -\RI/2) -- (-\RII*\x, -\RII*\y, \height);
\draw [thick, orange, dashed] (\RI*\x, -\RI*\y, -\RI/2) -- (\RII*\x, -\RII*\y, \height);


\tdplotsetrotatedcoords{0}{0}{300}
\coordinate (Shift) at (0,0,2);
\tdplotsetrotatedcoordsorigin{(Shift)}
\begin{scope}[tdplot_rotated_coords]
\draw node at (0,0,0) {$\cdot$};
\draw node [above, left] at (0,0,0) {$a$};
\def\RI{2}


\def\RII{4}
\def\height{1}

\draw  node [above, black] at (\RII, 0, \height) {$x_2$};
\draw[thick] [yellow] (\RII, 0, \height) -- (\RII*\x, \RII*\y, \height) node [above, black] at (\RII*\x, \RII*\y, \height) {$x_3$};
\draw[thick] [orange] (\RII*\x, \RII*\y, \height) -- (-\RII*\x, \RII*\y, \height) node [above, black] at (-\RII*\x, \RII*\y, \height) {$x_1$};
\draw[thick] [cyan] (-\RII*\x, \RII*\y, \height) --  (-\RII, 0, \height) node [above, black] at (-\RII, 0, \height) {$x_2$};
\draw[thick] [red] (-\RII, 0, \height) --  (-\RII*\x, -\RII*\y, \height) node [above, black] at (-\RII*\x, -\RII*\y, \height) {$x_3$};
\draw[thick] [magenta] (-\RII*\x, -\RII*\y, \height) --  (\RII*\x, -\RII*\y, \height) node [above, black] at (\RII*\x, -\RII*\y, \height) {$x_1$};
\draw[thick] [black] (\RII*\x, -\RII*\y, \height) --  (\RII, 0, \height);

\draw [thick, cyan, dashed] (\RI, 0, -\RI/2) -- (\RII, 0, \height);
\draw [thick, red] (\RI*\x, \RI*\y, -\RI/2) -- (\RII*\x, \RII*\y, \height);
\draw [thick, magenta] (-\RI*\x, \RI*\y, -\RI/2) -- (-\RII*\x, \RII*\y, \height);
\draw [thick, black] (-\RI, 0, -\RI/2) -- (-\RII, 0, \height);
\draw [thick, yellow] (-\RI*\x, -\RI*\y, -\RI/2) -- (-\RII*\x, -\RII*\y, \height);
\draw [thick, orange, dashed] (\RI*\x, -\RI*\y, -\RI/2) -- (\RII*\x, -\RII*\y, \height);
%
%

\draw [thick, opacity=1, -latex', blue] (-\x*\RI, -\y*\RI, -\height) -- (-\y*\yy*\RI, -\x*\yy*\RI, \zz*\RI) node[right] at (-\y*\yy*\RI, -\x*\yy*\RI, \zz*\RI) {$\tilde{\delta}_{23}$};
\draw [thick, green, opacity=0.7, -latex'] (-\y*\yy*\RI, -\x*\yy*\RI, \zz*\RI) -- (0, 0, 0) node [above] at (-1/2*\y*\yy*\RI, -1/2*\x*\yy*\RI, 1/2*\zz*\RI) {$\gamma'_1$};
\end{scope}
\draw [thick, green, opacity=0.7, -latex'] (0, 0, 0) -- (0,0, 1) node [right] at (0,0, 1/2){$\sigma^{-1}$};
\draw [thick, green, opacity=0.7, -latex'](0,0, 1) -- (-\RI,0,1) node [below] at
(25/30*-\RI, 0,1) {$\gamma_1$};

\end{tikzpicture}
\end{center}
\caption{Parallel transport along $\tilde{\delta}_{23}$}\label{main and top}
\end{figure}
\noindent Thus we obtain: 
$$\begin{array}{rcl}
&A&\\
P_{\tilde{\delta}_{23}}  \colon  W_{x_2}  & \xrightarrow{\hspace*{1.5cm}} & W_{x'_3}.
 \end{array}$$
\hspace{1mm} \\
\noindent We compute one more example, namely $P_{\tilde{\gamma}_3} \colon W_{m} \to W_{x_3}$. The loop $(\sigma)^{-1}\cdot\tilde{\gamma}_3\cdot(\gamma_{3})^{-1}\cdot\sigma$ lifts as shown in Figure \ref{main and front left}.
%
\begin{figure}[h!]\label{parallel transport along gamma3tilde}
\begin{center}
\tdplotsetmaincoords{80}{175}
\begin{tikzpicture}[tdplot_main_coords, scale = 1.2]
\pgfmathsetmacro\x{cos(60)}
\pgfmathsetmacro\y{sin(60)}

\draw node at (0,0,0) {$\cdot$};
\draw node [above, right] at (0,0,0) {$\operatorname{Id}$};

\def\RI{2}	

\node [below, black] at (\RI,0,-\RI/2) {$x_1$};
\draw[thick] [black, dashed] (\RI, 0, -\RI/2) -- (\RI*\x, \RI*\y, -\RI/2) node [below, black] at (\RI*\x, \RI*\y, -\RI/2) {$x_2$};
\draw[thick] [yellow] (\RI*\x, \RI*\y, -\RI/2) -- (-\RI*\x, \RI*\y, -\RI/2) node [below, black] at (-\RI*\x, \RI*\y, -\RI/2) {$x_3$};
\draw[thick] [orange] (-\RI*\x, \RI*\y, -\RI/2) --  (-\RI, 0, -\RI/2) node [below, black] at (-\RI, 0, -\RI/2){$x_1$};
\draw[dashed, thick] [cyan] (-\RI, 0, -\RI/2) --  (-\RI*\x, -\RI*\y, -\RI/2) node [below, black] at (-\RI*\x, -\RI*\y, -\RI/2) {$x_2$};
\draw[dashed, thick] [red] (-\RI*\x, -\RI*\y, -\RI/2) --  (\RI*\x, -\RI*\y, -\RI/2) node [below, black] at (\RI*\x, -\RI*\y, -\RI/2) {$x_3$};
\draw[dashed, thick] [magenta] (\RI*\x, -\RI*\y, -\RI/2) --  (\RI, 0, -\RI/2);

\def\RII{2}
\def\height{1}

\draw node [above, black] at (\RII, 0, \height) {$x_2$};
\draw[thick] [yellow] (\RII, 0, \height) -- (\RII*\x, \RII*\y, \height) node [above, black] at (\RII*\x, \RII*\y, \height) {$x_3$};
\draw[thick] [orange] (\RII*\x, \RII*\y, \height) -- (-\RII*\x, \RII*\y, \height) node [above, black] at (-\RII*\x, \RII*\y, \height) {$x_1$};
\draw[thick] [cyan] (-\RII*\x, \RII*\y, \height) --  (-\RII, 0, \height) node [above, black] at (-\RII, 0, \height) {$x_2$};
\draw[thick] [red] (-\RII, 0, \height) --  (-\RII*\x, -\RII*\y, \height) node [above, black] at (-\RII*\x, -\RII*\y, \height) {$x_3$};
\draw[thick] [magenta] (-\RII*\x, -\RII*\y, \height) --  (\RII*\x, -\RII*\y, \height) node [above, black] at (\RII*\x, -\RII*\y, \height) {$x_1$};
\draw[thick] [black] (\RII*\x, -\RII*\y, \height) --  (\RII, 0, \height);

\draw [thick, cyan, dashed] (\RI, 0, -\RI/2) -- (\RII, 0, \height);
\draw [thick, red] (\RI*\x, \RI*\y, -\RI/2) -- (\RII*\x, \RII*\y, \height);
\draw [thick, magenta] (-\RI*\x, \RI*\y, -\RI/2) -- (-\RII*\x, \RII*\y, \height);
\draw [thick, black] (-\RI, 0, -\RI/2) -- (-\RII, 0, \height);
\draw [thick, yellow, dashed] (-\RI*\x, -\RI*\y, -\RI/2) -- (-\RII*\x, -\RII*\y, \height);
\draw [thick, orange, dashed] (\RI*\x, -\RI*\y, -\RI/2) -- (\RII*\x, -\RII*\y, \height);

\draw [thick, green, opacity=0.7, -latex'] (0, 0, 0) -- (0,0, 1) node [right] at (0,0, 1/2){$\sigma^{-1}$};
\draw [thick, blue, opacity=0.7, -latex'] (0, 0, \height) -- (\x*\RII, \y*\RII, \height) node [above] at (15/30*\x*\RII, 15/30*\y*\RII, \height) {$\tilde{\gamma}_{3}$};


\tdplotsetrotatedcoords{300}{270}{60}
\coordinate (Shift) at (2*\RI*\y*\y,2*\RI*\y*\x,0);
\tdplotsetrotatedcoordsorigin{(Shift)}
\begin{scope}[tdplot_rotated_coords]
\draw node at (0,0,0) {$\cdot$};
\draw node [above, left] at (0,0,0) {$a^5b$};
\def\RI{2}

\node [left, black] at (\RI,0,-\RI/2) {$x_1$};
\draw[thick] [black] (\RI, 0, -\RI/2) -- (\RI*\x, \RI*\y, -\RI/2) node [below, black] at (\RI*\x, \RI*\y, -\RI/2) {$x_2$};
\draw[thick] [yellow] (\RI*\x, \RI*\y, -\RI/2) -- (-\RI*\x, \RI*\y, -\RI/2) node [below, black] at (-\RI*\x, \RI*\y, -\RI/2) {$x_3$};
\draw[thick, dashed] [orange] (-\RI*\x, \RI*\y, -\RI/2) --  (-\RI, 0, -\RI/2); 
\draw[thick, dashed] [red] (-\RI*\x, -\RI*\y, -\RI/2) --  (\RI*\x, -\RI*\y, -\RI/2) node [above, black] at (\RI*\x, -\RI*\y, -\RI/2) {$x_3$};
\draw[thick] [magenta] (\RI*\x, -\RI*\y, -\RI/2) --  (\RI, 0, -\RI/2);

\def\RII{2}
\def\height{1}

\def\RII{2}
\def\height{1}

\draw node [right, black] at (\RII, 0, \height) {$x_2$};
\draw[thick] [yellow] (\RII, 0, \height) -- (\RII*\x, \RII*\y, \height) node [right, black] at (\RII*\x, \RII*\y, \height) {$x_3$};
\draw[thick] [orange] (\RII*\x, \RII*\y, \height) -- (-\RII*\x, \RII*\y, \height) node [below, black] at (-\RII*\x, \RII*\y, \height) {$x_1$};
\draw[thick] [cyan] (-\RII*\x, \RII*\y, \height) --  (-\RII, 0, \height);
\draw[thick] [magenta] (-\RII*\x, -\RII*\y, \height) --  (\RII*\x, -\RII*\y, \height) node [above, black] at (\RII*\x, -\RII*\y, \height) {$x_1$};
\draw[thick] [black] (\RII*\x, -\RII*\y, \height) --  (\RII, 0, \height);

\draw [thick, cyan] (\RI, 0, -\RI/2) -- (\RII, 0, \height);
\draw [thick, red] (\RI*\x, \RI*\y, -\RI/2) -- (\RII*\x, \RII*\y, \height);
\draw [thick, magenta] (-\RI*\x, \RI*\y, -\RI/2) -- (-\RII*\x, \RII*\y, \height);
\draw [thick, orange] (\RI*\x, -\RI*\y, -\RI/2) -- (\RII*\x, -\RII*\y, \height);
\draw [thick, green, latex'-, opacity=0.7] (0, 0, \height) -- (-\x*\RII, -\y*\RII, \height) node [below] at (20/30*-\x*\RII, 20/30-\y*\RII, \height){$\gamma_{3}^{-1}$};
\draw [thick, green, opacity=0.7, latex'-] (0, 0, 0) -- (0,0, 1) node [above] at (0,0, 1/2){$\sigma$};
\end{scope}
\tdplotresetrotatedcoordsorigin;

\end{tikzpicture}
\end{center}
\caption{Parallel transport along $\tilde{\gamma}_3$}\label{main and front left}
\end{figure}
This yields:
$$\begin{array}{rcl}
&A^5B&\\
P_{\tilde{\gamma}_3}  \colon  W_{m}  & \xrightarrow{\hspace*{1.5cm}} & W_{x_3}.
 \end{array}$$
Continuing this way we obtain all the needed maps for the calculation of $\del_0$, which we summarise in the following table:

\begin{minipage}{0.19\textwidth}
\begin{eqnarray*}
P_{\tilde{\gamma}'_1} &= &B,\\
P_{\tilde{\gamma}'_2} &= &AB,\\
P_{\tilde{\gamma}'_3} &=& A^2B,\\ 
\end{eqnarray*}
\end{minipage}
\begin{minipage}{0.19\textwidth}
\begin{eqnarray*}
P_{\delta_{11}} &=&\Id,\\ 
P_{\delta_{21}} &=& A^2B,\\
P_{\delta_{31}} &=& A^2B,\\
P_{\tilde{\delta}_{31}}& =&A^3B,\\
\end{eqnarray*}
\end{minipage} 
\begin{minipage}{0.19\textwidth}
\begin{eqnarray*}
P_{\delta_{12}} &= &\Id, \\
P_{\tilde{\delta}_{12}} &=& A,\\
P_{\delta_{22}}& = &\Id, \\
P_{\delta_{32}}& = &A^3B, \\
\end{eqnarray*}
\end{minipage}
\begin{minipage}{0.19\textwidth}
\begin{eqnarray*}
P_{\delta_{13}} &=& A, \\
P_{\delta_{23}}& =& \Id,\\
P_{\tilde{\delta}_{23}}& =& A,\\ 
P_{\delta_{33}} &=& \Id,\\ 
\end{eqnarray*}
\end{minipage}
\begin{minipage}{0.19\textwidth}
\begin{eqnarray*}
P_{\tilde{\gamma}_1}& = &A^3B,\\ 
P_{\tilde{\gamma}_2} &= &A^4B,\\ 
P_{\tilde{\gamma}_3} &= &A^5B.\\
\end{eqnarray*}
\end{minipage}
Now every flowline $\gamma$ connecting $y$ to $x$ gives us a map $\End(W_x) \to \End(W_y)$ by conjugation $\alpha \mapsto P_{\gamma}^{-1}\circ \alpha \circ P_{\gamma}$. Using the above expressions we obtain:

\begin{itemize}
\item for every $\alpha' \in \End(W_{m'})$:
\begin{eqnarray}
\del_0 (\alpha') & = &\left(\alpha' + \bin\alpha'\b,\, \alpha' + \abin\alpha'\ab,\, \alpha' + \aabin\alpha'\aab\right)\nonumber\\
 &\in & \End(W_{x'_1})\oplus\End(W_{x'_2})\oplus\End(W_{x'_3}) \label{del 0 first part}
\end{eqnarray}
\item for every $(\alpha_1', \alpha_2', \alpha_3') \in \End(W_{x'_1})\oplus\End(W_{x'_2})\oplus\End(W_{x'_3})$:
\begin{eqnarray}
\del_0(\alpha_1', \alpha_2', \alpha_3')& = &\left(
\alpha_1' + \alpha_2' + \ain\alpha_2'\a + \ain\alpha_3'\a\, ,\right.\nonumber\\
&&\aabin\alpha_1'\aab + \alpha_2' + \ain\alpha_3'\a + \alpha_3'\, ,\nonumber\\
&&\left.\aabin\alpha_1'\aab + \aaabin\alpha_1'\aaab + \aaabin\alpha_2'\aaab + \alpha_3'\right)\nonumber\\
&\in & \End(W_{x_1})\oplus\End(W_{x_2})\oplus\End(W_{x_3})\label{del 0 second part}
\end{eqnarray}
\item and for every $(\alpha_1, \alpha_2, \alpha_3) \in \End(W_{x_1})\oplus\End(W_{x_2})\oplus\End(W_{x_3})$:
\begin{eqnarray}
\del_0 (\alpha_1, \alpha_2, \alpha_3) &=& \alpha_1+ \aaabin\alpha_1\aaab + \alpha_2 + \aaaabin\alpha_2\aaaab\nonumber\\
&&+ \alpha_3  + \aaaaabin\alpha_3\aaaaab \nonumber\\
&\in & \End(W_m) \label{del 0 third part}
\end{eqnarray}
\end{itemize}

\subsubsection{Contributions from holomorphic discs}\label{Contributions from holomorphic discs}
The classification of holomorphic discs of Maslov indices $2$ and $4$ with boundary on $\chiang$ (which are precisely the ones appearing in the pearly differential) has been carried out in \cite{evans2014floer} (see also \cite{smith2015floer}). Our main goal for this section will be to trace where their boundaries pass and to determine the parallel transport maps $\gamma^j_{\mathbf{u}}$ for all relevant pearly trajectories $\mathbf{u}$. Let us first recall the main notions and results on holomorphic discs from \cite{evans2014floer} which give us total control over the positions of the Maslov 2 discs.
\begin{definition}
Let $L$ be a manifold and let $K$ be a Lie group acting on $L$. Denote the stabiliser of a point $x \in L$ by $K_x$. An $x$-admissible homomorphism is defined to be any homomorphism $R \colon \RR \to K$ such that $R(2 \pi) \in K_x$. Such a homomorphism is called primitive if $R(\theta) \notin K_x,\; \forall \theta \in (0, 2\pi)$. 
\end{definition}
\begin{definition}
Let $X$ be a manifold and let $K$ be a Lie group acting on $X$. Suppose further that $L \subseteq X$ is a submanifold preserved by the action. A disc $u \colon (D, \del D) \to (X, L)$ is called axial if (after possibly reparametrising $u$) there exists a $u(1)$-admissible homomorphism $R$ such that $u(re^{i \theta}) = R(\theta)\cdot u(r)$ for every $r \in [0, 1)$, $\theta \in \RR$. For the particular case when $K=SU(2)$ we also define the axis of a non-constant axial disc $u$ to be the normalised infinitesimal generator $R'(0)/\lvert \lvert R'(0) \rvert \rvert \in S^2$, where again we identify $\mathfrak{su}(2)$ with $\RR^3$ via the basis $\{\sigma_1, \sigma_2, \sigma_3\}$.
\end{definition}

Let $J_0$ denote the standard (integrable) almost complex structure on $\CP^3$. Using the above notions, one can summarise Evans and Lekili's classification results for $J_0$-holomorphic discs $u \colon (D, \del D) \to (\CP^3, \chiang)$ in the following three theorems.
\begin{theorem}(\cite{evans2014floer}, lemma 3.3.1)\label{regularity}
All $J_0$-holomorphic discs $u \colon (D, \del D) \to (\CP^3, \chiang)$ are \emph{regular}.\footnote{In particular $J_0 \in \mathcal{J}_{reg}(\chiang)$.} 
\end{theorem}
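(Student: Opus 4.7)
The plan is to exploit two structural features: the integrability of $J_0$ and the transitive $SU(2)$-symmetry of the configuration $(\CP^3, \chiang, J_0)$. Together these should yield automatic regularity via a Grothendieck-splitting argument combined with symmetry-generated holomorphic sections of the pull-back bundle. Recall that a disc $u$ is regular precisely when the linearized Cauchy-Riemann operator $D_u$ with totally real boundary condition $u|_{\del D}^*T\chiang$ is surjective, and a standard criterion is that the holomorphic splitting of $u^*T\CP^3$ compatible with the boundary condition has all line-bundle summands of non-negative Maslov index (in fact, index $\ge -1$ suffices).

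First I would perform the doubling construction. Because $J_0$ is integrable and $\chiang$ is totally real, $u^*T\CP^3$ inherits a holomorphic structure on $D$, and the totally real subbundle $u|_{\del D}^*T\chiang$ determines a Schwarz-type extension of $u^*T\CP^3$ across $\del D$ to a holomorphic rank-$3$ bundle $\widetilde{u^*T\CP^3}$ over $\CP^1$. By Grothendieck's theorem this bundle splits as $\bigoplus_{i=1}^{3}\mathcal{O}(k_i)$ with $\sum_i k_i = \mu(u)$, and the boundary condition splits compatibly as a direct sum of totally real line subbundles of Maslov indices $k_1, k_2, k_3$.

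Next I would use $SU(2)$-symmetry to bound the $k_i$ from below. Each $\xi\in \mathfrak{su}(2)$ generates a Killing vector field $X_\xi$ on $\CP^3$ (a global holomorphic vector field, since the $SU(2)$-action preserves $J_0$) whose restriction to $\chiang$ is tangent to $\chiang$. Pulling back along $u$ gives a section $s_\xi$ of $u^*T\CP^3$ satisfying $s_\xi|_{\del D} \in u|_{\del D}^*T\chiang$; hence $s_\xi$ extends to a global holomorphic section of $\widetilde{u^*T\CP^3}$ over $\CP^1$. As $\xi$ ranges over a basis of $\mathfrak{su}(2)$ one obtains a $3$-dimensional space of such sections. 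The content of the argument, which also uses that every such $u$ factors through (a multiple cover of) an axial disc and that generic axial discs have axis not tangent to $\chiang$ everywhere, is that these symmetry sections project nontrivially into each Grothendieck summand. Each $\mathcal{O}(k_i)$ thereby admits a nonzero holomorphic section, forcing $k_i \ge 0$.

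With all $k_i \ge 0 > -2$ in particular $\ge -1$, the classical automatic transversality criterion for the totally real boundary-value problem on the disc (Hofer--Lizan--Sikorav, or the disc analogue of Lemma 3.3.3 in \cite{mcduff2012j}) yields surjectivity of $D_u$, i.e.\ regularity of $u$. The main obstacle will be the middle step, namely verifying that the three symmetry sections genuinely split off all three summands rather than bunching up in a single one; I would handle it by invoking the axial classification (primitive axial discs are $SU(2)$-orbits of a real one-parameter subgroup, so the stabilizer direction contributes to the tangent summand while the two transverse directions span the normal summands), and then extend to multiple covers by observing that the splitting indices pulled back under a degree $k$ cover scale linearly and remain non-negative.
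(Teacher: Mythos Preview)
Your overall architecture---doubling to a bundle on $\CP^1$, Grothendieck splitting, producing holomorphic sections from the $SU(2)$-action, then invoking automatic transversality once all partial indices are $\ge -1$---is exactly the route taken in \cite{evans2014floer}, which the present paper simply cites. So the strategy is right.

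However, your resolution of what you correctly identify as the main obstacle is flawed. The assertion that ``every such $u$ factors through (a multiple cover of) an axial disc'' is false: axiality is only established for Maslov index $2$ discs (Theorem~\ref{maslov 2 discs on chiang} here), and higher-index discs need not be axial. Worse, in Evans--Lekili the regularity statement is proved \emph{before} any classification of discs and is then used as input to that classification, so appealing to axiality here would be circular.

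The correct argument for the middle step is both simpler and classification-free. Since $SU(2)$ acts on $\chiang$ with finite stabiliser $\bindih$, the infinitesimal action $\psi_p\colon\mathfrak{su}(2)\to T_p\chiang$ is an isomorphism at every $p\in\chiang$. Hence the three symmetry sections $s_{\xi_1},s_{\xi_2},s_{\xi_3}$ are pointwise linearly independent along $\del D$. After doubling they extend to global holomorphic sections of $\widetilde{u^*T\CP^3}\cong\bigoplus_{i}\mathcal{O}(k_i)$ over $\CP^1$. If some $k_i<0$ then $H^0(\CP^1,\mathcal{O}(k_i))=0$, so all three sections project trivially to that summand and therefore live in a rank-$2$ subbundle---forcing them to be everywhere linearly dependent, contradicting what happens on the boundary. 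Thus $k_i\ge 0$ for all $i$ and automatic transversality applies. Replace your axial-classification paragraph with this linear-independence argument and the proof goes through.
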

\begin{theorem}(\cite{evans2014floer}, sections 3.5 and 6.1) \label{maslov 2 discs on chiang}
All $J_0$-holomorphic discs of  Maslov index 2 with boundary on $\chiang$ are axial. Through every point on $\chiang$ there pass exactly three such discs, namely the appropriate $SU(2)$-translates of the discs $\{u'_1, u'_2, u'_3\}$ with $u'_i(1) = m'$ for $i \in \{1, 2, 3\}$ and axes $V'_1$, $V'_2$ and $V'_3$ respectively.
\end{theorem}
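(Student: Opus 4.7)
My plan is to prove the theorem in three stages: construct the three axial Maslov $2$ discs $u'_1, u'_2, u'_3$ explicitly; reduce the uniqueness assertion to a finite count at $m'$ via $SU(2)$-equivariance and regularity; and classify Maslov $2$ discs through $m'$ via the $SL(2,\CC)$-orbit structure of $\CP^3 \cong \operatorname{Sym}^3(\CP^1)$.

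For the construction, for each $i \in \{1,2,3\}$, take $X_i \in \mathfrak{su}(2)$ to be the positive multiple of $V'_i$ yielding a primitive $m'$-admissible homomorphism $R_i(\theta) := \exp(\theta X_i)$ (so $R_i(2\pi)\in\bindih$ but $R_i(\theta)\notin\bindih$ for $\theta\in(0,2\pi)$). Extending the $SU(2)$-action to the holomorphic $SL(2,\CC)$-action on $\CP^3$, define $u'_i(z) := \exp(-i(\log z)X_i)\cdot m'$. This map is well-defined on $D$ (the $2\pi i$-monodromy of $\log z$ acts by $R_i(2\pi)\in\bindih$, which fixes $m'$); it is $J_0$-holomorphic (being a composition of holomorphic maps, or equivalently, by a direct check in polar coordinates using that $-J_0 X_i^{\#}$ is the infinitesimal action of $-iX_i \in \mathfrak{sl}(2,\CC)$); and its boundary traces the $R_i$-orbit of $m'$ inside $\chiang$. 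The Maslov index $\mu(u'_i) = 2$ then follows from an explicit computation: $Y_\Delta$ is an anticanonical divisor disjoint from $\chiang$, so $\mu(u'_i) = 2\,(u'_i \cdot Y_\Delta)$, and a local calculation at the unique intersection point $u'_i(0)$ (a triple-coincidence configuration corresponding to $V'_i$), together with positivity of complex intersections, yields the required count.

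For uniqueness, $SU(2)$-equivariance of $\M_{0,1}(2, \chiang; J_0)$ and transitivity of $SU(2)$ on $\chiang$ reduce the problem to counting elements of $\M_{0,1}(m', 2, \chiang; J_0)$. By Theorem \ref{regularity}, $J_0$ is regular for $\chiang$, so this fibre is a compact $0$-manifold and hence finite; the three axial discs $u'_1, u'_2, u'_3$ give three distinct elements, distinguished by their axes. It remains to show no others exist.

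The main obstacle is this final step: proving that every $J_0$-holomorphic Maslov $2$ disc with $u(1)=m'$ coincides with some $u'_i$. My approach exploits the $SL(2,\CC)$-orbit decomposition $\CP^3 = W_\Delta \sqcup Y_\Delta$, where $W_\Delta = SL(2,\CC)/\bindih$ is the unique free open orbit. By positivity of intersections, a Maslov $2$ disc $u$ meets $Y_\Delta$ exactly once, at some interior point $z_0$, so $u$ maps $D\setminus\{z_0\}$ into $W_\Delta$. Passing to the universal cover of $D\setminus\{z_0\}$, $u$ lifts to a holomorphic map $\widetilde u$ into $SL(2,\CC)$ with unitary boundary values on $\partial D$ and a single essential singularity at $z_0$. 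Standard function-theoretic tools---notably the Iwasawa decomposition of $SL(2,\CC)$ and the classification of scalar holomorphic maps with unimodular boundary values as finite Blaschke products---combined with the energy bound from $\mu = 2$, force $\widetilde u$ into the axial template $\widetilde u(z) = \exp(-i(\log z)X)\cdot m'$ for some $X\in\mathfrak{su}(2)$. The admissibility condition $\exp(2\pi X)\in\bindih$, combined with the Maslov $2$ constraint, pins down the conjugacy class of $\exp(2\pi X)$ inside $\bindih$, whose possible axes in $S^2$ are precisely $V'_1, V'_2, V'_3$, completing the classification.
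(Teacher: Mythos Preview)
The paper does not prove this result; it is quoted from Evans--Lekili \cite{evans2014floer} with a citation to their sections 3.5 and 6.1, so there is no in-paper argument to compare against. Your sketch is essentially a reconstruction of their strategy: build the axial discs explicitly, compute Maslov indices via intersection with the anticanonical divisor $Y_{\Delta}$, reduce to the single point $m'$ by $SU(2)$-equivariance, and for uniqueness lift $u|_{D\setminus\{z_0\}}$ through the cover $SL(2,\CC)\to W_{\Delta}$ and analyse the lift via Iwasawa/Blaschke-type factorisation. Your final step, identifying the conjugacy class of $\exp(2\pi X)$ in $\bindih$ and reading off the axes $V'_1,V'_2,V'_3$, is also correct (this is exactly the class $\{b,a^2b,a^4b\}$, whose elements act as rotations by $\pi$ about the vertices of $\Delta$; the axes $(\pm 1,0,0)$ would give the Maslov $4$ discs of Theorem~\ref{maslov 4 discs through m and m'}).

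Two imprecisions worth flagging. First, one works on a \emph{finite} cyclic cover of the disc, not the universal cover of the punctured disc: the monodromy of $u$ about $z_0$ lands in the finite group $\bindih$, so a cover $z\mapsto z^k$ of the whole disc trivialises it and the lift is then defined on a disc with a single interior puncture. On that cover the lifted matrix entries have \emph{poles} at the preimage of $z_0$, not an essential singularity, since $u$ extends holomorphically across $z_0$ into $\CP^3$ (on the universal cover of $D\setminus\{z_0\}$ there is no point $z_0$ at which to speak of a singularity at all). Second, and more substantively, your single sentence invoking ``standard function-theoretic tools'' to force axiality conceals what in Evans--Lekili is the genuine content of the argument, occupying two sections; the outline is right, but a reader could not reconstruct the proof from it.
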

\begin{theorem}(\cite{evans2014floer}, sections 3.6 and 6.2, example 6.1.3 and lemma 7.2.2 )\label{maslov 4 discs through m and m'}
There are precisely two $J_0$-holomorphic discs $w_1, w_{-1}\colon (D, \del D) \to (\CP^3, \chiang)$ of Maslov index 4 and passing through $m$ and $m'$. They are both simple and axial with axes $(1, 0, 0)$ and $(-1, 0, 0)$ respectively.
\end{theorem}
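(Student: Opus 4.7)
The plan is to derive this from the classification of $J_0$-holomorphic discs with boundary on $\chiang$ of Maslov index up to $4$, which is established in \cite{evans2014floer}. The key structural input is that every such disc is either axial (in the sense recalled just before Theorem \ref{maslov 2 discs on chiang}) or factors through a non-simple configuration built out of axial Maslov $2$ discs. The existence part of the statement is easy, so the content lies in the uniqueness/enumeration of simple Maslov $4$ discs passing through the two specified points of $\chiang$.

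First I would record a geometric observation about the relative position of $m$ and $m'$: since $m' = q(\Id)$ and $m = q(h) = q(\exp(\pi\sigma_1/6))$, the two points lie on the common circle $C = \{q(\exp(t\sigma_1)) \st t \in \RR\} \subseteq \chiang$, which is the $SU(2)$-orbit of $m'$ under the $1$-parameter subgroup with infinitesimal generator $\sigma_1$. For an axial disc $u$ with $u(re^{i\theta}) = R(\theta)\cdot u(r)$ and $u(1) \in \{m,m'\}$, the boundary circle $u(\del D)$ is a multiple cover of such a $1$-parameter orbit. For that orbit to contain both $m$ and $m'$ (which, for a disc with boundary on $\chiang$, is the only way these points can be hit) the axis must be $\pm (1,0,0)$, i.e., $\pm\sigma_1$. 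This already produces the two candidates $w_{1}$ and $w_{-1}$.

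Next I would verify that these candidates are genuine simple Maslov $4$ discs passing through $m$ and $m'$. Writing $R_{\pm}(\theta) = \exp(\pm\theta\sigma_1/k)$ for the smallest $k$ making $R_{\pm}(2\pi) \in \bindih \cap \langle a \rangle$, one reads off from $\bindih$ that the primitive admissible homomorphism wraps with Maslov index $4$ (one primitive wrapping in the binary-dihedral cyclic subgroup generated by $a$), so the Maslov index computation reduces to the formulas in \cite{evans2014floer} for axial discs. Simpleness follows from the fact that the axis is primitive (no intermediate stabilizer element along $\exp(t\sigma_1)$ outside $\langle a\rangle$), so the axial parametrization is injective on an open dense set.

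The main obstacle is step (a) — excluding non-axial simple Maslov $4$ discs through both $m$ and $m'$. Here I would invoke the classification in Section 3.6 and 6.2 of \cite{evans2014floer}, where the moduli of Maslov $4$ discs with boundary on $\chiang$ is analyzed $SU(2)$-equivariantly: any non-axial disc sits in a $2$-parameter family obtained from the axial ones by applying $SL(2,\CC)$-equivariant deformations, and generic members of such families avoid any fixed pair of prescribed points. Combined with the regularity statement of Theorem \ref{regularity}, this reduces the enumeration to the axial locus, where the analysis of the previous paragraph becomes exhaustive, and rules out the possibility of additional ``star'' Maslov $4$ discs (splittings into two Maslov $2$ components) passing through $m$ and $m'$ simultaneously, again by a codimension-count on where the two constituent axial Maslov $2$ discs from Theorem \ref{maslov 2 discs on chiang} can touch both $m$ and $m'$.
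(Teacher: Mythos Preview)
The paper does not prove this theorem; it simply quotes it from \cite{evans2014floer} (as the bracketed citation indicates) and uses it as input for the computation of $\del_2$. So there is no argument in the present paper to compare against, and your task is really to reconstruct the Evans--Lekili proof.

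Your sketch is in the right spirit but has a genuine gap in part (a). The reduction to axial discs is where the actual work in \cite{evans2014floer} lies: their Sections~3.6 and~6.2 give a full description of the Maslov~4 moduli space, showing it decomposes into an axial locus and a family of non-axial (``Weierstrass'') discs, and Lemma~7.2.2 then does the explicit incidence check against the two specific points. Your argument for (a) --- that non-axial discs ``sit in a 2-parameter family'' whose generic members miss a prescribed pair of points --- is not a proof: genericity of the family says nothing about whether \emph{some} member passes through both $m$ and $m'$, which is exactly the question. One needs the actual description of these discs, not just a dimension count.

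Even your axial step is incomplete. You assert that the boundary orbit can contain both $m'=q(\Id)$ and $m=q(h)$ only when the axis is $\pm\sigma_1$, but this amounts to checking that no element of $\bindih\cdot h$ other than $h$ and its $\langle a\rangle$-translates lies on a one-parameter subgroup whose associated axial disc has Maslov index~4 --- and you have not done that check. (Every element of $SU(2)$ lies on \emph{some} one-parameter subgroup, so the constraint really comes from the Maslov~4 condition, which in \cite{evans2014floer} pins down the allowed axes.) In short, your outline correctly identifies the two ingredients, but both rely on classification statements from \cite{evans2014floer} that you invoke without reproducing, so as written the argument would not stand on its own.
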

Note first that Theorem \ref{maslov 2 discs on chiang} immediately allows us to compute the value of the obstruction section $m_0(E)$ at the point $m'$. Indeed, the boundaries of the three Maslov 2 discs passing through $m'$ are given by
\begin{equation}\label{boundaries of maslov 2 discs through m'}
 \del u'_1 = (\gamma'_1)^{-1}\cdot\tilde{\gamma}'_1,\quad \del u'_2 = (\tilde{\gamma}'_2)^{-1} \cdot \gamma'_2 \quad \text{and} \quad \del u'_3 = (\gamma'_3)^{-1} \cdot \tilde{\gamma}'_3.
 \end{equation}
  Referring to our calculations in Section \ref{Morse Differential}, we have $P_{\del u'_1} = \b$, $P_{\del u'_2} = \abin = A^4B$ and $P_{\del u'_3} = A^2B$. This gives:
\begin{equation}\label{m_0 for chiang}
m_0(E)(m') = (\Id + \aa + \aaaa)\b.
\end{equation}
Note that by point \ref{important remark on self-floer cohomology} in Remark \ref{remark with three parts}, the cohomology $HF^*(W,W)$ is well-defined if and only if $m_0(W)(m') \in \{0, \Id\}$. Using (\ref{m_0 for chiang}) and the identities $(A^2 - \Id)(A^4 + A^2 + \Id) = 0$ and $A^3 = B^2$, it is easy to see that $m_0(W) = \Id$ only when $W$ is trivial and $m_0(W) = 0$ precisely when $A^2$ has no non-zero fixed vector.
\begin{remark}\label{rank 1 doesnt work}
Recall that in order to have well-defined cohomology $HF((\chiang, W), \RP^3)$ we need $m_0(W)=0$. But any $1-$dimensional representation of $\bindih$ must satisfy $A^2 = \Id$ and so the resulting local system has non-vanishing obstruction section.
\end{remark}



The three theorems above actually allow us to completely determine all isolated pearly trajectories which a candidate differential $d^{(\mathcal{F}, J_0)} \colon C^*_{f}(W,W) \to  C^*_{f}(W,W)$ would count. Note that while Theorems \ref{regularity}, \ref{maslov 2 discs on chiang} and \ref{maslov 4 discs through m and m'} give us a strong control over the moduli spaces of discs involved in $d^{(\mathcal{F}, J_0)}$, we cannot a priori be sure that $J_0 \in \Jreg(\F)$. This potential problem has been dealt with already in \cite{evans2014floer} and further elaborated on in \cite{smith2015floer} and the solution is to perturb the Morse data $\F$ by pushing it forward through a diffeomorphism of $\chiang$. In \ref{perturbation of morse data} below we explain why implementing this perturbation does not affect any of our calculations, so for now let us work directly with the complex $C^*_{f}(W,W)$ and determine the candidate differential $d^{(\F, J_0)}$. To alleviate notation we shall also temporarily drop the decorations $(\F, J_0)$.

From equation (\ref{splitting of differential}) we know that the maps which we need to figure out are:

$$
\begin{array}{rrcl}
\del_1 \colon & \End(W_{x'_1})\oplus\End(W_{x'_2})\oplus\End(W_{x'_3}) & \to &\End(W_{m'}),\\
\del_1 \colon & \End(W_{x_1})\oplus\End(W_{x_2})\oplus\End(W_{x_3}) &\to& \End(W_{x'_1})\oplus\End(W_{x'_2})\oplus\End(W_{x'_3}),\\
\del_1 \colon &\End(W_{m}) &\to& \End(W_{x_1})\oplus\End(W_{x_2})\oplus\End(W_{x_3}),\\
\del_2 \colon &\End(W_{m}) &\to &\End(W_{m'}).
\end{array}
$$

To determine the first one of these, we are interested in pearly configurations, consisting of a single Maslov 2 disc $u$ such that $u(-1) \in W^d(m')$ and $u(1) \in W^a(x'_i)$. Since $W^d(m') = \{m'\}$ such a disc must be one of $\{u'_1, u'_2, u'_3\}$. From \eqref{boundaries of maslov 2 discs through m'} we see that the corresponding parallel transport maps are 
$$
\begin{array}{lll}
P_{\gamma^0_{u'_1}} = \Id, \; & P_{\gamma^0_{u'_2}} = \abin = A^4B, \; & P_{\gamma^0_{u'_3}} = \Id \\
P_{\gamma^1_{u'_1}} = B, \; & P_{\gamma^1_{u'_2}} = \Id,\; & P_{\gamma^1_{u'_3}} = A^2B.
\end{array}
$$
Thus for every $(\alpha'_1, \alpha'_2, \alpha'_3) \in \End(W_{x'_1})\oplus\End(W_{x'_2})\oplus\End(W_{x'_3})$ we have
\begin{equation}\label{del 1 from index 1 to index 0}
\del_1 (\alpha'_1, \alpha'_2, \alpha'_3) = \b \alpha'_1 + \alpha'_2 \aaaab + \aab \alpha'_3 \quad \in \quad \End(W_{m'}).
\end{equation}

Similarly, to determine $\del_1 \colon \End(W_{m}) \to \End(W_{x_1})\oplus\End(W_{x_2})\oplus\End(W_{x_3})$ we look for pearly configurations containing one Maslov 2 disc $u$ such that $u(-1) \in W^d(x_i)$ and $u(1) \in W^a(m) = \{m\}$. From Theorem \ref{maslov 2 discs on chiang} we know that these discs must be axial. Consulting Figure \ref{the one with all the circles} we see that their axes are $\{V_1, V_2, V_3\}$ and, denoting by $u_i$ the disc with axis $V_i$, we have: $\gamma^0_{u_1} = (\tilde{\gamma}_1)^{-1}$, $\gamma^1_{u_1} = \gamma_1$, $\gamma^0_{u_2} = \gamma_2^{-1}$, $\gamma^1_{u_2} = \tilde{\gamma}_2$, $\gamma^0_{u_3} = (\tilde{\gamma}_3)^{-1}$, $\gamma^1_{u_3} = \gamma_3$. It follows from our calculations in Section \ref{Morse Differential} that:
$$
\begin{array}{lll}
P_{\gamma^0_{u_1}} = \aaabin = B, \; & P_{\gamma^0_{u_2}} = \Id, \; & P_{\gamma^0_{u_3}} = \aaaaabin = A^2B\ \\
P_{\gamma^1_{u_1}} = \Id, \; & P_{\gamma^1_{u_2}} = A^4B,\; & P_{\gamma^1_{u_3}} = \Id.
\end{array}
$$
Thus for every $\alpha \in \End(W_m)$ we have:
\begin{equation}\label{del 1 from index 3 to index 2}
\del_1 (\alpha) = (\alpha \b, \aaaab \alpha, \alpha \aab) \quad \in \quad \End(W_{x_1})\oplus\End(W_{x_2})\oplus\End(W_{x_3}).
\end{equation}

Determining $\del_1 \colon \End(W_{x_1})\oplus\End(W_{x_2})\oplus\End(W_{x_3}) \to \End(W_{x'_1})\oplus\End(W_{x'_2})\oplus\End(W_{x'_3})$ requires a bit more work. In the case of trivial local systems \cite{evans2014floer} are able to deduce that this part of the differential must be zero by a simple algebraic argument using the fact that the whole pearly differential has to square to zero. We cannot appeal to such an argument in our case (indeed, for specific choices of $W$ this part of the differential is non-zero, see Section \ref{proof of Theorem} below) and so we must analyse all relevant pearly trajectories.
That is, we are looking at pearly trajectories consisting of one Maslov 2 disc $u$, satisfying $u(-1) \in W^d(x'_i)$ and $u(1) \in W^a(x_j)$, as depicted in Figure \ref{just one pearly trajectory from index 1 to index 2}. To find all such trajectories we consider again Figure \ref{the one with all the circles} and argue in terms of triangles inscribed in the unit sphere in $\RR^3$.

\begin{figure}[h!]
\begin{center}
\begin{tikzpicture}
\def\centrearc[#1](#2)(#3:#4:#5)[#6]\{#7\}
{
\draw[#1] ($(#2)+({#5*cos(#3)},{#5*sin(#3)})$) arc (#3:#4:#5);
\node[#6] at ($(#2)+({#5*cos(#4)},{#5*sin(#4)})$) {#7};
}

\def\howred{0} 

\draw [thick, ->] (-3, 0) -- (-2, 0);
\draw [thick] (-2, 0) -- (-1, 0);
\draw [thick, ->] (1, 0) -- (2, 0);
\draw [thick] (2, 0) -- (3, 0);
\filldraw [brown] (-3, 0) circle (1pt) node [left, brown] at (-3, 0) {$x'_i$};
\filldraw (3, 0) circle (1pt) node [right] at (3, 0) {$x_j$};

\draw [red!\howred] (-1, 0) -- (1, 0);
\filldraw [red!\howred] (0, 0) circle (1);
\centrearc[very thick, color=red, ->](0, 0)(-180:-90:1)[below]\{$\gamma^0_u$\}
\centrearc[very thick, color=red](0, 0)(-90:0:1)[]\{\}
\centrearc[very thick, red, ->](0, 0)(0:90:1)[above]\{$\gamma^1_u$\}
\centrearc[very thick, red](0, 0)(90:180:1)[]\{\}

\node at (0, 0) {$u$};
\end{tikzpicture}

\caption{A pearly trajectory $\mathbf{u} = (u)$ connecting $x'_i$ to $x_j$.}\label{just one pearly trajectory from index 1 to index 2}
\end{center}

\end{figure}

Theorem \ref{maslov 2 discs on chiang} and our choice of Morse data give us that each of the following three sets
\begin{itemize}
\item the descending manifold of $x'_i$
\item the boundary of the Maslov 2 disc $u$
\item the ascending manifold of $x_j$
\end{itemize}
consists of triangles, obtained from a single equilateral triangle by applying a rotation which keeps one of its vertices fixed. In fact we know that for the descending manifold of $x'_i$ the fixed vertex is $V'_i$ and for the ascending manifold of $x_j$, it is $V_j$. For any unit vector $p \in S^2$, let $S^1_p$ denote the circle, obtained by intersecting $S^2$ with the plane $< p > ^{\perp} -\; \frac{1}{2}p$, where the angular brackets denote linear span. Then, since $u(-1)$ lies on the descending manifold of $x'_i$, we have that one of the vertices of $u(-1)$ is $V'_i$ and the other two lie on $S^1_{V'_i}$. Similarly one of the vertices of $u(1)$ is $V_j$ and the other two lie on $S^1_{V_j}$. Let us temporarily denote the axis of $u$ by $A \in S^2$ (note then that $A$ is a vertex which all triangles in $u(\del D)$ share). Since $V'_i \neq A \neq V_j$ we see from the above that we must have $A \in S^1_{V'_i}\cap S^1_{V_j}$. That is, we must have $j \equiv i \text{ or } i+1 \pmod 3$  and $A = F_{ij}$ or $A = B_{ij}$ (see figure \ref{the one with all the circles}). Let us denote by $u^{F_{ij}}$ and $u^{B_{ij}}$ the Maslov 2 axial discs with axes $F_{ij}$ and $B_{ij}$ respectively. 

\begin{proposition}\label{euclidean proof}
For every $i, j \in \{1, 2, 3\}$ with $j \equiv i \text{ or } i+1 \pmod 3$, there are precisely two pearly trajectories $\mathbf{u}^{F_{ij}}$ and $\mathbf{u}^{B_{ij}}$ connecting $x'_i$ to $x_j$. They are given by $\mathbf{u}^{F_{ij}} = (u^{F_{ij}})$ and $\mathbf{u}^{B_{ij}} = (u^{B_{ij}})$. If $j \equiv i+2 \pmod 3$, there are no pearly trajectories connecting $x'_i$ to $x_j$.
\end{proposition}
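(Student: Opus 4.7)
The plan is to complete the geometric analysis begun in the paragraphs immediately preceding the proposition. We have already reduced to the case where the disc $u$ is axial with axis $A$ constrained to lie in the intersection $S^1_{V'_i}\cap S^1_{V_j}$, the cases $A=V'_i$ and $A=V_j$ being excluded because a direct computation of the inner products $V'_i\cdot V_j$ shows that none equals $-1/2$, so that neither $V_j\in S^1_{V'_i}$ nor $V'_i\in S^1_{V_j}$. The first step is to compute the relevant intersection explicitly. With the recorded coordinates of the $V'_i$'s and $V_j$'s, the system $s\cdot V'_i = s\cdot V_j = -1/2$ together with $|s|=1$ is a routine linear-plus-sphere problem in $\RR^3$: when $j\equiv i$ or $i+1\pmod 3$ it yields exactly the pair $\{F_{ij},B_{ij}\}$, and when $j\equiv i+2\pmod 3$ the two linear equations are inconsistent (as the sample $(i,j)=(1,3)$ makes transparent, since one obtains $s_3=-1/2$ and $s_3=1/2$ simultaneously). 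This immediately settles the non-existence clause of the proposition.

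For existence in the non-vacuous cases, I would fix $A\in\{F_{ij},B_{ij}\}$ and invoke Theorem \ref{maslov 2 discs on chiang} to produce the axial Maslov 2 disc with axis $A$, unique up to the transitive $SU(2)$-action; its boundary loop sweeps through every triangle containing $A$ as a vertex. Among these there is a distinguished triangle $T_{-1}$, namely the unique equilateral triangle inscribed centrally in $S^2$ that has both $A$ and $V'_i$ as vertices (the third vertex being forced by the equilateral condition), and an analogously distinguished $T_1$ built from $A$ and $V_j$. I would then translate the disc by an element of $SU(2)$ so that $T_1$ appears at the boundary point $z=1$ and check that this forces $T_{-1}$ to appear at $z=-1$; the key geometric input here is that the rotation $R(\pi)$ about the axis $A$ (whose speed $c$ is pinned down by the primitive condition $R(2\pi)\in\bindih'$, where $\bindih'$ is the stabiliser of the base triangle) carries a non-axis vertex of $T_1$ onto a non-axis vertex of $T_{-1}$.

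Uniqueness of the pearly trajectory with given axis is then immediate, since the $G_{-1,1}$-equivalence class of a parametrised axial disc is determined by the data $(A,\,u(-1),\,u(1))$, which we have now fixed. The step I expect to be the main obstacle is identifying $W^d(x'_i)$ and $W^a(x_j)$ with the correct arcs of triangles containing $V'_i$ (respectively $V_j$) as a vertex --- not merely with the ambient $1$-dimensional vertex submanifolds, but with the specific open arcs carrying the negative gradient flow. I would verify this identification in Hopf coordinates using the formula \eqref{formula for morse function}, reducing by the $\bindih$-symmetry of the Morse data to a single representative case. The remaining transversality hypotheses underlying Theorem \ref{biran-cornea} are handled by a small perturbation of $\F$ which is the content of the Appendix and which does not alter the count of pearly trajectories.
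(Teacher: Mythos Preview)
Your non-existence and uniqueness arguments are sound and essentially coincide with the paper's preamble. The gap is in your existence step: the ``key geometric input'' you cite is actually \emph{false}. For the primitive Maslov~2 axial disc the admissible homomorphism is $R(\theta)=\exp(\tfrac{\theta}{4}A)$, so $R(\pi)$ acts on $S^2$ as rotation by $\pi/2$ about $A$. But the rotation carrying $T_{-1}$ to $T_1$ is by the angle $2t_1$ with $t_1=\arccos(1/\sqrt{3})\neq\pi/4$; this is precisely what the paper's Euclidean calculation establishes (and it records $t_0,t_1$ because it needs them later to parametrise the boundary arcs for the parallel-transport computation). So after arranging $u(1)=T_1$ in an axial parametrisation, $u(-1)$ is \emph{not} $T_{-1}$, and your scheme as written does not produce a pearly trajectory.

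There is a much simpler repair that avoids any calculation: once $A\in S^1_{V'_i}\cap S^1_{V_j}$, the boundary $\partial u^A$ is the full circle of triangles containing $A$, and it meets $\overline{W^d(x'_i)}$ (resp.\ $\overline{W^a(x_j)}$) in the single triangle $T_{-1}=\{A,V'_i,-(A+V'_i)\}$ (resp.\ $T_1=\{A,V_j,-(A+V_j)\}$). These are distinct points of $\partial D$, so reparametrise by an element of $G=\mathrm{PSL}(2,\RR)$ --- not by $SU(2)$, which would move the axis --- sending them to $-1$ and $1$. The result is a $J_0$-holomorphic disc in the same class with $u(-1)\in W^d(x'_i)$ and $u(1)\in W^a(x_j)$, giving exactly one $G_{-1,1}$-class per axis. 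Your final worry about $W^d(x'_i)$ and $W^a(x_j)$ being proper arcs is a non-issue: the paper already records that each is the entire vertex circle minus a single critical point (the union of the two flowlines through $m'$, resp.\ $m$), and one checks from the coordinates that $T_{-1}\neq m'$ and $T_1\neq m$.
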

\begin{wrapfigure}[]{R}{0.53\textwidth}
\begin{center}
\tdplotsetmaincoords{80}{90}
\begin{tikzpicture}[tdplot_main_coords, scale = 0.7]
\def\centrearc[#1](#2)(#3:#4:#5)
{ \draw[#1] ($(#2)+({#5*cos(#3)},{#5*sin(#3)})$) arc (#3:#4:#5); }
\def\centrearcendpoint(#1)(#2:#3:#4)\{#5\}
{ \coordinate (#5) at ($(#1)+({#4*cos(#3)},{#4*sin(#3)},0)$); }
\def\labelcentrearc[#1](#2)(#3:#4:#5)\{#6\}
{\centrearcendpoint(#2)(#3:#4:#5)\{tempname\};
\node[#1] at (tempname) {#6};
}

\pgfmathsetmacro\x{cos(60)}
\pgfmathsetmacro\y{sin(60)}
\pgfmathsetmacro\tzero{acos(sqrt(2)/sqrt(3))}
\pgfmathsetmacro\tone{acos(sqrt(1)/sqrt(3))}
\pgfmathsetmacro\sqrtsix{sqrt(6)}
\pgfmathsetmacro\sqrtthree{2*\y}

\def\r{4}

\coordinate (v'1) at (0, 0, \r);
\coordinate (v'2) at (0, \y*\r, -\x*\r);
\coordinate (v'3) at (0, -\y*\r, -\x*\r);
\coordinate (v1) at (0, -\y*\r, \x*\r);
\coordinate (v2) at (0, \y*\r, \x*\r);
\coordinate (v3) at (0, 0, -\r);
\coordinate (f11) at (\r*\sqrtsix/3, \r*\sqrtthree/6, -\r/2);
\coordinate (b11) at (-\r*\sqrtsix/3, \r*\sqrtthree/6, -\r/2);
\coordinate (f12) at (\r*\sqrtsix/3, -\r*\sqrtthree/6, -\r/2);
\coordinate (b12) at (-\r*\sqrtsix/3, -\r*\sqrtthree/6, -\r/2);
\coordinate (f22) at (\r*\sqrtsix/3, -\r/\sqrtthree, 0);
\coordinate (b22) at (-\r*\sqrtsix/3, -\r/\sqrtthree, 0);
\coordinate (f23) at (\r*\sqrtsix/3, -\r/2*1/\sqrtthree, 1/2);
\coordinate (b23) at (-\r*\sqrtsix/3, -\r/2*1/\sqrtthree, 1/2);
\coordinate (f33) at (\r*\sqrtsix/3, \r/2*1/\sqrtthree, 1/2);
\coordinate (b33) at (-\r*\sqrtsix/3, \r/2*1/\sqrtthree, 1/2);
\coordinate (f31) at (\r*\sqrtsix/3, \r/\sqrtthree, 0);
\coordinate (b31) at (-\r*\sqrtsix/3, \r/\sqrtthree, 0);

\node [above, black] at (v'1) {$V'_1$};
\node [right, black] at (v'2) {$V'_2$};
\node [left, black] at (v'3) {$V'_3$};
\node [left, black] at (v1) {$V_1$};

\node [below, black] at (f12) {$F_{12}(t_0)$};
\node [above right, black] at (b11) {$B_{11}(t_0)$};

\draw [thick, black] (v'1) -- (v'2);
\draw [thick, black] (v'2) -- (v'3) node [midway, above] {$E$};
\draw [thick, black] (v'3) -- (v'1);
\draw [thick, black] (v'1) -- (f12) node [midway, left] {$H$};
\draw [thick, black, dashed] (v'1) -- (b11);
\draw [thick, black, dashed] (f12) -- (b11);
\draw [thick, black] (f12) -- (v1);
\draw [thick, black] (v'1) -- (v1);

\tdplotsetrotatedcoords{0}{90}{0};
\draw[tdplot_rotated_coords][thick](0,0, 0) circle (\r);
\centrearc[thin, dashed](0, 0, -\r/2)(90:270:\y*\r);
\centrearc[thin](0, 0, -\r/2)(270:450:\y*\r);
\centrearc[thin](0, 0, -\r/2)((270:270+2*\tzero:1/3*\y*\r);
\centrearc[thin](0, 0, -\r/2)((90:90+2*\tzero:1/3*\y*\r);
\labelcentrearc[]((0, 0, -\r/2)(0:105 + \tzero: 1.8/3*\y*\r)\{$2t_0$\}
\end{tikzpicture}
\end{center}
\caption{}\label{figure for euclidean proof}
\end{wrapfigure}
\emph{Proof.} Our discussion above already proves the uniqueness and non-existence parts of the proposition. 
We only need to show that $u^{F_{ij}}$ and $u^{B_{ij}}$ do indeed give rise to pearly trajectories when $j \equiv i \text{ or } i+1 \pmod 3$. 
In other words, we need to show that both $u^{F_{ij}}(\del D)$ and $u^{B_{ij}}(\del D)$ intersect the ascending manifold of $x_j$. We prove this only for $u^{B_{11}}$ since all other proofs follow by the symmetries of Figure \ref{the one with all the circles}. 

For all $t \in (0, \pi/4]$ one of the vertices of $\exp(tV'_1)\cdot\Delta$ is $V'_1$ and the other two lie on $S^1_{V'_1}$. Let us denote these two vertices by $F_{12}(t)$ and $B_{11}(t)$, where $F_{12}(t)$ is the one with positive $x$-coordinate (the letters F and B are to be read as ``front'' and ``back''; see figure \ref{the one with all the circles}). Define $c(t) \coloneqq \cos(\angle V'_1V_1F_{12}(t))$. Let $E$ denote the midpoint of the line segment $V'_3V'_2$, i.e. the centre of $S^1_{V'_1}$. Then $\angle B_{11}(t)EV'_2 = 2t$ and so $\angle V'_2EF_{12}(t) = \pi - 2t$. By the cosine rule for $\vartriangle V'_2EF_{12}(t)$ we have:
\begin{eqnarray*}
\lvert F_{12}(t)V'_2 \rvert ^2 &=& \lvert EV'_2 \rvert ^2 + \lvert EF_{12}(t) \rvert^2 - 2 \lvert EV'_2 \rvert \lvert EF_{12}(t) \rvert \cos(\pi - 2t)\\
&=& \frac{3}{2}(1 + \cos(2t)).
\end{eqnarray*}
Then from Pythagoras's theorem for $\vartriangle V_1F_{12}(t)V'_2$ we get that $\lvert V_1F_{12}(t) \rvert^2 = 4 - 3\cos^2(t)$. Substituting this and $\lvert V'_1F_{12}(t) \rvert = \sqrt{3}$ into the equation
$$\lvert V'_1F_{12}(t) \rvert^2 = \lvert V'_1V_1 \rvert^2 + \lvert V_1F_{12}(t) \rvert^2 - 2 \lvert V'_1V_1 \rvert \lvert V_1F_{12}(t) \rvert c(t),$$
which we have from the cosine rule for $\vartriangle V'_1V_1F_{12}(t)$, we get 
$$c(t) = \frac{2 - 3 \cos^2(t)}{2\sqrt{2} \sin(t)}.$$
Set $t_0 \coloneqq \arccos\left(\sqrt{2}/\sqrt{3}\right)$ and let $S^2_{V'_1F_{12}(t_0)}$ denote the sphere in $\RR^3$ whose diameter is the line segment $V'_1F_{12}(t_0)$. Since $c(t_0) =0$ we have that 
\begin{equation} \label{lying on the correct circle}
F_{12}(t_0) \in S^2_{V'_1F_{12}(t_0)} \cap S^2 = S^1_{B_{11}(t_0)}.
\end{equation}
Put $t_1 \coloneqq \frac{\pi}{2} - t_0$. Note that $\cos(\angle V_1V'_1F_{12}(t_0)) = \lvert V_1V'_1\rvert/\lvert V'_1F_{12}(t_0) \rvert = 1/\sqrt{3} =  \cos(t_1)$. Thus if $H$ is the midpoint of $V'_1F_{12}(t_0)$, i.e. the centre of $S^1_{B_{11}(t_0)}$, then $\angle F_{12}(t_0)HV_1 = 2t_1$. From this and (\ref{lying on the correct circle}) we deduce that a right-hand rotation by $2t_1$ in the axis $B_{11}(t_0)$ sends the point $F_{12}(t_0)$ to $V_1$. In other words, acting by $\exp(t_1B_{11}(t_0))$ on the triangle $\vartriangle F_{12}(t_0)V'_1B_{11}(t_0)  = \exp(t_0V'_1)\cdot\Delta$ gives a triangle, one of whose vertices is $V_1$ and hence the other two (among which is $B_{11}(t_0)$) lie on $S^1_{V_1}$. This shows first that $B_{11}(t_0)=B_{11}$ and (by symmetry) $F_{12}(t_0) = F_{12}$  and second, that the axial Maslov 2 disc with axis $B_{11}$ 
intersects the descending manifold of $x'_1$ in $y'_1 \coloneqq q(\exp(t_0V'_1))$ and the ascending manifold of $x_1$ at $y_1 \coloneqq q(\exp(t_1B_{11})\cdot\exp(t_0V'_1))$.
\qed
\\

Observe that the above proof gives us ways of parametrising the paths $\gamma^{\ast}_{u^{\Box}}$ for $* = 0\;\text{or}\; 1$ and $\Box = F_{ij} \; \text{or} \; B_{ij}$. For example for $\Box = F_{12} \; \text{or} \; B_{11}$ we get
\begin{eqnarray*}
\gamma^0_{u^{\Box}} & = & q\left(\exp\left(t\,\Box\right)\cdot\exp\left(t_0V'_1\right)\right), \quad t \in [0, t_1] \\
\gamma^1_{u^{\Box}} & = & q\left(\exp\left(t\,\Box\right)\cdot\exp\left(t_0V'_1\right)\right), \quad t \in [t_1, \pi/2]
\end{eqnarray*}
and for $\Box = F_{11} \; \text{or} \; B_{12}$ we get
\begin{eqnarray*}
\gamma^0_{u^{\Box}} & = & q\left(\exp\left(t\,\Box\right)\cdot\exp\left(t_1V'_1\right)\right), \quad t \in [0, t_0] \\
\gamma^1_{u^{\Box}} & = & q\left(\exp\left(t\,\Box\right)\cdot\exp\left(t_1V'_1\right)\right), \quad t \in [t_0, \pi/2].
\end{eqnarray*}
Using these and the parametrisations for the index 1 flowlines of $f$, described in Section \ref{morse function}, one can plot lifts of the paths $\gamma^*_{\mathbf{u}}$, associated with the pearly trajectories $\mathbf{u}=(u^{F_{ij}})$ or $\mathbf{u}=(u^{B_{ij}})$ (see Figures \ref{actual plot b11aller}, \ref{actual plot b11retour}; the Mathematica programme used for these plots can be found on the author's home page). From these plots the parallel transport maps are immediate to read off.
Applying this procedure to all 12 pearly trajectories $\lbrace \mathbf{u}^{F_{ij}},\, \mathbf{u}^{B_{ij}} \st i \equiv j, j+1 \pmod 3 \rbrace$ we obtain the results summarised in Table \ref{table}. We have thus computed that for every $(\alpha_1, \alpha_2, \alpha_3) \in \End(W_{x_1})\oplus\End(W_{x_2})\oplus\End(W_{x_3})$ we have
\begin{eqnarray}\label{del 1 from index 2 to index 1}
\del_1(\alpha_1, \alpha_2, \alpha_3) & = & (\aab\alpha_1 + \alpha_1\aaaab + \aab\alpha_2 + \aaaaa\alpha_2\aaaaab\,, \nonumber \\
&&\aab\alpha_2 + \alpha_2\b + \aaab\alpha_3\a + \alpha_3\b\,, \nonumber \\
&& \aaaab\alpha_3 + \alpha_3\b + \a\alpha_1\aaab + \aaaaab\alpha_1\aaaaa) \nonumber \\
&\in & \End(W_{x'_1})\oplus\End(W_{x'_2})\oplus\End(W_{x'_3}).
\end{eqnarray}
\begin{figure}[h!]
\labellist
\pinlabel $\Id$ at 974 812
\pinlabel $a^4b$ at 552 768
\pinlabel {\textcolor{brown}{$x'_1$}} at 1175 736
\pinlabel $x_1$ at 920 613
\pinlabel {\textcolor{red}{$\gamma^0_{u^{B_{11}}}$}} at 1277 545
\pinlabel {\textcolor{green!75!black}{$\gamma_1^{-1}$}} at 963 449
\pinlabel {\textcolor{green}{$(\gamma'_1)^{-1}$}} at 1289 998
\pinlabel {\textcolor{blue}{$\tilde{\gamma}_1$}} at 1109 462
\endlabellist
\centering
\includegraphics[height= 0.5\textheight]{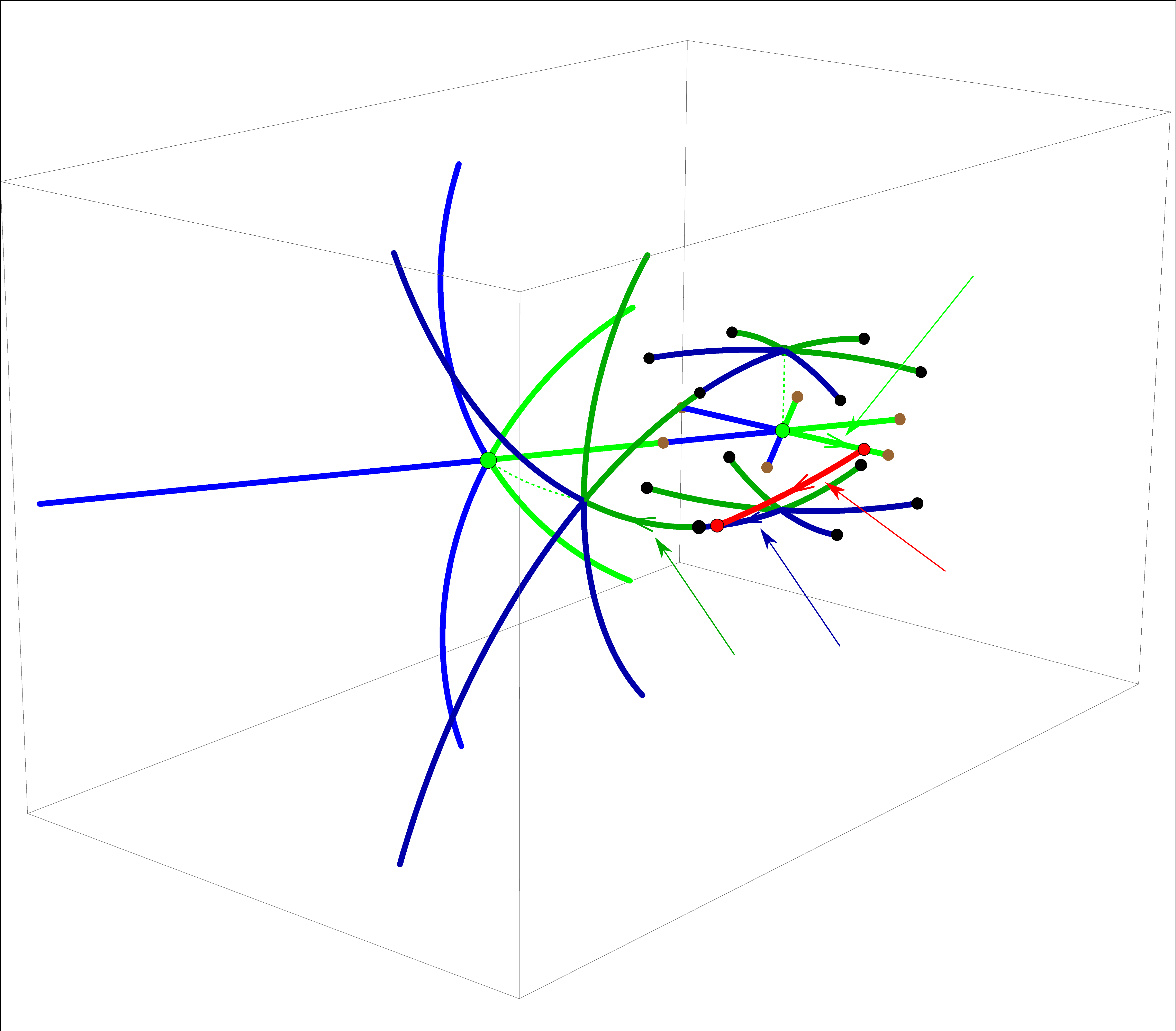}
\begin{center}
\end{center}
\caption{A lift of the path $\gamma^0_{u^{B_{11}}}$	 together with the descending manifolds of the index 1 critical points and the ascending manifolds for index 2 critical points for the fundamental domains centred  at $\Id$ and $a^4b$. Lifts of the index 3 flowline $\sigma$ are represented by the dashed line segments. From this plot one reads off that $P_{\gamma^0_{\mathbf{u}^{B_{11}}}}  = A^4B$.}\label{actual plot b11aller}
\end{figure}
\begin{figure}[h!]
\labellist
\pinlabel $\Id$ at 305 815
\pinlabel $x_1$ at 516 883
\pinlabel {\textcolor{brown}{$x'_1$}} at 468 700
\pinlabel {\textcolor{red}{$\gamma^1_{u^{B_{11}}}$}} at 392 608
\pinlabel $b$ at 654 674
\pinlabel {\textcolor{green!75!black}{$\gamma_1$}} at 488 980
\pinlabel {\textcolor{green}{$\gamma'_1$}} at 268 610
\endlabellist
\begin{center}
\includegraphics[height= 0.5\textheight]{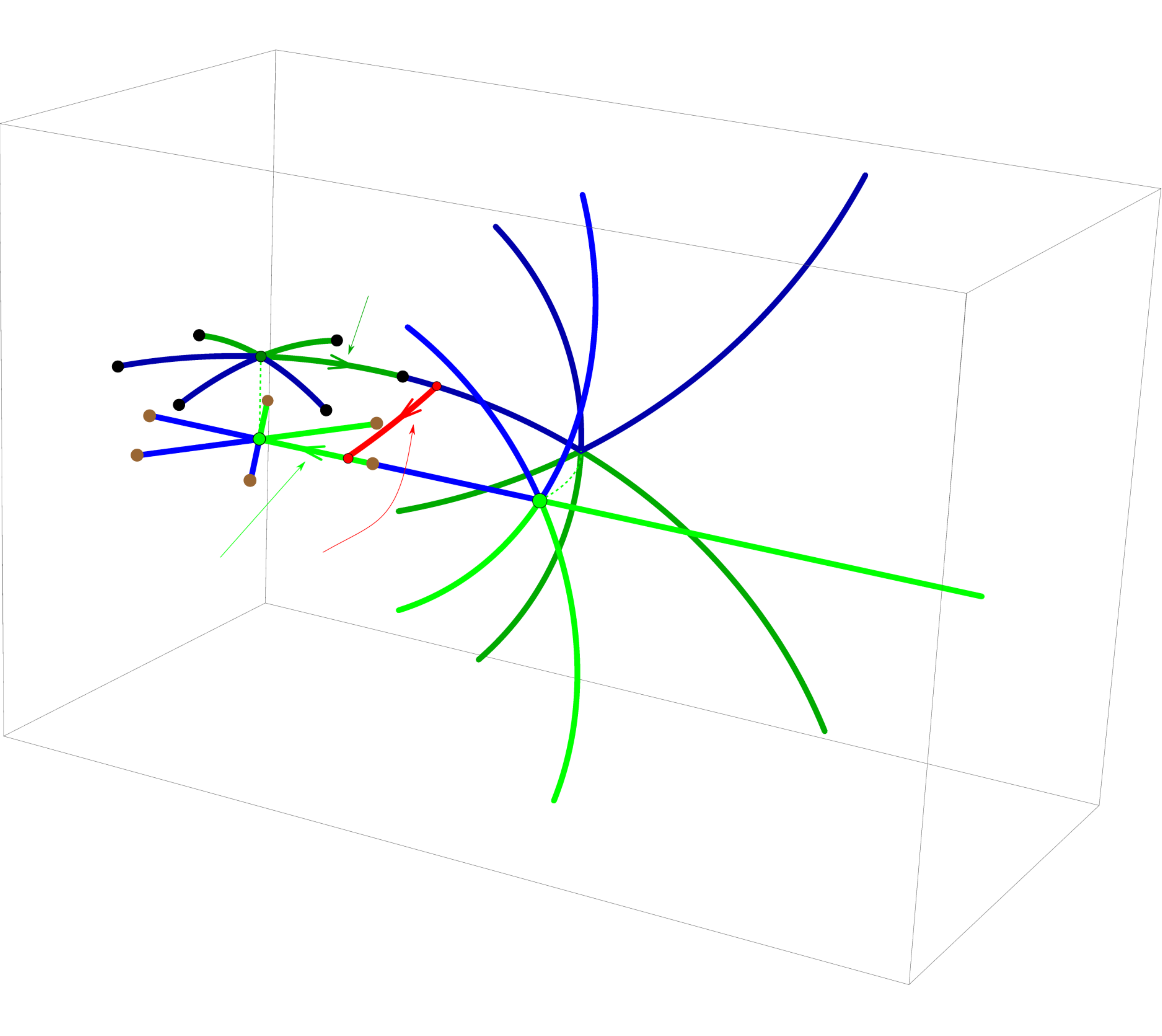}
\end{center}
\caption{A lift of the path $\gamma^1_{u^{B_{11}}}$ with the fundamental domains centred at $\Id$ and $b$.  From this plot one reads off that $P_{\gamma^1_{\mathbf{u}^{B_{11}}}}  = \Id$.}\label{actual plot b11retour}
\end{figure}

\begin{center}
\begin{table}[h!]
\setlength\tabcolsep{2pt}
\begin{tabular}[]{cccc}\label{table for parallel transport maps}
\begin{tikzpicture}[scale=0.4]
\pgfmathsetmacro\tzero{acos(sqrt(2)/sqrt(3)};
\pgfmathsetmacro\tone {acos(1/sqrt(3))};
\def\tuse{\tzero}
\def\radius{1}

\pgfmathsetmacro\ycoord{cos(2*\tuse)*\radius};
\pgfmathsetmacro\xcoord{sin(2*\tuse)*\radius};

\def\centrearc[#1](#2)(#3:#4:#5)[#6]\{#7\}
{
\draw[#1] ($(#2)+({#5*cos(#3)},{#5*sin(#3)})$) arc (#3:#4:#5);
\node[#6] at ($(#2)+({#5*cos(#4)},{#5*sin(#4)})$) {#7};
}

\draw [thick, blue, ->] (-3, 0) -- (-2*\xcoord, 0);
\draw [thick, blue] (-2*\xcoord, 0) -- (-\xcoord, 0);
\draw [thick, green, ->] (\xcoord, 0) -- (2*\xcoord, 0);
\draw [thick, green] (2*\xcoord, 0) -- (3, 0);
\filldraw [brown] (-3, 0) circle (1pt) node [left] at (-3, 0) {$x'_1$};
\filldraw [black] (3, 0) circle (1pt) node [right] at (3, 0) {$x_1$};

\def\howred{0}

\draw [red!\howred] (-\xcoord, 0) -- (\xcoord, 0);
\centrearc[very thick, color=red, fill=red!\howred](0, \ycoord)(-90-2*\tuse:-90+2*\tuse:\radius)[]\{\}
\centrearc[very thick, color=red, fill=red!\howred](0, \ycoord)(-90+2*\tuse:270-2*\tuse:\radius)[]\{\}
\node at (0, \ycoord) {$u^{F_{11}}$};

\def\rescaledradius{1.5*\radius}
\def\downshift{-0.5}
\pgfmathsetmacro\tuseshifted{1/2*acos((cos(2*\tuse) - \downshift)/\rescaledradius)}
\pgfmathsetmacro\xcoordshifted{sin(2*\tuseshifted)*\rescaledradius}
\draw [	] (-3, \downshift) -- (-\xcoordshifted, \downshift);
\centrearc[->](0, \ycoord)(-90-2*\tuseshifted:-90:\rescaledradius)[below]\{$\Id$\}
\centrearc[](0, \ycoord)(-90:-90+2*\tuseshifted:\rescaledradius)[]\{\}
\draw [	] (\xcoordshifted, \downshift) -- (3, \downshift);

\def\upshift{0.5}
\pgfmathsetmacro\tuseshifted{1/2*acos((cos(2*\tuse) - \upshift)/\rescaledradius)}
\pgfmathsetmacro\xcoordshifted{sin(2*\tuseshifted)*\rescaledradius}
\draw [	] (-3, \upshift) -- (-\xcoordshifted, \upshift);
\centrearc[->](0, \ycoord)(-90+2*\tuseshifted:90:\rescaledradius)[above]\{$A^2B$\}
\centrearc[](0, \ycoord)(90:270 - 2*\tuseshifted:\rescaledradius)[]\{\}
\draw [	] (\xcoordshifted, \upshift) -- (3, \upshift);
\end{tikzpicture}
&
\begin{tikzpicture}[scale=0.4]
\pgfmathsetmacro\tzero{acos(sqrt(2)/sqrt(3)};
\pgfmathsetmacro\tone {acos(1/sqrt(3))};
\def\tuse{\tone}
\def\radius{1}

\pgfmathsetmacro\ycoord{cos(2*\tuse)*\radius};
\pgfmathsetmacro\xcoord{sin(2*\tuse)*\radius};

\def\centrearc[#1](#2)(#3:#4:#5)[#6]\{#7\}
{
\draw[#1] ($(#2)+({#5*cos(#3)},{#5*sin(#3)})$) arc (#3:#4:#5);
\node[#6] at ($(#2)+({#5*cos(#4)},{#5*sin(#4)})$) {#7};
}

\draw [thick, green, ->] (-3, 0) -- (-2*\xcoord, 0);
\draw [thick, green] (-2*\xcoord, 0) -- (-\xcoord, 0);
\draw [thick, blue, ->] (\xcoord, 0) -- (2*\xcoord, 0);
\draw [thick, blue] (2*\xcoord, 0) -- (3, 0);
\filldraw [brown] (-3, 0) circle (1pt) node [left] at (-3, 0) {$x'_1$};
\filldraw [black] (3, 0) circle (1pt) node [right] at (3, 0) {$x_1$};

\def\howred{0}

\draw [red!\howred] (-\xcoord, 0) -- (\xcoord, 0);
\centrearc[very thick, color=red, fill=red!\howred](0, \ycoord)(-90-2*\tuse:-90+2*\tuse:\radius)[]\{\}
\centrearc[very thick, color=red, fill=red!\howred](0, \ycoord)(-90+2*\tuse:270-2*\tuse:\radius)[]\{\}
\node at (0, \ycoord) {$u^{B_{11}}$};

\def\rescaledradius{1.5*\radius}
\def\downshift{-0.5}
\pgfmathsetmacro\tuseshifted{1/2*acos((cos(2*\tuse) - \downshift)/\rescaledradius)}
\pgfmathsetmacro\xcoordshifted{sin(2*\tuseshifted)*\rescaledradius}
\draw [	] (-3, \downshift) -- (-\xcoordshifted, \downshift);
\centrearc[->](0, \ycoord)(-90-2*\tuseshifted:-90:\rescaledradius)[below]\{$A^4B$\}
\centrearc[](0, \ycoord)(-90:-90+2*\tuseshifted:\rescaledradius)[]\{\}
\draw [	] (\xcoordshifted, \downshift) -- (3, \downshift);

\def\upshift{0.5}
\pgfmathsetmacro\tuseshifted{1/2*acos((cos(2*\tuse) - \upshift)/\rescaledradius)}
\pgfmathsetmacro\xcoordshifted{sin(2*\tuseshifted)*\rescaledradius}
\draw [	] (-3, \upshift) -- (-\xcoordshifted, \upshift);
\centrearc[->](0, \ycoord)(-90+2*\tuseshifted:90:\rescaledradius)[above]\{$\Id$\}
\centrearc[](0, \ycoord)(90:270 - 2*\tuseshifted:\rescaledradius)[]\{\}
\draw [	] (\xcoordshifted, \upshift) -- (3, \upshift);
\end{tikzpicture}
&
\begin{tikzpicture}[scale=0.4]
\pgfmathsetmacro\tzero{acos(sqrt(2)/sqrt(3)};
\pgfmathsetmacro\tone {acos(1/sqrt(3))};
\def\tuse{\tone}
\def\radius{1}

\pgfmathsetmacro\ycoord{cos(2*\tuse)*\radius};
\pgfmathsetmacro\xcoord{sin(2*\tuse)*\radius};

\def\centrearc[#1](#2)(#3:#4:#5)[#6]\{#7\}
{
\draw[#1] ($(#2)+({#5*cos(#3)},{#5*sin(#3)})$) arc (#3:#4:#5);
\node[#6] at ($(#2)+({#5*cos(#4)},{#5*sin(#4)})$) {#7};
}

\draw [thick, green, ->] (-3, 0) -- (-2*\xcoord, 0);
\draw [thick, green] (-2*\xcoord, 0) -- (-\xcoord, 0);
\draw [thick, green, ->] (\xcoord, 0) -- (2*\xcoord, 0);
\draw [thick, green] (2*\xcoord, 0) -- (3, 0);
\filldraw [brown] (-3, 0) circle (1pt) node [left] at (-3, 0) {$x'_1$};
\filldraw [black] (3, 0) circle (1pt) node [right] at (3, 0) {$x_2$};

\def\howred{0}

\draw [red!\howred] (-\xcoord, 0) -- (\xcoord, 0);
\centrearc[very thick, color=red, fill=red!\howred](0, \ycoord)(-90-2*\tuse:-90+2*\tuse:\radius)[]\{\}
\centrearc[very thick, color=red, fill=red!\howred](0, \ycoord)(-90+2*\tuse:270-2*\tuse:\radius)[]\{\}
\node at (0, \ycoord) {$u^{F_{12}}$};

\def\rescaledradius{1.5*\radius}
\def\downshift{-0.5}
\pgfmathsetmacro\tuseshifted{1/2*acos((cos(2*\tuse) - \downshift)/\rescaledradius)}
\pgfmathsetmacro\xcoordshifted{sin(2*\tuseshifted)*\rescaledradius}
\draw [	] (-3, \downshift) -- (-\xcoordshifted, \downshift);
\centrearc[->](0, \ycoord)(-90-2*\tuseshifted:-90:\rescaledradius)[below]\{$\Id$\}
\centrearc[](0, \ycoord)(-90:-90+2*\tuseshifted:\rescaledradius)[]\{\}
\draw [	] (\xcoordshifted, \downshift) -- (3, \downshift);

\def\upshift{0.5}
\pgfmathsetmacro\tuseshifted{1/2*acos((cos(2*\tuse) - \upshift)/\rescaledradius)}
\pgfmathsetmacro\xcoordshifted{sin(2*\tuseshifted)*\rescaledradius}
\draw [	] (-3, \upshift) -- (-\xcoordshifted, \upshift);
\centrearc[->](0, \ycoord)(-90+2*\tuseshifted:90:\rescaledradius)[above]\{$A^2B$\}
\centrearc[](0, \ycoord)(90:270 - 2*\tuseshifted:\rescaledradius)[]\{\}
\draw [	] (\xcoordshifted, \upshift) -- (3, \upshift);
\end{tikzpicture}
&
\begin{tikzpicture}[scale=0.4]
\pgfmathsetmacro\tzero{acos(sqrt(2)/sqrt(3)};
\pgfmathsetmacro\tone {acos(1/sqrt(3))};
\def\tuse{\tzero}
\def\radius{1}

\pgfmathsetmacro\ycoord{cos(2*\tuse)*\radius};
\pgfmathsetmacro\xcoord{sin(2*\tuse)*\radius};

\def\centrearc[#1](#2)(#3:#4:#5)[#6]\{#7\}
{
\draw[#1] ($(#2)+({#5*cos(#3)},{#5*sin(#3)})$) arc (#3:#4:#5);
\node[#6] at ($(#2)+({#5*cos(#4)},{#5*sin(#4)})$) {#7};
}

\draw [thick, blue, ->] (-3, 0) -- (-2*\xcoord, 0);
\draw [thick, blue] (-2*\xcoord, 0) -- (-\xcoord, 0);
\draw [thick, blue, ->] (\xcoord, 0) -- (2*\xcoord, 0);
\draw [thick, blue] (2*\xcoord, 0) -- (3, 0);
\filldraw [brown] (-3, 0) circle (1pt) node [left] at (-3, 0) {$x'_1$};
\filldraw [black] (3, 0) circle (1pt) node [right] at (3, 0) {$x_2$};

\def\howred{0}

\draw [red!\howred] (-\xcoord, 0) -- (\xcoord, 0);
\centrearc[very thick, color=red, fill=red!\howred](0, \ycoord)(-90-2*\tuse:-90+2*\tuse:\radius)[]\{\}
\centrearc[very thick, color=red, fill=red!\howred](0, \ycoord)(-90+2*\tuse:270-2*\tuse:\radius)[]\{\}
\node at (0, \ycoord) {$u^{B_{12}}$};

\def\rescaledradius{1.5*\radius}
\def\downshift{-0.5}
\pgfmathsetmacro\tuseshifted{1/2*acos((cos(2*\tuse) - \downshift)/\rescaledradius)}
\pgfmathsetmacro\xcoordshifted{sin(2*\tuseshifted)*\rescaledradius}
\draw [	] (-3, \downshift) -- (-\xcoordshifted, \downshift);
\centrearc[->](0, \ycoord)(-90-2*\tuseshifted:-90:\rescaledradius)[below]\{$A^5B$\}
\centrearc[](0, \ycoord)(-90:-90+2*\tuseshifted:\rescaledradius)[]\{\}
\draw [	] (\xcoordshifted, \downshift) -- (3, \downshift);

\def\upshift{0.5}
\pgfmathsetmacro\tuseshifted{1/2*acos((cos(2*\tuse) - \upshift)/\rescaledradius)}
\pgfmathsetmacro\xcoordshifted{sin(2*\tuseshifted)*\rescaledradius}
\draw [	] (-3, \upshift) -- (-\xcoordshifted, \upshift);
\centrearc[->](0, \ycoord)(-90+2*\tuseshifted:90:\rescaledradius)[above]\{$A^5$\}
\centrearc[](0, \ycoord)(90:270 - 2*\tuseshifted:\rescaledradius)[]\{\}
\draw [	] (\xcoordshifted, \upshift) -- (3, \upshift);

\end{tikzpicture}
\\
\begin{tikzpicture}[scale=0.4]
\pgfmathsetmacro\tzero{acos(sqrt(2)/sqrt(3)};
\pgfmathsetmacro\tone {acos(1/sqrt(3))};
\def\tuse{\tzero}
\def\radius{1}

\pgfmathsetmacro\ycoord{cos(2*\tuse)*\radius};
\pgfmathsetmacro\xcoord{sin(2*\tuse)*\radius};

\def\centrearc[#1](#2)(#3:#4:#5)[#6]\{#7\}
{
\draw[#1] ($(#2)+({#5*cos(#3)},{#5*sin(#3)})$) arc (#3:#4:#5);
\node[#6] at ($(#2)+({#5*cos(#4)},{#5*sin(#4)})$) {#7};
}

\draw [thick, green, ->] (-3, 0) -- (-2*\xcoord, 0);
\draw [thick, green] (-2*\xcoord, 0) -- (-\xcoord, 0);
\draw [thick, blue, ->] (\xcoord, 0) -- (2*\xcoord, 0);
\draw [thick, blue] (2*\xcoord, 0) -- (3, 0);
\filldraw [brown] (-3, 0) circle (1pt) node [left] at (-3, 0) {$x'_2$};
\filldraw [black] (3, 0) circle (1pt) node [right] at (3, 0) {$x_2$};

\def\howred{0}

\draw [red!\howred] (-\xcoord, 0) -- (\xcoord, 0);
\centrearc[very thick, color=red, fill=red!\howred](0, \ycoord)(-90-2*\tuse:-90+2*\tuse:\radius)[]\{\}
\centrearc[very thick, color=red, fill=red!\howred](0, \ycoord)(-90+2*\tuse:270-2*\tuse:\radius)[]\{\}
\node at (0, \ycoord) {$u^{F_{22}}$};

\def\rescaledradius{1.5*\radius}
\def\downshift{-0.5}
\pgfmathsetmacro\tuseshifted{1/2*acos((cos(2*\tuse) - \downshift)/\rescaledradius)}
\pgfmathsetmacro\xcoordshifted{sin(2*\tuseshifted)*\rescaledradius}
\draw [	] (-3, \downshift) -- (-\xcoordshifted, \downshift);
\centrearc[->](0, \ycoord)(-90-2*\tuseshifted:-90:\rescaledradius)[below]\{$\Id$\}
\centrearc[](0, \ycoord)(-90:-90+2*\tuseshifted:\rescaledradius)[]\{\}
\draw [	] (\xcoordshifted, \downshift) -- (3, \downshift);

\def\upshift{0.5}
\pgfmathsetmacro\tuseshifted{1/2*acos((cos(2*\tuse) - \upshift)/\rescaledradius)}
\pgfmathsetmacro\xcoordshifted{sin(2*\tuseshifted)*\rescaledradius}
\draw [	] (-3, \upshift) -- (-\xcoordshifted, \upshift);
\centrearc[->](0, \ycoord)(-90+2*\tuseshifted:90:\rescaledradius)[above]\{$A^2B$\}
\centrearc[](0, \ycoord)(90:270 - 2*\tuseshifted:\rescaledradius)[]\{\}
\draw [	] (\xcoordshifted, \upshift) -- (3, \upshift);
\end{tikzpicture}
&
\begin{tikzpicture}[scale=0.4]
\pgfmathsetmacro\tzero{acos(sqrt(2)/sqrt(3)};
\pgfmathsetmacro\tone {acos(1/sqrt(3))};
\def\tuse{\tone}
\def\radius{1}

\pgfmathsetmacro\ycoord{cos(2*\tuse)*\radius};
\pgfmathsetmacro\xcoord{sin(2*\tuse)*\radius};

\def\centrearc[#1](#2)(#3:#4:#5)[#6]\{#7\}
{
\draw[#1] ($(#2)+({#5*cos(#3)},{#5*sin(#3)})$) arc (#3:#4:#5);
\node[#6] at ($(#2)+({#5*cos(#4)},{#5*sin(#4)})$) {#7};
}

\draw [thick, blue, ->] (-3, 0) -- (-2*\xcoord, 0);
\draw [thick, blue] (-2*\xcoord, 0) -- (-\xcoord, 0);
\draw [thick, green, ->] (\xcoord, 0) -- (2*\xcoord, 0);
\draw [thick, green] (2*\xcoord, 0) -- (3, 0);
\filldraw [brown] (-3, 0) circle (1pt) node [left] at (-3, 0) {$x'_2$};
\filldraw [black] (3, 0) circle (1pt) node [right] at (3, 0) {$x_2$};

\def\howred{0}

\draw [red!\howred] (-\xcoord, 0) -- (\xcoord, 0);
\centrearc[very thick, color=red, fill=red!\howred](0, \ycoord)(-90-2*\tuse:-90+2*\tuse:\radius)[]\{\}
\centrearc[very thick, color=red, fill=red!\howred](0, \ycoord)(-90+2*\tuse:270-2*\tuse:\radius)[]\{\}
\node at (0, \ycoord) {$u^{B_{22}}$};

\def\rescaledradius{1.5*\radius}
\def\downshift{-0.5}
\pgfmathsetmacro\tuseshifted{1/2*acos((cos(2*\tuse) - \downshift)/\rescaledradius)}
\pgfmathsetmacro\xcoordshifted{sin(2*\tuseshifted)*\rescaledradius}
\draw [	] (-3, \downshift) -- (-\xcoordshifted, \downshift);
\centrearc[->](0, \ycoord)(-90-2*\tuseshifted:-90:\rescaledradius)[below]\{$B$\}
\centrearc[](0, \ycoord)(-90:-90+2*\tuseshifted:\rescaledradius)[]\{\}
\draw [	] (\xcoordshifted, \downshift) -- (3, \downshift);

\def\upshift{0.5}
\pgfmathsetmacro\tuseshifted{1/2*acos((cos(2*\tuse) - \upshift)/\rescaledradius)}
\pgfmathsetmacro\xcoordshifted{sin(2*\tuseshifted)*\rescaledradius}
\draw [	] (-3, \upshift) -- (-\xcoordshifted, \upshift);
\centrearc[->](0, \ycoord)(-90+2*\tuseshifted:90:\rescaledradius)[above]\{$\Id$\}
\centrearc[](0, \ycoord)(90:270 - 2*\tuseshifted:\rescaledradius)[]\{\}
\draw [	] (\xcoordshifted, \upshift) -- (3, \upshift);
\end{tikzpicture}
&
\begin{tikzpicture}[scale=0.4]
\pgfmathsetmacro\tzero{acos(sqrt(2)/sqrt(3)};
\pgfmathsetmacro\tone {acos(1/sqrt(3))};
\def\tuse{\tone}
\def\radius{1}

\pgfmathsetmacro\ycoord{cos(2*\tuse)*\radius};
\pgfmathsetmacro\xcoord{sin(2*\tuse)*\radius};

\def\centrearc[#1](#2)(#3:#4:#5)[#6]\{#7\}
{
\draw[#1] ($(#2)+({#5*cos(#3)},{#5*sin(#3)})$) arc (#3:#4:#5);
\node[#6] at ($(#2)+({#5*cos(#4)},{#5*sin(#4)})$) {#7};
}

\draw [thick, blue, ->] (-3, 0) -- (-2*\xcoord, 0);
\draw [thick, blue] (-2*\xcoord, 0) -- (-\xcoord, 0);
\draw [thick, blue, ->] (\xcoord, 0) -- (2*\xcoord, 0);
\draw [thick, blue] (2*\xcoord, 0) -- (3, 0);
\filldraw [brown] (-3, 0) circle (1pt) node [left] at (-3, 0) {$x'_2$};
\filldraw [black] (3, 0) circle (1pt) node [right] at (3, 0) {$x_3$};

\def\howred{0}

\draw [red!\howred] (-\xcoord, 0) -- (\xcoord, 0);
\centrearc[very thick, color=red, fill=red!\howred](0, \ycoord)(-90-2*\tuse:-90+2*\tuse:\radius)[]\{\}
\centrearc[very thick, color=red, fill=red!\howred](0, \ycoord)(-90+2*\tuse:270-2*\tuse:\radius)[]\{\}
\node at (0, \ycoord) {$u^{F_{23}}$};

\def\rescaledradius{1.5*\radius}
\def\downshift{-0.5}
\pgfmathsetmacro\tuseshifted{1/2*acos((cos(2*\tuse) - \downshift)/\rescaledradius)}
\pgfmathsetmacro\xcoordshifted{sin(2*\tuseshifted)*\rescaledradius}
\draw [	] (-3, \downshift) -- (-\xcoordshifted, \downshift);
\centrearc[->](0, \ycoord)(-90-2*\tuseshifted:-90:\rescaledradius)[below]\{$A$\}
\centrearc[](0, \ycoord)(-90:-90+2*\tuseshifted:\rescaledradius)[]\{\}
\draw [	] (\xcoordshifted, \downshift) -- (3, \downshift);

\def\upshift{0.5}
\pgfmathsetmacro\tuseshifted{1/2*acos((cos(2*\tuse) - \upshift)/\rescaledradius)}
\pgfmathsetmacro\xcoordshifted{sin(2*\tuseshifted)*\rescaledradius}
\draw [	] (-3, \upshift) -- (-\xcoordshifted, \upshift);
\centrearc[->](0, \ycoord)(-90+2*\tuseshifted:90:\rescaledradius)[above]\{$A^3B$\}
\centrearc[](0, \ycoord)(90:270 - 2*\tuseshifted:\rescaledradius)[]\{\}
\draw [	] (\xcoordshifted, \upshift) -- (3, \upshift);
\end{tikzpicture}
&
\begin{tikzpicture}[scale=0.4]
\pgfmathsetmacro\tzero{acos(sqrt(2)/sqrt(3)};
\pgfmathsetmacro\tone {acos(1/sqrt(3))};
\def\tuse{\tzero}
\def\radius{1}

\pgfmathsetmacro\ycoord{cos(2*\tuse)*\radius};
\pgfmathsetmacro\xcoord{sin(2*\tuse)*\radius};

\def\centrearc[#1](#2)(#3:#4:#5)[#6]\{#7\}
{
\draw[#1] ($(#2)+({#5*cos(#3)},{#5*sin(#3)})$) arc (#3:#4:#5);
\node[#6] at ($(#2)+({#5*cos(#4)},{#5*sin(#4)})$) {#7};
}

\draw [thick, green, ->] (-3, 0) -- (-2*\xcoord, 0);
\draw [thick, green] (-2*\xcoord, 0) -- (-\xcoord, 0);
\draw [thick, green, ->] (\xcoord, 0) -- (2*\xcoord, 0);
\draw [thick, green] (2*\xcoord, 0) -- (3, 0);
\filldraw [brown] (-3, 0) circle (1pt) node [left] at (-3, 0) {$x'_2$};
\filldraw [black] (3, 0) circle (1pt) node [right] at (3, 0) {$x_3$};

\def\howred{0}

\draw [red!\howred] (-\xcoord, 0) -- (\xcoord, 0);
\centrearc[very thick, color=red, fill=red!\howred](0, \ycoord)(-90-2*\tuse:-90+2*\tuse:\radius)[]\{\}
\centrearc[very thick, color=red, fill=red!\howred](0, \ycoord)(-90+2*\tuse:270-2*\tuse:\radius)[]\{\}
\node at (0, \ycoord) {$u^{B_{23}}$};

\def\rescaledradius{1.5*\radius}
\def\downshift{-0.5}
\pgfmathsetmacro\tuseshifted{1/2*acos((cos(2*\tuse) - \downshift)/\rescaledradius)}
\pgfmathsetmacro\xcoordshifted{sin(2*\tuseshifted)*\rescaledradius}
\draw [	] (-3, \downshift) -- (-\xcoordshifted, \downshift);
\centrearc[->](0, \ycoord)(-90-2*\tuseshifted:-90:\rescaledradius)[below]\{$B$\}
\centrearc[](0, \ycoord)(-90:-90+2*\tuseshifted:\rescaledradius)[]\{\}
\draw [	] (\xcoordshifted, \downshift) -- (3, \downshift);

\def\upshift{0.5}
\pgfmathsetmacro\tuseshifted{1/2*acos((cos(2*\tuse) - \upshift)/\rescaledradius)}
\pgfmathsetmacro\xcoordshifted{sin(2*\tuseshifted)*\rescaledradius}
\draw [	] (-3, \upshift) -- (-\xcoordshifted, \upshift);
\centrearc[->](0, \ycoord)(-90+2*\tuseshifted:90:\rescaledradius)[above]\{$\Id$\}
\centrearc[](0, \ycoord)(90:270 - 2*\tuseshifted:\rescaledradius)[]\{\}
\draw [	] (\xcoordshifted, \upshift) -- (3, \upshift);
\end{tikzpicture}
\\
\begin{tikzpicture}[scale=0.4]
\pgfmathsetmacro\tzero{acos(sqrt(2)/sqrt(3)};
\pgfmathsetmacro\tone {acos(1/sqrt(3))};
\def\tuse{\tzero}
\def\radius{1}

\pgfmathsetmacro\ycoord{cos(2*\tuse)*\radius};
\pgfmathsetmacro\xcoord{sin(2*\tuse)*\radius};

\def\centrearc[#1](#2)(#3:#4:#5)[#6]\{#7\}
{
\draw[#1] ($(#2)+({#5*cos(#3)},{#5*sin(#3)})$) arc (#3:#4:#5);
\node[#6] at ($(#2)+({#5*cos(#4)},{#5*sin(#4)})$) {#7};
}

\draw [thick, blue, ->] (-3, 0) -- (-2*\xcoord, 0);
\draw [thick, blue] (-2*\xcoord, 0) -- (-\xcoord, 0);
\draw [thick, green, ->] (\xcoord, 0) -- (2*\xcoord, 0);
\draw [thick, green] (2*\xcoord, 0) -- (3, 0);
\filldraw [brown] (-3, 0) circle (1pt) node [left] at (-3, 0) {$x'_3$};
\filldraw [black] (3, 0) circle (1pt) node [right] at (3, 0) {$x_3$};

\def\howred{0}

\draw [red!\howred] (-\xcoord, 0) -- (\xcoord, 0);
\centrearc[very thick, color=red, fill=red!\howred](0, \ycoord)(-90-2*\tuse:-90+2*\tuse:\radius)[]\{\}
\centrearc[very thick, color=red, fill=red!\howred](0, \ycoord)(-90+2*\tuse:270-2*\tuse:\radius)[]\{\}
\node at (0, \ycoord) {$u^{F_{33}}$};

\def\rescaledradius{1.5*\radius}
\def\downshift{-0.5}
\pgfmathsetmacro\tuseshifted{1/2*acos((cos(2*\tuse) - \downshift)/\rescaledradius)}
\pgfmathsetmacro\xcoordshifted{sin(2*\tuseshifted)*\rescaledradius}
\draw [	] (-3, \downshift) -- (-\xcoordshifted, \downshift);
\centrearc[->](0, \ycoord)(-90-2*\tuseshifted:-90:\rescaledradius)[below]\{$\Id$\}
\centrearc[](0, \ycoord)(-90:-90+2*\tuseshifted:\rescaledradius)[]\{\}
\draw [	] (\xcoordshifted, \downshift) -- (3, \downshift);

\def\upshift{0.5}
\pgfmathsetmacro\tuseshifted{1/2*acos((cos(2*\tuse) - \upshift)/\rescaledradius)}
\pgfmathsetmacro\xcoordshifted{sin(2*\tuseshifted)*\rescaledradius}
\draw [	] (-3, \upshift) -- (-\xcoordshifted, \upshift);
\centrearc[->](0, \ycoord)(-90+2*\tuseshifted:90:\rescaledradius)[above]\{$A^4B$\}
\centrearc[](0, \ycoord)(90:270 - 2*\tuseshifted:\rescaledradius)[]\{\}
\draw [	] (\xcoordshifted, \upshift) -- (3, \upshift);
\end{tikzpicture}
&
\begin{tikzpicture}[scale=0.4]
\pgfmathsetmacro\tzero{acos(sqrt(2)/sqrt(3)};
\pgfmathsetmacro\tone {acos(1/sqrt(3))};
\def\tuse{\tone}
\def\radius{1}

\pgfmathsetmacro\ycoord{cos(2*\tuse)*\radius};
\pgfmathsetmacro\xcoord{sin(2*\tuse)*\radius};

\def\centrearc[#1](#2)(#3:#4:#5)[#6]\{#7\}
{
\draw[#1] ($(#2)+({#5*cos(#3)},{#5*sin(#3)})$) arc (#3:#4:#5);
\node[#6] at ($(#2)+({#5*cos(#4)},{#5*sin(#4)})$) {#7};
}

\draw [thick, green, ->] (-3, 0) -- (-2*\xcoord, 0);
\draw [thick, green] (-2*\xcoord, 0) -- (-\xcoord, 0);
\draw [thick, blue, ->] (\xcoord, 0) -- (2*\xcoord, 0);
\draw [thick, blue] (2*\xcoord, 0) -- (3, 0);
\filldraw [brown] (-3, 0) circle (1pt) node [left] at (-3, 0) {$x'_3$};
\filldraw [black] (3, 0) circle (1pt) node [right] at (3, 0) {$x_3$};

\def\howred{0}

\draw [red!\howred] (-\xcoord, 0) -- (\xcoord, 0);
\centrearc[very thick, color=red, fill=red!\howred](0, \ycoord)(-90-2*\tuse:-90+2*\tuse:\radius)[]\{\}
\centrearc[very thick, color=red, fill=red!\howred](0, \ycoord)(-90+2*\tuse:270-2*\tuse:\radius)[]\{\}
\node at (0, \ycoord) {$u^{B_{33}}$};

\def\rescaledradius{1.5*\radius}
\def\downshift{-0.5}
\pgfmathsetmacro\tuseshifted{1/2*acos((cos(2*\tuse) - \downshift)/\rescaledradius)}
\pgfmathsetmacro\xcoordshifted{sin(2*\tuseshifted)*\rescaledradius}
\draw [	] (-3, \downshift) -- (-\xcoordshifted, \downshift);
\centrearc[->](0, \ycoord)(-90-2*\tuseshifted:-90:\rescaledradius)[below]\{$B$\}
\centrearc[](0, \ycoord)(-90:-90+2*\tuseshifted:\rescaledradius)[]\{\}
\draw [	] (\xcoordshifted, \downshift) -- (3, \downshift);

\def\upshift{0.5}
\pgfmathsetmacro\tuseshifted{1/2*acos((cos(2*\tuse) - \upshift)/\rescaledradius)}
\pgfmathsetmacro\xcoordshifted{sin(2*\tuseshifted)*\rescaledradius}
\draw [	] (-3, \upshift) -- (-\xcoordshifted, \upshift);
\centrearc[->](0, \ycoord)(-90+2*\tuseshifted:90:\rescaledradius)[above]\{$\Id$\}
\centrearc[](0, \ycoord)(90:270 - 2*\tuseshifted:\rescaledradius)[]\{\}
\draw [	] (\xcoordshifted, \upshift) -- (3, \upshift);
\end{tikzpicture}
&
\begin{tikzpicture}[scale=0.4]
\pgfmathsetmacro\tzero{acos(sqrt(2)/sqrt(3)};
\pgfmathsetmacro\tone {acos(1/sqrt(3))};
\def\tuse{\tone}
\def\radius{1}

\pgfmathsetmacro\ycoord{cos(2*\tuse)*\radius};
\pgfmathsetmacro\xcoord{sin(2*\tuse)*\radius};

\def\centrearc[#1](#2)(#3:#4:#5)[#6]\{#7\}
{
\draw[#1] ($(#2)+({#5*cos(#3)},{#5*sin(#3)})$) arc (#3:#4:#5);
\node[#6] at ($(#2)+({#5*cos(#4)},{#5*sin(#4)})$) {#7};
}

\draw [thick, green, ->] (-3, 0) -- (-2*\xcoord, 0);
\draw [thick, green] (-2*\xcoord, 0) -- (-\xcoord, 0);
\draw [thick, blue, ->] (\xcoord, 0) -- (2*\xcoord, 0);
\draw [thick, blue] (2*\xcoord, 0) -- (3, 0);
\filldraw [brown] (-3, 0) circle (1pt) node [left] at (-3, 0) {$x'_3$};
\filldraw [black] (3, 0) circle (1pt) node [right] at (3, 0) {$x_1$};

\def\howred{0}

\draw [red!\howred] (-\xcoord, 0) -- (\xcoord, 0);
\centrearc[very thick, color=red, fill=red!\howred](0, \ycoord)(-90-2*\tuse:-90+2*\tuse:\radius)[]\{\}
\centrearc[very thick, color=red, fill=red!\howred](0, \ycoord)(-90+2*\tuse:270-2*\tuse:\radius)[]\{\}
\node at (0, \ycoord) {$u^{F_{31}}$};

\def\rescaledradius{1.5*\radius}
\def\downshift{-0.5}
\pgfmathsetmacro\tuseshifted{1/2*acos((cos(2*\tuse) - \downshift)/\rescaledradius)}
\pgfmathsetmacro\xcoordshifted{sin(2*\tuseshifted)*\rescaledradius}
\draw [	] (-3, \downshift) -- (-\xcoordshifted, \downshift);
\centrearc[->](0, \ycoord)(-90-2*\tuseshifted:-90:\rescaledradius)[below]\{$A^3B$\}
\centrearc[](0, \ycoord)(-90:-90+2*\tuseshifted:\rescaledradius)[]\{\}
\draw [	] (\xcoordshifted, \downshift) -- (3, \downshift);

\def\upshift{0.5}
\pgfmathsetmacro\tuseshifted{1/2*acos((cos(2*\tuse) - \upshift)/\rescaledradius)}
\pgfmathsetmacro\xcoordshifted{sin(2*\tuseshifted)*\rescaledradius}
\draw [	] (-3, \upshift) -- (-\xcoordshifted, \upshift);
\centrearc[->](0, \ycoord)(-90+2*\tuseshifted:90:\rescaledradius)[above]\{$A$\}
\centrearc[](0, \ycoord)(90:270 - 2*\tuseshifted:\rescaledradius)[]\{\}
\draw [	] (\xcoordshifted, \upshift) -- (3, \upshift);
\end{tikzpicture}
&
\begin{tikzpicture}[scale=0.4]
\pgfmathsetmacro\tzero{acos(sqrt(2)/sqrt(3)};
\pgfmathsetmacro\tone {acos(1/sqrt(3))};
\def\tuse{\tzero}
\def\radius{1}

\pgfmathsetmacro\ycoord{cos(2*\tuse)*\radius};
\pgfmathsetmacro\xcoord{sin(2*\tuse)*\radius};

\def\centrearc[#1](#2)(#3:#4:#5)[#6]\{#7\}
{
\draw[#1] ($(#2)+({#5*cos(#3)},{#5*sin(#3)})$) arc (#3:#4:#5);
\node[#6] at ($(#2)+({#5*cos(#4)},{#5*sin(#4)})$) {#7};
}

\draw [thick, blue, ->] (-3, 0) -- (-2*\xcoord, 0);
\draw [thick, blue] (-2*\xcoord, 0) -- (-\xcoord, 0);
\draw [thick, green, ->] (\xcoord, 0) -- (2*\xcoord, 0);
\draw [thick, green] (2*\xcoord, 0) -- (3, 0);
\filldraw [brown] (-3, 0) circle (1pt) node [left] at (-3, 0) {$x'_3$};
\filldraw [black] (3, 0) circle (1pt) node [right] at (3, 0) {$x_1$};

\def\howred{0}

\draw [red!\howred] (-\xcoord, 0) -- (\xcoord, 0);
\centrearc[very thick, color=red, fill=red!\howred](0, \ycoord)(-90-2*\tuse:-90+2*\tuse:\radius)[]\{\}
\centrearc[very thick, color=red, fill=red!\howred](0, \ycoord)(-90+2*\tuse:270-2*\tuse:\radius)[]\{\}
\node at (0, \ycoord) {$u^{B_{31}}$};

\def\rescaledradius{1.5*\radius}
\def\downshift{-0.5}
\pgfmathsetmacro\tuseshifted{1/2*acos((cos(2*\tuse) - \downshift)/\rescaledradius)}
\pgfmathsetmacro\xcoordshifted{sin(2*\tuseshifted)*\rescaledradius}
\draw [	] (-3, \downshift) -- (-\xcoordshifted, \downshift);
\centrearc[->](0, \ycoord)(-90-2*\tuseshifted:-90:\rescaledradius)[below]\{$A^5$\}
\centrearc[](0, \ycoord)(-90:-90+2*\tuseshifted:\rescaledradius)[]\{\}
\draw [	] (\xcoordshifted, \downshift) -- (3, \downshift);

\def\upshift{0.5}
\pgfmathsetmacro\tuseshifted{1/2*acos((cos(2*\tuse) - \upshift)/\rescaledradius)}
\pgfmathsetmacro\xcoordshifted{sin(2*\tuseshifted)*\rescaledradius}
\draw [	] (-3, \upshift) -- (-\xcoordshifted, \upshift);
\centrearc[->](0, \ycoord)(-90+2*\tuseshifted:90:\rescaledradius)[above]\{$A^5B$\}
\centrearc[](0, \ycoord)(90:270 - 2*\tuseshifted:\rescaledradius)[]\{\}
\draw [	] (\xcoordshifted, \upshift) -- (3, \upshift);

\end{tikzpicture}
\\

\end{tabular}
\caption{Parallel transport maps for the pearly trajectories $\lbrace \mathbf{u}^{F_{ij}},\, \mathbf{u}^{B_{ij}} \st i \equiv j, j+1 \pmod 3 \rbrace$.}\label{table}
\end{table}
\end{center}

Finally, let us determine $\del_2 \colon \End(W_m) \to \End(W_{m'})$. A pearly trajectory, connecting $m'$ to $m$ must have total Maslov index 4 and since we know all Maslov 2 discs through $\{m'\}$ or $\{m\}$, it is easy to see that it must in fact consist of a single Maslov 4 disc $u$, satisfying $u(-1) \in W^d(m') = \{m'\}$ and $u(1) \in W^a(m) = \{m\}$. Thus $u$ must be one of the discs $\{w_1, w_{-1}\}$ described in Theorem \ref{maslov 4 discs through m and m'}. From Figure \ref{the one with all the circles} again we see that $\gamma^0_{w_1} = (\sigma)^{-1}$, $\gamma^1_{w_1} = \tilde{\sigma}$, $\gamma^0_{w_{-1}} = (\tilde{\sigma})^{-1}$ and $\gamma^1_{w_{-1}} = \sigma$. It is clear then that
$$P_{\gamma^0_{w_{1}}} = \Id, \quad P_{\gamma^1_{w_1}} = A, \quad P_{\gamma^0_{w_{-1}}} = A^5, \quad P_{\gamma^1_{w_{-1}}} = \Id$$ 
and hence for every $\alpha \in \End(W_m)$ we have:
\begin{equation}\label{del 2}
\del_2 (\alpha) = A \alpha + \alpha A^5 \quad \in \quad \End(W_{m'}).
\end{equation}

We end this section by writing down the complete candidate Floer differential $d$. Note that $d$ preserves the parity of indices of critical points and so we can split it into the two following maps:
\begin{eqnarray*}
d^0 \colon \bigoplus_{\Box = m', x_1, x_2, x_3} \End(W_{\Box}) & \to & \bigoplus_{\Box = x'_1, x'_2, x'_3, m} \End(W_{\Box}) \\
d^1 \colon \bigoplus_{\Box= x'_1, x'_2, x'_3, m} \End(W_{\Box}) & \to & \bigoplus_{\Box = m', x_1, x_2, x_3} \End(W_{\Box})
\end{eqnarray*} 
Now from equations \eqref{del 0 first part}, \eqref{del 0 second part}, \eqref{del 0 third part}, \eqref{del 1 from index 1 to index 0}, \eqref{del 1 from index 3 to index 2}, \eqref{del 1 from index 2 to index 1} and \eqref{del 2} we have:

$\bullet$ for every $(\alpha', \alpha_1, \alpha_2, \alpha_3) \in \End(W_{m'})\oplus \End(W_{x_1})\oplus\End(W_{x_2})\oplus\End(W_{x_3})$

\begin{equation}\label{d0}
\begin{array}{rcl}
d^0(\alpha', \alpha_1, \alpha_2, \alpha_3) & = & \bigg(\del_0 (\alpha') + \del_1(\alpha_1, \alpha_2, \alpha_3), \, \del_0(\alpha_1, \alpha_2, \alpha_3) \bigg) \\
&=&\bigg( \big[\alpha' + \bin\alpha'\b \big ] + \aab\alpha_1 + \alpha_1\aaaab + \aab\alpha_2 + \aaaaa\alpha_2\aaaaab\, ,\\
&&\big[\alpha' + \abin\alpha'\ab \big] + \aab\alpha_2 + \alpha_2\b + \aaab\alpha_3\a + \alpha_3\b\,, \\
&& \big[\alpha' + \aabin\alpha'\ab \big] + \aaaab\alpha_3 + \alpha_3\b + \a\alpha_1\aaab + \aaaaab\alpha_1\aaaaa\, , \\
&& \big[\alpha_1 + \aaabin\alpha_1\aaab + \alpha_2 + \aaaabin\alpha_2\aaaab + \\
&& \qquad \qquad \qquad \qquad  \qquad  \qquad  \qquad  \qquad  + \alpha_3 + \aaaaabin\alpha_3\aaaaab \big]\bigg) \\
& \in & \End(W_{x'_1})\oplus\End(W_{x'_2})\oplus\End(W_{x'_3})\oplus\End(E_m). 
\end{array}
\end{equation}
$\bullet$ for every $(\alpha'_1, \alpha'_2, \alpha'_3, \alpha) \in \End(W_{x'_1})\oplus\End(W_{x'_2})\oplus\End(W_{x'_3})\oplus\End(E_m)$
\begin{equation}\label{d1}
\begin{array}{rcl}
d^1(\alpha'_1, \alpha'_2, \alpha'_3, \alpha) & = & \bigg ( \del_1(\alpha'_1, \alpha'_2, \alpha'_3) + \del_2(\alpha)\,,\, \del_0(\alpha'_1, \alpha'_2, \alpha'_3) + \del_1 (\alpha)\bigg ) \\
&=&\bigg (  B\alpha'_1 + \alpha'_2\aaaab + \aab\alpha'_3 + \big [A\alpha + \alpha A^5\big] \,, \\
&&\big [ \alpha_1' + \alpha_2' + \ain\alpha_2'\a + \ain\alpha_3'\a \big] + \alpha \b \, ,\\
&&\big [\aabin\alpha_1'\aab + \alpha_2' + \ain\alpha_3'\a + \alpha_3'\big] + \aaaab \alpha\, ,\\
&&\big [\aabin\alpha_1'\aab + \aaabin\alpha_1'\aaab +\\
&&  \qquad  \qquad  \qquad  \qquad  \qquad  \qquad + \aaabin\alpha_2'\aaab + \alpha_3'\big] + \alpha \aab\bigg)\\
& \in & \End(W_{m'})\oplus \End(W_{x_1})\oplus\End(W_{x_2})\oplus\End(W_{x_3}). 
\end{array}
\end{equation}
\vspace{5cm}

\subsection{Proof of Theorem \ref{mytheorem}}\label{proof of Theorem}
We are now in a position to prove Theorem \ref{mytheorem}. In Appendix \ref{appendix:classification of reps} we give an explicit description of all indecomposable representations of $\bindih$ over $\FF_2$. We find that there is a unique non-trivial irreducible representation which we denote by $D$. It is a two-dimensional faithful representation of the dihedral group of order 6 and its pullback to $\bindih$ is given explicitly by
\begin{eqnarray*}
\rho_D \colon \bindih & \to & GL(2, \FF_2)\\
\rho_D(a) = \begin{pmatrix}
0 & 1\\
1 & 1
\end{pmatrix},&&
\rho_D(b) = \begin{pmatrix}
0 & 1 \\
1 & 0
\end{pmatrix}.
\end{eqnarray*}
Let $W^D$ denote the resulting local system on $\chiang$.
It is immediate to check from \eqref{m_0 for chiang} that $m_0(W^D) = 0$ and thus $\left(C^*_{\F}(W^D,W^D), d^{(\F, J_0)}\right)$ is indeed a $\ZZ / 2-$graded complex which computes $HF^*(W^D,W^D)$. Further, since $\rho_D$ is surjective, we have
$$CF^*(W^D,W^D) = \overline{CF}^*(W^D,W^D) = CF^*_{mon}(W^D).$$
 We can now compute $HF^*(W^D,W^D)$ explicitly. As before, we identify both $\bigoplus_{\Box = m', x_1, x_2, x_3} \End(W^D_{\Box})$ and $\bigoplus_{\Box = x'_1, x'_2, x'_3, m} \End(W^D_{\Box})$ with $\End((\FF_2)^2)^4 = (\operatorname{Mat}_{2\times2}(\FF_2))^4$. For $(\operatorname{Mat}_{2\times2}(\FF_2))^4$ we choose a basis as follows:
\begin{itemize}
\item set 
$e_1 = \begin{pmatrix}
1 & 0\\
0 & 0
\end{pmatrix}$, 
$e_2 = \begin{pmatrix}
0 & 1\\
0 & 0
\end{pmatrix}$, 
$e_3 = \begin{pmatrix}
0 & 0\\
1 & 0
\end{pmatrix}$, 
$e_4 = \begin{pmatrix}
0 & 0\\
0 & 1
\end{pmatrix}$
\item for each $l \in \mathbb{N}$ write $l = 4(l) + \langle l \rangle$ for the division with remainder of $l$ by 4
\item for each  $1 \le l \le 16$ and $1 \le k \le 4$ define the matrix $\mathcal{E}_{lk} \in \operatorname{Mat}_{2\times2}(\FF_2)$ by
$$\mathcal{E}_{lk} \coloneqq
 \begin{cases} 
 e_{\langle l \rangle},& \text{when } \langle l \rangle \neq 0 \text{ and } k = (l)+1 \\
 e_4,& \text{when } \langle l \rangle = 0 \text{ and } k = (l) \\
 0& \text{otherwise.}
 \end{cases}$$
\item we now define a basis $\mathcal{E} \coloneqq \{\mathcal{E}_l \st 1 \le l \le 16\}$ for $(\operatorname{Mat}_{2\times2}(\FF_2))^4$ by setting
$$\mathcal{E}_l = (\mathcal{E}_{l1}, \mathcal{E}_{l2}, \mathcal{E}_{l3}, \mathcal{E}_{l4}).$$
\end{itemize}
For example $$\mathcal{E}_1 = \left(\begin{pmatrix}
1 & 0\\
0 & 0
\end{pmatrix},
\begin{pmatrix}
0 & 0\\
0 & 0
\end{pmatrix},
\begin{pmatrix}
0 & 0\\
0 & 0
\end{pmatrix},
\begin{pmatrix}
0 & 0\\
0 & 0
\end{pmatrix}\right), \;
\mathcal{E}_6 = \left(\begin{pmatrix}
0 & 0\\
0 & 0
\end{pmatrix},
\begin{pmatrix}
0 & 1\\
0 & 0
\end{pmatrix},
\begin{pmatrix}
0 & 0\\
0 & 0
\end{pmatrix},
\begin{pmatrix}
0 & 0\\
0 & 0
\end{pmatrix}\right).$$

Plugging \eqref{d0} and \eqref{d1} into a computer programme, one finds that with respect to the basis $\mathcal{E}$ the operators $d^0$ and $d^1$ are given respectively by the matrices
$$D^0 = \left(
\begin{array}{cccccccccccccccc}
 1 & 0 & 0 & 1 & 0 & 1 & 1 & 0 & 0 & 0 & 0 & 0 & 0 & 0 & 0 & 0 \\
 0 & 1 & 1 & 0 & 0 & 0 & 0 & 1 & 1 & 0 & 1 & 0 & 0 & 0 & 0 & 0 \\
 0 & 1 & 1 & 0 & 0 & 0 & 0 & 1 & 1 & 0 & 1 & 0 & 0 & 0 & 0 & 0 \\
 1 & 0 & 0 & 1 & 0 & 0 & 0 & 0 & 1 & 1 & 0 & 1 & 0 & 0 & 0 & 0 \\
 0 & 1 & 0 & 0 & 0 & 0 & 0 & 0 & 1 & 1 & 1 & 0 & 0 & 1 & 0 & 1 \\
 0 & 0 & 0 & 0 & 0 & 0 & 0 & 0 & 1 & 1 & 0 & 1 & 1 & 0 & 1 & 1 \\
 1 & 1 & 0 & 1 & 0 & 0 & 0 & 0 & 0 & 0 & 1 & 1 & 0 & 1 & 0 & 1 \\
 0 & 1 & 0 & 0 & 0 & 0 & 0 & 0 & 0 & 0 & 1 & 1 & 1 & 1 & 1 & 0 \\
 0 & 0 & 1 & 0 & 1 & 1 & 1 & 0 & 0 & 0 & 0 & 0 & 1 & 1 & 0 & 0 \\
 1 & 0 & 1 & 1 & 1 & 0 & 0 & 0 & 0 & 0 & 0 & 0 & 1 & 1 & 0 & 0 \\
 0 & 0 & 0 & 0 & 0 & 1 & 1 & 0 & 0 & 0 & 0 & 0 & 1 & 0 & 1 & 1 \\
 0 & 0 & 1 & 0 & 1 & 0 & 0 & 0 & 0 & 0 & 0 & 0 & 0 & 1 & 1 & 1 \\
 0 & 0 & 0 & 0 & 1 & 0 & 0 & 1 & 0 & 1 & 0 & 0 & 0 & 0 & 1 & 0 \\
 0 & 0 & 0 & 0 & 0 & 1 & 1 & 0 & 0 & 0 & 0 & 0 & 1 & 0 & 1 & 1 \\
 0 & 0 & 0 & 0 & 0 & 1 & 1 & 0 & 1 & 1 & 0 & 1 & 0 & 0 & 0 & 0 \\
 0 & 0 & 0 & 0 & 1 & 0 & 0 & 1 & 0 & 1 & 0 & 0 & 0 & 0 & 1 & 0 \\
\end{array}
\right)$$
and
$$
D^1=\left(
\begin{array}{cccccccccccccccc}
 0 & 0 & 1 & 0 & 1 & 1 & 0 & 0 & 1 & 0 & 1 & 0 & 1 & 1 & 1 & 0 \\
 0 & 0 & 0 & 1 & 0 & 1 & 0 & 0 & 0 & 1 & 0 & 1 & 1 & 0 & 0 & 1 \\
 1 & 0 & 0 & 0 & 0 & 0 & 1 & 1 & 0 & 0 & 1 & 0 & 1 & 0 & 0 & 1 \\
 0 & 1 & 0 & 0 & 0 & 0 & 0 & 1 & 0 & 0 & 0 & 1 & 0 & 1 & 1 & 1 \\
 1 & 0 & 0 & 0 & 1 & 1 & 0 & 1 & 0 & 1 & 0 & 1 & 0 & 1 & 0 & 0 \\
 0 & 1 & 0 & 0 & 1 & 0 & 1 & 1 & 1 & 1 & 1 & 1 & 1 & 0 & 0 & 0 \\
 0 & 0 & 1 & 0 & 0 & 1 & 1 & 0 & 0 & 1 & 0 & 0 & 0 & 0 & 0 & 1 \\
 0 & 0 & 0 & 1 & 1 & 1 & 0 & 1 & 1 & 1 & 0 & 0 & 0 & 0 & 1 & 0 \\
 1 & 0 & 1 & 0 & 1 & 0 & 0 & 0 & 1 & 1 & 0 & 1 & 1 & 0 & 0 & 0 \\
 1 & 1 & 1 & 1 & 0 & 1 & 0 & 0 & 1 & 0 & 1 & 1 & 0 & 1 & 0 & 0 \\
 0 & 0 & 1 & 0 & 0 & 0 & 1 & 0 & 0 & 1 & 1 & 0 & 1 & 0 & 1 & 0 \\
 0 & 0 & 1 & 1 & 0 & 0 & 0 & 1 & 1 & 1 & 0 & 1 & 0 & 1 & 0 & 1 \\
 1 & 0 & 1 & 1 & 0 & 0 & 0 & 1 & 1 & 0 & 0 & 0 & 1 & 0 & 0 & 0 \\
 1 & 1 & 0 & 1 & 0 & 0 & 1 & 0 & 0 & 1 & 0 & 0 & 1 & 1 & 0 & 0 \\
 0 & 1 & 1 & 0 & 0 & 1 & 0 & 0 & 0 & 0 & 1 & 0 & 0 & 0 & 1 & 0 \\
 1 & 0 & 1 & 1 & 1 & 0 & 0 & 0 & 0 & 0 & 0 & 1 & 0 & 0 & 1 & 1 \\
\end{array}
\right)
$$

One then computes that $\operatorname{rank}(D^0) = 6$, $\operatorname{rank}(D^1) = 8$ and thus
$$HF^0((\chiang,W^D), (\chiang,W^D)) \cong HF^1((\chiang,W^D),(\chiang,W^D)) \cong (\FF_2)^2.$$
Putting $W=W^D$ concludes the proof of Theorem \ref{mytheorem} as stated in the Introduction. \qed
\begin{remark}\label{minimality of W^D}
It is a fact (see section \ref{some additional calculations} below) that $W^D$ is the minimal $\FF_2-$local system on $\chiang$ for which the central Floer cohomology is non-vanishing. By this we mean that any other finite rank local system $W \to \chiang$ with $\overline{HF}^*(W,W) \neq 0$ must have $W^D$ as a direct summand. 
\end{remark}
\textit{Proof of Corollary \ref{chiangRp3nonDisp}:} In the setup of Theorem \ref{split-generation criterion} we set $L=\RP^3$ and $E$ to be the trivial local system of rank 1. As already noted $m_0(\RP^3) =0$. Proposition 1.1 in \cite{tonkonog2015closed} states that the map $\CO^* \colon QH^*(\CP^3) \to HH^*(CF^*(\RP^3, \RP^3))$ is injective and so $\RP^3$ split generates $\F(\CP^3)_0$. From Theorem \ref{mytheorem} we have that $(\chiang, W^D)$ is an essential object in $\F(\CP^3)_0$. By Lemma \ref{split-generation implies non-vanishing of HF} it then follows that $HF^*(\RP^3, (\chiang, W^D)) \neq 0$ and thus by point \ref{non-vanishing floer implies non-disp} of Remark \ref{remark with three parts}, $\chiang$ and $\RP^3$ cannot be displaced by a Hamiltonian isotopy. \qed

\subsection{Some additional calculations}\label{some additional calculations}
In this section we describe some results on the computation of the central and monodromy Floer cohomologies for some other local systems on $\chiang$. In Appendix \ref{appendix:classification of reps} we show that there are 6 isomorphism classes of indecomposable representations of $\bindih$ over $\FF_2$. These come into the following 3 groups whose central and monodromy Floer complexes exhibit quite different behaviour:
\begin{enumerate}
\item the representations $V_1, V_2, V_3, V_4$ (with $\dim V_i = i$) which are the indecomposable representations of the cyclic group $C_4$ over $\FF_2$ with $V_1$ being the trivial one and $V_4$ being the regular representation. Since $C_4$ is the quotient of $\bindih$ by $C_3 = \{1, a^2, a^4\}$, these are also representations of $\bindih$
\item the irreducible representation $D$ used in the proof of Theorem \ref{mytheorem}.
\item a faithful $\bindih-$representation $U_4$ of dimension $4$.
\end{enumerate}

Let $\rho \colon \bindih \to \operatorname{Aut}(V)$ be any finite-dimension representation of $\bindih$ over $\FF_2$. We know by the Krull-Schmidt theorem that there exist unique non-negative integers $k_1, k_2, k_3, k_4, m, n$ such that
\begin{equation}\label{Krull-Schmidt decomposition}
V \cong V_1^{\oplus k_1}\oplus V_2^{\oplus k_2} \oplus V_3^{\oplus k_3} \oplus V_4^{\oplus k_4} \oplus D^{\oplus m} \oplus U_4^{\oplus n}.
\end{equation}
Note that the relation $a^6 = 1$ in $\bindih$ yields
$$(\rho(a^2) - 1)\cdot(\rho(a^4) + \rho(a^2) + \Id) = 0.$$
Using \eqref{m_0 for chiang} we see that if $W^V \to \chiang$ is the corresponding local system, we have
\begin{equation}\label{relation for m0 of chiang}
(\rho(a^2) - 1)\cdot\big(m_0(W^V)(m')\cdot\rho(b)^{-1}\big)=0
\end{equation}
It is now not hard to see that $m_0(W^V) \neq 0$ precisely when $k_i \neq 0$ for some $i \in \{1,2,3,4\}$ and $m_0(W^V)=\Id$ if and only if $V \cong V_1^{\oplus k_1}$ is a trivial representation. It follows that $CF^*(W^V, W^V)$ is obstructed precisely when $k_i \neq 0$ for some $i \in \{2,3,4\}$. 

Using our general expressions for the Floer differential \eqref{d0} and \eqref{d1}, together with the observations \eqref{decomposition for central complex} and \eqref{phixx isomorphisms} one can do some explicit computer calculations of central and monodromy Floer cohomology for different local systems on $\chiang$. To alleviate notation we write $K_V = V_1^{\oplus k_1}\oplus V_2^{\oplus k_2} \oplus V_3^{\oplus k_3} \oplus V_4^{\oplus k_4}$, $M_V = D^{\oplus m} \oplus U_4^{\oplus n}$ and drop the notational distinction between a representation of $\bindih$ and a local system on $\chiang$. One then finds that:
\begin{enumerate}[1)]
\item $\forall i \in \{0,1,2,3\} \quad H^i(\chiang; \End(V)) \cong H^i(\chiang ; End(K_V)) \oplus H^i(\chiang; \End(M_V))$ 
\item \label{kv and mv are hm-bar-orthogonal} $\forall i \in \{0,1,2,3\} \quad \overline{HM}^i(V,V) \cong \overline{HM}^i(K_V, K_V) \oplus \overline{HM}^i(M_V, M_V)$
\item \label{kv and mv are hm-mon orthogonal}$\forall i \in \{0, 1, 2, 3\} \quad HM^i_{mon}(V) \cong HM^i_{mon}(K_V) \oplus HM^i_{mon}(M_V)$
\item \label{hm-mon for kv}$\forall i \in \{0, 1, 2, 3\} \quad HM^i_{mon}(K_V) \cong (\FF_2)^{\max\{i \st k_i \neq 0\}}$
\item \label{hm-mon for mv}$\forall i \in \{0, 1, 2, 3\} \quad HM^i_{mon}(M_V) \cong
\begin{cases}
0, \quad M_V = 0 \\
\FF_2, \quad m \neq 0, n=0\\
(\FF_2)^2, \quad n \neq 0
\end{cases}$
\item $\forall i \in \{0, 1\} \quad \overline{HF}^i(K_V,K_V) \cong 0$
\item \label{central floer depends only on the copies of D} $\forall i \in \{0, 1\} \quad \overline{HF}^i(V,V) \cong  \overline{HF}^i(M_V, M_V) \cong (\FF_2)^{2m^2}$
\item $\forall i \in \{0, 1\} \quad HF^i_{mon}(K_V) \cong 0$
\item \label{hf-mon for mv}$\forall i \in \{0, 1\} \quad HF^i_{mon}(V) \cong  HF^i_{mon}(M_V) \cong
\begin{cases}
0, \quad M_V = 0 \\
(\FF_2)^2, \quad m \neq 0, n=0\\
(\FF_2)^4, \quad n \neq 0
\end{cases}$
\end{enumerate}
Note first from \ref{central floer depends only on the copies of D}) that $HF^*(V,V) \neq 0$ if and only if $m \neq 0$. In other words any local system on $\chiang$ with non-vanishing (central) Floer cohomology must contain a copy of $D$ as a direct summand.

It is also worth noting (by combining \ref{kv and mv are hm-bar-orthogonal}) and \ref{central floer depends only on the copies of D})) that whenever $K_V \neq 0$ and $m \neq 0$ we have
$$0 \neq \overline{HF}^i(V,V) \neq \bigoplus_{j \in \{0, 1\}} \overline{HM}^{2j+i}(V,V) \quad \forall i \in \{0, 1\}.$$
In other words, the corrections to the Morse differential on $\overline{C}_f(V,V)$ coming from holomorphic discs are non-trivial but they also do not kill the cohomology entirely. 

It follows from \ref{kv and mv are hm-mon orthogonal}), \ref{hm-mon for kv}), \ref{hm-mon for mv}) and \ref{hf-mon for mv}) that the same is true for the monodromy Floer complex, i.e. whenever $K_V \neq 0$ and $M_V \neq 0$ one has
$$0 \neq HF^i_{mon}(V) \neq \bigoplus_{j \in \{0, 1\}}HM^{2j+i}_{mon}(V)\quad \forall i \in \{0, 1\}.$$
This in particular applies to the regular representation $V_{reg}$ of $\bindih$ since it is not hard to check that $V_{reg} \cong V_4 \oplus U_4 \oplus U_4$. Thus \ref{kv and mv are hm-mon orthogonal}), \ref{hm-mon for kv}) and  \ref{hm-mon for mv}) yield 
$$H^i(\chiang, W_{conj}) = HM^i_{mon}(W_{reg}) \cong (\FF_2)^6 \quad \forall i \in \{0, 1, 2, 3\},$$
while from \ref{hf-mon for mv}) we have 
$$HF^i_{mon}(W_{reg}) \cong (\FF_2)^4 \quad \forall i \in \{0, 1\}.$$
This is observation \ref{narrow-wide breaks}.

Finally, note that in many cases the central Floer cohomology vanishes, while $HF^*_{mon}$ does not. For example, from \ref{central floer depends only on the copies of D}) and \ref{hf-mon for mv}) this is always the case whenever $m=0$ but $n \neq 0$. Thus, sometimes even if the Floer complex is unobstructed, it could be that one obtains a non-zero invariant only after passing to the monodromy subcomplex.
\appendix

\refstepcounter{section}
\section*{Appendix A: Perturbation of Morse Data}\label{perturbation of morse data}\label{appendix A}
\addcontentsline{toc}{section}{Appendix A: Perturbation of Morse Data}
Many statements in the foundational theory of the pearl complex (\cite{biran2007quantum}) rely on a generic choice of almost complex structure, i.e. on showing the existence of the non-empty (and in fact Baire) subset $\Jreg(\F) \subseteq \J(M, \omega)$ and choosing $J \in \Jreg(\F)$. An obvious necessity for such a genericity assumption is ensuring that all moduli spaces of simple holomorphic discs are regular, i.e. that $J \in \Jreg(L)$. Pertaining more to the particular algebraic structure under construction is the requirement to shrink $\Jreg(L)$ to the smaller set $\Jreg(\F)$ in order to guarantee that the spaces of pearly trajectories of the form $\P(y, x, \mathbf{A};  \F, J)$ and $\P(y, x, (\mathbf{A}', \mathbf{A}'');  \F, J)$
are cut out transversely by their respective evaluation maps, at least when their expected dimensions are at most 1. 

All of our calculations for $\chiang$ above required that we work with the standard complex structure $J_0$ on $\CP^3$. There is no problem with the regularity of the moduli spaces of discs, due to Theorem \ref{regularity}. However the transversality for evaluation maps could a-priori not be satisfied for $J_0$ with respect to our very specific choice of Morse data \eqref{morse function}. Exactly this potential problem is addressed in \cite[Appendix A]{smith2015floer} and a direct application of Proposition A.2 from there is the following
\begin{proposition}(\cite[Proposition A.2]{smith2015floer})\label{transversality from Jack}
For any $C^{\infty}-$open neighbourhood $U$ of the identity in $C^{\infty}(\chiang, \chiang)$, there exists a diffeomorphism $\varphi \in U$ such that $J_0 \in \Jreg(\varphi_*\F)$.
\end{proposition}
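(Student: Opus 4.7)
The plan is to run a Sard--Smale transversality argument in the parameter space of diffeomorphisms of $\chiang$, with the roles of the almost complex structure and the Morse datum reversed compared to the standard proof of the existence of $\Jreg(\F)$. Since Theorem \ref{regularity} already guarantees $J_0\in\Jreg(\chiang)$ and makes each $\M^{A_i}(\chiang;J_0)$ into a smooth compact manifold, the only missing ingredient for $J_0\in\Jreg(\varphi_*\F)$ is the transversality of the relevant evaluation maps against the stable/unstable-manifold data of the Morse datum. Pushing a Morse datum forward by a diffeomorphism $\varphi$ sends $W^a(x)$ and $W^d(y)$ to $\varphi(W^a(x))$ and $\varphi(W^d(y))$, and sends $Q_\F$ to $(\varphi\times\varphi)(Q_\F)$; so the task is to move these submanifolds of $\chiang$ by a small diffeomorphism so as to make them transverse to the (fixed) evaluation maps of the $J_0$-holomorphic disc moduli spaces.

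First I would set up the transversality conditions explicitly. For each tuple $(y,x,\mathbf{A})$ with $\delta(y,x,\mathbf{A})\le 1$ and each splitting $\mathbf{A}=(\mathbf{A}',\mathbf{A}'')$ we need the evaluation map
\[
\ev_{\mathbf{A}}\colon \M^{A_1}(\chiang;J_0)\times\cdots\times\M^{A_r}(\chiang;J_0)\longrightarrow\chiang^{2r}
\]
to be transverse to the submanifold
\[
N_{y,x,\mathbf{A},\varphi}\coloneqq \varphi(W^d(y))\times Q_{\varphi_*\F}^{r-1}\times \varphi(W^a(x))
\]
as well as to the analogous submanifolds with $Q^{\,\cdot}$-factors replaced by the diagonal $\operatorname{diag}(\chiang)$ that arise in the description of $\P(y,x,(\mathbf{A}',\mathbf{A}'');\F,J)$. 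By Gromov compactness and the Maslov bound from monotonicity, there are only finitely many classes $\mathbf{A}$ contributing to trajectories of expected dimension $\le 1$, so this is a finite collection of (open) transversality conditions.

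Second, I would apply Sard--Smale to the universal moduli space
\[
\mathcal{U}\coloneqq \left\{(\varphi,\mathbf{u})\in \mathrm{Diff}^{\ell}(\chiang)\times \prod_{i}\M^{A_i}(\chiang;J_0)\st \ev_{\mathbf{A}}(\mathbf{u})\in N_{y,x,\mathbf{A},\varphi}\right\},
\]
where $\mathrm{Diff}^{\ell}(\chiang)$ denotes the Banach manifold of $C^{\ell}$-diffeomorphisms for $\ell$ large. The key point is that $\mathcal{U}$ is a Banach manifold: the linearisation of its defining condition in the $\varphi$-direction is surjective because the evaluation of a vector field on $\chiang$ at the finitely many boundary marked points of $\mathbf{u}$ attains any prescribed value in $T\chiang^{2r}$ (choose $C^{\infty}$ vector fields supported in disjoint neighbourhoods of those points). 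Hence the projection $\pi\colon \mathcal{U}\to \mathrm{Diff}^{\ell}(\chiang)$ is a Fredholm map of the expected index, and Sard--Smale furnishes a residual set of regular values in $\mathrm{Diff}^{\ell}(\chiang)$. For any such regular value $\varphi$, the corresponding fibre is cut out transversely, which is exactly the sought-after transversality of $\ev_{\mathbf{A}}$ with $N_{y,x,\mathbf{A},\varphi}$. Intersecting the residual sets coming from the finitely many relevant tuples $(y,x,\mathbf{A},\mathbf{A}',\mathbf{A}'')$ and applying the standard Taubes-style trick to promote $C^{\ell}$ to $C^{\infty}$, one obtains a $C^{\infty}$-diffeomorphism $\varphi\in U$ with $J_0\in\Jreg(\varphi_*\F)$.

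The main obstacle is setting up $\mathcal{U}$ so that the $\varphi$-linearisation is manifestly surjective while keeping the functional-analytic framework consistent across the different pieces of the pearl complex (in particular the strata involving $\operatorname{diag}(\chiang)$ and the strata where marked points on different discs coincide). Beyond this standard bookkeeping the argument is routine, since all disc moduli spaces are already regular by Evans--Lekili and only the position of the Morse data on $\chiang$ needs to be adjusted.
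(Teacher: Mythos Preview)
The paper does not give its own proof of this proposition; it is quoted directly from \cite[Proposition A.2]{smith2015floer} and used as a black box. Your outline is the correct strategy and is essentially what that reference does: since $J_0\in\Jreg(\chiang)$ already by Theorem~\ref{regularity}, the only outstanding condition for $J_0\in\Jreg(\varphi_*\F)$ is transversality of the (fixed) evaluation maps from disc moduli against the (moving) Morse-theoretic incidence conditions, and a Sard--Smale argument with $\mathrm{Diff}(\chiang)$ as parameter space produces a residual set of good $\varphi$.

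One point in your sketch deserves more care. Your surjectivity claim for the $\varphi$-linearisation (``choose vector fields supported in disjoint neighbourhoods of those points'') tacitly assumes the boundary evaluation points $u_1(-1),u_1(1),\ldots,u_r(1)$ are pairwise distinct. For configurations with a diagonal factor (the spaces $\P(y,x,(\mathbf{A}',\mathbf{A}''))$ where two consecutive marked points coincide) this fails, and even for ordinary pearly trajectories it is not automatic. One must either observe that the evaluation maps from the disc moduli themselves already contribute enough surjectivity at such coincidences, or else stratify by the pattern of coincidences and handle each stratum separately. This is precisely the bookkeeping that \cite[Appendix A]{smith2015floer} carries out; your dismissal of it as ``standard'' is fair in spirit, but it is the only place where the argument is not entirely formal.
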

In light of this, we see that to make sure our calculations are rigorous, we need only check that they are ``robust'' under small perturbations of Morse data. More precisely, we now prove the following

\begin{proposition}\label{robustness under perturbations}
There exists a smooth path of diffeomorphisms $\{\varphi_{\tau}\}_{\tau \in [0, 1]} \subseteq C^{\infty}(\chiang, \chiang)$ such that $\varphi_0 = \operatorname{id}_{\chiang}$, $J_0 \in \Jreg((\varphi_1)_*\F)$ and for every $x, y \in Crit(f)$ and $k \in \NN$ satisfying $\delta(y, x, 2k; \F) = 0$, there is a bijection
$$ S \colon \P(y, x, 2k; \F, J_0) \to \P(\varphi_1(y), \varphi_1(x), 2k; (\varphi_1)_*\F, J_0)$$
such that for every $\mathbf{u} \in \P(y, x, 2k; \F, J_0)$ we have that the paths $\gamma_{\mathbf{u}}^0$ and $\{\varphi_{\tau}(y)\} \cdot \gamma_{S(\mathbf{u})}^0 \cdot \{\varphi_{1-\tau}(x)\}$ define the same element of  $\Pi_1\chiang(y,x)$ and similarly $\gamma_{\mathbf{u}}^1$ and  $\{\varphi_{\tau}(x)\} \cdot \gamma_{S(\mathbf{u})}^1 \cdot \{\varphi_{1-\tau}(y)\}$ define the same element of $\Pi_1\chiang(x,y)$ (here the notation $\{\varphi_{\tau}(y)\}$ refers to the path $[0, 1] \to \chiang$, $\tau \to \varphi_{\tau}(y)$ and similarly for the other such paths).
\end{proposition}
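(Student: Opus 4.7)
The plan is to use Proposition~\ref{transversality from Jack} to obtain a diffeomorphism $\varphi_1$ arbitrarily close to the identity for which $J_0$ is regular, and then realise the bijection $S$ between pearly trajectories for $\F$ and for $(\varphi_1)_*\F$ as a smooth one-parameter family traced out along a path from $\operatorname{id}$ to $\varphi_1$. The first step I would take is to fix a $C^{\infty}$-open, path-connected neighbourhood $U$ of $\operatorname{id}_\chiang$ in $C^{\infty}(\chiang, \chiang)$ that is small enough that the Morse--Smale combinatorics of $\F$ is unchanged under any $\varphi \in U$, with critical points and ascending/descending manifolds moved by only a small $C^1$ amount. A basic observation I shall exploit throughout is that for every $\varphi$ the negative gradient flowlines of $\varphi_*\F$ are exactly the $\varphi$-images of those of $\F$, so that critical points, Morse indices, and pure Morse trajectories transform equivariantly. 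Applying Proposition~\ref{transversality from Jack} with this $U$, I pick $\varphi_1 \in U$ with $J_0 \in \Jreg((\varphi_1)_*\F)$ and connect $\varphi_1$ to $\operatorname{id}$ by a smooth path $\{\varphi_\tau\}_{\tau \in [0,1]} \subseteq U$.

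Next I would assemble the parametrised space
\[
\widetilde{\P}_{[0,1]}(y,x,2k) \coloneqq \bigl\{(\tau, \mathbf{u}) \bigm| \tau \in [0,1],\ \mathbf{u} \in \P(\varphi_\tau(y), \varphi_\tau(x), 2k; (\varphi_\tau)_*\F, J_0)\bigr\}
\]
and show that, near each trajectory, it is a smooth $1$-manifold submersing onto $[0,1]$. By the explicit classification in Section~\ref{Contributions from holomorphic discs} (Proposition~\ref{euclidean proof} together with Theorems~\ref{regularity}, \ref{maslov 2 discs on chiang} and \ref{maslov 4 discs through m and m'}), the space $\P(y,x,2k;\F,J_0)$ is in each relevant case a finite list of configurations realised as transverse intersections of an evaluation map out of a moduli space of axial discs with a product of descending/ascending submanifolds inside $\chiang^2$; this transversality is a geometric feature of the discs themselves and persists when $\F$ is replaced by $(\varphi_\tau)_*\F$ for $\tau$ small. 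The implicit function theorem then extends each $\mathbf{u} \in \P(y,x,2k;\F,J_0)$ uniquely to a smooth family $\tau \mapsto \mathbf{u}_\tau \in \P(\varphi_\tau(y),\varphi_\tau(x),2k;(\varphi_\tau)_*\F, J_0)$ with $\mathbf{u}_0 = \mathbf{u}$, and I set $S(\mathbf{u}) \coloneqq \mathbf{u}_1$. The homotopy statement on $\gamma^0_{\mathbf{u}}$ and $\gamma^1_{\mathbf{u}}$ is then immediate, since the family $\tau \mapsto \mathbf{u}_\tau$ furnishes a continuous deformation of the boundary arcs, producing a homotopy relative endpoints between $\gamma^j_{\mathbf{u}}$ and the concatenation $\{\varphi_\tau\} \cdot \gamma^j_{S(\mathbf{u})} \cdot \{\varphi_{1-\tau}\}$ specified in the claim.

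To establish that $S$ is bijective I plan to use Gromov compactness applied to $\widetilde{\P}_{[0,1]}(y,x,2k)$: any element of $\P(\varphi_1(y), \varphi_1(x), 2k; (\varphi_1)_*\F, J_0)$ that is not in the image of $S$ would sit at the $\tau = 1$ endpoint of a path in $\widetilde{\P}_{[0,1]}(y,x,2k)$ whose $\tau \to 0$ limit is a stable Gromov-limit configuration for $\F$ of total Maslov index $2k$. Since $\chiang$ is monotone with $N_{\chiang} = 2$, any degeneration can only consist of insertion of additional Maslov-$2$ disc bubbles or breakings of gradient flowlines, and the finite list of possibilities is entirely determined by Theorems~\ref{maslov 2 discs on chiang} and \ref{maslov 4 discs through m and m'}; an index count together with the fixed total Maslov index forces such a limit to actually be a single genuine element of $\P(y,x,2k;\F,J_0)$, providing the preimage. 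Injectivity is immediate from the uniqueness of the family $\mathbf{u}_\tau$ given $\mathbf{u}_0$.

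The main obstacle I anticipate is precisely this last compactness step: at $\tau = 0$ we do \emph{not} know that $J_0 \in \Jreg(\F)$, so in principle the limit could be a non-generic degeneration (e.g.\ a disc whose boundary passes through an extra critical point of $f$, or a flowline tangent to a boundary arc of a disc). Ruling these out rigorously will require combining the rigidity afforded by the axial nature of all low-Maslov $J_0$-holomorphic discs on $\chiang$ with the combinatorial restriction from fixing total Maslov index $2k \in \{2,4\}$, and checking case by case against the trajectories enumerated in Section~\ref{Contributions from holomorphic discs} that no extra boundary stratum of $\widetilde{\P}_{[0,1]}(y,x,2k)$ can appear beyond the ones coming from the paired endpoints of the $1$-parameter families $\mathbf{u}_\tau$ constructed above.
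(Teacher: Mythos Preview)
Your outline follows the same overall arc as the paper's proof, but it skips the step that actually carries most of the weight. You assert that the trajectories in $\P(y,x,2k;\F,J_0)$ are ``realised as transverse intersections'' and that this ``is a geometric feature of the discs themselves,'' citing Proposition~\ref{euclidean proof} and Theorems~\ref{regularity}--\ref{maslov 4 discs through m and m'}. But those results only \emph{enumerate} the isolated trajectories; they say nothing about whether the evaluation maps are transverse to the relevant products $W^d(y_u)\times W^a(x_u)$ at those points. The paper devotes the bulk of its proof to verifying this transversality explicitly disc by disc: for the $u'_i$ and $u_i$ it produces a reparametrisation vector $\xi_0$ hitting $T_{x'_i}W^d(x'_i)\setminus\{0\}$; for the $u^{F_{ij}},u^{B_{ij}}$ it additionally exhibits an infinitesimal-action vector $\xi_1=\psi_{\tilde u}(V'_i)$ and checks linear independence of $\{V'_i,B_{ij},V_j\}$; and for the Maslov~4 discs $w_{\pm 1}$ it appeals to the Riemann--Hilbert analysis in \cite[Corollary~4.6.4]{smith2015floer}. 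Without this verification at $\tau=0$ your use of the implicit function theorem to build the families $\mathbf{u}_\tau$ is unjustified.

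The paper also checks two compactness conditions (that $\cl{\ev}_u(\cl X_u)$ misses $\cl N_u\setminus N_u$ and that $\cl{\ev}_u(\cl X_u\setminus X_u)$ misses $\cl N_u$) to ensure no broken or bubbled configurations lie in the closure. Once both transversality and these conditions hold at $\tau=0$, openness (via \cite[Lemma~A.1]{smith2015floer}) gives them for all $\varphi\in U$, and the parametrised space becomes a \emph{trivial} cobordism with $\pi\colon\P(y_u,x_u,[u];\F,J_0,\{\varphi_\tau\})\to[0,1]$ a diffeomorphism. This makes bijectivity of $S$ automatic and renders your Gromov-compactness argument for surjectivity---which you yourself flag as problematic because you do not know $J_0\in\Jreg(\F)$---unnecessary.
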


\begin{proof}
The proof of this fact proceeds as follows. First we need to show that all isolated pearly trajectories which we counted to construct $d^{(\F, J_0)}$ have been cut-out transversely by their respective evaluation maps. More precisely, let $u$ be any of the discs from the set $P \coloneqq \{u'_i, u_i, u^{F_{ij}}, u^{B_{ij}}, w_1, w_{-1} \st i, j \in \{1,2,3\}, \; j\equiv i, i+1 \mod 3\}$ and write $y_u, x_u \in Crit(f)$ for the source and target of the corresponding pearly trajectory. For brevity we write $X_u \coloneqq \PM^{[u]}(\chiang; J_0)/(G_{-1, 1})$. Then we need to check that
 the map
$$\ev_u \coloneqq (\ev_{-1}, \ev_1) \colon X_u \longrightarrow \chiang \times \chiang$$
is transverse to $N_u \coloneqq W^d(y_u) \times W^a(x_u)$. Further, if $\cl{X}_u$ denotes the Gromov compactification of $X_u$ and $\cl{\ev}_u$ is the unique continuous extension of $\ev_u$ to $\cl{X}_u$, then we need to verify that 
\begin{eqnarray}
\cl{\ev}_u(\cl{X}_u) \cap (\cl{N}_u \setminus N_u)& = &\emptyset \label{broken pearly trajectories and shrinking of flowlines}\\
\cl{\ev}_u(\cl{X}_u \setminus X_u) \cap \cl{N}_u & = & \emptyset. \label{bubbled pearly trajectories}
\end{eqnarray}
Let us postpone these checks for a moment. The above facts now allow us to apply the mantra ``transverse intersections persist'' in the precise form that it appears in \cite[Appendix A]{smith2015floer}.
First, we invoke \cite[Lemma A.1]{smith2015floer} to assert the existence of a $C^1-$open neighbourhood $U$ of $\operatorname{id}_{\chiang}$ in $C^{\infty}(\chiang, \chiang)$ such that for every $\varphi \in U$ and every $u \in P$ the map $\ev_u$ is transverse to $\varphi(N_u)$ (note that $\varphi(N_u) = W^d(\varphi(x_u)) \times W^a(\varphi(y_u))$, where descending and ascending manifolds are now taken with respect to the perturbed Morse datum $\varphi_*\F$). Now, applying Proposition \ref{transversality from Jack} we can find $\varphi_1 \in U$ such that $J_0 \in \Jreg((\varphi_1)_*\F)$. Further, we may assume $\varphi_1$ to be connected to $\varphi_0 = \operatorname{id}_{\chiang}$ via a path $\{\varphi_{\tau}\}_{\tau \in [0, 1]} \subseteq U$. It follows then from general differential topology that for every $u \in P$ the space
\begin{eqnarray*}
\P(y_u, x_u, [u]; \F, J_0, \{\varphi_{\tau}\}) & \coloneqq &\{(\tau, \hat{u}) \in [0, 1] \times X_u \st \hat{u} \in \ev_u^{-1}(N_u)\} \\
&=&\{(\tau, \hat{u}) \in [0, 1] \times X_u
 \st (\hat{u}) \in
\P(\varphi_{\tau}(y_u), \varphi_{\tau}(x_u), [u]; (\varphi_{\tau})_*\F, J_0)\}
\end{eqnarray*} 
is a trivial cobordism between $\P(y_u, x_u, [u]; \F, J_0)$ and $\P(\varphi_{1}(y_u), \varphi_{1}(x_u), [u]; (\varphi_{1})_*\F, J_0)$. In fact, the projection 
$$\pi \colon \P(y_u, x_u, [u]; \F, J_0, \{\varphi_{\tau}\}) \to [0, 1]$$ 
is a diffeomorphism and so if we write $\hat{u}_{\tau_0} = \pi^{-1}(\tau_0)$, we can define
\begin{eqnarray*}
S \colon \P(y, x, 2k; \F, J_0) & \to & \P(\varphi_1(y), \varphi_1(x), 2k; (\varphi_1)_*\F, J_0)\\
\hat{u}_0 & \mapsto & \hat{u}_1.
\end{eqnarray*}
Further, the collection of paths $\{\phi_{\tau}(y_u)\}_{\tau \in [0, \tau_0]} \cdot \gamma^0_{(\hat{u}_{\tau_0})} \cdot \{\phi_{\tau_0-\tau}(x_u)\}_{\tau \in [0, \tau_0]}$ for varying $\tau_0 \in [0, 1]$ provide the needed homotopy between $\gamma_{\mathbf{u}}^0$ and $\{\varphi_{\tau}(y)\} \cdot \gamma_{S(\mathbf{u})}^0 \cdot \{\varphi_{1-\tau}(x)\}$ and similarly the paths $\{\phi_{\tau}(x_u)\}_{\tau \in [0, \tau_0]} \cdot \gamma^1_{(\hat{u}_{\tau_0})} \cdot \{\phi_{\tau_0-\tau}(y_u)\}_{\tau \in [0, \tau_0]}$ give a homotopy between $\gamma_{\mathbf{u}}^1$ and  $\{\varphi_{\tau}(x)\} \cdot \gamma_{S(\mathbf{u})}^1 \cdot \{\varphi_{1-\tau}(y)\}$.


Let us now make the necessary checks. First, we verify that all isolated pearly trajectories have been transversely cut-out by their respective evaluation maps. To that end, let us introduce the following notation. Given a point $y \in \chiang$ we will write $\psi_y \colon \mathfrak{su}(2) \to T_{y}\chiang$ for the infinitesimal action. Note that $\psi_y$ is always an isomorphism. Similarly, for a parametrised holomorphic disc $\tilde{u}$ in some moduli space of parametrised discs $\PM^{[\tilde{u}]}(\chiang, J_0)$ we write $\psi_{\tilde{u}} \colon \mathfrak{su}(2) \to T_{\tilde{u}}\PM^{[\tilde{u}]}(\chiang, J_0)$ for the infinitesimal action of $SU(2)$ and $\phi_{\tilde{u}} \colon \mathfrak{g} \to T_{\tilde{u}}\PM^{[\tilde{u}]}(\chiang, J_0)$ for the infinitesimal reparametrisation action (recall that we denote by $G$ the biholomorphism group of the unit disc in $\CC$ and $\mathfrak{g}$ is its Lie algebra).

In order to check that the trajectory $(u'_1)$ is transversely cut-out, we need to show that the map
$$(\ev_{-1}, \ev_{1}) \colon \PM^{[u'_1]}(\chiang; J_0) \to \chiang \times \chiang$$ 
is transverse to $W^d(m') \times W^a(x'_1) = \{m'\} \times W^a(x'_1)$ at any holomorphic parametrisation $\tilde{u}'_1$ of $u'_1$, satisfying $\tilde{u}'_1(-1) = m'$, $\tilde{u}'_1(1) = x'_1$. From general linear algebra considerations, it follows that this transversality holds if and only if the map
\begin{eqnarray*}
d \ev_{-1} ({\tilde{u}'_1}) \colon T_{\tilde{u}'_1}\PM^{[u'_1]}(\chiang; J_0) & \to & T_{m'}\chiang \\
\xi &\mapsto & \xi(-1)
\end{eqnarray*}
is surjective and there exists $\xi_0 \in T_{\tilde{u}'_1}\PM^{[u'_1]}(\chiang; J_0)$ such that $\xi_0(-1)=0 \in T_{m'}\chiang$ and $\xi_0(1) \in T_{x'_1}\chiang \setminus T_{x'_1}W^a(x'_1)$. The first assertion is immediate from the fact that $(d \ev_{-1}({\tilde{u}'_1})) \circ \psi_{\tilde{u}'_1} = \psi_{m'}$ and the latter is an isomorphism. To find a suitable $\xi_0$ on the other hand, let $\{\varphi_{\theta}\}$ be any path in $G$ such that $\varphi_{\theta}(-1) =  -1$, $\varphi_{\theta}(1) = e^{i\theta}$ for all $\theta \in \RR$ and $\varphi_0$ is the identity\footnote{e.g. $\varphi_{\theta}(z) = \frac{(1+3e^{i\theta})z - (1-e^{i\theta})}{(1-e^{i\theta})z + (3 + e^{i\theta})}$.}. Let $\nu \in \mathfrak{g}$ be its derivative at 0. Set $\xi_0 = \phi_{\tilde{u}'_1}(\nu)$. Then  $\xi_0(-1) = (\phi_{\tilde{u}'_1}(\nu))(-1) =0 \in T_{m'}\chiang$ and
$$\xi_0(1) = (\phi_{\tilde{u}'_1}(\nu))(1) = \at{\frac{d}{d\theta}}{\theta=0} \tilde{u}'_1(e^{i\theta}) = d_1 \tilde{u}'_1 \cdot \del_{\theta}\quad \in \quad T_{x'_1}W^d(x'_1) \setminus \{0\}$$
since the boundary of $u'_1$ coincides with $W^d(x'_1)$. Since the function $f$ is Morse we then have $\xi_0(1) \notin T_{x'_1}W^a(x'_1)$. Thus the pearly trajectory $(u'_1)$ is cut-out transversely. It follows by symmetry that transversality also holds for the trajectories $(u'_2)$, $(u'_3)$, $(u_1)$, $(u_2)$, $(u_3)$. 

Now consider a trajectory connecting an index 1 critical point to an index 2 critical point, e.g. $\mathbf{u}^{B_{11}} = (u^{B_{11}})$. We need to check that 
$$(\ev_{-1}, \ev_{1}) \colon \PM^{[u^{B_{11}}]}(\chiang; J_0) \to \chiang \times \chiang$$ is transverse to $W^d(x'_1) \times W^a(x_1)$ at $\tilde{u}^{B_{11}}$, where again $\tilde{u}^{B_{11}}$ is a holomorphic parametrisation of $u^{B_{11}}$ with $\tilde{u}^{F_{11}}(-1) = y'_1 \in W^d(x'_1)$ and $\tilde{u}^{F_{11}}(1)  = y_1 \in W^a(x_1)$. Define the projections $\pi_{y'_1} \colon T_{y'_1}\chiang \to T_{y'_1}\chiang / T_{y'_1}W^d(x'_1)$ and $\pi_{y_1} \colon T_{y_1}\chiang \to T_{y_1}\chiang / T_{y_1}W^a(x_1)$. Then the desired transversality holds if and only if the maps
\begin{equation}\label{tangent evaluation at -1}
\pi_{y'_1} \circ (d \ev_{-1}({\tilde{u}^{B_{11}}})) \quad \colon \quad T_{\tilde{u}^{B_{11}}}\PM^{[u^{B_{11}}]}(\chiang; J_0) \quad \longrightarrow \quad T_{y'_1}\chiang / T_{y'_1}W^d(x'_1) 
\end{equation}
and
\begin{equation}\label{tangent evaluation at 1}
\pi_{y_1} \circ (d \ev_1(\tilde{u}^{B_{11}})) \quad \colon \quad \ker (\pi_{y'_1} \circ (d \ev_{-1}({\tilde{u}^{B_{11}}})))\quad  \longrightarrow \quad T_{y_1}\chiang / T_{y_1}W^a(x_1)
\end{equation}
are surjective. The first one of these is clearly surjective again since $(d \ev_{-1}\left({\tilde{u}^{B_{11}}}\right)) \circ \psi_{\tilde{u}^{B_{11}}} = \psi_{y'_1}$ is an isomorphism. Again, by differentiating a suitable path of reparametrisations of $\tilde{u}^{B_{11}}$, we can find $\xi_0 \in T_{\tilde{u}^{B_{11}}}\PM^{[u^{B_{11}}]}(\chiang; J_0)$ such that $\xi_0(-1)=0$ and $\xi_0(1) = d \tilde{u}^{B_{11}}(1) \cdot \del_{\theta} \in T_{y_1}\chiang \setminus T_{y_1}W^a(x_1)$. Now, rewriting 
$$\ker (\pi_{y'_1} \circ (d \ev_{-1}({\tilde{u}^{B_{11}}}))) = \{\xi \in T_{\tilde{u}^{B_{11}}}\PM^{[u^{B_{11}}]}(\chiang; J_0) \st \xi(-1) \in T_{y'_1}W^d(x'_1)\},$$
it is clear that to show that \eqref{tangent evaluation at 1} is surjective, it suffices to find $\xi_1 \in T_{\tilde{u}^{B_{11}}}\PM^{[u^{B_{11}}]}(\chiang; J_0)$ such that $\xi_1(-1) \in T_{y'_1}W^d(x'_1)$ and $\xi_1(1) \notin \operatorname{Span}\{\xi_0(1)\} \oplus T_{y_1}W^a(x_1)$. We claim that it suffices to set $\xi_1 \coloneqq \psi_{\tilde{u}^{B_{11}}}(V'_1)$ \footnote{Recall that we identify $\RR^3$ with $\mathfrak{su}(2)$ via the basis of Pauli matrices $\{\sigma_1, \sigma_2, \sigma_3\}$.}. Indeed 
$$\xi_1(-1) = \psi_{y'_1}(V'_1) = \at{\frac{d}{dt}}{t=0} \exp(tV'_1)\cdot y'_1 = \at{\frac{d}{dt}}{t=0} \exp(tV'_1)\cdot \exp(t_0V'_1) \cdot \Delta$$ which clearly lies in $W^d(x'_1)$. To see that $\xi_1(1) \notin \operatorname{Span}\{\xi_0(1)\} \oplus T_{y_1}W^a(x_1)$ observe that 
\begin{eqnarray*}
\psi_{y_1}^{-1}(\xi_1(1)) & = & V'_1\\
\psi_{y_1}^{-1} \left(\operatorname{Span}\lbrace\xi_0(1)\rbrace \right) = \psi_{y_1}^{-1} \left(\operatorname{Span}\{d \tilde{u}^{B_{11}}(1)\cdot \del_{\theta}\} \right) & = & \operatorname{Span}\{B_{11}\} \\
\psi_{y_1}^{-1} \left(T_{y_1}W^a(x_1)\right) & = & \operatorname{Span}\{V_1\}
\end{eqnarray*}
and $\{V'_1, B_{11}, V_1\}$ are linearly independent. This shows that the pearly trajectory $\mathbf{u}^{B_{11}}$ is cut-out transversely. By symmetry, the same is clearly true for all other trajectories connecting a critical point of index 1 to a critical point of index 2.

It remains for us to check that the trajectories $(w_1)$ and $(w_{-1})$ are transversely cut-out. This cannot be verified as easily as the cases above and one needs to carefully analyse the appropriate dimensions for spaces of holomorphic sections of the Riemann-Hilbert pairs $\left(w_j^* T\CP^3, (\at{w_j}{\del D})^*T\chiang \right)$. A more general analysis is carried out in \cite[Section 4.6]{smith2015floer} and the precise transversality result we need is contained in \cite[Corollary 4.6.4]{smith2015floer}.  


Let us now verify conditions \eqref{broken pearly trajectories and shrinking of flowlines} and \eqref{bubbled pearly trajectories}. For all trajectories of total Maslov index 2 these are immediate. For example, note that in these cases the elements of $\cl{X}_{u} \setminus X_{u}$ are configurations consisting of a Maslov two disc and a constant bubble, containing the two marked points. Since the map $\cl{\ev}_u$ restricts to $\cl{X}_{u} \setminus X_{u}$ as the evaluation on the constant bubble to both $\chiang$ factors, its image is contained in $(\cl{W^d(y_u)} \times \cl{W^a(x_u)}) \cap \operatorname{diag}(\chiang) = \emptyset$. This verifies equation \eqref{bubbled pearly trajectories}. Equation \eqref{broken pearly trajectories and shrinking of flowlines} is equally easy to check in this case. 

Consider now the trajectories containing a single Maslov 4 disc. In this case for $j \in \{-1, 1\}$ we have that $N_{w_j} = \cl{N_{w_j}} = \{m'\} \times \{m\}$ and the elements of $\cl{X}_{w_j} \setminus X_{w_j}$ are configurations of one of the following types:
\begin{itemize}
\item a Maslov 4 disc and a constant bubble containing the marked points;
\item two Maslov 2 discs with both marked points on the same component;
\item two Maslov 2 discs with each containing one marked point.
\end{itemize}
The image under $\cl{\ev}_{w_j}$ of the strata consisting of the configurations of the first two types, is then clearly disjoint from $\{m'\} \times \{m\}$. To see that the same holds for the third type just note that the boundaries of the Maslov 2 discs through $m'$ are disjoint from the boundaries of the Maslov 2 discs through $m$.

Finally, putting $X \coloneqq  \PM(2, \chiang; J_0)/(G_{-1, 1}) \times \PM(2, \chiang; J_0)/(G_{-1, 1})$ we must consider the map: 	
\begin{equation}
\ev_X \coloneqq (\ev_{-1}, \ev_1, \ev_{-1}, \ev_1) \colon X \longrightarrow \chiang \times \chiang \times \chiang \times \chiang.
\end{equation}
It is vacuously transverse to $N_X \coloneqq {m'} \times Q_{\chiang} \times {m}$. To construct the Gromov compactification $\cl{X}$ one needs to add only pairs of Maslov 2 discs where one or more of them have a ghost component containing the marked points. Therefore, if $\hat{u} \in \cl{X}$ and $\cl{\ev}_X(\hat{u}) = (p_1, p_2, p_3, p_4) \in \cl{N}_X$, we must have that $p_1 = m'$, $p_4 = m$ and $p_2$ lies on a Maslov 2 disc passing through $m'$, while $p_3$ lies on a Maslov 2 disc passing through $m$. Observe however that $\cl{Q}_{\chiang} \subseteq \{(p, q) \in \chiang^2 \st f(p) \ge f(q)\}$ and $(\del u'_i) \times (\del u_j)$ is disjoint from the latter set for any $i, j \in \{1,2,3\}$. Therefore, $\cl{\ev}_X(\cl{X})$ is disjoint from $\cl{N}_X$ and so conditions \eqref{broken pearly trajectories and shrinking of flowlines} and \eqref{bubbled pearly trajectories} are satisfied. The proof of Proposition \ref{robustness under perturbations} is now complete. 
\end{proof}
\refstepcounter{section}
\section*{Appendix B: Representations of the binary dihedral group of order 12 over $\FF_2$ }\label{appendix:classification of reps}
\addcontentsline{toc}{section}{Appendix B: Representations of the binary dihedral group of order 12 over $\FF_2$}
In this appendix we describe all indecomposable $\FF_2-$representations of the binary dihedral group of order twelve. Such a classification is, of course, not new and much more general results are proved for example in \cite{janusz1969indecomposable}. Here we give a rather direct and pedestrian argument for the classification in order to make the arguments in Section \ref{some additional calculations} complete and the paper more self-contained.

We start by making the following observations. First note that the binary dihedral group
 $$\bindih = \left\langle a,b \;\vert\; a^6 = 1, a^2 = b^3, ab = ba^5\right\rangle$$
 can be viewed as the semi-direct product 
 $$C_3 \rtimes C_4 = \left\langle c, b \;\vert\; c^3 = 1, b^4 = 1, bc = c^2b \right\rangle$$
 by simply setting $c \coloneqq a^2$. This point of view will be particularly convenient for us since we will classify representations of $\bindih$ by viewing them simultaneously as $C_3-$representations and $C_4-$representations. To that end, let us introduce some notation. We put
 \begin{eqnarray*}
 R_3 & \coloneqq & \FF_2[C_3] = \FF_2[c]/(c^3 -1) \\
 R_4 & \coloneqq & \FF_2[C_4] = \FF_2[b]/(b^4 -1) = \FF_2[b]/(b + 1)^4.
 \end{eqnarray*}
If $V$ is a $\bindih-$representation, we shall write $O_{C_3}(V)$ for the set of orbits of the $C_3-$action on $V\setminus \{0\}$. Note that since $C_3$ is a normal subgroup of $\bindih$, we have a $C_4-$action on $O_{C_3}(V)$. We denote the set of orbits of this action by $O_{C_4}(O_{C_3}(V))$. For an element $\A \in O_{C_4}(O_{C_3}(V))$ we shall write $\spaN \A \coloneqq \sum_{A \in \A}\spaN A \le V$. Note that $\spaN \A$ is always a $\bindih-$subrepresentation of $V$.
Further, given a $\bindih-$representation $V$ and a $C_i-$representation $W$ for some $i \in \{3, 4\}$, we will write $V \cong_i W$ to mean that $V$ and $W$ are isomorphic as representations of $C_i$.

Note now that the ring $R_3$ is semisimple with 
$$R_3 \cong \FF_2 \oplus \frac{\FF_2[c]}{1+c+c^2}$$
and hence any finite-dimensional $R_3-$module $\widehat{V}$ can be written as 
$$\widehat{V} \cong \widehat{V}_1^{\oplus k_1} \oplus \widehat{D}^{\oplus k_2},$$
where $\widehat{V}_1$ is the trivial one-dimensional $R_3-$module and $\widehat{D} \coloneqq \FF_2[c]/(1+c+c^2)$.

On the other hand, the ring $R_4$ is not semisimple but from the structure theorem for finitely-generated modules over a PID, we know that the only indecomposable finite-dimensional $R_4-$modules are
$$\overline{V}_1 \coloneqq \frac{\FF_2[b]}{b+1}, \; \overline{V}_2 \coloneqq \frac{\FF_2[b]}{(b+1)^2}, \;\overline{V}_3 \coloneqq \frac{\FF_2[b]}{(b+1)^3}, \; \overline{V}_4 \coloneqq R_4.$$
Observe that since we have the short exact sequence $1 \to C_3 \to \bindih \to C_4 \to 1$, the above vector spaces are also indecomposable $\bindih-$representations with trivial $C_3-$action. When we view them as such, we will lose the bar on top and denote them as $V_1, V_2, V_3, V_4$. 

Further, since we have the short exact sequence 
$$1 \longrightarrow C_2=\{1, b^2\} \longrightarrow \bindih \longrightarrow D_6 = \left\langle c, \hat{b}\;\vert\; c^3=1, \hat{b}^2 = 1, c\hat{b} = \hat{b}c^2\right\rangle \longrightarrow 1$$
and $D_6$ acts naturally on $\widehat{D}= \FF_2[c]/(1+c+c^2)$ by $\hat{b}\cdot 1 = 1$, $\hat{b}\cdot c = c^2$, $\hat{b}\cdot c^2 = c$, we see that $\widehat{D}$ has naturally the structure of a non-faithful indecomposable $\bindih-$representation. We denote this representation by $D$.

Finally, we define the following faithful representation of $\bindih$. 
Let
$$U_4 \coloneqq \S \oplus \S x$$
and set $b \cdot x = 1+ cx$. Using linearity and the relation $bc=c^2b$ this extends uniquely to an action of $C_4$ on $U_4$, thus making $U_4$ into a well-defined $\bindih-$representation. It is important to note that $U_4 \cong_4 \overline{V}_4$, for example via the map 
\begin{eqnarray*}
U_4 &\longrightarrow& \FF_2[b]/(b+1)^4\\
1 &\mapsto & 1+b+b^2+b^3\\
c &\mapsto & 1+b^2\\
x &\mapsto & 1 \\
cx & \mapsto & 1 +b^2+b^3.
\end{eqnarray*}
We are now ready to state the classification.
\begin{proposition}\label{classification of representations}
The only finite-dimensional indecomposable representations of $\bindih$ over $\FF_2$ are $V_1, V_2, V_3, V_4, D$ and $U_4$.
\end{proposition}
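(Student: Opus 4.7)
The strategy is to exploit the semidirect product decomposition $\bindih = C_3 \rtimes C_4$ together with the fact that $|C_3|=3$ is coprime to the characteristic. First I would restrict an indecomposable $V$ to $C_3$ and apply Maschke's theorem to obtain the canonical $C_3$-isotypic decomposition $V = V^{C_3} \oplus V_D$. Since $b$-conjugation on $C_3$ is the automorphism $c \mapsto c^2$, which stabilises the two $C_3$-irreducible isomorphism classes over $\FF_2$ (trivially for the trivial rep, and for $D$ because $c$ and $c^2$ play symmetric roles in the relation $1+c+c^2=0$), both summands are in fact $\bindih$-sub-representations, and indecomposability of $V$ forces $V = V^{C_3}$ (Case A) or $V = V_D$ (Case B).

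In Case A the rep factors through $C_4$, so $V$ is a module over $R = \FF_2[b]/(b^4+1) = \FF_2[b]/(b+1)^4$. This is a local Artinian uniserial ring, and a standard structure theorem decomposes every finitely generated module as a direct sum of cyclic modules $R/(b+1)^k$; the four nontrivial values $k\in\{1,2,3,4\}$ produce exactly $V_1, V_2, V_3, V_4$.

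In Case B I identify $V$ with a finite-dimensional $\FF_4$-vector space on which $\zeta := [c]$ acts as scalar multiplication; then $b$ becomes a $\sigma$-semilinear endomorphism with $b^4 = \Id$, where $\sigma \in \operatorname{Gal}(\FF_4/\FF_2)$ is the Frobenius. Setting $T := b^2$ gives an $\FF_4$-linear map satisfying $(T-\Id)^2 = 0$. Both $K := \ker(T-\Id)$ and $V/K$ carry $\bindih$-sub-representations on which $b^2$ acts trivially; for any such ``involutive'' semilinear representation, Galois descent (equivalently $H^1(\operatorname{Gal}(\FF_4/\FF_2), GL_n(\FF_4)) = 0$) supplies an $\FF_4$-basis in which $b$ acts as coordinatewise Frobenius, so $K \cong D^k$ and $V/K \cong D^l$ as $\bindih$-representations. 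When $b^2 = \Id$ on $V$, indecomposability immediately forces $\dim_{\FF_4} V = 1$ and $V \cong D$.

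The main technical work lies in the case $b^2 \neq \Id$. Choosing an $\FF_4$-basis $\{e_i\}$ of $K$ with $b(e_i) = e_i$ and lifts $\{f_j\}$ of an $\FF_4$-basis of $V/K$ with $b(f_j) = f_j + \sum_i \gamma_{ij} e_i$, one checks that $b^4 = \Id$ holds automatically and $(b^2-\Id)(f_j) = \sum_i \operatorname{tr}(\gamma_{ij})\, e_i$ where $\operatorname{tr} \colon \FF_4 \to \FF_2$ is the trace. The admissible changes of basis are the $\FF_4$-linear automorphisms of $K$ and of a chosen lift of $V/K$, together with the shifts $f_j \mapsto f_j + \nu_{ij} e_i$ (which modify $\gamma_{ij}$ by $\operatorname{tr}(\nu_{ij})$). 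Using these, the $\FF_2$-matrix $\big(\operatorname{tr}(\gamma_{ij})\big) \in M_{k\times l}(\FF_2)$ can be reduced to block-diagonal form with $0/1$ entries; any zero row or column of the reduced matrix exhibits a $D$-direct summand of $V$, contradicting indecomposability, and forcing $k = l = 1$ and $\gamma_{11} \in \FF_4 \setminus \FF_2$. The two possibilities $\gamma_{11} \in \{\zeta, \zeta^2\}$ are then exchanged by $f_1 \mapsto f_1 + \nu e_1$ with $\operatorname{tr}(\nu) = 1$, leaving a unique isomorphism class that one identifies with $U_4$ by direct comparison with the defining action $b\cdot x = 1 + cx$. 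The hardest step will be the matrix-reduction argument ruling out higher values of $\dim_{\FF_4} V$; an alternative route is to compute $\operatorname{Ext}^1_{\bindih}(D,D) \cong \FF_2$ and then induct on Loewy length in the $D$-block of $\FF_2[\bindih]$ to exclude longer indecomposables.
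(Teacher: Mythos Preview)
Your approach is correct and constitutes a genuinely different, more structural argument than the paper's. Both proofs start from the same observation that the $C_3$-isotypic decomposition $V=V^{C_3}\oplus V_D$ is $\bindih$-stable, reducing Case~A to the uniserial local ring $\FF_2[b]/(b+1)^4$. In Case~B the routes diverge. The paper proceeds combinatorially: it classifies the $2$-, $4$-, $6$- and $8$-dimensional representations by direct inspection of $C_3$-orbits, and its key technical input is an averaging lemma (any $\bindih$-subrepresentation that is $C_4$-isomorphic to a sum of copies of the regular $C_4$-module splits off as a $\bindih$-summand), which is then used to peel off a copy of $U_4$ from anything faithful of dimension $\ge 4$.

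Your argument instead recognises $V_D$ as an $\FF_4$-vector space with $b$ acting $\sigma$-semilinearly, so that $T=b^2$ is $\FF_4$-linear with $(T-\Id)^2=0$; Galois descent handles the kernel and cokernel of $T-\Id$, and the whole classification collapses to a single normal-form problem for the matrix $(\gamma_{ij})\in M_{k\times l}(\FF_4)$ under $GL_k(\FF_2)\times GL_l(\FF_2)$ and $\FF_2$-shifts. Once one notes that injectivity of $T-\Id$ on $V/K$ forces the trace matrix $(\operatorname{tr}\gamma_{ij})$ to have rank $l$, the reduction to $\zeta\cdot\begin{pmatrix}I_l\\0\end{pmatrix}$ gives $V\cong U_4^{\oplus l}\oplus D^{\oplus(k-l)}$ immediately and uniformly in all dimensions. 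This is cleaner and would generalise more readily to other groups of the shape $C_p\rtimes C_{q^n}$; the paper's version is more elementary (no descent, no semilinear algebra) and more in keeping with the explicit, coordinate-based flavour of the surrounding computations. Your parenthetical alternative via $\operatorname{Ext}^1_{\bindih}(D,D)\cong\FF_2$ is also valid, though to rule out longer indecomposables by that route you still need the observation $(b^2-\Id)^2=0$, which bounds the Loewy length and makes your primary matrix argument the natural endgame anyway.
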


We will prove this statement in several steps and
in the course of the proof it will become apparent that all these representations are indeed indecomposable. Note that $V_1$ and $D$ are the only irreducible representations since $U_4$ contains a copy of $D$ and $V_i \le V_j$ whenever $i\le j$. 

It will be useful for us to also consider the following representation. Let 
$$U_8 \coloneqq \S \oplus \S x \oplus \S x^2 \oplus \S x^3$$
and let $b \in C_4$ act as the cyclic permutation $(1, x, x^2, x^3)$. Again, the relation $bc=c^2b$ allows us to extend this action making $U_8$ into a $\bindih-$representation. In fact,  we have $U_8 \cong U_4 \oplus U_4$ via the map
\begin{eqnarray*}
U_8 & \longrightarrow & U_4 \oplus U_4 \\
1 & \mapsto & (x, cx).
\end{eqnarray*} 

To begin the classification, we first observe that we can restrict attention only to representations on which $C_3$ acts non-trivially. 
\begin{lemma}
Let $V$ be a $\bindih-$representation over $\FF_2$. Define 
\begin{eqnarray*}
V^{C_3} & \coloneqq & \{v \in V \st c\cdot v = v\}\\
W & \coloneqq & \{v \in V \st v + c \cdot v + c^2 \cdot v = 0\}.
\end{eqnarray*}
Then $V^{C_3}$ and $W$ are $\bindih-$representations and we have a decomposition $V = V^{C_3}\oplus W$.
\end{lemma}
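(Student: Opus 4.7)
The plan is to exhibit an explicit central idempotent in $\FF_2[\bindih]$ that projects $V$ onto $V^{C_3}$, so that the decomposition $V = V^{C_3} \oplus W$ arises automatically as the splitting of $V$ by a central idempotent. This avoids having to check invariance of the summands by hand.

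The key observation is that the element $e \coloneqq 1 + c + c^2 \in \FF_2[\bindih]$ is a central idempotent. Indeed, first I would check idempotency directly: expanding $(1+c+c^2)^2$ in characteristic $2$ with $c^3 = 1$ gives $1 + c^2 + c^4 = 1 + c^2 + c = e$, since in characteristic $2$ all cross terms vanish. Then I would check centrality: $e$ manifestly commutes with $c$, and using the defining relation $bc = c^2 b$ one computes
\[
be = b + bc + bc^2 = b + c^2 b + c^4 b = (1 + c + c^2)b = eb,
\]
so $e$ lies in the centre of $\FF_2[\bindih]$. Hence the linear map $v \mapsto ev$ is a $\bindih$-equivariant projection and $V = eV \oplus (1+e)V$ is a decomposition of $\bindih$-representations (in characteristic $2$, $1-e = 1+e$).

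It remains to identify the two summands with $V^{C_3}$ and $W$. From $ce = c + c^2 + 1 = e$ one gets $c \cdot (ev) = ev$, so $eV \subseteq V^{C_3}$; conversely, if $v \in V^{C_3}$ then $ev = v + cv + c^2v = 3v = v$, so $V^{C_3} \subseteq eV$. For the other summand: since $e(1+e) = e + e^2 = 0$, we have $(1+e)V \subseteq \ker e = W$; and if $v \in W$ then $(1+e)v = v + ev = v$, whence $W \subseteq (1+e)V$. Together this gives $eV = V^{C_3}$ and $(1+e)V = W$, completing the decomposition.

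There is no real obstacle here; the only thing that needs care is the compatibility between the characteristic-$2$ arithmetic and the relation $bc = c^2b$, which is precisely what makes $e$ central. Everything else is bookkeeping.
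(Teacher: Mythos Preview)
Your proof is correct. The paper's argument is essentially the same computation in different clothing: it invokes semisimplicity of $R_3 = \FF_2[C_3]$ to get the $C_3$-splitting $V = V^{C_3}\oplus W$, and then checks by hand that $b$ preserves each summand via $c\cdot(b\cdot v) = b\cdot(c^2\cdot v)$ and $(1+c+c^2)\cdot(b\cdot v) = b\cdot((1+c+c^2)\cdot v)$. Your approach packages both steps at once by observing that $e = 1+c+c^2$ is a \emph{central} idempotent in $\FF_2[\bindih]$; the centrality check $be=eb$ is exactly the paper's hand verification of $b$-invariance, and the idempotency of $e$ is what underlies the semisimplicity of $R_3$ (since $3^{-1}=1$ in $\FF_2$, $e$ is the averaging projector). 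So the two routes are the same at the level of computations, but yours is a bit more self-contained and conceptual.
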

\begin{proof}
The fact that $V \cong_3 V^{C_3} \oplus W$ is just a restatement of the fact that $R_3$ is semisimple. To see that $V^{C_3}$ and $W$ are preserved by the $C_4-$action note that if $v \in V^{C_3}$ then $c \cdot (b \cdot v) = b \cdot (c^2 \cdot v) = b \cdot v$, i.e. $b \cdot v \in V^{C_3}$ and if $v \in W$ then $(1 + c + c^2) \cdot (b \cdot v) = b \cdot ( (1 + c + c^2) \cdot v) = 0$, i.e. $b \cdot v \in W$.
\end{proof}
We thus have that $V \cong V_1^{\oplus k_1} \oplus V_2^{\oplus k_2} \oplus V_3^{\oplus k_3} \oplus V_4^{\oplus k_4} \oplus W$, where $W^{C_3} =0$. To prove Proposition \ref{classification of representations} it then suffices to show that the only indecomposable representations $V$ with $V^{C_3} = 0$ are $D$ and $U_4$. We do this in two steps: first, we show that these are the only indecomposable $\bindih-$representations of dimension at most $8$ and then we prove that any $\bindih-$representation $V$ with $V^{C_3} = 0$ and $\dim V > 8$ cannot be indecomposable. 

Classifying the two-dimensional representations is easy. Indeed, if $V$ is such, then we have $V \cong_3 \widehat{D}=\s$ and $V$ contains exactly $3$ non-zero vectors $\{1, c, c^2\}$. Since $C_4$ acts on this set, we must have that either this action is trivial, or that $b$ fixes one of the three vectors and swaps the other two. But $C_4$ cannot act trivially since then we would have 
$$c^2 = c^2 \cdot(b\cdot 1) = b \cdot (c \cdot 1) = c,$$
a contradiction. Hence $b$ fixes exactly one non-zero vector and without loss of generality $b \cdot 1 = 1$ and $b \cdot c = c^2$, $b \cdot c^2 = c$. Thus $V \cong D$ as $\bindih-$representations.

In fact, the only indecomposable representations of the dihedral group $D_6$ over $\FF_2$ are the trivial representation, the regular representation of $C_2$ and $D$. This is an easy special case of \cite{bondarenko1975representations} and can also be proved directly by writing $D_6 = C_3 \rtimes C_2$ and using the same methods we employ here (see Remark \ref{remark on how to classify dihedral reps} below). Thus, we can restrict ourselves to finding the faithful indecomposable  $\bindih-$representations. 

So, let $V$ be a faithful $\bindih-$representation with $V^{C_3}=0$ and $\dim V = 4$. Then we have
$$V \cong_3 \S \oplus \S x.$$
and
\begin{eqnarray*}
O_{C_3}(V) & = & \{\{1, c, c^2\}, \{x, cx, c^2 x\}, \{1+x, c + cx, c^2 + c^2 x\},\\
&& \qquad \qquad \{1+cx, c+c^2 x, c^2 + x\}, \{1+ c^2x, c+x, c^2 + cx\}\}.
\end{eqnarray*}
Since the size of each orbit of the $C_4-$action on $O_{C_3}(V)$ action must divide $\vert C_4 \vert = 4$ and $\vert O_{C_3}(V) \vert  = 5$ we see that $C_4$ must preserve at least one $C_3-$orbit. Up to a $C_3-$equivariant change of basis for $V$, we may assume that $b \cdot \{1, c, c^2\}  = \{1, c, c^2\}$ and further $b \cdot 1 = 1$, $b \cdot c = c^2$, $b \cdot c^2 = c$. Since we are assuming that $V$ is a faithful representation, the action of $C_4$ on $O_{C_3}(V)$ must also be faithful (otherwise $b^2$ must fix all elements of $O_{C_3}(V)$ and it is not hard to see that it will then have to act trivially on $V$). Hence
$$\A \coloneqq \{\{x, cx, c^2 x\}, \{1+x, c + cx, c^2 + c^2 x\}, \{1+cx, c+c^2 x, c^2 + x\}, \{1+ c^2x, c+x, c^2 + cx\}\}$$ forms a single orbit of the $C_4-$action on $O_{C_3}(V)$. By linearity and the relation $bc = c^2b$, the action of $b$ on $\A$ is uniquely determined by which element $b \cdot x$ is. Note that an element of $V \setminus \{0, 1, c, c^2\}$ can be written as a linear polynomial $f(x)$ with coefficients in $\{1, c, c^2\}$. Call $f(x)$ \emph{primitive} if the coefficient of its lowest degree term is $1$. Hence, primitive elements of $V \setminus \{0, 1, c, c^2\}$ are in one-to-one correspondence with elements of $\A$. We now have the following cases:
\begin{enumerate}
\item\label{primitive} Suppose that $b \cdot x = f(x)$ is primitive. 
\begin{enumerate}
\item if $b \cdot x = x$ then $V = \spaN\{1, c\} \oplus \spaN\{x, cx\} \cong D \oplus D$ which contradicts faithfulness.
\item if $b \cdot x = 1+x$ then $b \cdot (c^2 + x) = c + 1 + x = c^2 + x$ and so
$$V = \spaN\{1, c\}\oplus\spaN\{c^2 + x, 1+cx\} \cong D\oplus D$$ which again contradicts faithfulness.
\item if $b \cdot x = 1+cx$ we obtain precisely the representation $U_4$. It is clearly indecomposable since it is not isomorphic to $D\oplus D$.
\item if $b \cdot x = 1+c^2x$ then consider the $C_3-$equivariant change of basis for $V$ given by the substitution $\tilde{x}=1+c^2x$. Then we have $x=c\cdot(1+\tilde{x})$ and $b \cdot \tilde{x} = b \cdot (1 +c^2x) = 1+c\cdot(b\cdot x) = 1+c \cdot(1+c^2x) = 1+c+x = c^2 + x = c^2 + c + c\tilde{x} = 1+c\tilde{x}$ and thus $V \cong U_4$.
\end{enumerate}
\item  Suppose that $b \cdot x$ is not primitive. 
\begin{enumerate}
\item if $b \cdot x = c\cdot(1+\alpha x)$ for some $\alpha \in \{1, c, c^2\}$ then consider the substitution 
$\tilde{x} = cx$. Then $x = c^2\tilde{x}$ and $b\cdot\tilde{x} = b\cdot cx = c^2b\cdot x = c^2c(1 + \alpha x) = 1 + \alpha x = 1+\alpha c^2 \tilde{x}$ is primitive and we are back in case \ref{primitive}.
\item if $b \cdot x = c^2\cdot(1+\alpha x)$ then, putting $\tilde{x}= c^2x$ we obtain $b \cdot \tilde{x} = 1+\alpha c \tilde{x}$ is primitive and again we can apply case \ref{primitive}.
\end{enumerate}
\end{enumerate}
We have thus seen that indeed the only faithful indecomposable $\bindih-$representation of dimension 4 is $U_4$. 

Recall that $U_4 \cong_4 \overline{V}_4$.
In order to extend the classification to higher-dimensional representations we will repeatedly use this fact, together with the following lemma.
\begin{lemma}\label{vFourBarsSplitAsGammaReps}
Let $V$ be a finite-dimensional representation of $\bindih$ over $\FF_2$ and let $U \le V$ be a subrepresentation. Suppose that $U \cong_4 \overline{V}_4^{\oplus k}$ for some $k \ge 1$. Then there exists a subrepresentation $W \le V$ such that $V = U \oplus W$ as representations of $\bindih$.
\end{lemma}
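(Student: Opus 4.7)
The strategy is to produce a $\bindih$-equivariant projection $\pi \colon V \twoheadrightarrow U$; the desired complement will then be $W \coloneqq \ker\pi$. I will construct $\pi$ in two stages: first obtain a $C_4$-equivariant projection $\pi_4 \colon V \to U$, then symmetrize over $C_3$ to upgrade it to a $\bindih$-equivariant one. Neither averaging over all of $\bindih$ nor over $C_4$ works directly, since $|\bindih|=12$ and $|C_4|=4$ both vanish in $\FF_2$; the point is that $|C_3|=3$ is a unit, and the $C_4$-splitting can be produced by hand from the freeness hypothesis on $U$.

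\emph{Stage 1: a $C_4$-equivariant splitting.} The assumption $U \cong_4 \overline{V}_4^{\oplus k}$ says exactly that $U$ is a free $R_4$-module. Since $R_4 = \FF_2[C_4]$ is a Frobenius algebra (self-injective), free $R_4$-modules are injective, so any $C_4$-equivariant inclusion into a larger $R_4$-module splits. Explicitly, I would pick an $R_4$-basis $u_1,\dots,u_k$ of $U$, choose $\FF_2$-linear functionals $\lambda_i \colon V \to \FF_2$ satisfying $\lambda_i(b^s u_m)=\delta_{im}\delta_{s,0}$ for all $0 \le s \le 3$ and $1 \le m \le k$ (possible because the vectors $\{b^s u_m\}$ are linearly independent by freeness of $U$, and one then extends arbitrarily off $U$), and set
$$\pi_4(v) \coloneqq \sum_{i=1}^{k}\sum_{j=0}^{3}\lambda_i(b^{-j}v)\,b^j u_i.$$
A direct check shows $\pi_4(bv)=b\,\pi_4(v)$ and $\pi_4(b^l u_m)=b^l u_m$, so $\pi_4$ is the required $C_4$-equivariant retraction.

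\emph{Stage 2: averaging over $C_3$.} Since $3$ is invertible in $\FF_2$, define
$$\pi(v) \coloneqq \sum_{g \in C_3} g\,\pi_4(g^{-1}v).$$
This is tautologically $C_3$-equivariant, and for $u \in U$ one has $\pi(u) = \sum_{g \in C_3} g\,g^{-1}u = 3u = u$, so $\pi$ is a projection onto $U$. To verify $C_4$-equivariance, I would use the relation $bc=c^2b$, equivalently $b^{-1}cb=c^2$, which makes conjugation by $b^{-1}$ an automorphism $\phi \colon C_3 \to C_3$. Then $g^{-1}b = b\,\phi(g)^{-1}$ for every $g \in C_3$; combining this with the $C_4$-equivariance of $\pi_4$ and reindexing the sum by $h = \phi(g)$ gives
$$\pi(bv) \;=\; \sum_{g \in C_3} gb\,\pi_4(\phi(g)^{-1}v) \;=\; \sum_{g \in C_3} b\,\phi(g)\,\pi_4(\phi(g)^{-1}v) \;=\; b\sum_{h \in C_3} h\,\pi_4(h^{-1}v) \;=\; b\,\pi(v).$$
Thus $\pi$ is $\bindih$-equivariant, and $W \coloneqq \ker\pi$ is a $\bindih$-subrepresentation with $V = U \oplus W$.

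\emph{Main obstacle.} The only genuinely delicate point is Stage 1, i.e.\ producing a $C_4$-equivariant splitting in characteristic $2$ where naive averaging over $C_4$ fails. The freeness of $U$ over $R_4$ is exactly what rescues the argument, via the Frobenius (self-injective) property of the group algebra $\FF_2[C_4]$; once the $C_4$-splitting is in hand, the $C_3$-averaging is routine, modulo the small bookkeeping that conjugation by $b$ acts on $C_3$ by the squaring automorphism rather than trivially.
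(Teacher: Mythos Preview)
Your proof is correct and follows the same two-stage strategy as the paper: first produce a $C_4$-equivariant splitting, then average over $C_3$ (which is exactly the content of the paper's Lemma~\ref{splitting reps of semidirect products}) to upgrade it to a $\bindih$-splitting. The only difference is in Stage~1: the paper establishes the $C_4$-splitting by a direct hands-on argument (its Lemma~\ref{vFourBarsSplitAsCfourReps}), using the classification of indecomposable $R_4$-modules and an order-of-orbit computation to show each copy of $\overline{V}_4$ peels off, whereas you invoke the cleaner general fact that $\FF_2[C_4]$ is Frobenius, hence self-injective, so free equals injective. Your route is more conceptual and immediately handles all $k$ copies at once; the paper's is more elementary and self-contained.
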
 
\begin{remark}\label{remark on how to classify dihedral reps}
We note here that a similar statement holds also for $\FF_2-$representations of the dihedral group $D_6 = C_3 \rtimes C_2$. That is, if $U \le V$ is a pair of representations of $D_6$ and $U$ is isomorphic to a direct sum of copies of the regular representation of $C_2$, then $U$ is actually a direct summand of $V$. The proof is an easier version of the proof we present below.
\end{remark}
The proof of Lemma \ref{vFourBarsSplitAsGammaReps} requires a short detour. We first note the following standard fact whose proof is straightforward.
\begin{lemma}\label{summand replacement lemma}
Let $R$ be a ring (not necessarily commutative) and let $X$ be an $R-$module. Let $M, N \le X$ be submodules such that $X = M \oplus N$ and let $\pi \colon X \to M$ be the projection along $N$. Let $M' \le X$ be another $R-$submodule. Then $X=M' \oplus N$ if and only if $\at{\pi}{M'}\colon M'\to M$ is an isomorphism.\qed
\end{lemma}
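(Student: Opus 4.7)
The plan is to prove both directions directly from the definition of internal direct sum, using the defining properties $\ker \pi = N$ and $\pi|_M = \mathrm{id}_M$. This is a completely formal, element-level argument and the main obstacle is really just keeping the roles of $M, M', N$ straight rather than anything conceptual.

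For the ``only if'' direction, I would assume $X = M' \oplus N$ and verify that $\pi|_{M'}$ is bijective. Injectivity: if $m' \in M'$ satisfies $\pi(m')=0$, then $m' \in \ker \pi = N$, so $m' \in M' \cap N = 0$. Surjectivity: given $m \in M$, use the decomposition $X = M' \oplus N$ to write $m = m' + n$ with $m' \in M'$, $n \in N$; applying $\pi$ and using $\pi(m)=m$, $\pi(n)=0$ gives $\pi(m') = m$.

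For the ``if'' direction, assume $\pi|_{M'}$ is an isomorphism. To see $M' \cap N = 0$, note any $x$ in the intersection lies in $N = \ker \pi$, so $\pi|_{M'}(x) = 0$ and injectivity forces $x = 0$. To see $M' + N = X$, take $x \in X$ and decompose it as $x = m + n$ with respect to $X = M \oplus N$; by surjectivity of $\pi|_{M'}$, pick $m' \in M'$ with $\pi(m') = m$, observe that $m' - m \in \ker \pi = N$, and rewrite $x = m' + (n - (m'-m)) \in M' + N$.

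Both checks are a couple of lines of elementary manipulation once the projection $\pi$ is used correctly, so I do not anticipate any real difficulty. The only thing to be careful about is emphasising that we are working with internal direct sums of $R$-submodules, so that all the elements manipulated above (in particular $m', m'-m, n-(m'-m)$) are automatically $R$-linear combinations of submodule elements and hence lie in the expected submodules without further argument.
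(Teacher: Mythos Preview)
Your proof is correct and is exactly the standard elementary argument; the paper itself omits the proof entirely, declaring it ``straightforward'' and marking the lemma with a \qed, so there is nothing further to compare.
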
 
Using this fact, we can now make a step towards Lemma \ref{vFourBarsSplitAsGammaReps} by first showing that copies of $\overline{V}_4$ are always direct summands of $C_4-$representations. 
\begin{lemma}\label{vFourBarsSplitAsCfourReps}
Let $\overline{V}$ be an $R_4-$module which is finite-dimensional over $\FF_2$. Suppose $\overline{V}_4 \le \overline{V}$. Then there exists an $R_4-$submodule $\overline{W}\le \overline{V}$ such that $\overline{V}=\overline{V}_4 \oplus \overline{W}$.
\end{lemma}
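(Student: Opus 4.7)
The plan is to combine the structure theorem for finitely generated modules over the principal ideal ring $R_4=\FF_2[b]/(b+1)^4$ with the summand-replacement Lemma \ref{summand replacement lemma}. Since $R_4$ is the quotient of the PID $\FF_2[b]$ by a single element, the usual structure theorem transfers directly to give a decomposition
\[
\overline{V} \;\cong\; \overline{V}_1^{\oplus k_1} \oplus \overline{V}_2^{\oplus k_2} \oplus \overline{V}_3^{\oplus k_3} \oplus \overline{V}_4^{\oplus k_4}.
\]
Given this, my goal becomes to exhibit a particular $\overline{V}_4$-summand $N$ from the above splitting such that the projection $\pi\colon \overline{V}\twoheadrightarrow N$ restricts to an isomorphism on the given submodule $M\cong \overline{V}_4$; the desired complement $\overline{W}$ of $M$ in $\overline{V}$ will then be the sum of all remaining summands, by Lemma \ref{summand replacement lemma}.

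To produce $N$, I will pick a generator $v$ of $M$, so that $\mathrm{ann}_{R_4}(v)=0$, or equivalently $(b+1)^3 v\neq 0$, and decompose $v$ according to the fixed splitting of $\overline{V}$. Every component of $v$ living in a summand of type $\overline{V}_i$ with $i\le 3$ is annihilated by $(b+1)^3$, so at least one component $v_0$ must lie in a $\overline{V}_4$-summand $N$ and satisfy $(b+1)^3 v_0\neq 0$. Writing $v_0=p(b)\cdot 1_N$, if $p(b)$ belonged to the maximal ideal $(b+1)R_4$ then $(b+1)^3 p(b)\in (b+1)^4 R_4=0$, contradicting $(b+1)^3 v_0\neq 0$; hence $p(b)$ is a unit, so $v_0$ generates $N$. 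The restricted projection $\pi|_M\colon M\to N$ is therefore surjective, and since $|M|=|N|=2^4$ it must be an isomorphism, completing the verification of the hypothesis of Lemma \ref{summand replacement lemma}.

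The only real input is the structure theorem for $R_4$-modules, and I do not anticipate any serious obstacle. As a conceptual shortcut, one could instead observe that $R_4$ is a local Artinian ring with simple socle $(b+1)^3 R_4\cong \FF_2$ and therefore self-injective, from which it follows that $\overline{V}_4=R_4$ is an injective $R_4$-module and any embedding $\overline{V}_4\hookrightarrow\overline{V}$ splits automatically; the elementary argument above is however more self-contained and better suited to the pedestrian style of this appendix.
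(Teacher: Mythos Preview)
Your proof is correct and follows the same overall template as the paper: decompose $\overline{V}$ via the structure theorem, locate a single $\overline{V}_4$-summand $N$ onto which the given copy $M\cong\overline{V}_4$ projects isomorphically, and then invoke Lemma~\ref{summand replacement lemma}.

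The difference lies in how you pin down $N$. The paper works with orbit sizes under the $C_4$-action: it shows $\ord(\phi(1))=\ord(\phi(1+b))=4$ and then argues by contradiction, using an explicit case analysis of orbits in $\overline{V}_3$, that the $\overline{V}_4$-component $\vec{v}_4$ of $\phi(1)$ must itself have $\ord(\vec{v}_4)=4$. You instead use the nilpotent $(b+1)$ directly: a generator $v$ of $M$ satisfies $(b+1)^3v\neq 0$, while every component of $v$ in a summand $\overline{V}_i$ with $i\le 3$ is killed by $(b+1)^3$, so some $\overline{V}_4$-component $v_0$ survives and must therefore be a unit times the generator of its summand. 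This is shorter and avoids the orbit bookkeeping; it is really the same computation (the paper's observation that $\ord(x+bx)\le 2$ in $\overline{V}_3$ is exactly $(b+1)^3x=0$ there), but organised more transparently around the filtration by powers of $(b+1)$. Your parenthetical remark that $R_4$ is self-injective is also a legitimate one-line proof, though less elementary.
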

\begin{proof}
Since $V$ is an $R_4-$module, there exist non-negative integers $n_1, n_2, n_3, n_4$ such that
\begin{equation}\label{splitting of a C4 rep}
\overline{V}\cong \overline{V}_1^{\oplus n_1} \oplus \overline{V}_2^{\oplus n_2} \oplus \overline{V}_3^{\oplus n_3} \oplus \overline{V}_4^{\oplus n_4}.
\end{equation}
Let \begin{eqnarray*}
\phi \colon R_4 &\longrightarrow & \overline{V}_1^{\oplus n_1} \oplus \overline{V}_2^{\oplus n_2} \oplus \overline{V}_3^{\oplus n_3} \oplus \overline{V}_4^{\oplus n_4}\\
1 &\mapsto & \vec{v}_1 + \vec{v}_2 + \vec{v}_3 + \vec{v}_4  
\end{eqnarray*}
denote the inclusion of $R_4-$modules obtained by restricting the isomorphism \eqref{splitting of a C4 rep} to $R_4 \cong \overline{V}_4 \le \overline{V}$. For any element $v$ of an $R_4-$module let us write $\ord(v) \in \{1, 2,4\}$ for the size of the orbit of $v$ under the $C_4-$action. We know that $\ord(\vec{v}_1) = 1$ and $\ord(\vec{v}_2) \le 2$. Since $\phi$ is a $C_4-$equivariant embedding, we must have 
\begin{eqnarray*}
\ord(\phi(1))& = & \ord(1) = 4 \\
\ord(\phi(1+b)) &=& \ord(1+b) = \vert \{ 1+b, b+b^2, b^2 + b^3, b^3 + 1\} \rvert = 4
\end{eqnarray*}
We claim that this implies that $\ord(\vec{v}_4)=4$. Suppose not, i.e. that $\ord(\vec{v}_4)\le 2$. Then, since $4=\ord(\phi(1)) = \max \{\ord(\vec{v}_1), \ord(\vec{v}_2), \ord(\vec{v}_3), \ord(\vec{v}_4)\}$, we must have $\ord(\vec{v}_3) = 4$. 
The elements of $\overline{V}_3 \cong \FF_2[b]/(b+1)^3 = \FF_2[b]/(1+b+b^2+b^3)$ are grouped into orbits of the $C_4-$action as follows
$$\overline{V}_3 = \{0\} \cup \{1+b^2\} \cup \{1+b, b+b^2\} \cup \{1, b, b^2, 1+b+b^2\}.$$
It is clear then that for all $x \in \overline{V}_3$ one has $\ord(x + b\cdot x) \le 2$. Hence $\ord(\vec{v}_3 + b \cdot \vec{v}_3) \le 2$ and thus $4 =  \ord(\phi(1+b)) = \max\{\ord(\vec{v}_1 + b\cdot \vec{v}_1), \ord(\vec{v}_2 + b\cdot \vec{v}_2), \ord(\vec{v}_3 + b\cdot \vec{v}_3), \ord(\vec{v}_4 + b\cdot \vec{v}_4)\} \le 2$, a contradiction.
So we must have $\ord(\vec{v}_4)= 4$. Writing $\vec{v}_4 = (v_{41}, v_{42}, \ldots, v_{4n_4}) \in \overline{V}_4^{\oplus n_4}$, there must exist $1 \le j \le n_4$ such that $\ord(v_{4j})=4$. Let $\pi \colon \overline{V} \to \overline{V}_4$ denote the projection to the $j^{\text{th}}$ $\overline{V}_4-$factor along the other factors in the decomposition \eqref{splitting of a C4 rep}. We claim that $\pi \circ \phi \colon R_4 \to \overline{V}_4$ is an isomorphism. Indeed, $\ord(\pi(\phi(1))) = \ord(v_{4j}) =4$ and it is easy to see that the $\FF_2-$span of any $C_4-$orbit of length $4$ in $\overline{V}_4$ has dimension $4$. Comparing dimensions, we see that $\pi \circ \phi$ is an isomorphism. 

The existence of a complement for $\overline{V}_4 \le \overline{V}$ now follows from Lemma \ref{summand replacement lemma}.
\end{proof}

To finish the proof of Lemma \ref{vFourBarsSplitAsGammaReps} we also need the following general lemma. 

\begin{lemma}\label{splitting reps of semidirect products}
Let $G$ be a group with subgroups $H \unlhd G$, $K \le G$ such that $G = H \rtimes K$. Suppose that $H$ is finite and that $\FF$ is a field with $\operatorname{char}(\FF) \nmid \vert H \vert$. Let $V$ be a representation of $G$ over $\FF$ and assume that there is a splitting $V = U \oplus \overline{W}$ as $K-$representations. Further, suppose that $U$ is actually a $G-$subrepresentation of $V$. Then there exists a $G-$representation $W \le V$ such that $V=U \oplus W$ as $G-$representations.
\end{lemma}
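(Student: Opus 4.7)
\textbf{Proof plan for Lemma \ref{splitting reps of semidirect products}.} The strategy is a Maschke-type averaging over $H$, exactly as one proves complete reducibility in the coprime-characteristic case, but applied to a projection rather than to a bilinear form. Since the hypotheses give us a $K$-equivariant projection for free and we need only upgrade it to a $G$-equivariant one, and since $H$ is the ``missing'' part of $G$, the natural thing is to average the given projection over $H$; the hypothesis $\operatorname{char}(\FF) \nmid |H|$ is precisely what makes this averaging well-defined.

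More explicitly, first I would let $\pi_0 \colon V \to U$ be the $K$-equivariant projection along the given complement $\overline{W}$. Since $U$ is a $G$-subrepresentation, $\pi_0$ takes values in an $H$-invariant subspace, so the formula
\[
\pi(v) \;\coloneqq\; \frac{1}{|H|}\sum_{h \in H} h\cdot \pi_0(h^{-1}\cdot v)
\]
defines a linear map $V \to U$. The key computations are then (i) $H$-equivariance of $\pi$, obtained by reindexing the sum $h \mapsto h'h$, and (ii) $K$-equivariance of $\pi$, obtained by reindexing $h \mapsto khk^{-1}$ (which is a bijection of $H$ because $H \unlhd G$) and using $K$-equivariance of $\pi_0$:
\[
\pi(k\cdot v) = \frac{1}{|H|}\sum_{h \in H} h\cdot \pi_0(h^{-1}k\cdot v) = \frac{1}{|H|}\sum_{h \in H} (khk^{-1})\cdot \pi_0(kh^{-1}\cdot v) = k\cdot \pi(v).
\]

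Next I would check that $\pi$ is a projection onto $U$: for $u \in U$ we have $h^{-1}\cdot u \in U$ (as $U$ is $G$-invariant) and $\pi_0$ restricts to the identity on $U$, so each summand equals $u$ and $\pi(u)=u$. Hence $\pi$ is an idempotent $G$-equivariant endomorphism with image $U$. Setting $W \coloneqq \ker \pi$ then gives a $G$-subrepresentation of $V$ with $V = U \oplus W$, as required.

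The whole argument is essentially formal once the averaging formula is written down; the only step that requires care is the $K$-equivariance check, where one must use both the normality of $H$ (to reindex by conjugation) and the $K$-equivariance of the original projection $\pi_0$. Since $|H|$ is invertible in $\FF$, there is no arithmetic obstruction, and no representation-theoretic input beyond Maschke's lemma is needed. In the application (the proof of Lemma \ref{vFourBarsSplitAsGammaReps}) we have $G = \bindih$, $H = C_3$, $K = C_4$ and $\FF = \FF_2$, so indeed $\operatorname{char}(\FF) = 2 \nmid 3 = |H|$ and the lemma applies.
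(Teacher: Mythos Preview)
Your proof is correct and is essentially the same as the paper's: both average the given $K$-equivariant projection over $H$ and then check that the result is $G$-equivariant and restricts to the identity on $U$. The only cosmetic difference is that the paper verifies equivariance for a general element $(h_0,k_0)\in H\rtimes K$ in a single computation, whereas you check $H$- and $K$-equivariance separately (which suffices since $G=HK$).
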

\begin{proof}
The proof is based on the standard technique of ``averaging the projection''. Namely, let $\overline{\pi} \colon V \to V$ denote the projection to $U$ along $\overline{W}$, followed by the inclusion $\iota \colon U \hookrightarrow V$. Since $\operatorname{char}(\FF) \nmid \vert H \vert$ we can define
\begin{eqnarray}
\pi \colon V & \longrightarrow & V \nonumber\\
v & \longmapsto & \frac{1}{\vert H \vert}\sum_{h\in H} (h^{-1}, 1) \cdot \overline{\pi} ( (h, 1) \cdot v).\label{definition of averaged projection}
\end{eqnarray} 
We claim that $\pi$ is $G-$equivariant and $\at{\pi}{U} = \iota$. For the second claim note that since $H$ preserves $U$ and $\at{\overline{\pi}}{U} = \iota$ then for all $h \in H$ and $u \in U$ we have $\overline{\pi}((h,1)\cdot u) = (h, 1)\cdot u$. Plugging this into \eqref{definition of averaged projection}, we see that 
$$\pi(u) = \frac{1}{\vert H \vert}\vert H\vert u = u \quad \forall u \in U.$$
Now, to see that that $\pi$ is $G-$equivariant we let $v\in V$, $(h_0, k_0) \in H \rtimes K = G$ and compute

\begin{eqnarray*}
\pi((h_0, k_0)\cdot v) &=& \frac{1}{\vert H \vert}\sum_{h\in H} (h^{-1}, 1) \cdot \overline{\pi} ( (h, 1)(h_0, k_0) \cdot v)\\
&=&\frac{1}{\vert H \vert}\sum_{h\in H} (h^{-1}, 1) \cdot \overline{\pi} ( (hh_0, 1)(1, k_0) \cdot v)\\
&=&\frac{1}{\vert H \vert}(h_0, 1)\cdot\sum_{h\in H} (hh_0, 1)^{-1} \cdot \overline{\pi} ( (hh_0, 1)(1, k_0) \cdot v)\\
&=& \frac{1}{\vert H \vert}(h_0, 1)\cdot\sum_{h\in H} (h^{-1}, 1) \cdot \overline{\pi} ( (h, 1)(1, k_0) \cdot v)\\
&=& \frac{1}{\vert H \vert}(h_0, 1)\cdot\sum_{h\in H} (h^{-1}, 1) \cdot \overline{\pi} ( (1, k_0)(k_0^{-1}hk_0, 1) \cdot v)\\
&=&\frac{1}{\vert H \vert}(h_0, 1)\cdot\sum_{h\in H} (h^{-1}, 1)(1, k_0) \cdot \overline{\pi} ( (k_0^{-1}hk_0, 1) \cdot v)\quad \text{[since $\overline{\pi}$ is $K-$equivariant]}\\
&=&\frac{1}{\vert H \vert}(h_0, 1)(1, k_0)\cdot\sum_{h\in H} (k_0^{-1}hk_0, 1)^{-1} \cdot \overline{\pi} ( (k_0^{-1}hk_0, 1) \cdot v)\\
&=&(h_0, k_0)\cdot \pi(u).
\end{eqnarray*}
Putting $W \coloneqq \ker \pi$ we now obtain the desired splitting $V = U \oplus W$.
\end{proof}
\noindent We are now in a position to prove Lemma \ref{vFourBarsSplitAsGammaReps}:
\begin{proof}
We are assuming that we have a pair of $\bindih-$representations $U \le V$ and that $U \cong_4 \overline{V}_4^{\oplus k}$ for some $k \ge 1$. By applying Lemma \ref{vFourBarsSplitAsCfourReps} $k$ times, we find a $C_4-$subrepresentation $\overline{W} \le V$ such that $V \cong_4 U \oplus \overline{W}$. Now, since $C_3$ preserves $U$ and $\bindih = C_3 \rtimes C_4$, we can apply Lemma \ref{splitting reps of semidirect products} to find a $\bindih-$subrepresentation $W \le V$ such that $V = U \oplus W$.
\end{proof}
Armed with Lemma \ref{vFourBarsSplitAsGammaReps}, we are now ready to extend our classification of indecomposable $\bindih-$representations to higher dimensions. So let $V$ be a $\bindih-$representation with $V^{C_3} =0$ and $\dim V =6$. We will show that $V$ cannot be indecomposable. We observe that since $\vert O_{C_3}(V) \vert = 1+4+4^2$ is odd, there must exist an orbit $A \in O_{C_3}(V)$ which is fixed by the $C_4-$action on $O_{C_3}(V)$. Then $D_0 \coloneqq \spaN A$ is a two-dimensional $\bindih-$subrepresentation of $V$ and so $D_0 \cong D$ and we have a short exact sequence 
\begin{equation}\label{short exact sequence}
\xymatrix{
0\ar[r] & D_0 \ar[r] & V \ar[r]^{\pi}& V/D_0 \ar[r]& 0.}
\end{equation} 
By the semisimplicity of $R_3$, this is a split sequence of $C_3-$representations and in particular $(V/D_0)^{C_3} = 0$.
Then $\vert O_{C_3}(V/D_0) \vert = 5$ and again there must be an orbit $B \in O_{C_3}(V/D_0)$ which is fixed by the $C_4-$action. Put $D_1 \coloneqq \spaN B \le V/D_0$ and $U \coloneqq \pi^{-1}(D_1) \le V$. We thus obtain a composition series
$$0 \lneq D_0 \lneq U \lneq V$$
with $U/D_0 = D_1\cong D$ and $V/U \cong D$. From our classification of the four-dimensional representations, we now have the following two possibilities
\begin{enumerate}
\item Suppose that $U \cong U_4$ or $V/D_0 \cong U_4$. Then
\begin{enumerate}
\item if $U \cong U_4$ we know by Lemma \ref{vFourBarsSplitAsGammaReps} that $V$ is not indecomposable and in fact $V \cong U_4 \oplus D$
\item if $V/D_0 \cong U_4$ then in particular $V/D_0 \cong_4 \overline{V}_4$ is a free $R_4-$module and hence  \eqref{short exact sequence} splits as a sequence of $C_4-$representations. However, Lemma \ref{splitting reps of semidirect products} then implies that \eqref{short exact sequence} is also a split sequence of $\bindih-$representations, i.e. again $V \cong D \oplus U_4$.
\end{enumerate}
\item Suppose that $U \cong D \oplus D$ and $V/D_0 \cong D \oplus D$. It follows (see Remark \ref{remark on how to classify dihedral reps}) that there exist $\bindih-$equivariant sections $r \colon D_1 = U/D_0 \longrightarrow U$ and $s \colon V/U=(V/D_0)/D_1 \longrightarrow V/D_0$ of the respective quotient maps. Now let $t \colon V/D_0 \longrightarrow V$ be any $\FF_2-$linear section of $\pi\colon V \to V/D_0$, satisfying $\at{t}{D_1}  = r$. These maps fit into the following diagram of $\bindih-$representations, whose rows and columns are exact:
$$
\xymatrix@1{
&&0 \ar[d]& 0 \ar[d] &\\
0 \ar[r]& D_0 \ar@{=}[d] \ar[r]& U \ar[d] \ar[r]& U/D_0 = D_1 \ar@/_/[l]_{r} \ar[d] \ar[r]& 0\\
0 \ar[r]& D_0 \ar[r]& V \ar[d]  \ar[r]_{\pi}& V/D_0 \ar@{.>}@/_/[l]_{t}  \ar[d] \ar[r]& 0\\
&&V/U  \ar[d] \ar@{=}[r]&  (V/D_0)/D_1 \ar@/_/[u]_{s} \ar[d] &\\
&&0 & 0&}$$
\noindent Observe that since $\pi$ and $s$ are $C_4-$equivariant, we have 
\begin{equation}\label{b commutes with ts up to D_0}
\pi(b \cdot ts(x)) = b \cdot \pi(ts(x)) = b \cdot s(x) = s(b\cdot x).
\end{equation}
Consider now the $\FF_2-$linear splitting
\begin{equation}\label{V splits into three parts}
V = D_0 \oplus t(V/D_0) = D_0 \oplus t(D_1 \oplus s(V/U)) = D_0 \oplus r(D_1) \oplus ts(V/U).
\end{equation}
We claim that $C_4$ preserves the summand $\overline{W} \coloneqq D_0 \oplus ts(V/U)$. Indeed, if $v_0 \in D_0$, $x \in V/U$, then by \eqref{b commutes with ts up to D_0} we have 
\begin{eqnarray*}
b \cdot (v_0 + ts(x)) & = & \Big[b \cdot v_0 + b \cdot ts(x) - t(\pi(b \cdot (ts(x)))) \Big] + t(\pi(b \cdot ts(x)))\\
&=&\Big[b \cdot v_0 + b \cdot ts(x) - t(\pi(b \cdot (ts(x)))) \Big] + ts(b\cdot x) \in D_0 \oplus ts(V/U).
\end{eqnarray*}
Now, since $r$ is $C_4-$equivariant we see that \eqref{V splits into three parts} gives rise to the  splitting $V = r(D_1) \oplus \overline{W}$ of $C_4-$representations.
On the other hand, since $r$ is also $C_3-$equivariant, we have that $r(D_1)$ is a $\bindih-$subrepresentation of $V$ and then it follows from Lemma \ref{splitting reps of semidirect products} that $V$ is not indecomposable.
\end{enumerate}

We have seen that if $V$ is a six-dimensional $\bindih-$representation with $V^{C_3} =0$ then we must have $V \cong D^{\oplus 3}$ or $V \cong D \oplus U_4$. In particular, the only faithful six-dimensional $\bindih-$representation with $V^{C_3} =0$ is $D \oplus U_4$.

 Now let $V$ be a faithful $\bindih-$representation with $V^{C_3} =0$ and $\dim V =8$. Since the representation is faithful, there exists $\A \in O_{C_4}(O_{C_3}(V))$ with $\vert \A\vert =4$. Then $\spaN \A$ is a faithful subrepresentation of $V$ and so $\dim(\spaN \A) \in \{4, 6, 8\}$. If $\dim (\spaN \A) =4$ we know that $\spaN \A \cong U_4$. By Lemma \ref{vFourBarsSplitAsGammaReps} we have that $V$ is not indecomposable. If $\dim(\spaN \A) = 6$, then must have $\spaN \A \cong D \oplus U_4$; in particular $U_4 \le V$ and again Lemma \ref{vFourBarsSplitAsGammaReps} shows that $V$ cannot be indecomposable. We are left with the case $\dim(\spaN \A) =8$, i.e. $\spaN \A = V$. We can then write
 $$ \A = \{\{1, c, c^2\}, \{x, cx, c^2x\}, \{x^2, cx^2, c^2x^2\}, \{x^3, cx^3, c^2x^3\}\}$$
 and $\{1, c, x, cx, x^2, cx^2, x^3, cx^3\}$ forms a basis for $V$. Hence
 $$V \cong_3 \S \oplus \S x \oplus \S x^2 \oplus \S x^3$$
and $b \in C_4$ acts as the cyclic permutation $(1, x, x^2, x^3)$. That is, $V \cong U_8 \cong U_4 \oplus U_4$ is not indecomposable.

Finally, we are ready to finish the proof of Proposition \ref{classification of representations}, by showing that if $V$ is a faithful representation of $\bindih$ with $V^{C_3}=0$ and $\dim V > 8$, then $V$ cannot be indecomposable. Indeed, by faithfulness, there must exist $\A \in  O_{C_4}(O_{C_3}(V))$ with $\vert \A \vert =4$. Then $\spaN \A$ is a faithful subrepresentation of $V$. In particular, we have that $(\spaN \A)^{C_3} = 0$ and hence $V \cong_3 \widehat{D}^{\oplus k}$. It follows that $\dim (\spaN \A)$ must be even and for each $A = \{v, c\cdot v, c^2 \cdot v\} \in \A$ we have $v + c \cdot v + c^2 \cdot v =0$. Then  $\dim(\spaN \A) \le 2\vert \A \vert \le 8$. But we have seen that any faithful $\bindih-$representation of dimension at most 8 contains a copy of $U_4$. Hence $U_4 \le \spaN \A \le V$ and Lemma \ref{vFourBarsSplitAsGammaReps} shows that $V$ cannot be indecomposable.


Proposition \ref{classification of representations} is now proved.

\clearpage
\bibliographystyle{amsalpha}
\bibliography{biblio_Lagrangian_Floer_towards_paper}

\newcommand{\etalchar}[1]{$^{#1}$}
\providecommand{\bysame}{\leavevmode\hbox to3em{\hrulefill}\thinspace}
\providecommand{\MR}{\relax\ifhmode\unskip\space\fi MR }
\providecommand{\MRhref}[2]{%
  \href{http://www.ams.org/mathscinet-getitem?mr=#1}{#2}
}
\providecommand{\href}[2]{#2}
\begin{thebibliography}{Abo12b}

\bibitem[Abo10]{abouzaid2010geometric}
Mohammed Abouzaid, \emph{A geometric criterion for generating the {F}ukaya
  category}, Publications Math{\'e}matiques de l'IH{\'E}S \textbf{112} (2010),
  no.~1, 191--240.

\bibitem[Abo12a]{abouzaid2012nearby}
\bysame, \emph{Nearby {L}agrangians with vanishing {M}aslov class are homotopy
  equivalent}, Inventiones Mathematicae \textbf{189} (2012), no.~2, 251--313.

\bibitem[Abo12b]{abouzaid2012wrapped}
\bysame, \emph{On the wrapped {F}ukaya category and based loops}, Journal of
  Symplectic Geometry \textbf{10} (2012), no.~1, 27--79.

\bibitem[ADE14]{audin2014morse}
Mich{\`e}le Audin, Mihai Damian, and Reinie Ern{\'e}, \emph{Morse theory and
  {F}loer homology}, Springer, 2014.

\bibitem[AF93]{aluffi1993linear}
Paolo Aluffi and Carel Faber, \emph{Linear orbits of d-tuples of points in
  $\mathbb{P}^1$}, J. reine angew. Math \textbf{445} (1993), no.~205-220, 4--1.

\bibitem[AFO{\etalchar{+}}]{afoooinpreparation}
Mohammed Abouzaid, Kenji Fukaya, Yong-Geun Oh, Hiroshi Ohta, and Kaoru Ono,
  \emph{Quantum cohomology and split generation in {L}agrangian {F}loer
  theory}.

\bibitem[BC07a]{barraud2007lagrangian}
Jean-Fran{\c{c}}ois Barraud and Octav Cornea, \emph{Lagrangian intersections
  and the {S}erre spectral sequence}, Annals of mathematics (2007), 657--722.

\bibitem[BC07b]{biran2007quantum}
Paul Biran and Octav Cornea, \emph{Quantum structures for {L}agrangian
  submanifolds}, arXiv preprint arXiv:0708.4221 \textbf{193} (2007).

\bibitem[BC08]{biran2008lagrangian}
\bysame, \emph{Lagrangian quantum homology}, arXiv preprint arXiv:0808.3989
  (2008).

\bibitem[BC09]{biran2009rigidity}
\bysame, \emph{Rigidity and uniruling for {L}agrangian submanifolds}, Geometry
  \& Topology \textbf{13} (2009), no.~5, 2881--2989.

\bibitem[Bon75]{bondarenko1975representations}
Vitalii~Mikhailovich Bondarenko, \emph{Representations of dihedral groups over
  a field of characteristic 2}, Mathematics of the USSR-Sbornik \textbf{25}
  (1975), no.~1, 58.

\bibitem[Chi04]{chiang2004new}
River Chiang, \emph{New {L}agrangian submanifolds of $\mathbb{C}{P}^n$},
  International Mathematics Research Notices \textbf{2004} (2004), no.~45,
  2437--2441.

\bibitem[Dam07]{damian2007constraints}
Mihai Damian, \emph{Constraints on exact {L}agrangians in cotangent bundles of
  manifolds fibred over the circle}, arXiv preprint arXiv:0710.0511v2 (2007).

\bibitem[Dam12]{damian2012floer}
\bysame, \emph{Floer homology on the universal cover, {A}udin's conjecture and
  other constraints on {L}agrangian submanifolds}, Commentarii Mathematici
  Helvetici \textbf{87} (2012), no.~2, 433--463.

\bibitem[EL14]{evans2014floer}
Jonathan~David Evans and Yank{\i} Lekili, \emph{Floer cohomology of the
  {C}hiang {L}agrangian}, Selecta Mathematica (2014), 1--44.

\bibitem[Flo88a]{floer1988relative}
Andreas Floer, \emph{A relative morse index for the symplectic action},
  Communications on pure and applied mathematics \textbf{41} (1988), no.~4,
  393--407.

\bibitem[Flo88b]{floer1988unregularized}
\bysame, \emph{The unregularized gradient flow of the symplectic action},
  Communications on Pure and Applied Mathematics \textbf{41} (1988), no.~6,
  775--813.

\bibitem[Fuk06]{fukaya2006application}
Kenji Fukaya, \emph{Application of {F}loer homology of {L}angrangian
  submanifolds to symplectic topology}, Morse theoretic methods in nonlinear
  analysis and in symplectic topology, Springer, 2006, pp.~231--276.

\bibitem[Jan69]{janusz1969indecomposable}
GJ~Janusz, \emph{Indecomposable modules for finite groups}, Annals of
  Mathematics (1969), 209--241.

\bibitem[KO00]{kwon2000structure}
Daesung Kwon and Yong-Geun Oh, \emph{Structure of the image of
  (pseudo)-holomorphic discs with totally real boundary conditions}, Comm.
  Anal. Geom \textbf{8} (2000), 31--82.

\bibitem[Laz00]{lazzarini2000existence}
Laurent Lazzarini, \emph{Existence of a somewhere injective pseudo-holomorphic
  disc}, Geometric \& Functional Analysis GAFA \textbf{10} (2000), no.~4,
  829--862.

\bibitem[MS12]{mcduff2012j}
Dusa McDuff and Dietmar Salamon, \emph{J-holomorphic curves and symplectic
  topology}, vol.~52, American Mathematical Soc., 2012.

\bibitem[Oh93]{oh1993floer}
Yong-Geun Oh, \emph{Floer cohomology of {L}agrangian intersections and
  pseudo-holomorphic disks i}, Communications on Pure and Applied Mathematics
  \textbf{46} (1993), no.~7, 949--993.

\bibitem[Poz99]{pozniak1999floer}
Marcin Pozniak, \emph{Floer homology, {N}ovikov rings and clean intersections},
  Northern California Symplectic Geometry Seminar, vol. 196, American
  Mathematical Soc., 1999, pp.~119--181.

\bibitem[Sei08]{seidel2008fukaya}
Paul Seidel, \emph{Fukaya categories and {P}icard-{L}efschetz theory}, European
  Mathematical Society, 2008.

\bibitem[She13]{sheridan2013fukaya}
Nick Sheridan, \emph{On the {F}ukaya category of a {F}ano hypersurface in
  projective space}, arXiv preprint arXiv:1306.4143 (2013).

\bibitem[Smi15]{smith2015floer}
Jack Smith, \emph{Floer cohomology of {C}hiang-type {L}agrangians}, arXiv
  preprint arXiv:1510.08031 (2015).

\bibitem[Ste43]{steenrod1943homology}
NE~Steenrod, \emph{Homology with local coefficients}, Annals of Mathematics
  (1943), 610--627.

\bibitem[Ton15]{tonkonog2015closed}
Dmitry Tonkonog, \emph{The closed-open string map for ${S}^1$-invariant
  {L}agrangians}, arXiv preprint arXiv:1504.01621 (2015).

\bibitem[Vit87]{viterbo1987intersection}
Claude Viterbo, \emph{Intersection de sous-vari{\'e}t{\'e}s {L}agrangiennes,
  fonctionnelles d'action et indice des syst{\`e}mes hamiltoniens}, Bulletin de
  la Societe Mathematique de France \textbf{115} (1987), 361--390.

\bibitem[Vit97]{viterbo1997exact}
\bysame, \emph{Exact {L}agrange submanifolds, periodic orbits and the
  cohomology of free loop spaces}, Journal of Differential Geometry \textbf{47}
  (1997), no.~3, 420--468.

\bibitem[Zap15]{zapolsky2015lagrangian}
Frol Zapolsky, \emph{The {L}agrangian {F}loer-quantum-{PSS} package and
  canonical orientations in {F}loer theory}, arXiv preprint arXiv:1507.02253
  (2015).

\end{thebibliography}

\end{document}